\documentclass[a4paper,reqno,11pt]{amsart}
\usepackage[T1]{fontenc}
\usepackage[utf8]{inputenc}
\usepackage[english]{babel}
\usepackage[dvipsnames,svgnames,table]{xcolor}
\usepackage{amssymb, amsmath, amsthm, graphicx, enumerate, color, mathtools, comment, caption, subcaption, float, lmodern}
\usepackage[foot]{amsaddr}
\usepackage[shortlabels]{enumitem}
\usepackage[unicode=true]{hyperref}
\usepackage[capitalise, noabbrev]{cleveref}
\usepackage{thmtools, thm-restate}

\newcommand{\q}[1]{``#1''}
\DeclarePairedDelimiter\set{\{}{\}}

\newcommand{\closure}[1]{\overline{#1}}
\newcommand{\algo}[1]{\mathbb{#1}}
\newcommand{\defin}[1]{\emph{\textcolor{ForestGreen}{#1}}}
\newcommand{\defmath}[1]{\defin{\text{$#1$}}}

\DeclareMathOperator{\ext}{ext}
\DeclareMathOperator{\Inc}{Inc}
\DeclareMathOperator{\hposet}{height}
\DeclareMathOperator{\se}{se}

\newcommand{\OR}{R}
\newcommand{\OL}{L}

\DeclareMathOperator{\kelly}{kelly}

\DeclareMathOperator{\Int}{int}

\DeclareMathOperator{\gcpe}{gcpe}
\DeclareMathOperator{\lcse}{lcse}

\let\le\leqslant
\let\ge\geqslant
\let\leq\leqslant
\let\geq\geqslant

\let\preceq\preccurlyeq

\let\subset\subseteq

\let\epsilon\varepsilon

\let\eps\varepsilon

\newcommand{\WR}{W_{\OR}}
\newcommand{\WL}{W_{\OL}}

\newcommand{\bfs}{\textsc{bfs}}

\newcommand{\RR}{\mathcal{R}}

\newcommand{\NN}{\mathbb{N}}

\newcommand{\calC}{\mathcal{C}}

\newcommand{\calE}{\mathcal{E}}

\newcommand{\calI}{\mathcal{I}}

\newcommand{\calL}{\mathcal{L}}

\newcommand{\calQ}{\mathcal{Q}} 
\newcommand{\calR}{\mathcal{R}}
\newcommand{\calS}{\mathcal{S}}
\newcommand{\calT}{\mathcal{T}}

\newcommand{\Oh}{\mathcal{O}}

\makeatletter
\newcommand{\myitem}[1]{%
\item[#1]\protected@edef\@currentlabel{#1}%
}
\makeatother

\makeatletter
\newcommand{\leqnomode}{\tagsleft@true\let\veqno\@@leqno}
\newcommand{\reqnomode}{\tagsleft@false\let\veqno\@@eqno}
\makeatother

\newcommand{\problemdef}[3]{
  \vspace{1mm}
\noindent\fbox{
  \begin{minipage}{0.95\textwidth}
  #1 \\
  {\bf{Input:}} #2  \\
  {\bf{Output:}} #3
  \end{minipage}
  }
  \vspace{1mm}
}

\newcommand{\PDD}{\textsc{Planar Diagram Dimension}\ }

\newcommand{\SCPDD}{\textsc{Singly Constrained Planar Diagram Dimension}\ }

\usepackage{algorithm}
\usepackage[noend]{algpseudocode}
\algdef{SE}[DOWHILE]{Do}{doWhile}{\algorithmicdo}[1]{\algorithmicwhile\ #1}

\algnewcommand{\IIf}[1]{\State\algorithmicif\ #1\ \algorithmicthen}
\algnewcommand{\EndIIf}{\unskip}

\renewenvironment{enumerate}{\begin{enumorig}[label=\textup{(\roman*)}, noitemsep, 
topsep=2pt plus 2pt, labelindent=.2em, leftmargin=*, widest=iii]}{\end{enumorig}}

\newenvironment{enumerateNumI}{\begin{enumorig}[label=\textup{(I\arabic*)}, 
noitemsep, topsep=2pt plus 2pt, labelindent=.2em, leftmargin=*, widest=I10]}{\end{enumorig}}

\newenvironment{enumerateNumU}{\begin{enumorig}[label=\textup{(u\arabic*)}, 
noitemsep, topsep=2pt plus 2pt, labelindent=.2em, leftmargin=*, widest=u10]}{\end{enumorig}}

\newenvironment{enumerateNumeo}{\begin{enumorig}[label=\textup{(\arabic*)}, 
noitemsep, topsep=2pt plus 2pt, labelindent=.2em, leftmargin=*, widest=10]}{\end{enumorig}}

\newenvironment{enumerateNume}{\begin{enumorig}[label=\textup{(e\arabic*)}, 
noitemsep, topsep=2pt plus 2pt, labelindent=.2em, leftmargin=*, widest=e10]}{\end{enumorig}}

\newenvironment{enumerateNumo}{\begin{enumorig}[label=\textup{(o\arabic*)}, 
noitemsep, topsep=2pt plus 2pt, labelindent=.2em, leftmargin=*, widest=o10]}{\end{enumorig}}

\newenvironment{enumerateNumr}{\begin{enumorig}[label=\textup{(r\arabic*)}, 
noitemsep, topsep=2pt plus 2pt, labelindent=.2em, leftmargin=*, widest=r10]}{\end{enumorig}}

\newenvironment{enumerateNump}{\begin{enumorig}[label=\textup{(p\arabic*)}, 
noitemsep, topsep=2pt plus 2pt, labelindent=.2em, leftmargin=*, widest=s10]}{\end{enumorig}}

\newenvironment{enumerateNumt}{\begin{enumorig}[label=\textup{(t\arabic*)}, 
noitemsep, topsep=2pt plus 2pt, labelindent=.2em, leftmargin=*, widest=s10]}{\end{enumorig}}

\newenvironment{enumerateNumH}{\begin{enumorig}[label=\textup{(H\arabic*)}, 
noitemsep, topsep=2pt plus 2pt, labelindent=.2em, leftmargin=*, widest=s10]}{\end{enumorig}}

\newenvironment{enumerateNumHprim}{\begin{enumorig}[label=\textup{(H\arabic*')}, 
noitemsep, topsep=2pt plus 2pt, labelindent=.2em, leftmargin=*, widest=s10]}{\end{enumorig}}

\newenvironment{enumerateNumHR}{\begin{enumorig}[label=\textup{(HR\arabic*)}, 
noitemsep, topsep=2pt plus 2pt, labelindent=.2em, leftmargin=*, widest=s10]}{\end{enumorig}}

\newenvironment{enumerateNumHRprim}{\begin{enumorig}[label=\textup{(HR\arabic*')}, 
noitemsep, topsep=2pt plus 2pt, labelindent=.2em, leftmargin=*, widest=s10]}{\end{enumorig}}

\newenvironment{enumerateNumHL}{\begin{enumorig}[label=\textup{(HL\arabic*)}, 
noitemsep, topsep=2pt plus 2pt, labelindent=.2em, leftmargin=*, widest=s10]}{\end{enumorig}}

\newenvironment{enumerateNumHLprim}{\begin{enumorig}[label=\textup{(HL\arabic*')}, 
noitemsep, topsep=2pt plus 2pt, labelindent=.2em, leftmargin=*, widest=s10]}{\end{enumorig}}

\renewenvironment{itemize}{\begin{itemorig}[label=\tiny\textbullet, noitemsep, 
topsep=2pt plus 2pt, labelindent=.2em, leftmargin=*, widest=ii]}{\end{itemorig}}
\makeatletter
\def\thm@space@setup{
  \thm@preskip=4mm
  \thm@postskip=0mm
}
\makeatother

\usepackage{mdframed}

\mdfdefinestyle{dontsplit}{
  hidealllines=true,
  nobreak=true,
  leftmargin=0pt,
  rightmargin=0pt,
  innerleftmargin=0pt,
  innerrightmargin=0pt,
}

\newmdtheoremenv[style=dontsplit]{theorem}{Theorem}
\newmdtheoremenv[style=dontsplit]{lemma}[theorem]{Lemma}
\newmdtheoremenv[style=dontsplit]{obs}[theorem]{Observation}
\newmdtheoremenv[style=dontsplit]{remark}[theorem]{Remark}
\newmdtheoremenv[style=dontsplit]{proposition}[theorem]{Proposition}
\newmdtheoremenv[style=dontsplit]{question}[theorem]{Question} 
\newmdtheoremenv[style=dontsplit]{corollary}[theorem]{Corollary} 
\newmdtheoremenv[style=dontsplit]{problem}[theorem]{Problem}
\newmdtheoremenv[style=dontsplit]{conjecture}[theorem]{Conjecture}
\newtheorem*{conjecture*}{Conjecture}

\theoremstyle{remark}
\newmdtheoremenv[style=dontsplit]{claim}[theorem]{Claim}
\crefname{claim}{Claim}{Claims}
\newmdtheoremenv[style=dontsplit]{example}[theorem]{Example}

\newenvironment{proofclaim}[1][]
	{\vspace{-\topsep}\begin{proof}[Proof] }{\end{proof}}

\crefformat{equation}{#2(#1)#3}
\let\eqref\cref
\crefformat{subsection}{Subsection #2#1#3}

\input{style.table-of-contents}

\hypersetup{ 
    colorlinks,
    linkcolor={RoyalBlue},
    citecolor={RubineRed},
    urlcolor={blue!80!black},
    pdftitle={Planarity and dimension II}
}

\newcommand{\jedrzej}[1]{\textcolor{ProcessBlue}{Jędrzej: #1}}
\newcommand{\michal}[1]{\textcolor{CarnationPink}{Michał: #1}}

\newcommand{\fig}{\textcolor{DeepPink}{TODO fig}}

\newif\ifshowcomments
\showcommentstrue      % show comments
\title{Planarity and dimension II}

\author[Blake]{Heather S.~Blake}
\address[Blake]{Mathematics \& Computer Science Department, Davidson College, Davidson, North Carolina 28035}
\email{hsblake@davidson.edu}

\author[Hodor]{Jędrzej Hodor}
\address[Hodor]{Theoretical Computer Science Department, 
Faculty of Mathematics and Computer Science and  Doctoral School of Exact and Natural Sciences, Jagiellonian University, Krak\'ow, Poland}
\email{jedrzej.hodor@gmail.com}

\author[Micek]{Piotr Micek}
\address[Micek]{Theoretical Computer Science Department, 
Faculty of Mathematics and Computer Science, Jagiellonian University, Krak\'ow, Poland}
\email{piotr.micek@uj.edu.pl}

\author[Seweryn]{Michał T.~Seweryn}
\address[Seweryn]{Computer Science Institute, Charles University, Prague, Czech Republic}
\email{seweryn@iuuk.mff.cuni.cz}

\author[Trotter]{William T. Trotter}
\address[Trotter]{School of Mathematics, Georgia Institute of Technology, Atlanta, Georgia 30332}
\email{trotter@math.gatech.edu}

\thanks{H.~S.~Blake is supported by a grant from the Simons Foundation. 
J.~Hodor and P.~Micek are supported by the National Science Center of Poland under grant UMO-2022/47/B/ST6/02837 within the OPUS 24 program. 
W.~T.~Trotter is supported by a grant from the Simons Foundation.}

\begin{document}

\begin{abstract}
The dimension of a poset $P$ is the minimum positive integer $d$ such that $P$ is an induced subposet of $\mathbb{R}^d$ equipped with the product order.
We give a constant-factor polynomial-time approximation algorithm for computing dimension in the class of posets with a planar (Hasse) diagram.
While computing the dimension of a poset is \textsf{NP}-hard in general, the computational complexity of the problem for planar posets remains open. 
The algorithmic result is driven by a structural understanding of the canonical obstruction to small dimension: standard examples.
A longstanding problem, originating in the early 1980s, asked whether every poset with a planar diagram has dimension bounded by a function of the maximum order of a standard example that it contains. 
In the first paper of the series, we have resolved the problem in a more general setting of posets with planar cover graphs by establishing a polynomial bound. 
We prove a stronger bound in the original setting, namely, for every poset $P$ with a planar diagram $\dim(P) \le 96\se(P)+672$, where $\dim(P)$ denotes the dimension of $P$ and $\se(P)$ denotes the maximum order of a standard example contained in $P$. 
\end{abstract}

\maketitle

\newpage
\tableofcontents
\newpage

\section{Introduction}
\label{sec:intro}
In this paper, we study finite partially ordered sets, or \emph{posets} for short.
The \defin{dimension} of a poset $P$, denoted \defin{$\dim(P)$}, is the least positive integer $d$ 
such that
$P$ is isomorphic to an induced subposet of $\mathbb{R}^d$ equipped with the product order.\footnote{In the \defin{product order} of $\mathbb{R}^d$, for $(x_1,\dots,x_d),(y_1,\dots,y_d) \in \mathbb{R}^d$, we have $(x_1,\dots,x_d) \leq (y_1,\dots,y_d)$ if and only if $x_i \leq y_i$ for every $i \in [d]$. For a positive integer $d$, we write \defin{$[d]$} as a compact form of $\{1,\dots,d\}$.}  
Dimension is arguably the most extensively studied measure of complexity for posets.
It captures important concepts in graph theory such as planarity~\cite{S89} and nowhere denseness~\cite{JMOdMW19}.

The computational complexity of poset dimension has been studied for almost five decades. 
The problem of deciding whether a poset has dimension at most $d$ appears in the classical Garey-Johnson list of problems~\cite{GJ79}.
Soon afterwards, Yannakakis proved that computing dimension is \textsf{NP}-hard~\cite{Y82}.
Later, Felsner, Mustaţă, and Pergel showed that 
deciding whether a height-$2$ poset has dimension at most $3$ is already \textsf{NP}-hard~\cite{FMP17}.
%Furthermore, 
On the approximation side, 
Chalermsook, Laekhanukit, and Nanongkai proved that 
unless \textsf{ZPP \(\neq\) NP}, 
there is no polynomial-time $\Oh(n^{1-\varepsilon})$-factor approximation algorithm for computing the dimension~\cite{CLN13}. 
In contrast, we obtain one of the first positive results in the area.
Namely, we give a polynomial-time constant-factor approximation algorithm for the dimension of posets with planar (Hasse) diagrams.
An element $y$ in a poset $P$ \defin{covers} an element $x$ in~$P$ if $x < y$ in~$P$ and there is no $z$ in~$P$ with $x<z<y$ in~$P$.
The \defin{cover graph} of a poset $P$ is the graph whose vertices are the
elements of $P$ with two elements adjacent whenever one covers the other.
%A \defin{diagram} of $P$ is a drawing of the cover graph of $P$ in the plane such that whenever $xy$ is an edge in the cover graph and $x<y$ in~$P$, the relation is represented by a curve from $x$ to $y$ going monotonically upwards. 
A \defin{diagram} of $P$ is a drawing of its cover graph in the plane in which every cover relation $x<y$ is represented by a curve from $x$ to $y$ that is monotone in the vertical direction.”

\begin{theorem}\label{thm:approx-algo}
    There exists a polynomial-time algorithm that, given a poset $P$ together with a planar diagram of $P$, 
    computes an embedding of $P$ into $\mathbb{R}^d$ with the product order, where $d = 96\dim(P)+672$.
\end{theorem}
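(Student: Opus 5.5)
The plan is to derive the algorithm from an \emph{algorithmic} version of the structural bound $\dim(P)\le 96\se(P)+672$ stated in the abstract. The key observation is that $\se(P)\le\dim(P)$: the standard example $S_k$ has dimension exactly $k$ for $k\ge 2$, and dimension is monotone under taking subposets. Hence it suffices to design a polynomial-time procedure that, given $P$ with a planar diagram, outputs a realizer of $P$ of size at most $96\se(P)+672$. We never need to compute $\se(P)$ or $\dim(P)$ exactly. An embedding into $\mathbb{R}^d$ with the product order is then read off from the realizer, by taking the $i$-th coordinate of an element to be its position in the $i$-th linear extension. Padding by repeating a linear extension gives exactly $d=96\dim(P)+672$ coordinates.

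The realizer is built from the proof of the structural theorem, checking that every step is constructive. The expected skeleton is as follows.
\begin{enumerate}
\item Standard reductions: remove the easy incomparable pairs, and reduce to reversing the critical (min--max type) pairs, splitting them into a constant number of classes by their left/right position relative to the planar diagram. These classes are witnessed by explicit paths in the diagram, which can be found by BFS.
\item For each class, the paper presumably proves a statement of the form: the class of incomparable pairs can be reversed by at most $c_1 k + c_2$ linear extensions, unless it contains a standard example of order $k+1$. This is made effective by a greedy or sweeping procedure. Order the minimal elements along the planar structure, for instance by the leftmost witnessing paths. Then repeatedly either extend the current linear extension, or detect an alternating cycle, or detect a large ``nested'' family.
\item Combine the per-class bounds. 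Sum the constants to obtain $96k+672$, with the algorithm using $k$ only implicitly: it outputs however many extensions the construction produces, and the analysis bounds that number by $96\se(P)+672$.
\end{enumerate}

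Each step uses only primitives that run in polynomial time in $|P|$ and the size of the diagram: computing comparabilities, walking along faces of the planar drawing, testing whether a set of pairs is reversible via the absence of strict alternating cycles, and building a linear extension reversing a reversible set by topological sort. Correctness follows because the output is a family of linear extensions reversing every incomparable pair, which is a realizer. The size bound follows from the structural analysis together with $\se(P)\le\dim(P)$.

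The main obstacle is step (ii). The structural proof may argue non-constructively, for example through a minimal counterexample or a choice of an extremal standard example, whereas we need a procedure whose every step is explicit. I would first try to rephrase the argument so that the existence of a large standard example appears only in the analysis, as a certificate bounding the number of extensions the greedy procedure opens. Concretely, each time the procedure is forced to start a new linear extension, it records a pair; the recorded pairs, minus a bounded number of exceptions per class, then form a standard example. This yields the bound without the algorithm ever searching for a standard example.
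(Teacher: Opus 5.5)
Your reduction (output a realizer of size at most $96\se(P)+672$, using $\se(P)\le\dim(P)$) and the final conversion (topologically sort $P\cup J^{-1}$ for each reversible set $J$) are fine. Everything in between is missing, though. Steps (i)--(iii) are guesses about what a structural proof might contain. The only concrete mechanism you offer for the hard step is a greedy sweep in which each newly opened linear extension records a pair, and the recorded pairs form a standard example up to boundedly many exceptions. You give no argument for this, and it is not how the bound is obtained. In the paper this step is the entire content:
\begin{itemize}
\item three rounds of unfolding along zig-zag trees reduce to singly constrained instances with the constraining element on the exterior face, at a factor-$24$ cost (\cref{lem:singly-constrained-reduction});
\item there, ten explicitly reversible sets $J_1,\dots,J_8,J_L,J_R$ are peeled off;
\item the remaining pairs $I_R$ (symmetrically $I_L$) are handled through auxiliary graphs $H\subseteq H'$ and a potential $\ell$. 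Every strict alternating cycle in $I_R$ spans an edge of $H$, and $\ell$ strictly decreases along edges of $H$.
\end{itemize}
Neither this nor any substitute appears in your proposal.

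Second, your plan to use $k$ ``only implicitly'' misses what makes the bound algorithmic, and it is not clear it can be done at all. There is no known bound on the values of $\ell$ in terms of $\se(P)$, so the classes $\{\ell=\alpha\}$ give no useful count. The construction must instead merge them modulo $2n+7$ for an explicitly given $n$. Whenever some edge of $H$ has $\ell(a,b)\ge\ell(c,d)+2n+7$, it extracts a Kelly subposet of order $n$ from nested regions along a long path in $H'$ (\cref{lemma:shortcuts}).

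The right formulation is a certifying algorithm (\cref{thm:algo-kelly}). Given $n\ge 3$, it returns one of two things:
\begin{itemize}
\item a Kelly subposet of order $n$, which contains $S_n$ and so certifies $\dim(P)\ge n$; or
\item a covering of $\Inc(P)$ by at most $96n+576$ reversible sets.
\end{itemize}
Run it for $n=3,4,\dots$ and stop at the first $n$ that returns a covering. This takes $O(|P|)$ rounds, since a Kelly subposet of order $n$ has $4n-4$ elements. The previous round certified $\dim(P)\ge n-1$, so at most $96(n-1)+576+96\le 96\dim(P)+672$ sets are used. If $n=3$ already succeeds, you use at most $864\le 96\dim(P)+672$ sets unless $P$ is a chain, which is trivial. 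This iteration is exactly where $672=576+96$ comes from.

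One minor point: the algorithm does not know $\dim(P)$, so you cannot pad to exactly $d$ coordinates. The statement has to be read as ``at most $d$''.
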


As stated, the algorithm in \Cref{thm:approx-algo} 
assumes that a planar diagram of a poset is given.
Although an explicit drawing may require substantially more space than its combinatorial description, the required geometric information admits a compact representation.
Di Battista and Tamassia~\cite{DiBattista88} proved that, 
given a combinatorial embedding of a planar diagram, 
one can construct an equivalent \q{small} planar diagram in polynomial time.
Consequently, it suffices to provide the input poset together with a combinatorial embedding of one of its planar diagrams. 
We discuss this issue in greater detail in \Cref{ssec:topology}. We remark that deciding whether a poset admits a planar diagram is itself \textsf{NP}-hard, as shown by Garg and Tamassia~\cite{upwardtesting}.
To the best of our knowledge, it remains open whether computing dimension is \textsf{NP}-hard for posets with planar diagrams.

The algorithm is a direct consequence of our main structural result: posets with a planar diagram and dimension $d$ contain a standard example of order linear in $d$. 
We next provide some context for this structural result.

In 1941, Dushnik and Miller~\cite{DM41} introduced dimension together with its canonical obstruction to small values: the family of standard examples.
For each integer $n \geq 2$,
the \defin{standard example} of order $n$, denoted by \defin{$S_n$}, is a poset on $2n$ elements 
$a_1,\dots,a_n,b_1,\dots,b_n$
such that 
$a_1,\dots,a_n$ are pairwise incomparable, 
$b_1,\dots,b_n$ are pairwise incomparable, and
for all $i,j \in [n]$, 
we have $a_i < b_j$ in $S_n$ if and only if $i\neq j$.
See~\cref{fig:se_kelly_wheel}.
It is one of the first exercises in dimension theory to show that $\dim(S_n)=n$. 
Since dimension is a monotone parameter, 
$\dim(P)\geq n$ whenever $P$ contains a subposet isomorphic to $S_n$.

% \begin{figure}[h]
%   \centering
%   \includegraphics{figs/se.pdf}
%   \caption{
%         The standard example of order $6$. 
%     }
%    \label{fig:se}
% \end{figure}

Large standard examples are not the only way to drive dimension up. 
Indeed, several natural families have unbounded dimension while excluding the standard example of a fixed order,
e.g., 
incidence posets of complete graphs (as proved by Dushnik and Miller~\cite{DM41}), interval orders (see a tight asymptotic bound on their dimension by 
Füredi, Hajnal, Rödl, and Trotter~\cite{FHRT91}), adjacency posets of triangle-free graphs with large chromatic number (as shown by Felsner and Trotter \cite{FT00}).
This motivates the following notion.
The \defin{standard example number} of a poset $P$, denoted \defin{$\se(P)$}, 
is set to be $1$ if $P$ does not contain a subposet isomorphic to a standard example; 
otherwise, 
$\se(P)$ is the maximum integer $n$ such that $P$ contains a subposet isomorphic to $S_{n}$.
Clearly, for every poset $P$, we have $\se(P) \leq \dim(P)$. 
A class of posets $\calC$ is \defin{$\dim$-bounded} if there is a function $f$ 
such that $\dim(P) \leq f(\se(P))$ for every $P$ in $\calC$. 
As we discussed, the class of all posets is not $\dim$-bounded.

\begin{figure}[tp]
  \centering
  \includegraphics{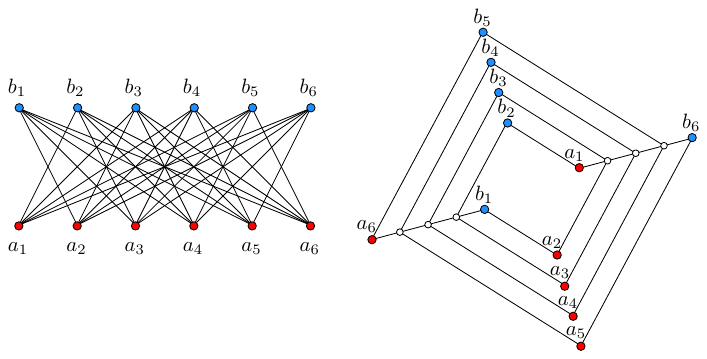}
  \caption{
    On the left, the standard example $S_6$ of order $6$. 
    On the right, the Kelly poset of order $6$: it has a planar diagram and contains a subposet isomorphic to $S_6$.
}
   \label{fig:se_kelly_wheel}
\end{figure}

Posets with planar diagrams can have arbitrarily large dimension, as proved by Kelly in 1981~\cite{K81}.  
%Posets with planar cover graphs can have arbitrarily large dimension, as proved by Trotter in 1978~\cite{T78}, see \cref{fig:se_kelly_wheel}.  
%In 1981, Kelly~\cite{K81} published a seminal construction of posets with planar diagrams and arbitrarily large dimension, see~\cref{fig:se_kelly_wheel} again. 
%A key remark is that all known constructions of planar posets with large dimension contain large standard examples. 
For each integer $n \geq 3$, the \defin{Kelly poset} of \defin{order} $n$ is a poset on $4n-4$ elements $a_1,\dots,a_n,b_1,\dots,b_n$ and $u_2,\dots,u_{n-1},v_2,\dots,v_{n-1}$ such that for all $i \in [n]$, $a_i$ is incomparable with $b_i$; for every $i \in \{2,\dots,n-2\}$, $a_i < u_i < b_{i-1}$ and $a_i < v_i < b_{i+1}$; $a_n \leq u_{n-1} \leq \dots \leq u_2 \leq b_1$; and $a_1 \leq v_2 \leq \dots \leq v_{n-1} \leq b_n$. 
Note that the elements $a_1,\dots,a_n,b_1,\dots,b_n$ induce a poset isomorphic to a standard example of order $n$.
Kelly posets admit planar diagrams, see \cref{fig:se_kelly_wheel}.

Since the early 1980's, it remained a challenge and perhaps the most important problem in poset theory to settle the following: 
\emph{Is it true that the class of posets with planar diagrams is $\dim$-bounded?}
%\later{For arxiv, say that we are authors of PD I :) Maybe use: In the first paper of this series~\cite{PD1}, we resolve the conjecture in the affirmative in full generality.}
We answer this question in the affirmative in the first paper of this series in the more general setting of posets with planar cover graphs~\cite{PD1}.
% The question was recently answered in the affirmative in the more general setting of posets with planar cover graphs by Blake, Hodor, Micek, Seweryn, and Trotter~\cite{PD1}.
Namely, we proved that $\dim(P) \in \Oh(\se(P)^8)$ for posets $P$ with planar cover graphs.
In the present paper, we study the original setting of planar diagrams and improve the bound from polynomial to linear.
% \begin{conjecture*}\hfill
%   \begin{enumerate}
%   \item The class of posets with planar diagrams is $\dim$-bounded.
%   \label{item:conj:diagram}
%   \item The class of posets with planar cover graphs is $\dim$-bounded.
%   \label{item:conj:cover-graph}
% \end{enumerate}
% \end{conjecture*}
% We believe that the first published reference to this question is
% an informal comment on page 119 in~\cite{Tro-book} published
% in 1992. However, the conjecture was circulating among researchers soon
% after the constructions illustrated in \cref{fig:se_kelly_wheel} appeared.
% In the first paper of this series~\cite{PD1}, we resolve the conjecture in the affirmative in full generality.
% Namely, we prove that $\dim(P) \in \Oh(\se(P)^8)$ for posets $P$ with planar cover graphs.
% In this paper, we study the more restrictive case of posets with planar diagrams, where we improve the mentioned bound to linear.

\begin{theorem}\label{thm:dim-boundedness}
  For every poset \(P\) with a planar diagram, 
  $\dim(P) \le 96 \se(P) + 672$.
\end{theorem}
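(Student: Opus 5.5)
We will bound $\dim(P)$ by covering the set $\Inc(P)$ of incomparable pairs, where a pair $(a,b)$ with $a\parallel b$ is \emph{reversed} by a linear extension $L$ if $b<a$ in $L$. Recall that $\dim(P)$ is the least number of linear extensions that together reverse every incomparable pair, equivalently the least number of reversible sets covering $\Inc(P)$. A set of incomparable pairs is reversible exactly when it contains no \emph{alternating cycle}. Throughout we fix a planar diagram of $P$.

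\textbf{Reduction to min--max pairs.} First we reduce to pairs $(a,b)$ with $a$ minimal and $b$ maximal. The standard trick is to attach a new pendant minimal element below each element and a new pendant maximal element above each element. This preserves the existence of a planar diagram, since pendant vertices can be drawn next to their neighbours, and it changes $\se$ by at most an additive constant. It then suffices to reverse all min--max incomparable pairs. The additive constant $672$ should absorb this step together with a few others below.

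\textbf{Unfolding into strips.} Next we split the min--max pairs by geometry. Choose a minimal element $x_0$ on the outer face, in the spirit of Streib--Trotter unfolding, and run a BFS-like layering of $P$ by alternating up-sets and down-sets from $x_0$. A standard argument shows that every incomparable pair either lives within one layer or two consecutive layers, or else is handled by a bounded number of extensions. Grouping the layers by parity, it suffices, at the cost of a constant factor, to bound the dimension of min--max pairs inside a single ``strip''. Such a strip consists of the elements in the up-set of a chain or path $W$, where every minimal element is comparable to something on $W$.

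Inside a strip, planarity of the diagram gives left/right orientation. For each min--max incomparable pair $(a,b)$, take witnessing paths from $a$ and from $b$ to $W$; the pair then lies to the left or to the right of $b$'s path. This yields a split into \emph{left} and \emph{right} pairs, matching the $\OL,\OR$ and $\WL,\WR$ macros, and we treat the two symmetrically. Within the right pairs we further separate pairs according to whether they are ``nested'' or ``crossing'' with respect to the plane drawing.

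\textbf{Main step: linear bound inside a strip.} The key lemma would state that if the right pairs of a strip require many extensions, then they contain a large \emph{strict alternating cycle} arranged like a Kelly poset. Concretely:
\begin{itemize}
\item in the auxiliary graph on right pairs, where two pairs are adjacent when they form an alternating $2$-cycle, a large chromatic number forces a long sequence $(a_1,b_1),\dots,(a_k,b_k)$ with $a_i<b_j$ for all $i\neq j$;
\item this sequence is exactly $S_k$, since in a planar diagram crossings of the witnessing paths propagate comparabilities transitively.
\end{itemize}
For pairs of this type we aim for a coloring with $\Oh(\se(P))$ colors. The expected route is an ordering argument: sort pairs along the boundary of the strip, and show that an alternating cycle of length at least $3$ among ``same-side'' pairs forces comparabilities via planarity, so that such cycles collapse to $2$-cycles. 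The reversible-set coloring then becomes the coloring of an interval-like or comparability-like graph, whose chromatic number equals its clique number. That clique number is at most a constant multiple of $\se(P)$.

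The main obstacle is this last conversion of a clique of pairwise alternating pairs into a genuine standard example with only constant loss. A clique yields $a_i\le b_j$ only along topologically constrained paths, and extracting a sub-family on which all $a_i<b_j$ hold simultaneously costs a factor. I would first try a greedy extraction along the left-to-right order, using the Jordan curve theorem on the union of two witnessing paths to show that every pair between two comparable pairs is also comparable. This should give a factor like $4$ or $8$. Multiplying through the parity, left/right, and nesting splits yields a bound of the form $c\cdot\se(P)+c'$, with the constants tuned to $96$ and $672$.
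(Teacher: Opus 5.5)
\textbf{The gap is in your main step, which is where the whole difficulty of the theorem lies.} You plan to colour the graph of alternating $2$-cycles on same-side pairs, after arguing that longer alternating cycles ``collapse'' to $2$-cycles, and then to use that this graph is perfect with clique number $\Oh(\se(P))$. Both halves are asserted without proof, and neither is what planarity gives.

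By definition, a strict alternating cycle of length $k\ge 3$ contains no $2$-cycle among its own pairs. So for independent sets of your graph to be reversible, same-side sets would have to contain no strict alternating cycle of length at least $3$ at all. What the paper can prove (\cref{prop:cycle-gives-an-edge}) is weaker. Take such a cycle $((a_1,b_1),\dots,(a_k,b_k))$ in $I_R$, normalised so that $b_1$ is $\preccurlyeq_R$-maximal. It yields a strict $2$-cycle between $(a_1,b_1)$ and the \emph{mixed} pair $(a_k,b_2)\in I_R$, and this pair need not belong to the colour class under consideration. This is why the paper's graph $H$ has an edge from $(a,b)$ to $(c,d)$ whenever some $f$ with $d\preccurlyeq_R f\prec_R b$ makes $((a,b),(c,f))$ a strict $2$-cycle with $(c,f)\in I_R$.

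As for perfection: $H$ does sit inside the transitive acyclic graph $H'$ of nested regions $\calR(a,z'(a,b),b'')$ (\cref{obs:transitive}), which is a comparability graph. However, its cliques are chains of nested regions. These force at most one ``arm'' of a Kelly poset ($c_i<d_j$ for $i>j$) and do not by themselves give a standard example, so $\chi(H')$ need not be bounded in terms of $\se(P)$.

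The missing idea is the potential $\ell(a,b)$, the maximum number of cycle edges on a directed $H'$-path starting at $(a,b)$. It strictly drops along every $H$-edge (\cref{prop:ell-plus-1}), so the classes of $\ell \bmod (2n+7)$ are independent in $H$, hence reversible by \cref{cor:sacs}, unless some $H$-edge jumps at least $2n+7$ levels. \Cref{lemma:shortcuts} handles such a jump: either a witnessing path escapes the nested regions, supplying the second arm and hence a Kelly subposet of order $n$, or $(c,d)$ would have an $H'$-edge deep into the chain, contradicting the jump.

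You have also misplaced the obstacle. Pairs that pairwise form alternating $2$-cycles satisfy $a_i\le b_j$ for all $i\neq j$ simultaneously, and for at least three pairs this is already an induced $S_k$. So ``clique number $\le \se(P)$'' costs nothing; the real problem is bounding the chromatic number.

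\textbf{Your reduction is also underspecified exactly where diagrams differ from cover graphs.} For planar cover graphs one contracts a prefix of the unfolding to a single element, but this can destroy planarity of the \emph{diagram}. Your ``strip'' (an up-set of a path $W$) does not provide what the left/right analysis needs: a single root $x_0$ on the exterior face with $x_0\le b$ for every pair $(a,b)$ considered. From that root the paper defines $W_L(b)$, $W_R(b)$, $\preccurlyeq_L$, $\preccurlyeq_R$, $\gamma_\infty$ and bottom elements. Even then, the left/right dichotomy (\cref{claim:diagram:left_right_parititon}) holds only after discarding $J_1,\dots,J_4$ and the pairs whose first element is a bottom element (covered by $J_5,\dots,J_8$).

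The paper reaches singly constrained instances (\cref{lem:singly-constrained-reduction}) at a cost of a factor $24$, using:
\begin{itemize}
\item three rounds of unfolding;
\item a pigeonhole choice of two rounds supported from the same side;
\item zig-zag trees;
\item a region argument splitting the remaining pairs into pieces, each singly constrained by an element on the exterior face.
\end{itemize}
The constants then come out as $24(4n+24)$ with $n=\kelly(P)+1\le\se(P)+1$, giving $96\se(P)+672$; they are not something tuned at the end. Your pendant min--max reduction is harmless but not needed.
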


% In the proof of~\Cref{thm:dim-boundedness}, given a poset $P$ and its planar diagram, we efficiently construct an embedding of $P$ into $\mathbb{R}^s$, where $s = 96\se(P) + 672$.
% Since $\se(P) \leq \dim(P)$ for every poset $P$, this way, we obtain~\Cref{thm:approx-algo}.
% Note that the result of~\cite{PD1} gives a polynomial-time polynomial approximation algorithm for dimension of posets with planar cover graphs.

Already in 1977, Trotter and Moore~\cite{TM77} showed that every poset with a planar diagram and a unique minimal element, called a zero, has dimension at most $3$.
In 1950, Dilworth~\cite{Dilworth1950} proved that for every poset~$P$, $\dim(P)$ is bounded by the width of $P$, i.e.\ the maximum size of an antichain in $P$. 
Standard examples show that no analogous bound holds in terms of height for arbitrary posets. 
However, posets with planar diagrams have dimension bounded by a linear function of their height, i.e.\ the maximum size of a chain in $P$, as proved by Joret, Micek, and Wiechert~\cite{JMW17}.

%Besides the general polynomial bound from~\cite{PD1}, there are two important cases with an improved bounds: (1) the every poset $P$ with a planar cover graph and a zero satisfies $\dim(P) \leq 2\se(P) + 2$, as proved by 
%Blake, Micek, and Trotter~\cite{BMT22}; 

Notably, classes of posets whose cover graphs have bounded treewidth (or even posets of bounded cliquewidth) 
are also dim-bounded, as proved by Joret, Micek, Pilipczuk, and Walczak~\cite{JMPW24}.
Since planar graphs exclude $K_5$ as a minor and graphs of treewidth less than $t$ exclude $K_{t+1}$ as a minor, a natural  generalization of the results of \cite{PD1} and \cite{JMPW24} would be the following statement.
\emph{For every positive integer $t$, the class of posets with cover graphs excluding $K_t$ as a minor is $\dim$-bounded.}
This remains open.

%\Cref{thm:dim-boundedness} improves the general bound of \cite{PD1}.
Another setting where a better bound than the one of \cite{PD1} is known are posets with planar cover graphs and a zero.
Blake, Micek, and Trotter~\cite{BMT22}, proved that for every such poset $P$, we have $\dim(P) \leq 2\se(P) + 2$.

The Kelly construction appears to play a central role in understanding dimension for posets with sparse cover graphs.
Joret, Micek, Pilipczuk, and Walczak~\cite{JMPW24} proved that, for every minor-closed class of graphs $\calC$, the class of posets with cover graphs in $\calC$ has bounded dimension if and only if $\calC$ excludes the cover graph of a Kelly poset.
For every poset $P$, we define the \defin{Kelly number} of $P$, denoted \defin{$\kelly(P)$}, to be $1$ if $P$ does not contain a subposet isomorphic to a Kelly poset; otherwise, 
$\kelly(P)$ is the maximum order of a Kelly poset isomorphic to a subposet of $P$.
Note that for every poset $P$, we have $\kelly(P) \leq \se(P)$.
% \later{For arxiv}
% In a manuscript in preparation, Planarity and dimension III, we show that a poset with a planar cover graph containing a large standard example must contain a large Kelly poset, and the function relating the standard example number and the Kelly number is linear.
% This implies that large dimensional posets with planar cover graphs contain large Kelly posets.
% In the case of planar diagrams, our proof-techniques give the Kelly posets \q{for free}.
In fact, we prove the following statement directly implying \cref{thm:dim-boundedness}.

\begin{theorem}\label{thm:dim-boundedness-Kelly}
  For every poset \(P\) with a planar diagram, 
  $\dim(P) \le 96 \kelly(P) + 672$.
\end{theorem}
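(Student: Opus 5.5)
We reduce the dimension bound to bounding the number of linear extensions needed to reverse all incomparable pairs, and we organize these pairs according to the geometry of the planar diagram. Fix a planar diagram of $P$ and set $k=\kelly(P)$. Following the standard reduction used in the first paper of the series~\cite{PD1}, we first pass to \emph{min-max} incomparable pairs $(a,b)$ with $a$ minimal and $b$ maximal; this costs a constant multiplicative factor, since $P$ embeds in a poset with a planar diagram of comparable size whose dimension is controlled by the min-max pairs. For each such pair we record a canonical witnessing structure in the diagram: the leftmost and rightmost witnessing paths going upward from $a$ and downward from $b$. These determine on which \q{side} of $a$ the element $b$ lies. Pairs are then split into a constant number of types, say left/right combined with a few topological configurations of these paths relative to the outer face. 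The claim is that each type needs at most $c\cdot k + c'$ linear extensions, and summing over the types gives $96k+672$.

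Within a fixed type, we use the familiar tool of \emph{alternating cycles}: a set $I$ of incomparable pairs is reversible by one linear extension if and only if it contains no alternating cycle. We therefore bound the chromatic number of the hypergraph of alternating cycles restricted to this type. The plan is to define a linear-size structure, namely a sequence of \q{nested} pairs $(a_1,b_1),\dots,(a_m,b_m)$ forming a strict alternating cycle of length $m$ where consecutive witnessing paths are nested in the plane. Next we would show that whenever the pairs of a given type cannot be reversed with few extensions, an unfolding argument produces such a nested configuration of length proportional to the obstruction. This works via a ranking or \q{depth} function: assign to each pair its depth in a nesting order determined by the planar embedding, and use the depth modulo a constant together with parity classes to split the pairs into classes with no alternating cycles, except those coming from long nested chains. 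Upward planarity is what makes the nesting order linear, since monotone curves cannot wind around each other.

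The key geometric step is that a nested strict alternating cycle of length $m$ in a planar diagram forces a Kelly subposet of order roughly $m$. The witnessing paths $a_i<b_{i+1}$ in consecutive nested positions behave like the chains $u_i$ and $v_i$ of the Kelly construction. Using planarity, the paths $a_i\to b_{i-1}$ and $a_i\to b_{i+1}$ must cross the neighboring paths, and the crossing points, which are common elements since crossings in a diagram occur at vertices, serve as the elements $u_i,v_i$. The main point to check is incomparability of $a_i$ and $b_i$ together with the absence of extra comparabilities among the chosen elements. We expect this to follow from the choice of leftmost and rightmost witnessing paths, plus taking every other or every fourth index to avoid interference between non-consecutive pairs. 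That thinning is what produces the constants.

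The main obstacle is the middle step: proving that large \q{depth} in a single type really yields a long nested alternating cycle with the required rigid structure, rather than merely a long alternating cycle with arbitrary topology. We would first try an induction peeling off the outermost pair and following its witnessing paths, using the Jordan curve theorem on the closed curve formed by two witnessing paths and a segment of the outer face. Each level of nesting is charged to one new index of the cycle. If that induction fails, we would fall back on a Ramsey-free unfolding in the spirit of Streib–Trotter arguments. Those arguments keep the bounds linear because every choice is forced by planarity.
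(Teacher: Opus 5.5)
Your proposal has genuine gaps, and the main one is in the step you call key. In a strict alternating cycle $((a_1,b_1),\dots,(a_m,b_m))$ one has $a_i\le b_j$ only for $j=i+1$. So for $m\ge 3$ there are no witnessing paths $a_i\to b_{i-1}$ to cross. Nor can the pairs $(a_i,b_i)$ serve as the pairs $(c_i,d_i)$ of a Kelly subposet, which requires $c_i<d_j$ for all $i\neq j$. In fact the $2m$ elements of such a cycle induce only disjoint chains of size at most two, a poset of dimension at most $2$. Hence the length of a strict alternating cycle, nested or not, carries no information about $\kelly(P)$.

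In the paper the obstruction is of a different kind. It is a long directed path in the auxiliary graph $H'$ whose cycle edges $((a_j,b_j),(c_j,d_j))$ are strict alternating cycles of size $2$, with nested regions $\calS_1\subset\calR_1\subset\calS_2\subset\cdots$. The Kelly pairs are the $(c_j,d_j)$. The relations $c_i<d_j$ for $i>j$ are routed along the single path $W_R(b'')$ for a fixed $b''\in B''_*(a,b)$. The relations $c_i<d_j$ for $i<j$ come from one witnessing path escaping many nested regions. The existence of that path is forced by an edge $((a,b),(c,d))$ of $H$ with $\ell(a,b)\ge\ell(c,d)+2n+7$ (\cref{lemma:shortcuts}). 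The step you explicitly leave open, that large depth yields the rigid nested structure, is exactly this lemma together with the region calculus of \cref{prop:properties-regions,prop:two-regions,prop:sac-region-containment}. So the heart of the argument is missing, not merely unpolished.

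The scaffolding around it also lacks the idea that makes the geometry usable. Passing to min-max pairs gives your paths no common base. The paper first reduces, by unfolding from a lowest element and routing along zig-zag trees, to instances singly constrained by a minimal element $x_0$ on the exterior face (\cref{lem:singly-constrained-reduction}). This, not the search for nested configurations, is where unfolding enters and where the factor $24$ comes from. After the reduction, all $W_L(b)=W_L(x_0,b)$ and all $W_R(b)=W_R(x_0,b)$ are pairwise bottom-consistent. Then $\preccurlyeq_L$ and $\preccurlyeq_R$ are partial orders on $B$ in which any two incomparable elements are comparable. Everything later depends on this: the easy sets, the left/right pairs, and the regions $\calR(a,b',b'')$. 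Your \q{leftmost/rightmost paths upward from $a$ and downward from $b$} share no root and give no consistent nesting order.

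The depth also cannot be taken modulo a constant. The shortcut argument needs a jump of about $2n$ in $\ell$ across an $H$-edge to produce a Kelly subposet of order $n$, so the modulus must grow linearly in $\kelly(P)$; the paper uses $2n+7$. With a constant modulus, the cycles inside one class come from jumps too short to yield the required order, and you have no other way to handle them. The final bound is $24\bigl(10+2(2n+7)\bigr)=96n+576$ with $n=\kelly(P)+1$. Your constants $96$ and $672$ are asserted rather than derived from any count in your scheme.
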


\cref{thm:approx-algo,thm:dim-boundedness,thm:dim-boundedness-Kelly} follow from \cref{thm:algo-kelly}, i.e.\ the existence of an algorithm, which for a given poset with a planar diagram either finds a prescribed Kelly subposet or constructs an embedding of bounded dimension.

\begin{theorem}\label{thm:algo-kelly}
    There exists a polynomial-time algorithm that, given an integer $n \geq 3$, a poset $P$ together with a planar diagram of $P$, outputs either a subposet of $P$ isomorphic to the Kelly poset of order $n$ or an embedding of $P$ into $\mathbb{R}^d$ with the product order, where $d = 96n+576$.
\end{theorem}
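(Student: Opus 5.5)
The proof of \cref{thm:algo-kelly} will follow the standard \q{unfolding plus local reduction} scheme used for dimension of planar posets, but will make every step constructive and will extract a Kelly subposet whenever a bound fails. First, I will apply the usual reductions. By a classical argument, $\dim(P)$ is at most the maximum, over components, of the dimension of the incomparable min--max pairs plus a constant. A constant number of additional linear extensions therefore lets us restrict attention to reversing incomparable pairs $(a,b)$ with $a$ minimal and $b$ maximal. Next, I will fix a minimal element $x_0$ and run a BFS-type unfolding of $P$ into alternating layers of minimal and maximal elements, following Streib--Trotter and the \cite{PD1} framework. Within the resulting family of constantly many subproblems, I will restrict to pairs lying in a single \q{ladder} between consecutive layers. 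The target is $d = 96n+576$. I expect roughly $4n$ linear extensions per subproblem, replicated over about $24$ subproblem classes: parity of layers, left/right orientation, and a few symmetric cases.

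The core step is to show the following. In a single subproblem, let the pairs $(a,b)$ lie in a region bounded by two witnessing paths from the zero-like root. Either the pairs can be reversed by $\Oh(n)$ linear extensions, or we find a Kelly poset of order $n$. To do this, I will use the planar diagram to define, for each incomparable pair $(a,b)$, its left or right \q{shadow} with respect to a fixed upward-planar embedding. I will classify pairs as left or right, according to whether $b$ lies to the left or to the right of the witnessing path through $a$. Then I will build the auxiliary strict alternating cycles. The key lemma should say that any strict alternating cycle of length $k$ among pairs of one type, nested in the planar drawing, forces a sequence of $a_i, b_i$ with connecting chains that, because the diagram is upward planar, can be rerouted into disjoint $u$-chains and $v$-chains. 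This yields a Kelly poset of order roughly $k/2$ or $k/4$. Conversely, if no long nested chain of such \q{crossing} pairs exists, then Dilworth-style greedy partitioning of the pairs by their left-to-right order in the drawing gives $\Oh(n)$ reversible sets. Each reversible set will be computed explicitly via a topological sort, so the whole procedure runs in polynomial time.

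The main obstacle is this extraction step: turning a long family of pairwise \q{interleaved} pairs into an actual Kelly subposet. This means finding the comparabilities $a_i < u_i < b_{i-1}$ and $a_i < v_i < b_{i+1}$, together with the two long chains, while simultaneously certifying the incomparabilities $a_i \parallel b_i$. These incomparabilities are only guaranteed pairwise and could be destroyed by the connecting paths. My first attempt would be to choose the interleaving sequence greedily as the leftmost/innermost pair at each step. I would then use planarity of the diagram (vertical monotonicity of edges together with the Jordan curve theorem) to show that any witnessing path for $a_i < b_j$ with $|i-j| \ge 2$ must cross an earlier chosen path. Such a crossing gives a common element, and that element can serve as $u_i$ or $v_i$. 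I would lose constant factors by passing to every second or fourth index, which is consistent with the constant $96$.

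Finally, I will assemble the pieces. If any subproblem returns a Kelly poset of order $n$, output it. Otherwise, concatenate the constructed linear extensions, using the constant number of reduction extensions to account for the additive $576$. Then output coordinates via the ranks in each linear extension, which gives the required embedding into $\mathbb{R}^d$.
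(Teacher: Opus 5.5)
\textbf{Main gap: the core dichotomy.} Your outline agrees with the paper only at the coarsest level: unfold to a singly constrained instance, split pairs into left and right, peel off constantly many easy reversible sets, then obtain either $\Oh(n)$ reversible sets or a Kelly poset. The step that carries all the content is missing, and the dichotomy you propose for it is false.

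A long strict alternating cycle forces nothing by itself. Its elements induce $k$ disjoint two-element chains. For example, $x_0<b_1,\dots,b_k$ together with $a_i<b_{i+1}$ (indices mod $k$) is a singly constrained instance with a planar tree diagram, a strict alternating cycle of every length, and no $S_3$. So whatever \q{nested} means must carry the whole argument.

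The natural reading also forces nothing. Take a long sequence of pairs $(c_i,d_i)$ in which consecutive pairs form strict 2-cycles and $c_i<d_j$ for $i>j$; this is what nesting of regions gives. These comparabilities are all realized by the height-two pattern $c_i<d_j$ iff $j\le i-1$ or $j=i+1$. That pattern contains no $S_3$ and is reversed by three sets, split by $i \bmod 3$. Hence the length of nested chains cannot be bounded, and your greedy/Dilworth branch has nothing to rest on.

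The paper instead does the following:
\begin{enumerate}
\item It reduces every strict alternating cycle in $I_R$ to a special 2-cycle, an edge of $H$ (\cref{prop:cycle-gives-an-edge,cor:sacs}).
\item It defines a potential $\ell$, the maximum number of cycle edges on a directed path in the acyclic transitive graph $H'$ of nested minimal regions. This potential drops along every $H$-edge (\cref{prop:ell-plus-1}).
\item It colours by $\ell \bmod (2n+7)$, which fails only if some $H$-edge $((a,b),(c,d))$ drops by at least $2n+7$.
\end{enumerate}
Only then is a Kelly poset extracted (\cref{lemma:shortcuts}). The long $H'$-path gives nested regions $\calS_1\subset\calR_1\subset\calS_2\subset\cdots$, and the points $\pi(c_i)$ on $W_R(b'')$ form one arm. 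The large drop forbids the witness $f$ and $W_R(c,z'(c,d))$ from both staying inside the inner regions; otherwise $((c,d),(a_{2n+6},b_{2n+6}))\in E(H')$ (\cref{claim:edge}). So a single witnessing path leaves an outer region from an inner one, and its crossings of $n$ consecutive boundaries give the other arm. Your greedy choice plus \q{crossing an earlier path} has no counterpart of this escape argument. Also, $c_i\parallel d_i$ comes for free, since the pairs are taken from $I_R$; the real work is $c_i<d_j$ for $i<j$.

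\textbf{The reduction is asserted, not supplied.} For planar diagrams you cannot contract a prefix of an unfolding without destroying planarity. So reaching \q{pairs in a region bounded by witnessing paths from a root on the exterior face} is itself the content of \cref{lem:singly-constrained-reduction}. The paper takes three nested unfoldings and keeps two supported from the same side, which gives six bit-string classes. Using regions $R(M(q))$ cut out by paths of two zig-zag trees, it splits the pairs into $J_1,J_2,J^+,J^-$. The first two are singly constrained by added elements $x_1,x_2$; the last two admit splitting partitions into instances singly constrained by vertices of the exterior face. This gives the factor $24$.

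The additive term is $576=24\cdot 24$: it comes from ten easy sets plus $2(2n+7)$ sets per singly constrained instance, not from reduction extensions. Your count \q{parity, left/right, symmetric cases} is fitted to the target rather than derived.

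\textbf{The min--max reduction.} It is unnecessary: the paper applies \cref{thm:main} with $I=\Inc(P)$, whose closure is $\Inc(P)$. As you state it, it is also false in general. Adjoining a $0$ and a $1$ to $S_n$ leaves no incomparable min--max pair, yet the dimension is $n$.
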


For many years, there were no tools for forcing large standard examples in high-dimensional posets. 
This led Streib and Trotter~\cite{ST14} to propose a simpler problem of forcing long chains in highly dimensional posets,
or in other words, bounding dimension of posets in terms of their height. 
As already mentioned, standard examples witness that this is not possible in general.
This initiated a line of research establishing bounds on dimension in terms of height for several structurally sparse classes of posets~\cite{KMT19,GS21,JMMTWW16,W17,MW17,JMW18}.
Joret, Micek, and Wiechert~\cite{JMW17} showed that for every poset $P$ with a planar diagram, $\dim(P) \leq 192\hposet(P) + 96$.
\Cref{thm:dim-boundedness-Kelly} directly yields an improvement of the multiplicative constant from $192$ to $96$; a more careful analysis of the proof reduces it further to $48$.

% Finally, let us mention that in fact (similarly as in~\cite{PD1}) we prove a stronger result, which implies~\Cref{thm:dim-boundedness-Kelly} (see~\Cref{thm:technical} and the necessary definitions in~\Cref{sec:dim}).
% In particular, the stronger result implies that the class of posets that are subposets of posets with planar diagrams is dim-bounded (see~\cref{cor:subposets-dim-bounded}).
% \todo{Fill references}

\section{Outline of the paper}\label{sec:outline}

Our proof of~\Cref{thm:algo-kelly} is self-contained.   
However, we do not include proofs of the properties of poset dimension that are already covered in~\cite{PD1}: \cref{prop:dim-alternating-cycles,prop:dim_of_sum_incomparable_sets,prop:dim-components,prop:unfolding}.
In a stronger variant of~\Cref{thm:algo-kelly} that we prove, we also use the result of Di Battista and Tamassia~\cite{DiBattista88} on upward drawings of planar graphs.

The notation and objects introduced in the outline are introduced later again (perhaps more carefully).
However, the first part of the outline is crucial for understanding the setup of the proofs; it states the initial reductions and the remainder of the paper is devoted only to proving \cref{lem:singly-constrained-reduction,lem:singly-constrained-proof}, which imply all the main results.

We begin with an alternative perspective on the dimension of posets, presented in detail in~\cref{sec:dim}, which is more amenable to combinatorial arguments.
Let $P$ be a poset.
We denote by \defin{$\Inc(P)$} all incomparable pairs of elements of $P$.
For an integer $k$ with $k \geq 2$, a sequence $((a_1,b_1), \dots, (a_k,b_k))$ of pairs in $\Inc(P)$ is a \defin{strict alternating cycle} of size $k$ in~$P$ 
if $a_i\leq b_{j}$ in~$P$ for all $i,j\in[k]$ if and only if $j = i +1$, cyclically (that is, $(a_{k+1},b_{k+1}) = (a_1,b_1)$).
Let $I \subset \Inc(P)$.
We say that such a strict alternating cycle is \defin{contained} in $I$ if $(a_i,b_i) \in I$ for all $i \in [k]$. 
The subset $I$ is \defin{reversible} if $I$ does not contain a strict alternating cycle. 
A family \(\calS\) of subsets of \(\Inc(P)\) \defin{covers} $I$ if $I \subset \bigcup \calS$.
We define \defin{$\dim_P(I)$} as the minimum positive integer $d$ such that $I$ can be covered by $d$ reversible sets.
One can show that $\dim(P)=\dim_P(\Inc(P))$, see \Cref{prop:dim-alternating-cycles}. 
This alternative angle on poset dimension can be efficiently implemented. 
Indeed, given a reversible set $I$ of incomparable pairs in a poset $P$, 
one can efficiently compute a linear extension $L$ of $P$ reversing all the pairs in $P$, i.e., $b<a$ in $L$ for each $(a,b)\in I$; 
simply compute a topological ordering of $P\cup I^{-1}$, where $\defmath{I^{-1}}=\set{(b,a): (a,b)\in I}$ is the dual of $I$. 
Then, each such linear extension yields an ordering on a coordinate in the embedding into $\mathbb{R}^d$.
Therefore, all our algorithms output a covering of $I$ into $d$ reversible sets instead of an embedding into $\mathbb{R}^d$.

In fact, we work with a stronger notion of $\dim$-boundedness, where large $\dim_P(I)$ forces a large standard examples with incomparable pairs in $I$ and also Kelly posets \q{based} on $I$. 
Namely, for each integer $n \geq 3$, the \defin{Kelly subposet} of $P$ of \defin{order} $n$ \defin{based} on $I$ is a subposet of $P$ on $4n-4$ elements $a_1,\dots,a_n,b_1,\dots,b_n$ and $u_2,\dots,u_{n-1},v_2,\dots,v_{n-1}$ satisfying the same conditions as in the Kelly poset of order $n$ and additionally $(a_i,b_i) \in I$ for every $i \in [n]$. 

The algorithm in \cref{thm:algo-kelly} takes as input a diagram of a poset.
As discussed in the introduction, it suffices to take a combinatorial embedding of a diagram, see the definition and more details in \cref{ssec:topology}.

For a subset of incomparable pairs $I$ in a poset $P$, 
we define \defin{$\closure{I}$} to be the set of all the pairs $(a,b) \in \Inc(P)$ such that there exists $a'$ and $b'$ in $P$ with $(a',b) \in I$ and $(a,b') \in I$.

The following problem is central to our considerations.

\problemdef{\PDD}{
A tuple $(n,P,I,\calE)$ where $n$ is an integer with $n \geq 3$, $P$ is a poset, $I \subset \Inc(P)$, and $\calE$ is a combinatorial embedding of $P$ witnessing a planar diagram of $P$.
}
{
A Kelly subposet of $P$ of order $n$ based on $\closure{I}$ or a covering of $I$ by reversible sets in $P$.
}

For a non-decreasing function $f$, we say that an algorithm for \PDD is \defin{$f$-good} if for every input $(n,P,I,\calE)$, it outputs either a required Kelly subposet of $P$ or a covering consisting of at most $f(n)$ sets.
The main technical theorem that we prove in this paper, which implies \cref{thm:algo-kelly}, and so, also \cref{thm:approx-algo,thm:dim-boundedness,thm:dim-boundedness-Kelly} is the following.

\begin{theorem}\label{thm:main}
    Let $f(n) = 96n+576$ for every positive integer $n$.
    There exists an $f$-good polynomial-time algorithm solving \PDD\!\!.
\end{theorem}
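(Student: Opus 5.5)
The plan is to reduce \PDD to a more constrained variant, the \SCPDD problem, in which the set $I$ of incomparable pairs satisfies one extra geometric constraint. The natural choice of constraint is that every pair $(a,b)\in I$ has its two elements on a common face, or on a common side of a distinguished path, with respect to the planar diagram. The theorem then follows from two statements: \cref{lem:singly-constrained-reduction} and \cref{lem:singly-constrained-proof}.

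\textbf{Preliminary reductions.} These use the standard tools recalled from~\cite{PD1}.
\begin{enumerate}
\item By \cref{prop:dim-components}, we may assume $P$ is connected. The combinatorial embedding $\calE$ lets us fix a vertical orientation in which minimal and maximal elements behave well; following \cref{ssec:topology} and Di Battista--Tamassia, this can be computed in polynomial time.
\item By \cref{prop:unfolding}, we unfold $P$ from a fixed minimal element. Breadth-first layering in the cover graph splits $I$ into a constant number of classes. In each class, the pairs $(a,b)$ have $a$ and $b$ in a bounded window of consecutive layers, and the poset restricted to that window sits inside a region bounded by two monotone paths.
\item By \cref{prop:dim_of_sum_incomparable_sets}, coverings of these classes combine additively.
\end{enumerate}
In each unfolded piece, each $b$ has a \q{witnessing path} to a root. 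Splitting $I$ further according to whether $a$ lies to the left or right of the witnessing path of $b$, and whether $a$ lies below or above the relevant frontier, produces the constant factors. Their product should account for the $96$: a factor $2$ from left/right, a factor $2$ from the dual (reversing $P$), and the rest from unfolding parity classes.

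\textbf{Singly constrained case.} This is \cref{lem:singly-constrained-proof}, which I expect to be the main obstacle. Here the pairs $(a,b)\in I$ are, say, all \q{left} pairs relative to a tree of leftmost upward paths. I would sort these pairs linearly by the position where the path from $b$ branches off, in the planar order. I would then run a greedy procedure that peels off reversible sets. Whenever a strict alternating cycle forces the procedure to open a new set, planarity should force a nested family of paths $a_i\to u_i \to b_{i-1}$ and $a_i\to v_i\to b_{i+1}$. In other words, a long chain of such conflicts should be shown to contain a Kelly subposet of order $n$ based on $\closure{I}$. The closure $\closure{I}$ is needed because the extracted pairs $(a_i,b_{i})$ are typically formed from the $a$ of one original pair and the $b$ of another.

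The key claim I would aim to prove is the following. If the greedy procedure uses more than $cn+c'$ sets, then there are $n$ pairs whose witnessing paths are pairwise nested in the plane, and nesting together with the left-constraint yields exactly the Kelly comparabilities. This is shown by examining where the leftmost paths must cross, using the Jordan curve theorem on the face structure. Every step is constructive (path computations and sorting), so the whole procedure runs in polynomial time. Summing over the constant number of classes gives the bound $96n+576$.
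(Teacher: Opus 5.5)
Your outline has the right two-lemma architecture (\cref{lem:singly-constrained-reduction} and \cref{lem:singly-constrained-proof}). However, the singly constrained step, which carries all of the $n$-dependence, is asserted rather than proved, and the mechanism you suggest does not work. Your key claim is that if a greedy peel-off opens many sets, then nested witnessing paths yield a Kelly subposet. This is exactly what fails. A long chain of conflicts with nested regions forces only one arm of the Kelly poset, namely $c_i<d_j$ for $i>j$. Nothing in the argument bounds the length of such chains in terms of $\kelly_P(\closure{I})$; the paper explicitly has no bound on the number of cycle edges along a path in $H'$.

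The missing idea is to colour by a potential rather than greedily:
\begin{enumerate}
\item On the right pairs $I_R$, define the oriented graphs $H\subseteq H'$, and let $\ell(a,b)$ be the maximum number of cycle edges on a directed $H'$-path starting at $(a,b)$.
\item Every strict alternating cycle in $I_R$ contains an $H$-edge (\cref{cor:sacs}, via \cref{prop:cycle-gives-an-edge}), and $\ell$ drops by at least one along every $H$-edge (\cref{prop:ell-plus-1}).
\item Colour $I_R$ by $\ell \bmod (2n+7)$. A monochromatic cycle then contains an $H$-edge whose drop is at least $2n+7$, that is, a single conflict jumping over many nested regions $\calS_1\subset\calR_1\subset\cdots$.
\item Only such a jump produces the second arm $c_i<d_j$ for $i<j$, through an element escaping an inner region past many outer ones (\cref{lemma:shortcuts}).
\end{enumerate}
None of this is available until several preliminary steps are done. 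The ten easy reversible sets $J_1,\dots,J_8,J_L,J_R$ must be removed, bottom elements must be excluded, and the remaining pairs must be shown to split into left and right pairs (\cref{claim:diagram:left_right_parititon}). This is where $4n+24=10+2(2n+7)$ comes from.

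The reduction is also misdescribed. \q{Singly constrained} means there is one element $x_0$ on the exterior face with $x_0\le b$ for every $(a,b)\in I$. This common root is what makes $W_L(x_0,b)$, $W_R(x_0,b)$, and hence the orders $\preccurlyeq_L,\preccurlyeq_R$, available; a common-face or common-side condition gives nothing comparable. The unfolding is a \textsc{bfs} layering of the comparability graph, not the cover graph. It yields unboundedly many classes forming splitting partitions, so one takes a maximum rather than a sum, and only a parity factor $2$ is lost.

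The real obstacle for diagrams, which you do not address, is that contracting a prefix of the unfolding destroys planarity of the diagram. The paper handles it as follows.
\begin{enumerate}
\item It unfolds three times and uses pigeonhole to find $\alpha<\beta$ supported from the same side.
\item Using the zig-zag trees, it builds paths $M(q)$ whose regions cut nested intervals $I(q)$ of the exterior facial walk along alternating cycles.
\item This gives a splitting partition into instances rooted at exterior-face vertices, plus two instances rooted at $x_1$ and $x_2$.
\end{enumerate}
Together with the grouping of branches that replaces $8$ by $6$, this gives the factor $6\cdot 4=24$. So $96n+576=24(4n+24)$. Your accounting of $96$ as a product of constants (left/right, duality, parity) neither matches this nor produces any specific constant.
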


The first step of our algorithm solving \PDD is a reduction to a so-called \q{singly constrained case}.
Given a poset \(P\) and an element \(x\) of \(P\), we say that a set \(I \subseteq \Inc(P)\) is \defin{singly constrained} by \(x\) in $P$ if for every \((a, b) \in I\) we have \(x \le b\) in \(P\).
The reduction exploits \q{unfoldings of posets}.
The idea was introduced by Streib and Trotter~\cite{ST14} in 2014, and it is inspired by the following classical application of layerings to graph colorings. 
A \defin{layering} of a graph $G$ is a family $(Z_i : i\in \NN)$ of pairwise disjoint subsets of $V(G)$ such that for every edge $uv$ of $G$, there exists $i \in \NN$ with $\{u,v\} \subset Z_i \cup Z_{i+1}$.
It is well-known that
given a graph $G$ and a
layering of $G$, 
if every layer induces a $k$-colorable graph, 
then $G$ is \(2k\)-colorable (one can use two disjoint
palettes of \(k\) colors each, one for even layers, and one for odd layers).
The counterpart of the above for posets is formulated in terms of the unfolding.
An \defin{unfolding} of a poset $P$ is a layering of the comparability graph of $P$.
If the union of any two consecutive sets in the unfolding induces a subposet of dimension at most \(d\), then the dimension of \(P\) is at most \(2d\), see~\cref{prop:unfolding}.
Unfoldings of $P$ that emerge from \textsc{bfs}-layerings of the comparability graph of $P$ are of particular interest. 

Streib and Trotter~\cite{ST14} used this idea to show that for every poset \(P\) with a planar cover graph, there exist a poset \(Q\) with a planar cover graph, an element, and a set \(J \subseteq \Inc(Q)\) that is singly constrained by $x$ in $Q$ with $\dim(P) \le 2\dim_Q(J)$.
See also~\cite[Subsection~3.6]{PD1}.
Later, Joret, Micek, and Wiechert~\cite{JMW17} with a more involved argument, showed that for every poset \(P\) with a planar diagram, there exist a poset \(Q\) with a planar diagram, an element \(x\) in $Q$, and a set \(J \subseteq \Inc(Q)\) that is singly constrained by $x$ in $Q$ with $\dim(P) \le 32\dim_Q(J)$.
In order to reduce \PDD to singly constrained setting, we adjust the setup, reprove, and algorithmize the reduction of Joret, Micek, and Wiechert.
Also, with a simple observation, we reduce the multiplicative constant from $32$ to $24$.
Here is a statement of the problem after this reduction:

\problemdef{\SCPDD}{
A tuple $(n,P,I,x_0,\calE)$ where $n$ is an integer with $n \geq 3$, $P$ is a poset, 
%$\calE$ is a combinatorial embedding of $P$, 
$x_0$ is an element of $P$, $I \subset \Inc(P)$ is singly constrained in $P$ by $x_0$, and 
$\calE$ is a combinatorial embedding of $P$ witnessing a planar diagram of $P$ with $x_0$ in the exterior face.
}
{
A Kelly subposet of $P$ of order $n$ based on $\closure{I}$ or a covering of $I$ by reversible sets in $P$.
}

Again, for a non-decreasing function $f$, we say that an algorithm for \SCPDD is \defin{$f$-good} if for every input $(n,P,I,x_0,\calE)$, it outputs either a required Kelly subposet or a covering consisting of at most $f(n)$ sets.
In \cref{sec:singly_constrained}, 
we describe the whole reduction which is encapsulated by the following lemma.
\begin{restatable}{lemma}{lemreduction}\label{lem:singly-constrained-reduction}
    Let $f$ be a non-decreasing function. 
    Suppose that there exists a polynomial-time $f$-good algorithm for \SCPDD\!\!.
    Then, there exists a polynomial-time $24f$-good algorithm for \PDD\!\!. 
\end{restatable}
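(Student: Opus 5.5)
We follow the strategy of Joret, Micek, and Wiechert~\cite{JMW17}, made algorithmic and with the loss of a factor $24 = 2\cdot 2\cdot 6$ instead of $32$. Let $(n,P,I,\calE)$ be an input to \PDD. We may assume the cover graph of $P$ is connected; otherwise we handle components separately and combine the coverings using \cref{prop:dim-components}, which costs at most an additive constant absorbed in the factor. As a first step, we add a new minimal element below an element on the exterior face in order to root a \bfs. We then fix an element $x$ on the exterior face and compute a \bfs-layering $(Z_i : i\in\NN)$ of the comparability graph of $P$ starting from $x$. By \cref{prop:unfolding}, it suffices to cover $I \cap \Inc(Z_i\cup Z_{i+1})$ by few reversible sets in $P$, uniformly in $i$, using two disjoint palettes for even and odd $i$. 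This gives the first factor $2$. Since covering a set by reversible sets in the subposet induced on $Z_i\cup Z_{i+1}$ may not suffice in $P$, we instead work with the convex hull of this subposet. Concretely, we take $Q_i$ to be the subposet of $P$ induced by all elements lying on comparability chains between elements of $Z_i\cup Z_{i+1}$ within the layers up to $i+1$. We check that $Q_i$ still has a planar diagram inherited from $\calE$, and that strict alternating cycles in $P$ contained in $I_i := I\cap \Inc(Z_i\cup Z_{i+1})$ are already strict alternating cycles in $Q_i$.

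Next we exploit the \bfs\ structure. Each element of $Z_{i+1}$ is reached from $Z_i$ by a comparability. Following \cite{JMW17}, we fix a \bfs\ tree and contract the earlier layers $Z_0\cup\dots\cup Z_{i-1}$, which induce a connected part of the comparability graph, into a single \q{root}. This contraction is not literally possible in a poset. The planar surrogate is the following: the earlier layers lie in a region of the plane whose boundary witnesses that the elements of $Z_i$ are comparable to the previous layer either from below or from above. We split $I_i$ according to whether each of $a$ and $b$ lies above or below its \bfs\ parent. This yields at most four classes, and by symmetry (considering the dual poset) it suffices to handle the classes in which $b$ is above something attached to the root. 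We then add a new element $x_0$ placed in the exterior face and made to lie below all elements that are above the root side. Doing this while keeping a planar diagram requires cutting the diagram along a \bfs\ path and duplicating part of the boundary, the \q{cutting open} step. In each class, the pairs become singly constrained by $x_0$ in a new poset $Q$ with a planar diagram having $x_0$ on the exterior face. The class splitting and the need to cover $a$-sides and $b$-sides together contribute the remaining factor $2\cdot 6=12$. Here $6$ counts the combinations of directions (below/below, below/above, and so on) and left/right sides relative to the cutting path, with one case merged by the simple observation that improves $32$ to $24$.

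Finally, we run the $f$-good algorithm for \SCPDD on each resulting instance $(n,Q,J,x_0,\calE')$. If it returns a covering, we pull it back to reversible sets in $P$; reversibility transfers because every strict alternating cycle of $P$ in $J$ maps to one in $Q$. If it returns a Kelly subposet of $Q$ based on $\closure{J}$, we must show that it avoids $x_0$ and the duplicated elements, or can be rerouted to avoid them, so that it is a Kelly subposet of $P$ based on $\closure{I}$. This uses the fact that the paths through $x_0$ can be replaced by paths through the root region, keeping the comparabilities $a_i<u_i<b_{i-1}$ and the others. All steps (\bfs, cutting, and the combinatorial embedding via \cite{DiBattista88}) run in polynomial time.

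The main obstacle is the middle step: constructing $Q$ with a planar diagram, $x_0$ exterior, and $J$ singly constrained, in such a way that both reversibility and Kelly subposets transfer faithfully between $Q$ and $P$. We would first try to mimic the construction of~\cite{JMW17} verbatim, carefully tracking which comparabilities are created by the added element, and then count the cases to reach $24$.
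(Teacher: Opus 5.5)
\textbf{There is a genuine gap: the lemma is not proved.} The step you flag as \q{the main obstacle} is essentially the whole content of the lemma, and the steps around it have concrete problems.

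\emph{The use of \cref{prop:unfolding} is misstated.} It does not reduce the task to the sets $I\cap\Inc(Z_i\cup Z_{i+1})$. Pairs whose elements lie in non-consecutive layers belong to none of these sets, so they do not cover $I$. What the proposition gives is the splitting partition $\{I_{0,i}\}\cup\{I_{1,j}\}$, where, e.g., $I_{1,j}$ consists of pairs with $b\in Z_j$ and $a$ anywhere in $Z_{\ge j}$. You must pass to the suffix $Z_{\ge k}$ with all $b$'s in the single layer $Z_k$ (pairs \emph{supported} by the unfolding). It is exactly this structure that later produces a constraining element.

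\emph{The new element $x_0$ does not work as described.} You attach it below an upset after \q{cutting open} the diagram and duplicating boundary elements. Nothing guarantees that the minimal elements of that upset can all be joined to one new point by non-crossing vertically monotone edges. Once elements are duplicated, $Q$ is no longer a subposet of $P$, and comparabilities of $P$ can disappear in $Q$. Then an alternating cycle of $P$ in $J$ need not be one in $Q$, so a set reversible in $Q$ need not be reversible in $P$. Likewise, a Kelly subposet of $Q$ through $x_0$ or a duplicate need not yield one in $P$; \q{rerouting through the root region} is not an argument.

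\emph{The constant is not derived.} The factorization $24=2\cdot 2\cdot 6$ is asserted, not obtained from any argument.

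\textbf{The missing idea is to get singly constrained instances without adding any elements.}
\begin{itemize}
\item The paper runs three successive rounds of unfolding, each from the lowest element $y_s$ of the current component. This element is automatically minimal and on the exterior face, so no auxiliary root is needed. Each round keeps the supported pairs and passes to a component of $Z^s_{\ge k_s}$, at a cost of a factor $2$.
\item By pigeonhole, two rounds $\alpha<\beta$ are supported from the same side. One stops as soon as this happens, so only the six side-patterns $(0,0),(1,1),(0,1,0),(0,1,1),(1,0,0),(1,0,1)$ are needed instead of $2^3$.
\item After dualizing if necessary, the zig-zag trees rooted at $y_\alpha$ and $y_\beta$ both reach each relevant $m(b)\le b$ from below. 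The union $M(b)$ of the two tree paths is a path joining two complementary arcs of the exterior facial walk of $Q=P^*_\beta$.
\item Pairs with $b$ above one of two elements $x_1,x_2$ on the outer tree paths form $J_1,J_2$. These are handled in $Q$ plus $x_\varepsilon$, an existing element of $P$, via \cref{lem:planar-subposet}.
\item The remaining pairs split into $J^{+},J^{-}$ according to which side of the cut $M(b)$ contains $a$. Along any alternating cycle in $J^{\pm}$ the regions are nested. This forces the intervals $I(b_i)$, and hence the exterior vertices $v^\alpha(m(b_i))$, to coincide. Grouping by this vertex $v$ gives a splitting partition into instances singly constrained by $v$.
\end{itemize}
This gives $4f(n)$ reversible sets per side-pattern and $24=6\cdot 4$ in total. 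Every instance is an induced subposet of $P$ or $P^{-1}$ with pairs taken from $I$, so both coverings and Kelly subposets transfer back directly.
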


Accordingly, to prove \cref{thm:main}, it suffices to design an algorithm for \SCPDD\!\!.

\begin{restatable}{lemma}{lemproof}\label{lem:singly-constrained-proof}
    Let $f(n) = 4n+24$ for every positive integer $n$.
    There exists an $f$-good polynomial-time algorithm solving \SCPDD\!\!.
\end{restatable}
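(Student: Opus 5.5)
Our plan is to follow the strategy of Joret, Micek, and Wiechert for bounding dimension in terms of height in the singly constrained setting, replacing each use of long chains by an extraction of a Kelly subposet based on $\closure{I}$. Let $(n,P,I,x_0,\calE)$ be an input. Since $x_0$ lies on the exterior face and $x_0\le b$ for every $(a,b)\in I$, we fix for each $b$ with $(a,b)\in I$ a canonical witnessing path from $x_0$ to $b$ in the diagram: say the leftmost, or equivalently the one given by a fixed planar search tree rooted at $x_0$. Each incomparable pair $(a,b)\in I$ then determines a region of the plane bounded by this path together with the exterior face, and $a$ lies either to the left or to the right of the path, since $a\not\ge x_0$ would otherwise conflict with the path ordering. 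We accordingly split $I=I_{\OL}\cup I_{\OR}$ by side, and by symmetry (reflecting the embedding) we treat only $I_{\OL}$. This costs a factor $2$, so we aim for $2n+12$ reversible sets per side.

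Inside $I_{\OL}$ we use the standard planar tool: strict alternating cycles in $I_{\OL}$ are analysed via the \q{left/right} crossing structure of the witnessing paths. We would prove a lemma that any strict alternating cycle $((a_1,b_1),\dots,(a_k,b_k))$ in $I_{\OL}$ forces the witnessing paths $W(b_i)$ and the comparabilities $a_i\le b_{i+1}$ to nest in the plane, producing a sequence of nested regions. We then define, for each $(a,b)$, a \q{depth} parameter: the maximum length of a nested sequence of pairs of $I_{\OL}$ whose regions are ordered by inclusion ending at $(a,b)$. This is computable in polynomial time by dynamic programming over the regions. We then show two things.

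The first is that pairs with the same depth, together with a bounded number of further splits (by parity and by a constant number of local configurations of how $a$ reaches the path), form reversible sets; this contributes the additive constant $24$ after doubling. The second is that if the depth reaches roughly $n$, then a nested sequence of $n$ pairs yields a Kelly subposet. Here $a_i$ and $b_i$ come from the nested pairs. The elements $v_i$ are chosen on the witnessing paths, which are comparable to the later $b$'s and supply the chain $a_1\le v_2\le\dots\le b_n$. The elements $u_i$ come from paths realizing $a_{i}\le b_{i-1}$ that must cross the previous region. Because the pairs used may be $(a_i,b_j)$ combinations of members of different pairs of $I$, the Kelly subposet is only based on $\closure{I}$, which is exactly why the problem is stated with $\closure{I}$.

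The main obstacle is the second step: guaranteeing incomparabilities such as $a_i\not\le b_i$ and the \emph{absence} of extra comparabilities among the chosen $u_i,v_i$ and the standard example, so that the subposet is genuinely a Kelly poset rather than merely containing its comparabilities. We would attempt this by choosing the nested sequence extremally (innermost witnesses, leftmost paths) and using planarity to argue that any extra comparability would yield a path crossing a boundary it cannot cross, contradicting either nesting or incomparability of a pair in $I$. When extremality fails, we discard a constant fraction of the sequence; this is where the factor $4$ in $4n$ (rather than $n$) arises.
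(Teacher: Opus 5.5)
There is a genuine gap in the dichotomy that is supposed to bound the number of sets. You claim that once the depth reaches about $n$, the nested sequence itself yields a Kelly subposet, with both chains $u_i$ and $v_i$ read off from it. Only one chain comes from nesting.

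In the paper's notation, let $\pi(\cdot)$ denote the point where a path attaches to $W_R(b'')$. A directed path in $H'_R$ with cycle edges $((a_j,b_j),(c_j,d_j))$ gives regions $\calS_1\subset\calR_1\subset\calS_2\subset\cdots$ built on a common $b''$. This forces $\pi(d_{i+1})<\pi(c_{i+1})\le\pi(d_i)$ along $W_R(b'')$, and hence $c_i<\pi(c_i)\le\pi(d_j)\le d_j$ for all $i>j$. Nothing in the nesting gives $c_i<d_j$ for $i<j$, and the pattern ``$c_i<d_j$ iff $i>j$'' contains no standard example at all.

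So depth need not be bounded by any function of $n$; the paper explicitly has no control over the number of cycle edges on a directed path in $H'_R$. Covering by depth classes, even if each class is reversible, can therefore use unboundedly many sets, and your algorithm is not $f$-good. Your declared main obstacle is also misplaced. A Kelly subposet based on $\closure{I}$ only has to satisfy the listed comparabilities with $(c_i,d_i)\in\closure{I}$, so extra comparabilities are harmless. The real difficulty is forcing the second chain.

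The missing idea is to take depth modulo a quantity linear in $n$, and to extract the second chain from an edge that jumps many levels. The paper sets $\ell_R(a,b)$ to be the maximum number of cycle edges on a directed path from $(a,b)$ in the acyclic, transitive region-containment graph $H'_R$. It then shows three things:
\begin{enumerate}
\item every strict alternating cycle in $I_R$ contains an edge of $H_R$ (Proposition~\ref{prop:cycle-gives-an-edge}; this is where pairs such as $(a_k,b_2)$, lying in $\closure{I}$ but possibly not in $I$, are needed);
\item $E(H_R)\subseteq E(H'_R)$;
\item $\ell_R$ drops by at least one along every $H_R$-edge.
\end{enumerate}
The $2n+7$ classes of $\ell_R \bmod (2n+7)$ are then reversible unless some $H_R$-edge $((a,b),(c,d))$, witnessed by $f$, drops $\ell_R$ by at least $2n+7$. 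In that case $a$ lies in the innermost region $\calR_1$. Either $f$ or the path $W_R(c,z'(c,d))$ must leave about $n$ consecutive regions, since otherwise $((c,d),(a_{2n+6},b_{2n+6}))$ would be an edge of $H'_R$, contradicting the gap. A witnessing path escaping that many regions crosses $W_R(c_{j+i},z'(c_{j+i},d_{j+i}))$ and $W_R(a_{j+i},d_{j+i})$ in turn (Proposition~\ref{prop:two-regions}), and these crossing points form the second chain.

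A secondary gap is your opening split: a single canonical path from $x_0$ does not produce a usable left/right dichotomy. The paper first discards the reversible sets $J_1,\dots,J_8$ before Proposition~\ref{claim:diagram:left_right_parititon} applies. These handle, for example, elements $a$ enclosed between $W_L(b)$ and $W_R(b)$, and bottom elements whose upward paths reach $B$ on both sides of $x_0$. It then discards $J_L$ and $J_R$, so that the four-path regions $\calR(a,b',b'')$ with $b''\in B''_*(a,b)$ exist. This gives the count $10+2(2n+7)=4n+24$.
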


We now discuss the algorithm witnessing \cref{lem:singly-constrained-proof} and all the necessary notions.
Let $(n,P,I,x_0,\calE)$ be an input tuple.
For every \((a, b) \in I\) we have \(x_0 \leq b\) in $P$.
Thus, for each $(a,b)\in I$, \(a \not \le x_0\) in $P$ 
as otherwise $a\leq x_0 \leq b$ in $P$.
Therefore, after removing all elements \(x\) with \(x < x_0\) in \(P\), we obtain
a convex subposet of $P$ which still satisfies all the requirements of the input.
Hence, without loss of generality, we may assume that $x_0$ is a minimal element of $P$.
By the discussion in \cref{ssec:topology}, in particular \Cref{thm:encoding-diagrams}, we may assume that we are given a diagram of $P$ encoded with number of bits polynomial in the size of $P$ such that every element of the poset is on a different vertical coordinate and $x_0$ is in the exterior face.
Note that the plane is always considered with the standard coordinate system, giving the notions of \defin{higher}, \defin{lower}, \defin{left of}, or \defin{right of}.
Let 
\[\defmath{B}=\{b \text{ in } P : x_0 \leq b \text{ in } P\} \ \text{ and } \ \defmath{A}=\{a \text{ in } P : a \notin B\}.\]
Thus, for every $(a,b) \in I$, we have $a \in A$ and $b \in B$.

In the first step, the algorithm identifies four \q{easy} reversible subsets of $\closure{I}$, see \cref{ssec:four-easy-sets}.
These sets are defined within a general framework illustrated by the following statement.

\begin{proposition}
\label{prop:reversible-from-order}
  %Let \(P\) be a poset, let $B$ be a subset of elements of $P$, and let $I \subset \Inc(P)$ such that for every $(a,b) \in I$, we have $b \in B$.  
  Let \(\preccurlyeq\) be a partial order on \(B\) such that 
  for every strict alternating cycle $((a_1,b_1),\ldots,(a_k,b_k))$ in $P$ contained in $I$ and for all $i,j\in[k]$ with $i\neq j$ we have
  either \(b_i \prec b_j\) or \(b_j \prec b_i\). Then, the set
  \[
    \ext(I,\preccurlyeq) = \{(a, b) \in I:
    \textrm{there is no \(b'\in B\)
    such that \(a \leq b'\) in $P$ and \(b' \prec b\)}\}
  \]
  is reversible in $P$.
\end{proposition}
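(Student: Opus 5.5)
Assume, for contradiction, that $\ext(I,\preccurlyeq)$ contains a strict alternating cycle $((a_1,b_1),\dots,(a_k,b_k))$ in $P$. We will derive a violation of the defining condition of $\ext(I,\preccurlyeq)$ for one of its pairs.

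First, since $\ext(I,\preccurlyeq)\subseteq I$, this is also a strict alternating cycle in $P$ contained in $I$. By hypothesis, for all distinct $i,j\in[k]$ the elements $b_i$ and $b_j$ are comparable in $\preccurlyeq$. In particular they are distinct, because $\prec$ is strict, so $\{b_1,\dots,b_k\}$ is a chain of $k$ distinct elements under $\preccurlyeq$.

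Next, pick the index $j$ such that $b_j$ is the $\preccurlyeq$-minimum of $\{b_1,\dots,b_k\}$. Let $i=j-1$, taken cyclically, so that $j=i+1$. The pairs are indexed cyclically and $k\ge 2$, so $i\neq j$, and hence $b_j\prec b_i$.

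The defining property of a strict alternating cycle gives $a_i\le b_{i+1}=b_j$ in $P$. Moreover, $b_j\in B$: the order $\preccurlyeq$ is defined on $B$, and the hypothesis compares the $b$'s of cycles in $I$, so these lie in $B$. Therefore $b'=b_j$ is an element of $B$ with $a_i\le b'$ in $P$ and $b'\prec b_i$. This is exactly what the definition of $\ext(I,\preccurlyeq)$ forbids for the pair $(a_i,b_i)$, contradicting $(a_i,b_i)\in\ext(I,\preccurlyeq)$. Hence $\ext(I,\preccurlyeq)$ contains no strict alternating cycle, i.e., it is reversible.

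The main point of care is the bookkeeping. We must use the pair $(a_{j-1},b_{j-1})$ that points into the minimal $b_j$, rather than the pair $(a_j,b_j)$ itself. We must also remember that $b_{j-1}\neq b_j$, which follows from the strict comparability hypothesis. No deeper obstacle is expected.
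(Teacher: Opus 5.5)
Your proposal is correct and follows essentially the paper's own argument. The paper also takes the $\preccurlyeq$-minimum $b_k$ among the $b_i$ and observes that $a_{k-1}\le b_k$ with $b_k\prec b_{k-1}$, so $(a_{k-1},b_{k-1})\notin\ext(I,\preccurlyeq)$. Reversibility then follows from the equivalence, via \cref{prop:dim-alternating-cycles}, between being reversible and containing no strict alternating cycle.
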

\begin{proof}
    Suppose to the contrary that there is a strict alternating cycle \(((a_1, b_1), \ldots, (a_k, b_k))\) in $P$ with all the pairs in \(\ext(I,\preceq)\).
    Thus the elements in \(\set{b_1, \ldots, b_k}\) are linearly ordered by \(\preccurlyeq\).
    Without loss of generality, we assume that \(b_k \prec b_i\) for each \(i \in [k-1]\).
    In particular, we have \(b_{k} \prec b_{k-1}\) and \(a_{k-1} \le b_k\) in $P$,
    thus, \((a_{k-1}, b_{k-1}) \not \in \ext(I, \preceq)\), which is a contradiction.
\end{proof}

Let \defin{\(\preccurlyeq_{\uparrow}\)} be an ordering of the points in the plane such that for all \(q_1\) and \(q_2\) in $P$, if $q_1$ is not higher than $q_2$, then \(q_1 \preccurlyeq_{\uparrow} q_2\) and if $q_1$ is lower than $q_2$, then \(q_1 \prec_{\uparrow} q_2\). 
Note that $\preccurlyeq_\uparrow$ restricted to the elements of $P$ is a linear order since the elements in the fixed diagram have pairwise distinct vertical coordinates.
By \cref{prop:reversible-from-order}, the following sets are reversible in $P$:
\begin{align*}
    \defmath{J_1} &= \{(a,b) \in \closure{I} : \text{there is no $q' \in B$ with $a \leq q'$ in $P$ and $q' \prec_\uparrow b$}\},\\
    \defmath{J_2} &= \{(a,b) \in \closure{I} : \text{there is no $q'' \in B$ with $a \leq q''$ in $P$ and $b \prec_\uparrow q''$}\}.
\end{align*}

Let $\gamma_1$ and $\gamma_2$ be vertically monotone curves in the plane.
We say that \(\gamma_1\) is \defin{left} of \(\gamma_2\) if
\begin{enumerate}
    \item there exists a horizontal line $\ell$ intersecting both $\gamma_1$ and $\gamma_2$ such that the intersection of $\gamma_1$ with $\ell$ is left of the intersection of $\gamma_2$ with $\ell$, and
    \item for each horizontal line $\ell$ intersecting both $\gamma_1$ and $\gamma_2$, either $\gamma_1$ and $\gamma_2$ intersect $\ell$ at the same point or the intersection of $\gamma_1$ with $\ell$ is left of the intersection of $\gamma_2$ with $\ell$. 
\end{enumerate}
When \(\gamma_1\) is left of \(\gamma_2\), we also say that
\(\gamma_2\) is \defin{right} of \(\gamma_1\). 
See \cref{fig:curves-order}. 
Since we work with a fixed planar diagram of $P$, elements of $P$ are identified with points in the plane and edges of the cover graph of $P$ are identified with vertically monotone curves.
Points are also considered to be (degenerate) vertically monotone curves.
A \defin{witnessing path} in $P$ is a path in the cover graph of $P$ of ascending elements in $P$.
In particular, we also identify witnessing paths with vertically monotone curves.

Given two elements $u$ and $v$ in $P$, we may consider all the witnessing paths from $u$ to $v$ in $P$.
It is relatively simple to check (see \cref{claim:diagram:extreme_paths_exist}) that there exist \defin{leftmost} and \defin{rightmost} such paths, denoted by $\defmath{W_L(u,v)}$ and $\defmath{W_R(u,v)}$, respectively.
Namely, all witnessing paths from $u$ to $v$ in $P$ are either right of or equal to $W_L(u,v)$, and either left of or equal to $W_R(u,v)$.
For each $b \in B$, we abbreviate $\defmath{W_L(b)} = W_L(x_0,b)$ and $\defmath{W_R(b)} = W_R(x_0,b)$.
Let \defin{$\preccurlyeq_L$} and \defin{$\preccurlyeq_R$} be two orderings on $B$ such that for all $b_1,b_2 \in B$, we have $b_1 \preccurlyeq_L b_2$ if either $b_1 = b_2$ or $W_L(b_1)$ is left of $W_L(b_2)$, and $b_1 \preccurlyeq_R b_2$ if either $b_1 = b_2$ or $W_R(b_1)$ is left of $W_R(b_2)$.
We prove (\cref{claim:diagram:L_and_R_are_posets,claim:diagram:incomparable_b_are_comparable}) that $\preccurlyeq_L$ and $\preccurlyeq_R$ are partial orderings of $B$ such that two incomparable elements in $P$ are comparable in each of the orderings.
Since for a strict alternating cycle $((a_1,b_1),\dots,(a_k,b_k))$ in $P$, the elements $b_1,\dots,b_k$ form an antichain in $P$, we will be able to apply \cref{prop:reversible-from-order} to these orderings.
Along with a simple technical statement (\cref{prop:one-side-implies-the-other}), we get that the following sets are reversible in $P$:
\begin{align*}
    \defmath{J_3} &= \{(a,b) \in \closure{I} \setminus (J_1 \cup J_2) : \text{there is no $b' \in B$ with $a \leq b'$ in $P$, $b' \prec_L b$, and $b' \prec_R b$}\},\\
    \defmath{J_4} &= \{(a,b) \in \closure{I} \setminus (J_1 \cup J_2) : \text{there is no $b'' \in B$ with $a \leq b''$ in $P$, $b \prec_L b''$, and $b \prec_R b''$}\}.
\end{align*}

The next step of the algorithm is finding four reversible sets covering all the pairs $(a,b) \in \closure{I} \setminus (J_1 \cup J_2 \cup J_3 \cup J_4)$ where $a$ is a \q{bottom element}, see \cref{ssec:bottom}.
An element $a\in A$ is a \defin{bottom element} if there exists $b^*,b^{**} \in B$ and witnessing paths $W^*$ from $a$ to $b^*$ and $W^{**}$ from $a$ to $b^{**}$ such that $W^*$ is left of $x_0$ and $W^{**}$ is right of $x_0$.
See \cref{fig:bottom-element}.

Since $x_0$ is a minimal element of $P$, every edge of the cover graph of $P$ incident with \(x_0\) has its lowest point in \(x_0\).  
Let \(e_1, \ldots, e_t\) be the edges of the cover graph of \(P\) incident with \(x_0\) listed from left to right.
The vertex \(x_0\) is on the boundary
of the exterior face, so for some \(i \in [t]\) the edges \(e_{i-1}\) and \(e_i\) are consecutive edges in the facial walk along the exterior face (the indices are interpreted
cyclically, so \(e_0 = e_t\)).
We choose a non-trivial vertically monotone curve \defin{$\gamma_\infty$} in the exterior face of $P$ which has its lowest point in \(x_0\) and is otherwise disjoint from the diagram.
% The curve \(\gamma_\infty\) may be left of \(e_1\) or right of \(e_t\) or 
% between \(e_{i-1}\) and \(e_i\) for some \(i \in [t]\).
% In particular, $\gamma_\infty$ is consistent with every witnessing path from $x_0$.
% By \cref{prop:diagram:for_consistent_one_distinction_is_enough}, this yields that $\gamma_\infty$ is either left or right of every non-trivial witnessing path from $x_0$.

Let \defin{$B_L$} be the set of all $b \in B$ such that there is a witnessing path from $x_0$ to $b$ that is left of $\gamma_\infty$, and let \defin{$B_R$} be the set of all $b \in B$ such that there is a witnessing path from $x_0$ to $b$ that is right of $\gamma_\infty$.
By \cref{claim:diagram:e_infty}, $\set{B_L,B_R}$ is a partition of $B$.

Let \defin{\(\preccurlyeq_L^{\infty}\)} be a relation on $B$ defined as follows: 
for all $b_1,b_2 \in B$, 
$b_1 \preccurlyeq_L^{\infty} b_2$ if 
($b_1\in B_R$ and $b_2\in B_L$) or
($b_1,b_2 \in B_R$ and $b_1\preceq_L b_2$) or
($b_1,b_2 \in B_L$ and $b_1\preceq_L b_2$).
Note that \(\preccurlyeq_L^{\infty}\) partially orders $B$.
Moreover, for all $b_1,b_2 \in B$ if $b_1$ and $b_2$ are comparable in $\preccurlyeq_L$, then they are comparable in $\preccurlyeq_L^\infty$.
In particular, by \cref{claim:diagram:incomparable_b_are_comparable},
for all $b_1,b_2\in B$ with $b_1\parallel b_2$ in $P$, $b_1$ and $b_2$ are comparable in \(\preccurlyeq_L^{\infty}\). 
Symmetrically, 
let \defin{\(\preccurlyeq_R^{\infty}\)} be a relation on $B$ defined as follows: 
for all $b_1,b_2 \in B$, 
$b_1 \preccurlyeq_R^{\infty} b_2$ if 
($b_1\in B_R$ and $b_2\in B_L$) or
($b_1,b_2 \in B_R$ and $b_1\preceq_R b_2$) or
($b_1,b_2 \in B_L$ and $b_1\preceq_R b_2$).
Again, 
note that \(\preccurlyeq_R^{\infty}\) partially orders $B$ and 
for all $b_1,b_2\in B$ with $b_1\parallel b_2$ in $P$, $b_1$ and $b_2$ are comparable in \(\preccurlyeq_R^{\infty}\). 
We define
\begin{align*}
    \defmath{J_5} &= \set{(a,b) \in \closure{I} \setminus (J_1 \cup \dots \cup J_4) : \text{there is no $b' \in B$ such that $a \leq b'$ in $P$ and $b' \prec_L^\infty b$}},\\
    \defmath{J_6} &= \set{(a,b) \in \closure{I} \setminus (J_1 \cup \dots \cup J_4) : \text{there is no $b'' \in B$ such that $a \leq b''$ in $P$ and $b \prec_R^\infty b''$}},\\
    \defmath{J_7} &= \set{(a,b) \in \closure{I} \setminus (J_1 \cup \dots \cup J_6) : \text{$a$ is a bottom element and } b \in B_L},\\
    \defmath{J_8} &= \set{(a,b) \in  \closure{I} \setminus (J_1 \cup \dots \cup J_6) : \text{$a$ is a bottom element and } b \in B_R}.
\end{align*}
By \cref{prop:reversible-from-order}, $J_5$ and $J_6$ are reversible.
With a less trivial argument we also prove that $J_7$ and $J_8$ are reversible, see \cref{claim:diagram:dim_I_7_leq_2}.
Note that for $(a,b) \in \closure{I} \setminus (J_1\cup \dots \cup J_8)$, $a$ is not a bottom element.
From now on, we work with the set
    \[\defmath{I'} = \set{(a,b) \in  \closure{I} \setminus (J_1 \cup \dots \cup J_4) : \text{$a$ is not a bottom element}}.\]
As discussed, $I'\subseteq \closure{I}\setminus(J_1\cup\cdots\cup J_8)$. 

In the next step, the algorithm splits the remaining pairs into two groups: left and right.
Each group is processed separately.
One more reversible set is going to be removed from  each group.
See \cref{ssec:left-right}.

For each $(a,b) \in I'$, we say that $(a,b)$ is a \defin{left pair} if for every $d \in B$ with $a \leq d$ in $P$, all witnessing paths from $a$ to $d$ are left of $W_L(b)$.
Symmetrically, we say that $(a,b)$ is a \defin{right pair} if for every $d \in B$ with $a \leq d$ in $P$, all witnessing paths from $a$ to $d$ are right of $W_R(b)$.
\begin{align*}
    \defmath{I_L'} = \{(a,b) \in I' : \text{$(a,b)$ is a left pair}\} \ \text{ and } \ \defmath{I_R'} = \{(a,b) \in I' : \text{$(a,b)$ is a right pair}\}.
\end{align*}
We prove that $I_L'$ and $I_R'$ form a partition of $I'$, see \cref{claim:diagram:left_right_parititon}.
For every pair $(a,b) \in \Inc(P)$, we define:
\begin{align*}
    \defmath{B'(a,b)} &= \{ b' \in B : \text{$a \leq b'$ in $P$, $b' \prec_L b$, and $b' \prec_R b$}\}, \\
    \defmath{B''(a,b)} &= \{ b'' \in B : \text{$a \leq b''$ in $P$, $b \prec_L b''$, and $b \prec_R b''$}\},
\end{align*}
and
\begin{align*}
    \defmath{B_*'(a,b)} &= \{ b' \in B'(a,b) : \text{$W_L(a,b')$ is left of $W_L(b')$}\}, \\
    \defmath{B_*''(a,b)} &= \{ b'' \in B''(a,b) : \text{$W_R(a,b'')$ is right of $W_R(b'')$}\}.
\end{align*}
We prove that the following two sets are reversible, see \cref{clm:IRempty}
\begin{align*}
    \defmath{J_L} = \{(a,b) \in I_L' : B'_*(a,b) = \emptyset\} \ \text{ and } \ \defmath{J_R} = \{(a,b) \in I_R' : B''_*(a,b) = \emptyset\}.
\end{align*}
Accordingly, we define two sets that still need to be covered by reversible sets
\begin{align*}
    \defmath{I_L} = \{ (a,b) \in I_L' : B_*'(a,b) \neq \emptyset\} \ \text{ and } \ \defmath{I_R} = \{ (a,b) \in I_R' : B_*''(a,b) \neq \emptyset\}.
\end{align*}

Now, we are going to  study certain regions associated to pairs in $I_L \cup I_R$ and elements in $P$.
By symmetry, we study only \q{right} regions, see \cref{ssec:regions}.
Let $(a,b) \in I_R$, $b' \in B'(a,b)$, and $b'' \in B''_*(a,b)$.
The paths $W_R(b')$, $W_R(b'')$, $W_R(a,b'')$, and $W_R(a,b')$ give the region in a natural way, see \cref{fig:regionR} and see \cref{prop:tuple-is-valid} for a detailed definition and study of these regions, and in particular, the proof that they must \q{look like} in the figure.
We denote such a region as \defin{$\calR_R(a,b',b'')$} and the symmetric one for a pair $(a,b) \in I_L$, $b' \in B'_*(a,b)$, and $b'' \in B''(a,b)$ as \defin{$\calR_L(a,b',b'')$}.

    \begin{figure}
        \centering
        \includegraphics{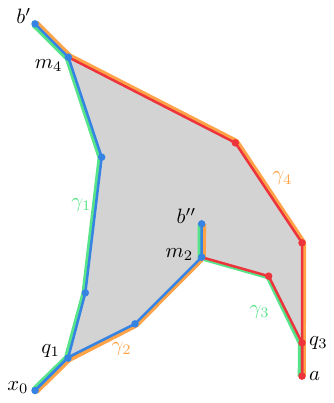}
        \caption{Illustration of a region given by elements $a$, $b'$, and $b''$ satisfying certain conditions.
        Important curves and points are named as in~\Cref{prop:tuple-is-valid}. 
        The region $\calR(a,b',b'')$ is the gray area.
        %The closed curve \(\gamma\) is the boundary of the gray area. 
        % \jedrzej{It's nice, I would just consider making it larger.} 
        % \piotr{Perhaps, we could add labels for $\gamma_1,\gamma_2,\gamma_3,\gamma_4$.}
        % \michal{Iterated on the figure.} \jedrzej{I like it. A very small comment that you can ignore is that we assume elements to have distinct $y$-coordinates.}\michal{Perturbed the elements.}
        } 
        \label{fig:regionR}
    \end{figure}

Next, for $(a,b) \in I_R$ and $b'' \in B''_*(a,b)$ we find $b' \in B'(a,b)$ such that the region is the smallest possible.
We set \defin{$z'(a,b)$} to be one of the rightmost (according to $\prec_R$) elements of $B'(a,b)$ with the lowest possible vertical coordinate.
See a more precise definition in \cref{ssec:smallest-region} and see \cref{fig:smallest-region}.
% Let $B_R'(a,b)$ be the set of all the maximal elements with respect to $\prec_R$ of the elements in $B'(a,b)$.
% It follows that $B_R'(a,b)$ is an antichain with respect to $\prec_R$.
% In other words for all $d,d' \in B_R'(a,b)$, either $W_R(d)$ is a subpath of $W_R(d')$ or $W_R(d')$ is a subpath of $W_R(d)$.
% In particular, the elements of $B_R'(a,b)$ can be labeled $b_1',\dots,b_m'$ so that $W_R(b_i')$ is a subpath of $W_R(b_{i+1}')$ for every $i \in [m-1]$.
% We set \defin{$z'(a,b)$} to be $b_1'$, i.e.\ the lowest element of $B_R'(a,b)$ in $P$.
We prove that $\calR_R(a,z'(a,b),b'')$ is indeed the smallest region in \cref{prop:smallest-region}.
The symmetric object is denoted by \defin{$z''(a,b)$}.

In \cref{ssec:regions,ssec:smallest-region,ssec:sacs} we derive many useful properties of regions and strict alternating cycles contained in $I_R$ or $I_L$.

In the last part (\cref{ssec:auxiliary}), we define four auxiliary oriented graphs on the sets $I_L$ and $I_R$.
First, we discuss $I_R$.
Let \defin{$H_R$} be an oriented graph with the vertex set $I_R$ such that $(a,b) \in I_R$ is connected by an edge oriented towards $(c,d) \in I_R$ if there exists $f \in B$ such that
\begin{enumerateNumHR}
    \item $d \preccurlyeq_R f \prec_R b$, 
    \item $(c,f) \in I_R$,
    \item $((a,b),(c,f))$ is a strict alternating cycle in $P$.
\end{enumerateNumHR}
See \Cref{fig:auxiliaryH}. 
% Note that item~\ref{item:def-HR-acyclic} guarantees that $H_R$ is acyclic. \jedrzej{It's not important}
The crucial property of $H_R$ is that every strict alternating cycle in $P$ contained in $I_R$ has two pairs connected by an edge in $H_R$, see \cref{cor:sacs}. 
Therefore, every proper coloring of vertices of $H_R$ into $\chi$ colors, gives a partition of $I_R$ into $\chi$ reversible sets in $P$.

Let \defin{$H_R'$} be an oriented graph with the vertex set $I_R$ such that $(a,b) \in I_R$ is connected by an edge oriented towards $(c,d) \in I_R$ if 
\begin{enumerateNumHRprim}
    \item $d \prec_R b$,
    \item $B_*''(a,b) \subset B_*''(c,d)$,
    \label{item:HR'-def-containment}
    \item for every $b'' \in B_*''(a,b)$, we have $\calR_R(a,z'(a,b),b'') \subset \calR_R(c,z'(c,d),b'')$.
\end{enumerateNumHRprim}
We say that an edge of $H'_R$ is a \defin{cycle edge} whenever it is a strict alternating cycle in $P$.
The first condition gives that $H'_R$ is acyclic.
Therefore, the following value for every $(a,b) \in I_R$ can be easily computed
\[\defmath{\ell_R(a,b)} = \text{the maximum number of cycle edges in a directed path in $H_R'$ starting in $(a,b)$.}\]
We prove that for every edge $((a,b),(c,d))$ in $H_R$, we have $\ell(a,b) \geq \ell(c,d) + 1$, see \cref{prop:ell-plus-1}.
This gives us an easy way to partition vertices of $H_R$ into independent sets:
\[
\defmath{J_{R,\alpha}} = \{(a,b) \in I_R : \ell_R(a,b) =\alpha\}\quad\textrm{for each $\alpha\in\mathbb{N}$}.
\]
As we discussed, by \cref{cor:sacs}, this gives a partition of $I_R$ into reversible sets in $P$.
This would complete the proof except one detail, 
we do not control the number of reversible sets in this partition; or, in other words, 
we do not have a bound on the number of cycle edges on a single path in $H_R'$. 
To circumvent this obstacle, we merge $J_{R,\alpha}$'s into larger sets. 
Our candidates for reversible sets partitioning $I_R$ are 
\[\defmath{I_{R,\alpha}} = \{(a,b) \in I_R : \ell_R(a,b) \equiv \alpha \bmod (2n+7) \}.\]
This time, we have at most $2n+7$ sets in the partition. 
It is not clear though if $\set{I_{R,\alpha}: \alpha\in[2n+7]}$ partition vertices of $H_R$ into independent sets. 
Recall that for each edge $((a,b),(c,d))$ in $H_R$, we know that $\ell(a,b)\geq \ell(c,d)+1$. 
Therefore, if one of the $I_{R,\alpha}$ contains an edge $((a,b),(c,d))$ of $H_R$, 
it must be that $\ell(a,b)\geq\ell(c,d)+2n+7$.
We conclude that if for each edge $((a,b),(c,d))$ of $H_R$, we have $\ell(a,b)<\ell(c,d)+2n+7$, then 
$\set{I_{R,\alpha} : \alpha \in [2n+7]}$ is a partition of $I_R$ into independent sets in $H_R$, and therefore, 
by \cref{cor:sacs}, $\set{I_{R,\alpha} : \alpha \in [2n+7]}$ is a partition of $I_R$ into reversible sets in $P$, as desired, see \cref{prop:IRalpha-reversible}.

It remains to deal with the case when there is an edge $((a,b),(c,d))$ of $H_R$ with $\ell_R(a,b)\geq\ell_R(c,d)+2n+7$.
The main technical part of the proof is showing that in this setting we can find a Kelly subposet of $P$ of order $n$ based on $I_R$.
This is proved in \Cref{lemma:shortcuts}.

The idea of the proof of \cref{lemma:shortcuts} is the following.
We fix an edge $((a,b),(c,d))$ of $H_R$ with $\ell_R(a,b)\geq\ell_R(c,d)+2n+7$. 
We fix a path in $H'_R$ starting in $(a,b)$ containing $\ell_R(a,b)$ cycle edges.
By the transitivity of $H'_R$ (\Cref{obs:transitive}), we may assume that this path is of the form 
\[(a,b)(a_1,b_1)(c_1,d_1)(a_2,b_2)(c_2,d_2)\dots(a_m,b_m)(c_m,d_m),\]
    where $((a_j,b_j),(c_j,d_j))$ is a cycle edge in $H'$, and either $(c_{j-1},d_{j-1}) = (a_j,b_j)$ or $((c_{j-1},d_{j-1}),(a_{j},b_{j}))$ is an edge in $H'$ 
     for each $j$, 
     (where $(c_0,d_0) = (a,b)$).
Then, using \cref{prop:sac-region-containment}, we construct many regions nested in each other.
These regions are \q{based} on some fixed element $b'' \in B_*''(a,b)\subseteq B_*''(a_j,b_j)\subseteq B_*''(c_j,d_j)$ for each $j$ (the containment holds by~\ref{item:HR'-def-containment}), and this gives the \q{left arm} of the sought Kelly poset.
The other \q{arm} is found by showing that a large difference of the value of $\ell_R$ between $(a,b)$ and $(c,d)$ forces some elements to \q{escape} the inner regions, giving the required comparabilities.
This is illustrated in \cref{fig:outline}.

The symmetric objects and statements can be obtained by mirroring the diagram.
However, for completeness of the algorithm, we restate the definitions for $I_L$.
Let \defin{$H_L$} be an oriented graph with the vertex set $I_L$ such that $(a,b) \in I_L$ is connected by an edge oriented towards $(c,d) \in I_L$ if there exists $f \in B$ such that
\begin{enumerateNumHL}
    \item $b \prec_L f \preccurlyeq_L d$,
    \item $(c,f) \in I_L$,
    \item $((a,b),(c,f))$ is a strict alternating cycle in $P$.
\end{enumerateNumHL}
Let \defin{$H_L'$} be an oriented graph with the vertex set $I_L$ such that $(a,b) \in I_L$ is connected by an edge oriented towards $(c,d) \in I_L$ if 
\begin{enumerateNumHLprim}
    \item $b \prec_L d$,
    \item $B_*'(a,b) \subset B_*'(c,d)$,
    \item for every $b' \in B_*'(a,b)$, we have $\calR_L(a,b',z''(a,b)) \subset \calR_L(c,b',z''(c,d))$.
\end{enumerateNumHLprim}
We say that an edge of $H'_L$ is a \defin{cycle edge} whenever it is a strict alternating cycle in $P$.
For every $(a,b) \in I_L$, we set
\[\defmath{\ell_L(a,b)} = \text{the maximum number of cycle edges in a directed path in $H_L'$ starting in $(a,b)$.}\]

We wrap up this outline with a concise description of our algorithm solving \SCPDD and witnessing \cref{lem:singly-constrained-proof}: \cref{algo}.

\begin{algorithm*}
        \begin{algorithmic}[1]
            \State $\backslash\backslash$ \Cref{lemma:shortcuts} and its symmetric version give the following two procedures
            \Function{FindKellyR}{$n,P,x_0,\calE,I_R,H'_R,(a,b),(c,d)$}
            \State $\backslash\backslash$ with a premise that $((a,b),(c,d)) \in E(H_R)$ and $\ell_R(a,b) \geq \ell_R(c,d) + (2n+7)$
                \State \Return the Kelly subposet of $P$ of order $n$ based on $I_R$
            \EndFunction
            \Function{FindKellyL}{$n,P,x_0,\calE,I_L,H_L',(a,b),(c,d)$}
            \State $\backslash\backslash$ with a premise that $((a,b),(c,d)) \in E(H_L)$ and $\ell_L(a,b) \geq \ell_L(c,d) + (2n+7)$
                \State \Return the Kelly subposet of $P$ of order $n$ based on $I_L$
            \EndFunction
            \State
            \Function{Solve}{$n,P,I,x_0,\calE$}
                \State compute $J_1$, $J_2$, $J_3$, $J_4$, $J_5$, $J_6$, $J_7$, $J_8$, $J_L$, $J_R$
                \State compute $I_L$ and $I_R$
                \State compute $H_L$, $H_L'$, $\ell_L$, $H_R$, $H_R'$, $\ell_R$
                \If {there exists $((a,b),(c,d))\in E(H_R)$ with $\ell_R(a,b) \geq \ell_R(c,d) + (2n+7)$ }
                \State $\backslash\backslash$ in this case, $\kelly_P(I_R) \geq n$
                \State \Return \textsc{FindKellyR}$(n,P,x_0,\calE,I_R,H'_R,(a,b),(c,d))$
                \ElsIf {there exists $((a,b),(c,d))\in E(H_L)$ with $\ell_L(a,b) \geq \ell_L(c,d) + (2n+7)$ }
                \State $\backslash\backslash$ in this case, $\kelly_P(I_L) \geq n$
                \State \Return \textsc{FindKellyL}$(n,P,x_0,\calE,I_L,H'_L,(a,b),(c,d))$
                \Else 
                \State $\backslash\backslash$ in this case, $\dim_P(I) \leq 4n+24$
                \State \Return $\{J_1,J_2,J_3,J_4,J_5,J_6,J_7,J_8,J_L,J_R\}$
                \State \ \ \ $\cup \{\{(a,b) \in I_{R} : \ell_R(a,b) \equiv \alpha \bmod (2n+7)\} : \alpha \in [2n+7]\} $
                \State \ \ \ $\cup \{\{(a,b) \in I_{L} : \ell_L(a,b) \equiv \alpha \bmod (2n+7)\} : \alpha \in [2n+7]\}$
                \EndIf 
            \EndFunction
        \end{algorithmic}
        \caption{}
        \label{algo}
    \end{algorithm*}

All the objects that the algorithm must compute can be easily found in polynomial time, and all the conditions that the algorithm must check can be easily verified in polynomial time.
To show the correctness of the algorithm, we must prove that the sets $J_1$, $J_2$, $J_3$, $J_4$, $J_5$, $J_6$, $J_7$, $J_8$, $J_L$, and $J_R$ are reversible (\cref{cor:J1J2,cor:J3J4,claim:diagram:dim_I_7_leq_2,clm:IRempty}), $I_L$ and $I_R$ are indeed the only remaining pairs (\cref{claim:diagram:left_right_parititon}), we must give the procedures \textsc{FindKellyR} and \textsc{FindKellyL} (\cref{lemma:shortcuts}), and we must prove that if the algorithm reaches \q{else} statements, then the sets $I_{R,\alpha}$ and $I_{L,\alpha}$ for each $\alpha \in [2n+7]$ are indeed reversible (\cref{prop:IRalpha-reversible}).
All these statements will complete the proof of \cref{lem:singly-constrained-proof}.

\begin{figure}[tp]
  \begin{center}
    \includegraphics{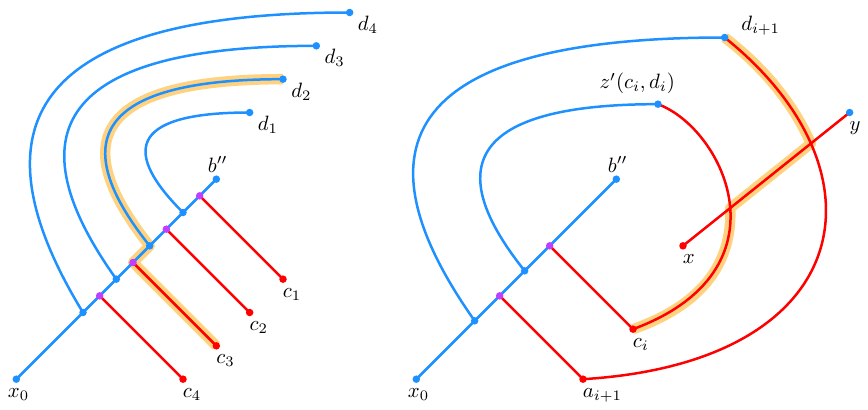}
  \end{center}
  \caption{
  An illustration of the proof of \cref{lemma:shortcuts}. 
  The fixed path contains cycle edges in $H'$ of the form $((a_i,b_i),(c_i,d_i))$ and $(c_i,d_i)$ and $(a_{i+1},b_{i+1})$ are either equal or connected by an edge in $H'$.
  The standard example (and Kelly poset) will be given by the family  of incomparable pairs $\set{(c_i,d_i) : i\in[n]}$. 
  On the left, we highlight how the relation $c_i<d_j$ in $P$ for all $i>j$ is forced in the sequence of nested regions.
  We prove that the elements $c_i$ and $d_i$ are \q{attached} to $W_R(b'')$ in the depicted order.
  On the right, we highlight how the relation $c_i<d_j$ in $P$ for all $i<j$ is forced.
  We prove that there is an element $x$ in one of the inner regions and an element $y$ outside one of the outer regions and $x < y$ in $P$ forces the required comparabilities.
  }
  \label{fig:outline}
\end{figure}

\section{Preliminaries}
\label{sec:preliminaries}
We denote by \defin{$\mathbb{R}$} the set of real numbers and by \defin{$\NN$} the set of nonnegative integers.
For a positive integer $k$, we write \defin{$[k]$} as a
compact form of $\{1,\dots,k\}$.
The plane is always considered with the standard coordinate system, giving the notions of \defin{higher}, \defin{lower}, and more to the \defin{left} or \defin{right}.

\subsection{Graphs}\label{ssec:graphs}
We consider simple and finite graphs.  
Unless we say otherwise, all graphs are undirected. 
The vertex set of a graph $G$ is denoted by \defin{\(V(G)\)} and the edge set of $G$ is denoted by \defin{\(E(G)\)}.
Let $G_1$ and $G_2$ be two graphs. 
The \defin{union} of $G_1$ and $G_2$, denoted by \defin{$G_1 \cup G_2$} is the graph with the vertex set $V(G_1)\cup V(G_2)$ and the edge set $E(G_1)\cup E(G_2)$.

Let $k$ be a nonnegative integer.
A \defin{path} is a graph 
with the vertex set $\set{v_0,\ldots,v_k}$ and the 
edges $\set{v_{i-1}v_{i} : i \in [k]}$, where $v_0,\dots,v_k$ are pairwise distinct. 
%The number of vertices of a path is its \defin{order}.
We often refer to a path by a natural sequence of its vertices, 
writing, say, $v_0\cdots v_k$ or equivalently $v_k \cdots v_0$.
 %\michal{Is there an ambiguity with the edge notation \(uv\)?}
 %\jedrzej{It's the same in Diestel, we think that we should just leave it as it is.}
 % \michal{I am fine with that, although we may consider making a small remark. I leave it up to your taste}
Writing a path as $v_0\cdots v_k$ fixes an underlying orientation of the path, where $v_0$ is the first vertex and $v_k$ is the last vertex.
We say that $v_0$ and $v_k$ are the \defin{endpoints} of $v_0 \cdots v_k$.
We also say that $v_0 \cdots v_k$ \defin{starts} in $v_0$ and \defin{ends} in $v_k$.
%When $W=v_0\cdots v_k$ is a path and $i,j \in \{0,\dots,k\}$ we write $v_i[W]v_j$ for the \defin{subpath} $v_i\cdots v_j$.
A \defin{cycle} is a graph with at least three vertices such that removing each of its edges gives a path. 
% \jedrzej{We changed the notation for paths. Now, we use "between" for paths and "from to" for witnessing path when we know the ordering. Keep this in mind and try to change succesively.}
A \defin{forest} is a graph with no cycles.
A \defin{tree} is a connected forest.
Let $T$ be a tree, and let $u,v$ be two vertices in $T$.
We write \defin{$u[T]v$} to denote the unique path in $T$ with endpoints $u$ and $v$ and with an orientation from $u$ to $v$. 
%We consider $u[T]v$ to have a fixed underlying orientation from $u$ to $v$.

We use the following convenient notation for manipulating paths in graphs. 
Let $G$ be a graph.
%For two paths $U$ and $V$ in $G$, we say that $U$ and $V$ are \defin{internally disjoint} if every vertex common to $U$ and $V$ is an endpoint of both $U$ and $V$.
Let \(v_0, \ldots, v_k\) be vertices of \(G\), and let \(T_1, \ldots, T_k\) be trees in \(G\) such that each \(T_i\) contains the vertices \(v_{i-1}\) and \(v_i\). Then we denote by \defin{\(v_0 [T_1] v_1 [T_2] \cdots [T_k] v_k\)} the union of the paths \(v_{i-1} [T_i] v_i\). See \cref{fig:concatenating_paths}.
%Whenever we use this notation, the resulting graph \(u_0 [U_1] u_1 [U_2] \cdots [U_k] u_k\) is either a path or a cycle. %\heather{In general, I don't think it's restricted to paths and cycles. Consider \(a[U]b[V]c[W]a[X]d\) where the four subpaths here are pairwise internally disjoint. But perhaps we are just saying that every time we use the notation, the result will be a path or a cycle.}
%\michal{Good point! this could be a bit confusing. I removed the mention that the paths \(u_{i-1}[U_i]u_i\) are pairwise internally disjoint.}
When $T_i = v_{i-1}v_i$, then we may omit \q{\([T_i]\)} in this notation. 
When the resulting graph is a path, we consider it with the orientation from $v_0$ to $v_k$.
For example, for a path \(W = v_0 \cdots v_5\), we have \(v_0[W]v_2 v_3 [W] v_5 = W\).

%\piotr{We discussed it with Michał. Michał has a green light to edit this paragraph. We don't know yet if we want $T_L$.}

% \begin{align*}
%     \maxsw(H,(a,b)) &= \text{the largest weight of a directed path in $H$ starting in $(a,b)$},\\
%     \maxew(H,(a,b)) &= \text{the largest weight of a directed path in $H$ ending in $(a,b)$}.
% \end{align*}

\subsection{Orders and posets}
% \jedrzej{We rewrote this section a little bit. Some decisions were made. 
%(1) We don't want to talk about $\prec$ at this point. 
%(2) We want to define "partially orders" and "poset" as close to each other as possible as they are the same things.
%(3) We removed some stuff that we believe is too much (a footnote).
%(4) We define incomparable elements.
%}
Let $X$ be a set. 
A \defin{partial order} on $X$ is a binary relation $R$ on \(X\)
that is \defin{reflexive} (for every \(x \in X\), \((x,x) \in R\)), \defin{antisymmetric} (for all \(x,y\in X\), if \((x,y) \in R\) and \((y,x) \in R\), then \(x=y\)), and \defin{transitive} (for all \(x,y,z \in X\), if \((x,y) \in R\) and \((y,z) \in R\), then \((x,z) \in R\)). 
A partial order $R$ is \defin{linear} if
for all $x,y \in X$, we have $(x,y) \in R$ or $(y,x) \in R$.
When \(R\) is a partial order (resp.\ linear order) on \(X\), we say that \(R\) \defin{partially orders} (resp.\ \defin{linearly orders}) \(X\).
When $R$ partially orders (resp.\ linearly orders) $X$, we say that $P = (X, R)$ is a \defin{partially ordered set}, or \defin{poset} for short (resp.\ \defin{linearly ordered set}).
% \footnote{Note that a poset \(P = (X, \le)\) carries the same information as the order relation \(\le\). More formally, we can restore the ground set \(X\) from the relation \(\le\) itself: by reflexivity we have \(X = \{x : x \le x\}\).
% However, in this paper posets will always have a planar cover graph. We use the symbol \(\preccurlyeq\) to denote an arbitrary partial order.}
We refer to the set \(X\) as the \defin{ground set} of \(P\), and to the members of \(X\) as the \defin{elements} of \(P\).
Note that $(X,R^{-1})$ is also a poset, we call it the \defin{dual} of~$P$ and denote it by~\defin{$P^{-1}$}.

Let $x$ and $y$ be elements of~$P$.
We say that $x$ and $y$ are \defin{comparable} in~$P$ (or in $R$)  if \((x,y) \in R\) or \((y, x) \in R\).
We use the notation \q{$x \leq y$ in~$P$} whenever $(x,y) \in R$, and \q{$x < y$ in~$P$} whenever $(x,y) \in R$ and $x \neq y$.
Furthermore, we say that $x$ and $y$ are \defin{incomparable} in~$P$ (or in $R$) if they are not comparable in $R$. 
In this case, we write \q{$x \parallel y$ in~$P$}.
Let \defin{\(\Inc(P)\)} denote the set of all ordered pairs \((x, y)\) of elements of~$P$ with \(x \parallel y\) in \(P\).
% For a subset $I \subset \Inc(P)$, we define
% \begin{align*}
%     \defmath{\pi_1(I)} &= \{a \in X : \text{ there exists $b'$ in $P$ with $(a,b') \in I$} \} \text{ and }\\
%     \defmath{\pi_2(I)} &= \{b \in X : \text{ there exists $a'$ in $P$ with $(a',b) \in I$} \}.
% \end{align*}
We define the \defin{closure} of $I$ as
\[\defmath{\closure{I}} = \{(a,b) \in \Inc(P) : \text{there exists $a'$ and $b'$ in $P$ such that $(a',b) \in I$ and $(a,b') \in I$}\}.\]

An element $x$ of~$P$ is \defin{minimal} in~$P$ if for every element $y$ of~$P$, $y \leq x$ in~$P$ implies $y = x$.
Symmetrically, an element $x$ of~$P$ is \defin{maximal} in~$P$ if for every element $y$ of~$P$, $x \leq y$ in~$P$ implies $y = x$.
A \defin{chain} in~$P$ is a set of elements in~$P$ such that every two elements are comparable in~$P$, 
and an \defin{antichain} in \(P\) is a set of elements in~$P$ such that every two distinct elements are incomparable in~$P$. 
%\jedrzej{I don't see why we need this.}
% A chain in \(P\) consists of elements \(x_1, \ldots, x_n\) with \(x_1 < \cdots < x_n\) in \(P\). 
% The elements \(x_1\) and \(x_n\) are respectively the \defin{minimal} and the \defin{maximal} element of the chain.

\begin{figure}[tp]
  \begin{center}
    \includegraphics{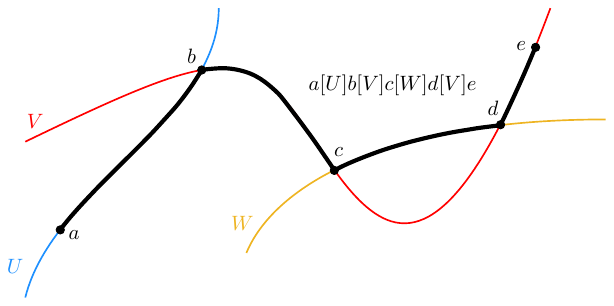}
  \end{center}
  \caption{
    A sample application of the notation for path concatenation.
  }
  \label{fig:concatenating_paths}
\end{figure}

%\jedrzej{Move this to the dimension subsection.}
% All posets in this paper are finite, except for \(\mathbb{R}^d\) which is always equipped with the product order: \((x_1, \ldots, x_d) \le (y_1, \ldots, y_d)\) in \(\mathbb{R}^d\) if and only if \(x_i \le y_i\) for each \(i \in [d]\).

The \defin{comparability graph} of \(P\) is a graph on the ground set of \(P\) in which two distinct vertices are adjacent if and only if they are comparable in \(P\). 
An element \(x\) of~$P$ is \defin{covered} by an element \(y\) of~$P$ (and \(y\) \defin{covers} \(x\)) in \(P\) if
\(x < y\) in~$P$ and there is no element \(z\) of~$P$ with \(x < z < y\) in \(P\).
The \defin{cover graph} of \(P\)
is a subgraph of the comparability graph in which two vertices are adjacent if either one of them covers the other.
Observe that the dual \(P^{-1}\) has the same comparability graph and cover graph as \(P\).

Two posets $P$ and $Q$ are \defin{isomorphic} if there exists a bijection $f$ from the ground set of~$P$ to the ground set of $Q$ such that for all elements $x$ and $y$ of~$P$, we have $x \leq y$ in~$P$ if and only if $f(x) \leq f(y)$ in $Q$.
For a subset $Y$ of the ground set of~$P$, the \defin{subposet induced} by $Y$ in~$P$ is the poset $Q$ with the ground set $Y$ such that for all $x,y \in Y$, we have $x \leq y$ in $Q$ if and only if $x \leq y$ in~$P$.
A poset $Q$ is a \defin{subposet} of a poset \(P\) if it is a subposet induced by some subset of the ground set of~$P$ in~$P$.
A subposet \(Q\) of a poset \(P\) is \defin{convex} if for all elements \(x,y,z\) with \(x < y < z\) in \(P\), if \(x\) and \(z\) are elements of \(Q\), then also \(y\) is an element of \(Q\). 
Note that if \(Q\) is a convex subposet of \(P\), then the cover graph of \(Q\) is a subgraph of the cover graph of \(P\).

% For an element \(x\) of \(P\), we denote by \defin{\(P - x\)}
% the subposet of \(P\) induced by the set of all elements of \(P\) except \(x\).

A \defin{finite} poset is a poset with a finite ground set.
All posets in this paper are finite, unless stated otherwise. 
A \defin{component} of a poset is a subposet induced by a connected component of its cover graph.
A poset is \defin{connected} if it has exactly one component.

\subsection{Dimension of posets}\label{sec:dim}
In the introduction, we presented a concise geometric definition of
dimension of posets. 
However, we (and most other researchers) work with a combinatorial equivalent. 
In this section, we discuss equivalent definitions of dimension and prove its basic properties.
Many statements are taken from~\cite{PD1} and adjusted to the algorithmic setting.

Let \(P\) be a poset.
A \defin{linear extension} of \(P\) is a linearly ordered set $L$ on the same ground set as $P$ such that $x\leq y$ in~$P$ implies $x\leq y$ in $L$ for all elements $x$ and $y$ of~$P$.
Every poset has a linear extension, and furthermore, for all incomparable elements \(x\) and \(y\) of~$P$ there exists a linear extension $L$ with \(x < y\) in $L$
%, and a linear extension $L'$ in which \(y < x\) in $L'$ 
(we prove an even stronger statement in \cref{prop:dim-alternating-cycles}). 
Therefore, if $\calL$ is the set of all linear extensions of~$P$ then for all elements $x$ and $y$ of~$P$, we have
\begin{equation}
\label{eq:all-linear-extensions}
\textrm{$x\leq y$ in~$P$ if and only if  $x\leq y$ in $L$ for each $L\in\calL$.} 
\end{equation}

Recall that the dimension of a poset \(P\) is the least positive integer \(d\) such that \(P\) is isomorphic to
a subposet of \(\mathbb{R}^d\) (ordered by the product order).
If \(Q\) is a finite subposet of \(\mathbb{R}^d\), then by slightly perturbing the coordinates of the points we can obtain an isomorphic subposet \(Q'\) of \(\mathbb{R}^d\) such that for each \(i \in [d]\), the \(i\)th coordinates of the elements of \(Q'\) are pairwise distinct. 
%For example, if \(Q\) is a subposet of \(\mathbb{R}^d\) such that for each \(i \in [d]\), the \(i\)th coordinates of the elements of \(Q\) are pairwise distinct, 
Then for each \(i \in [d]\), the linear order on the ground set of \(Q'\) given by the increasing \(i\)th coordinate yields a linear extension of \(Q'\).
This justifies an equivalent, combinatorial definition of dimension.

The dimension of \(P\) is the least positive integer \(d\) for which there exist \(d\) linear extensions
\(L_1, \ldots, L_d\) of \(P\) such that for all elements $x$ and $y$ of \(P\), we have
\begin{align*}
\textrm{$x\leq y$ in~$P$ if and only if  $x\leq y$ in $L_i$ for each $i\in[d]$.} 
\end{align*}
Indeed, a linear extension \(L_i\) can be seen as the order of elements in the \(i\)th coordinate in \(\mathbb{R}^d\),
and conversely, the order of elements in the \(i\)th coordinate in \(\mathbb{R}^d\) forms a linear extension. 
By~\eqref{eq:all-linear-extensions}, the dimension is well-defined.
In fact, Hiraguchi~\cite{H55} showed that an \(n\)-element poset with \(n \ge 4\), has dimension at most \( n/2 \). 
%\michal{check the reference}

A subset $I \subseteq \Inc(P)$ is \defin{reversible} in \(P\) if there exists a linear extension $L$ of~$P$ with
$y < x$ in $L$ for all $(x,y)\in I$.
A family \(\calS\) of subsets of \(\Inc(P)\) \defin{covers} $I$ if $I \subset \bigcup \calS$.
In this case, sometimes we say that $\calS$ is a \defin{covering} of $I$.
%\later{Sometimes coverings are multisets, technically it is a problem. If we have spare time, we should fix it (the same in PD1).}
We state arguably the most useful equivalent definition of dimension of posets.

% \jedrzej{We decided not to prove this.}

\begin{obs} \label{prop:redefine_dimension}
    For every poset \(P\), the dimension of~$P$ is the least positive integer $d$ for which 
          $\Inc(P)$ can be covered by $d$ reversible sets.
\end{obs}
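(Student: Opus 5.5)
The plan is to prove the two inequalities separately, working throughout with linear extensions as in the combinatorial definition of dimension stated just above.

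For the inequality ``number of reversible sets $\le \dim(P)$'', let $d=\dim(P)$ and fix linear extensions $L_1,\dots,L_d$ of $P$ that realize $P$: $x\le y$ in $P$ if and only if $x\le y$ in $L_i$ for every $i\in[d]$. For each $i\in[d]$ define
\[
I_i=\{(x,y)\in\Inc(P) : y<x \text{ in } L_i\}.
\]
Each $I_i$ is reversible, with $L_i$ itself as the witnessing linear extension. These sets cover $\Inc(P)$. Indeed, take $(x,y)\in\Inc(P)$. Since $x\not\le y$ in $P$, there is some $i$ with $x\not\le y$ in $L_i$. As $L_i$ is linear, this gives $y<x$ in $L_i$, so $(x,y)\in I_i$. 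Hence $\Inc(P)$ is covered by $d$ reversible sets.

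For the inequality ``$\dim(P)\le$ number of reversible sets'', suppose $\Inc(P)\subseteq I_1\cup\dots\cup I_d$ with every $I_i$ reversible. For each $i$, pick a linear extension $L_i$ with $y<x$ in $L_i$ for all $(x,y)\in I_i$. We check that $L_1,\dots,L_d$ realize $P$.
\begin{itemize}
\item If $x\le y$ in $P$, then $x\le y$ in every $L_i$, because each $L_i$ is a linear extension.
\item If $x\not\le y$ in $P$, there are two cases.
  \begin{itemize}
  \item If $y<x$ in $P$, then $y<x$ in every $L_i$.
  \item Otherwise $x\parallel y$. Then $(x,y)\in I_i$ for some $i$, so $y<x$ in $L_i$.
  \end{itemize}
  In both cases $x\le y$ fails in some $L_i$.
\end{itemize}
This shows $\dim(P)\le d$.

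Together the two directions give the equality. Since $\Inc(P)$ is nonempty exactly when $P$ is not a chain, the case of a chain needs a separate remark. There dimension is $1$, and we take the positive-integer convention for the covering number, covering the empty set by a single (empty) reversible set.

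I do not expect a genuine obstacle. The only subtle point is the covering condition in the first direction, which uses linearity of $L_i$ to turn ``$x\not\le y$ in $L_i$'' into ``$y<x$ in $L_i$''.
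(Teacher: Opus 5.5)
Your proof is correct. The paper states this as an observation without a written proof, and your two-direction argument is the standard one. In one direction you turn realizing linear extensions $L_1,\dots,L_d$ into the reversible sets $\{(x,y)\in\Inc(P): y<x \text{ in } L_i\}$; in the other you pick one reversing linear extension per set. The separate remark about chains is not needed, since both directions already work verbatim for every $d\ge 1$.
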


% \begin{proposition} \label{prop:redefine_dimension}
%     Dimension of a poset $P$ is equal to the minimum positive integer $d$ such that $\Inc(P)$ can be covered by $d$ reversible sets.
% \end{proposition}
% \begin{proof}
%     TODO
%     \heather{Here's a first draft. Feel free to edit/trim:\\ 
%     Let $\{I_1, \ldots, I_d\}$ be a minimum size cover of $\Inc(P)$ by reversible sets. For each $i\in [d]$, let $\leq_i$ be the linear extension of~$P$ which reverses each pair in $I_i$. Let $x,y\in P$. If $x$ and $y$ are comparable in~$P$ with $x\leq_P y$ then $x\leq_i y$ for each $i\in [d]$ by the definition of linear extension, so $(x,y)\in \bigcap_{i=1}^d \leq_i$ as desired. Now suppose $x$ and $y$ are incomparable in~$P$. Then $(x,y), (y,x)\in \Inc(P)$. So there exists $j,k\in [d]$ with $(x,y)\in I_j$ and $(y,x)\in I_k$. So $y\leq_j x$ and $x\leq_k y$. So $(x,y)\not\in \bigcap_{i=1}^d \leq_i$. Therefore $\leq_P = \bigcap_{i=1}^d \leq_i$, so $\dim(P) \leq d$.
% \newline
%     Next, let $\{\leq_1, \ldots, \leq_{\dim(P)}\}$ be a smallest set of linear extensions of~$P$ such that $\leq_P = \bigcap_{i=1}^{\dim(P)} \leq_i$. For each $i \in [\dim(P)]$, set $I_i = \{(x,y)\in \Inc(P): y\leq_i x\}$. Let $(x,y)\in \Inc(P)$ be arbitrary. Then $x\not\leq_P y$, so there exists $j\in [\dim(P)]$ with $y\leq_j x$. So $(x,y)\in I_j$. Since this is holds for all pairs in $\Inc(P)$, $\{I_1, \ldots, I_{\dim(P)}\}$ is a cover of $\Inc(P)$. So $d\leq \dim(P)$.
%     }
% \end{proof}

Let $P$ be a poset.
Motivated by the above, for every \(I \subseteq \Inc(P)\), we define the \defin{dimension} of $I$ in~$P$, denoted by \defin{$\dim_P(I)$}, as the minimum positive integer $d$ such that $I$ can be covered by $d$ reversible sets.
%\piotr{Maybe $\dim(P,I)$?}
Note that by \cref{prop:redefine_dimension}, $\dim(P)=\dim_P(\Inc(P))$.

Similarly, we define a restricted version of the standard example number.
We say that a set $I\subseteq \Inc(P)$ \defin{induces} a standard example in~$P$ if $|I| \geq 2$ and for all distinct $(a,b), (a',b')\in I$, 
we have $a < b'$ and $a'<b$ in~$P$.  
For every $I\subseteq\Inc(P)$, 
let \defin{$\se_P(I)$} be defined as $1$ when there is no subset of $I$ inducing a standard example in~$P$; otherwise, \defin{$\se_P(I)$} is the maximum size of a subset of $I$ that induces a standard example in~$P$.
Note that $\se(P) = \se_P(\Inc(P))$.
For an integer $n \geq 3$ and $I \subset \Inc(P)$, the \defin{Kelly subposet} of $P$ of \defin{order} $n$ \defin{based} on $I$ is a subposet of $P$ induced by a set $\{c_1,\dots,c_n,d_1,\dots,d_n,u_2,\dots,u_{n-1},v_2,\dots,v_{n-1}\}$ satisfying
\begin{itemize}
    \item $(c_i,d_i) \in I$ for every $i \in [n]$,
    \item $c_n \leq u_{n-1} \leq \dots \leq u_2 \leq d_1$ in $P$,
    \item $c_1 \leq v_2 \leq \dots \leq v_{n-1} \leq d_n$ in $P$,
    \item $c_i < u_i < d_{i-1}$ in $P$ for every integer $2 \leq i \leq n-1$,
    \item $c_i < v_i < d_{i+1}$ in $P$ for every integer $2 \leq i \leq n-1$.
\end{itemize}
For each $I \subset \Inc(P)$, we define \defin{$\kelly_P(I)$} as the maximum order of a Kelly subposet in $P$ based on $I$.

It turns out that there is a relatively simple criterion to verify if a given $I \subset \Inc(P)$ is reversible.
For an integer $k$ with $k \geq 2$, a sequence $((a_1,b_1), \dots, (a_k,b_k))$ of pairs in $\Inc(P)$ is an \defin{alternating cycle} of size $k$ in~$P$ 
if $a_i\leq b_{i+1}$ in~$P$ for all $i\in[k]$, cyclically 
(that is, $b_{k+1} = b_1$).
We say that $I \subset \Inc(P)$ \defin{contains} an alternating cycle \(((a_1, b_1), \ldots, (a_k, b_k))\) if all pairs \((a_i, b_i)\) belong to  $I$.
An alternating cycle $((a_1,b_1),\dots,(a_k,b_k))$ is \defin{strict} if for all $i,j \in [k]$, we have
$a_i\le b_{j}$ in~$P$ if and only if $j=i+1$ (cyclically).
Note that in this case, $\{a_1, \dots, a_k\}$ and $\{b_1, \dots, b_k\}$ are $k$-element antichains in~$P$.
Note also that in alternating cycles, we allow that $a_i=b_{i+1}$ for some or even all values of $i$.
Trotter and Moore~\cite{TM77} made the following elementary observation that has proven over time to be far-reaching in nature.
\begin{proposition}[{\cite[Proposition~3]{PD1}}]\label{prop:dim-alternating-cycles}
Let $P$ be a poset and let $I \subset \Inc(P)$.
The following conditions are equivalent:
\begin{enumerate}
    \item\label{item:I-reversible} $I$ is reversible,
    \item\label{item:I-with-no-alternating-cycle} $I$ does not contain an alternating cycle,
    \item\label{item:I-with-no-strict-alternating-cycle} $I$ does not contain a strict alternating cycle.
\end{enumerate}
\end{proposition}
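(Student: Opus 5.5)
We prove the cycle of implications (i)$\Rightarrow$(ii)$\Rightarrow$(iii)$\Rightarrow$(i). Two of them are short; the substance is in (iii)$\Rightarrow$(i).

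\emph{(i)$\Rightarrow$(ii).} Suppose $L$ is a linear extension with $b<a$ in $L$ for all $(a,b)\in I$, and suppose $I$ contains an alternating cycle $((a_1,b_1),\dots,(a_k,b_k))$. From $a_i\le b_{i+1}$ in $P$ we get $a_i\le b_{i+1}$ in $L$. Chaining this with the reversals gives
\[
b_1<a_1\le b_2<a_2\le\dots\le b_k<a_k\le b_1 \quad\text{in } L,
\]
so $b_1<b_1$ in $L$, a contradiction.

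\emph{(ii)$\Rightarrow$(iii).} This is trivial, since every strict alternating cycle is an alternating cycle.

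\emph{(iii)$\Rightarrow$(i).} We argue via the contrapositive chain (not (i))$\Rightarrow$(some alternating cycle)$\Rightarrow$(some strict alternating cycle).

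For the first step, consider the relation $R$ on the ground set given by $P$ together with all pairs $b\le a$ for $(a,b)\in I$. If the transitive closure of $R$ is antisymmetric, it is a partial order; any linear extension of it (a topological sort) extends $P$ and reverses $I$. So if $I$ is not reversible, the digraph of $P\cup I^{-1}$ has a directed cycle. Use the comparabilities of $P$ to compress consecutive $P$-steps into a single one, using transitivity. The cycle then alternates: reversed edges $b_i\to a_i$ followed by $P$-steps $a_i\le b_{i+1}$. There is at least one reversed edge, since $P$ is acyclic. A cycle using a single pair $(a,b)$ would give $a\le b$, contradicting incomparability. Hence we get an alternating cycle of size $k\ge2$ in $I$. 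This part is also constructive, which matters for the algorithmic use.

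For the second step, take an alternating cycle in $I$ of minimum size $k$ and show it is strict. Suppose $a_i\le b_j$ for some $j\ne i+1$ cyclically. Note that $j\ne i$, because $a_i\parallel b_i$. Then the pairs $(a_i,b_i),(a_j,b_j),(a_{j+1},b_{j+1}),\dots$ up to $(a_{i-1},b_{i-1})$ (indices cyclic) form a shorter alternating cycle: it drops $(a_{i+1},b_{i+1}),\dots,(a_{j-1},b_{j-1})$, of which there is at least one. It has size at least $2$, since $j\ne i$. This contradicts minimality.

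The only care needed is the cyclic index bookkeeping in this shortcut step, including the degenerate case where the shortened cycle has size $2$; there it is still a valid alternating cycle because $a_i\le b_j$ and $a_j\le b_i$. No real obstacle is expected.
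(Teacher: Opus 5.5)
Your proof is correct. It is the standard Trotter--Moore argument, which the paper does not reprove but cites from~\cite{PD1}: a chain in a reversing linear extension gives (i)$\Rightarrow$(ii), a minimum-size alternating cycle is strict, and if $I$ is not reversible then the digraph of $P\cup I^{-1}$ has a directed cycle that compresses into an alternating cycle (this last step is also what underlies the auxiliary digraph in \cref{prop:detect-reversibility}).
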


The property of being reversible can be checked algorithmically.

\begin{proposition}\label{prop:detect-reversibility}
    There exists a polynomial-time algorithm that takes a poset $P$ and $I \subset \Inc(P)$ and decides if $I$ is reversible in $P$.
\end{proposition}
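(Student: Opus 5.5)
The plan is to reduce reversibility testing to acyclicity testing in a directed graph, using the equivalence in \cref{prop:dim-alternating-cycles}. Let $X$ be the ground set of $P$. We build the directed graph $D$ on $X$ whose arcs are the pairs $(x,y)$ with $x<y$ in $P$, together with the reversed pairs $(b,a)$ for every $(a,b)\in I$; this is the digraph of $P\cup I^{-1}$ mentioned in the outline. The algorithm runs a topological sort (or depth-first search) on $D$. If it finds an ordering, $I$ is reported reversible; if it finds a directed cycle, $I$ is reported not reversible. Building $D$ takes $\Oh(|X|^2)$ time given the comparability relation, and detecting a cycle is linear in the size of $D$, so the running time is polynomial.

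For correctness in one direction, suppose $D$ is acyclic. Then any topological ordering $L$ of $D$ is a linear order on $X$ that extends $P$, since every relation $x<y$ in $P$ is an arc. Moreover, $b<a$ in $L$ for each $(a,b)\in I$, again because $(b,a)$ is an arc. So $L$ is a linear extension reversing every pair of $I$, and $I$ is reversible by definition. This direction does not even need \cref{prop:dim-alternating-cycles}.

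For the other direction, suppose $I$ is reversible, witnessed by a linear extension $L$ with $b<a$ in $L$ for all $(a,b)\in I$. Every arc $(u,v)$ of $D$ then satisfies $u<v$ in $L$. A directed cycle in $D$ would therefore give $u<u$ in $L$, which is impossible, so $D$ is acyclic.

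Equivalently, one can read a directed cycle in $D$ directly as an alternating cycle contained in $I$. To do this, compress consecutive $P$-arcs using transitivity, noting that two consecutive $I^{-1}$-arcs cannot occur usefully without a $P$-step in between (or one can allow the degenerate relation $a_i\le b_{i+1}$ with equality). This reading lets the algorithm also output a certificate via \cref{prop:dim-alternating-cycles}, but the main argument does not need it. The only step needing any care is this correspondence, and that care is only needed if one wants such certificates. Otherwise the proof is routine.
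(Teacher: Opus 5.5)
Your proof is correct, but it takes a different route from the paper's proof of this proposition.

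The paper builds an auxiliary digraph whose vertices are the pairs of $I$, with an arc from $(a,b)$ to $(c,d)$ whenever $a\le d$ in $P$. A directed cycle there is exactly an alternating cycle contained in $I$, so correctness rests on the Trotter--Moore equivalence (\cref{prop:dim-alternating-cycles}). You instead test acyclicity of $P\cup I^{-1}$ on the ground set itself, which the paper only mentions in its outline as the way to compute the reversing linear extension.

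What your version buys:
\begin{itemize}
\item It is self-contained: both directions follow from the definition of reversibility and the existence of topological orderings, with no appeal to alternating cycles.
\item It runs on a smaller graph, with $|X|$ vertices rather than $|I|$, which can be quadratic in $|X|$.
\item It outputs the reversing linear extension directly, which is what the paper ultimately needs to turn reversible sets into coordinates.
\end{itemize}

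What the paper's version buys: a negative answer comes with a certificate in the paper's working language, namely an alternating cycle contained in $I$, without the translation step you sketch at the end.

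On that translation, the phrase ``cannot occur usefully'' should be dropped. Consecutive $I^{-1}$-arcs do genuinely occur in cycles of your digraph, for instance when both $(a,b)$ and $(b,a)$ lie in $I$. They simply correspond to $a_i=b_{i+1}$, which the paper's definition of alternating cycle explicitly allows, so your parenthetical is the correct reading.
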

\begin{proof}
    For a given poset $P$ and $I \subset \Inc(P)$, the algorithm constructs an auxiliary oriented graph $H$ on the vertex set $I$.
    There is an oriented edge from $(a,b) \in I$ to $(c,d) \in I$ if $a \leq d$ in $P$.
    Now, $I$ contains an alternating cycle in $P$ if and only if $H$ contains a directed cycle.
    Thus, by~\Cref{prop:dim-alternating-cycles}, $I$ is reversible in $P$ if and only if $H$ is acyclic, which can be verified in polynomial time.
\end{proof}

Next, we develop some easy properties of dimension.
For every poset $P$ and for each $I \subset \Inc(P)$, we set $\defmath{I^{-1}} = \{(b,a) : (a,b) \in I\}$.
Note that if $I \subset \Inc(P)$, then $I^{-1} \subset \Inc(P^{-1})$.
The following observation is straightforward.

\begin{obs}
    For every poset $P$ and $I \subset \Inc(P)$, we have $\dim_{P}(I) = \dim_{P^{-1}}(I^{-1})$.
\end{obs}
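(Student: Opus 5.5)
The plan is to show that reversibility is preserved exactly under passing to the dual, and then transport optimal coverings in both directions. The key step is a bijection between linear extensions of $P$ and of $P^{-1}$. If $L$ is a linear extension of $P$, then its dual $L^{-1}$ (the same ground set with the reversed order) is a linear order. Moreover, whenever $x \le y$ in $P^{-1}$, i.e., $y \le x$ in $P$, we get $y \le x$ in $L$, i.e., $x \le y$ in $L^{-1}$. Hence $L^{-1}$ is a linear extension of $P^{-1}$. Since taking duals is an involution and $(P^{-1})^{-1}=P$, the map $L\mapsto L^{-1}$ is a bijection between the two sets of linear extensions. I would also note that $\Inc(P^{-1}) = \Inc(P)$, because comparability is symmetric, so $J^{-1}\subseteq \Inc(P^{-1})$ for every $J\subseteq\Inc(P)$.

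Next I would check that a set $J \subseteq \Inc(P)$ is reversible in $P$ if and only if $J^{-1}$ is reversible in $P^{-1}$. Suppose $L$ reverses $J$, that is, $y<x$ in $L$ for every $(x,y)\in J$. For $(y,x)\in J^{-1}$ we need $x<y$ in the extension of $P^{-1}$, and $L^{-1}$ gives exactly this. The converse follows by the same argument applied to $P^{-1}$ and $J^{-1}$, using the involution again. As an alternative, one could argue through \cref{prop:dim-alternating-cycles}: an alternating cycle $((a_i,b_i))_{i}$ in $J$ corresponds, after reindexing in reverse order, to an alternating cycle $((b_i,a_i))$ in $J^{-1}$ with respect to $P^{-1}$. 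The linear-extension argument is cleaner, though, and avoids the index bookkeeping.

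Finally, let $J_1,\dots,J_d$ be reversible sets in $P$ covering $I$. Then $J_1^{-1},\dots,J_d^{-1}$ are reversible in $P^{-1}$ and cover $I^{-1}$, so $\dim_{P^{-1}}(I^{-1})\le\dim_P(I)$. Applying the same inequality to $P^{-1}$ and $I^{-1}$ gives the reverse inequality. There is no real obstacle here; the only point requiring care is keeping the orientation conventions straight when checking that $L^{-1}$ reverses each pair of $J^{-1}$.
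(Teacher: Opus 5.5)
Your proof is correct. The paper states this observation without proof, calling it straightforward, and your argument is exactly the routine verification it leaves to the reader: $L\mapsto L^{-1}$ is a bijection between linear extensions of $P$ and of $P^{-1}$, so $J$ is reversible in $P$ if and only if $J^{-1}$ is reversible in $P^{-1}$, and coverings transfer in both directions.
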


Let $P$ be a poset, $I\subseteq \Inc(P)$, 
and let $\{I_i : i \in [s]\}$ be a covering of $I$. 
The following inequalities are trivial but still very useful
\[
\textstyle\dim_P(I) \leq \dim_P\left(\bigcup_{i\in[s]} I_i \right) \leq \sum_{i\in[s]} \dim_P(I_i).
\]
The next proposition describes a situation when we get a stronger bound, i.e.\ $\dim_P(I)\leq \max\{\dim_P(I_i): i\in[s]\}$ (so somehow the problem becomes local with respect to the covering).
For a poset $P$ and a set $I \subseteq \Inc(P)$, a subsets $I_1,\dots,I_s \subseteq I$ are \defin{splitting partition} of $I$ in $P$ if they are pairwise disjoint, their union is $I$, and for every strict alternating cycle in~$P$ contained in $I$, there is $i \in [s]$ such that all the pairs in the strict alternating cycle are in $I_i$.

\begin{proposition}[{\cite[Proposition 5]{PD1}}]\label{prop:dim_of_sum_incomparable_sets}
    Let $P$ be a poset, let $I \subseteq \Inc(P)$, and let $I_1,\dots,I_s \subseteq I$ be a splitting partition of $I$ in $P$.
    Then, $\dim_P(I) = \max\{ \dim_P(I_i): i \in [s]\}$.
    Moreover, if there exists a positive integer $d$ and for every $i \in [s]$, there exists a covering $\{I_{i,j} : j \in [d]\}$ of $I_i$ by reversible sets in $P$, then, $\{\bigcup_{i \in [s]}I_{i,j} : j \in [d]\}$ is a covering of $I$ by reversible sets in $P$.
\end{proposition}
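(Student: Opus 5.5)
The plan is to prove the two assertions together, beginning with the \q{moreover} part; the equality $\dim_P(I)=\max_i \dim_P(I_i)$ will then follow. The lower bound $\dim_P(I)\ge \max_i\dim_P(I_i)$ is immediate. Any covering of $I$ by reversible sets restricts to a covering of each $I_i$: intersect each set with $I_i$, and note that subsets of reversible sets are reversible, since a subset cannot contain a strict alternating cycle that the larger set avoids.

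For the upper bound, set $d=\max_i \dim_P(I_i)$ and fix, for each $i\in[s]$, a covering $\{I_{i,j}: j\in[d]\}$ of $I_i$ by reversible sets. If some $I_i$ needs fewer than $d$ sets, pad with empty sets, which are trivially reversible. For each $j\in[d]$ define $K_j=\bigcup_{i\in[s]}(I_{i,j}\cap I_i)$. Intersecting with $I_i$ preserves reversibility and keeps the sets inside the disjoint blocks, and clearly $\bigcup_j K_j\supseteq I$. For the moreover statement as written, we may replace $I_{i,j}$ by $I_{i,j}\cap I_i$ without loss of generality. Strictly, the union of the unintersected sets also works, provided each $I_{i,j}\subseteq I_i$; if not, I will intersect, which the statement implicitly allows.

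The key step is showing that each $K_j$ is reversible, and I will use \cref{prop:dim-alternating-cycles}, specifically the equivalence (i)$\Leftrightarrow$(iii). Suppose $K_j$ contained a strict alternating cycle $C$ in $P$. Since $K_j\subseteq I$, the cycle $C$ is a strict alternating cycle in $P$ contained in $I$. The splitting-partition hypothesis then gives an index $i$ with all pairs of $C$ lying in $I_i$. The blocks $I_1,\dots,I_s$ are pairwise disjoint, so every pair of $C$ lies in $K_j\cap I_i = I_{i,j}\cap I_i\subseteq I_{i,j}$. This contradicts the reversibility of $I_{i,j}$. Hence each $K_j$ is reversible, the family $\{K_j : j\in[d]\}$ covers $I$, and $\dim_P(I)\le d$.

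The only delicate point is the bookkeeping about whether the $I_{i,j}$ are contained in $I_i$. This is exactly where disjointness of the blocks is used, so that a cycle confined to $I_i$ only sees pairs coming from the $i$-th covering. Everything else is routine.
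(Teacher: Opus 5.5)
Your proof is correct and is essentially the standard argument behind the cited \cite[Proposition 5]{PD1}, which this paper does not reprove: you take the $j$-th reversible set from each block's covering and form their union. By the splitting property and the disjointness of the blocks, any strict alternating cycle in that union lies inside a single $I_{i,j}$, which contradicts \cref{prop:dim-alternating-cycles}.

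Your insistence on intersecting with $I_i$ is justified, not pedantic: if some $I_{i,j}\not\subseteq I_i$, then $\bigcup_i I_{i,j}$ can contain a strict alternating cycle built from pairs outside $I$, which the splitting hypothesis does not control. So the ``moreover'' part genuinely needs $I_{i,j}\subseteq I_i$ (or your intersection), and your corrected version is the right one.
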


Back in the 1950s, Hiraguchi~\cite{H55} proved that if a poset \(P\) is the union of
disjoint chains, then \(\dim(P) \le 2\), and otherwise
$\dim(P)$ equals the maximum dimension of a component of~$P$. 
We will use a slightly refined version of this statement devised for subsets of incomparable pairs.

\begin{proposition}[{\cite[Proposition 6]{PD1}}]\label{prop:dim-components}
Let $P$ be a poset, and let $I\subseteq \Inc(P)$.
Let $C_1,\dots,C_s$ be components of~$P$, let $I_{i} = I \cap \Inc(C_i)$, and suppose that there exists an integer $d \geq 2$ such that $\{I_{i,j}: j \in [d]\}$ is a covering of $I_i$ by reversible sets in $P$.
Also, define $J_1 = \{(a,b) \in I : a \text{ in } C_i, \ b \text{ in } C_j, \ \text{and} \ i < j\}$ and $J_2 = \{(a,b) \in I : a \text{ in } C_i, b \text{ in } C_j, \ \text{and} \ i > j\}$.
For each $j \in [d]$, let
    \[
        J_j' = 
        \begin{cases}
            \bigcup_{i=1}^s I_{i,j} \cup J_j \ &\text{if} \ j \in \{1,2\},\\
            \bigcup_{i=1}^s I_{i,j} \ &\text{otherwise}.
        \end{cases}
    \]
Then, $\{J_j' : j \in [d]\}$ is a covering of $I$ by reversible sets in $P$.
In particular, 
\[\dim_P(I) \leq \max (\{\dim_{C_i}(I_i) : i \in [s]\} \cup \{2\}).\]
\end{proposition}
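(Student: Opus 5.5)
The plan is to verify directly that each $J'_j$ contains no strict alternating cycle, and then invoke \cref{prop:dim-alternating-cycles}. Since $\{I_{i,j}\}_j$ covers $I_i$ and the $J_1,J_2$ absorb all pairs whose two elements lie in different components, $\{J'_j : j\in[d]\}$ covers $I$ (here $d\ge 2$ is used so that the indices $1,2$ exist). The inequality $\dim_P(I)\le \max(\{\dim_{C_i}(I_i)\}\cup\{2\})$ then follows by taking each covering of $I_i$ to be an optimal one, padded with empty sets up to a common $d$. Reversibility in $C_i$ and in $P$ coincide for subsets of $\Inc(C_i)$, because comparabilities only occur within a component.

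The key structural fact is that if $u\le v$ in $P$ then $u$ and $v$ lie in the same component. So take a strict alternating cycle $((a_1,b_1),\dots,(a_k,b_k))$ contained in some $J'_j$. Write $c(x)$ for the index of the component containing $x$. From $a_\ell \le b_{\ell+1}$ we get $c(a_\ell)=c(b_{\ell+1})$ for every $\ell$, cyclically.

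First suppose every pair of the cycle lies within a single component, i.e.\ $c(a_\ell)=c(b_\ell)$. Then, by the chain of equalities, all elements lie in one component $C_i$. The pairs therefore lie in $I_i\cap J'_j\subseteq I_{i,j}$: pairs with both ends in $C_i$ are not in $J_1\cup J_2$, and they cannot belong to $I_{i',j}$ for $i'\ne i$. This contradicts the reversibility of $I_{i,j}$.

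Otherwise some pair is a cross pair, so $j\in\{1,2\}$; by symmetry take $j=1$. Every pair in $J'_1$ satisfies $c(a_\ell)\le c(b_\ell)$, with strict inequality exactly for the cross pairs. Combining this with $c(b_{\ell+1})=c(a_\ell)$ gives
\[
c(b_{\ell+1})=c(a_\ell)\le c(b_\ell)
\]
for all $\ell$. Going around the cycle, the values $c(b_\ell)$ are non-increasing and return to their start, so they are all equal. Hence every inequality is an equality, and no cross pair exists, which is a contradiction. The case $j=2$ is identical with the inequalities reversed.

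The only mildly delicate step is this cyclic monotonicity argument. It shows that mixing cross pairs with in-component pairs inside a single class cannot create a cycle, and it is the reason only two extra classes are needed.
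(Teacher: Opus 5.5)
The paper does not prove this proposition; it is quoted from \cite{PD1}, as stated in \cref{sec:outline}. Your argument is correct, but it takes a different route from the classical Hiraguchi-style proof. That proof is constructive: choose linear extensions $L_{i,j}$ of $C_i$ reversing $I_{i,j}$ and concatenate them blockwise. Use the order $C_s,\dots,C_1$ for $j=1$, the order $C_1,\dots,C_s$ for $j=2$, and any order for $j\ge 3$. The block order then reverses every cross pair of $J_j$, and no appeal to \cref{prop:dim-alternating-cycles} is needed. Your version stays at the level of incomparable pairs, which fits the paper's framework. The block order is exactly your potential $c(\cdot)$: it is unchanged across each comparability $a_\ell\le b_{\ell+1}$ and moves in only one direction across each pair of $J_1'$ (resp.\ $J_2'$), hence it is constant around any alternating cycle.

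You should make one hypothesis explicit. Two of your key steps use $I_{i,j}\subseteq\Inc(C_i)$: the claim that an in-component pair of $J_j'$ cannot come from $I_{i',j}$ with $i'\ne i$, and the claim that every pair of $J_1'$ satisfies $c(a)\le c(b)$. This is the intended reading, and it is genuinely needed, because the paper's notion of a covering allows arbitrary subsets of $\Inc(P)$. For example, let $P$ be the disjoint union of the chains $u_1<v_1$ and $u_2<v_2$, let $I=\Inc(P)$ and $d=2$, and set $I_{1,1}=\{(u_2,v_1)\}$ and all other $I_{i,j}=\emptyset$. These sets are reversible and cover $I_1=I_2=\emptyset$, yet $J_1'$ contains the strict alternating cycle $((u_2,v_1),(u_1,v_2))$. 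For the \q{in particular} part the issue does not arise, since an optimal covering of $I_i$ in $C_i$ consists of subsets of $\Inc(C_i)$ by definition.
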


\subsection{Topology and planarity}\label{ssec:topology}
A plane is the set $\mathbb{R}^2$ equipped with the standard topology.
For a set $S$ of points in the plane, we write \defin{$\partial S$} for the topological boundary of $S$ and \defin{$\Int S$} for the topological interior of $S$.
Additionally, we define the \defin{exterior} of $S$ as the interior of the complement of $S$ in the plane.

A \defin{simple curve} $\gamma$ in the plane is the image of an injective continuous map of a closed segment into the plane. 
% \michal{in the case of a closed curve it is an image of any circle, and in the case of a non-closed curve it is the image of the specific interval \([0, 1]\). Also, why is it a `map' rather than a simple `function'?}
% \jedrzej{(1) map is traditionally used for functions going to the plane (2) Now it is any circle and any segment.}
In this case, the \defin{endpoints} of $\gamma$ are the images of the endpoints of the segment, while the \defin{interior} of $\gamma$ is the set of non-endpoint points in \(\gamma\).
We say that $\gamma$ \defin{connects} its endpoints.
%\michal{A small issue: the interior of a segment in the plane is empty. Alternative: ``the \defin{interior} of $\gamma$ is the set of non-endpoint points in \(\gamma\) (the interior of a curve is a different concept than the topological interior).''}
%\jedrzej{Ok, but I would omit the bracket as we use emph, so it is clear that this is a new definition.}
A simple curve $\gamma$ is \defin{vertically monotone} if for every real number $y_0$, the set $\{(x,y_0) : x \in \mathbb{R}\}$ intersects $\gamma$ in at most one point.
A \defin{simple closed curve} in the plane is the image of an injective continuous map of a circle into the plane. 
Since all combinatorial objects considered in this paper are finite, we always assume that curves are finite unions of segments in the plane.
%\michal{I don't understand this sentence. What is the connection between ``combinatorial objects'' and curves on the plane?}
%\michal{Is it ever actually useful to assume that curves are piecewise linear?}

Let $\gamma$ be a simple closed curve.
The Jordan Curve Theorem\footnote{Since in this paper we only consider curves that are finite unions of segments, we only need the Jordan Curve Theorem for Polygons~\cite[Chapter~4]{Diestel-book}.} states that the complement of~$\gamma$ in the plane consists of two arc-connected components, one bounded $B$ and one unbounded~$U$. 
Moreover, the boundary of each of the components is equal to $\gamma$.
%\michal{arc-connected components undefined.}
%\jedrzej{Continous functions undefined... I think that there is some limit :)}
We define the \defin{region} of $\gamma$ as the union of $\gamma$ and $B$. %, and the \defin{exterior} of the region of $\gamma$ as $U$.
%Note that again by Jordan Curve Theorem, the boundary of the region of $\gamma$ is exactly $\gamma$. 
% \michal{I don't see how JCT implies it.} 
% \jedrzej{It doesn't imply that, it states that. See above.}
%\heather{I believe we are defining ``boundary'' here, so should we use emph?} \michal{I would rather not have a notion of boundary different than the topological boundary=frontier. And if we assume that the reader knows what is a continuous function, it is safe to assume that they also know what is boundary.} 
%\michal{Do we use the fact that one of the regions is homeomorphich to a closed disk? If yes, then the Jordan-Schoenflies Theorem is more appropriate}
%\jedrzej{I don't think so}
In particular, if $\calR$ is the region of $\gamma$, then $\Int \calR = B$, $\partial\calR = \gamma$, and the exterior of $\calR$ is $U$.

\begin{proposition}\label{obs:region_containment}
    Let $\gamma_1$ and $\gamma_2$ be simple closed curves and let $\Gamma_1$ and $\Gamma_2$ be the regions of $\gamma_1$ and $\gamma_2$ respectively.
    If $\partial \Gamma_1 \subset \Gamma_2$, then $\Gamma_1 \subset \Gamma_2$.
\end{proposition}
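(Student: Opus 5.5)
The statement is a Jordan-curve containment fact: if $\partial \Gamma_1 \subset \Gamma_2$, then $\Gamma_1 \subset \Gamma_2$. I would argue by contradiction using connectedness. By definition $\Gamma_1 = \gamma_1 \cup \Int\Gamma_1$, and $\partial\Gamma_1 = \gamma_1$ by the Jordan Curve Theorem as recalled above. So $\gamma_1 \subset \Gamma_2$ by hypothesis, and it remains to show $\Int\Gamma_1 \subset \Gamma_2$.

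Suppose some point $p \in \Int\Gamma_1$ is not in $\Gamma_2$, so $p$ lies in the exterior $U_2$ of $\Gamma_2$, the unbounded component of the complement of $\gamma_2$. I would show that all of $U_2$ lies in $\Int\Gamma_1$.
\begin{itemize}
\item Since $\gamma_1 \subset \Gamma_2$, the set $U_2$ is disjoint from $\gamma_1$.
\item $U_2$ is arc-connected, so it lies in a single component of the complement of $\gamma_1$.
\item That component contains $p \in \Int\Gamma_1$, so $U_2 \subset \Int\Gamma_1$.
\end{itemize}

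This is impossible. $\Int\Gamma_1$ is bounded because it is the bounded component of the complement of $\gamma_1$, while $U_2$ is unbounded. Hence no such $p$ exists, and $\Gamma_1 \subset \Gamma_2$.

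I do not expect a real obstacle. The only point needing care is that $U_2$ lies in a single component of the complement of $\gamma_1$, and this uses only that $U_2$ is arc-connected and misses $\gamma_1$. Both facts are provided by the Jordan Curve Theorem as stated in the paper together with the hypothesis.
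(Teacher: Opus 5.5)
Your proof is correct and is essentially the paper's argument. The paper also uses that the exterior of $\Gamma_2$ is connected and misses $\gamma_1$, so it lies in a single component of the complement of $\gamma_1$, which must be the exterior of $\Gamma_1$ by unboundedness. The only difference is that the paper argues directly, whereas you argue by contradiction through a point $p$.
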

\begin{proof}
    Since $\partial \Gamma_1 \subset \Gamma_2$, the exterior of \(\Gamma_2\) is disjoint from \(\partial \Gamma_1\). Since the exterior of \(\Gamma_2\) is connected, it must be contained in some component of the complement of $\partial \Gamma_1$ in the plane. 
    Since the exterior of \(\Gamma_2\) is unbounded, it must be contained in the exterior of \(\Gamma_1\). 
    Thus, \(\Gamma_1 \subseteq \Gamma_2\).
\end{proof}

A \defin{drawing} of a graph $G$ is a function $f$ assigning a subset of the plane to each vertex and each edge of $G$ such that $f(u)$ is a point in the plane for every vertex $u$ in $G$ and no two vertices are assigned to the same point; for every edge $uv$ in $G$, $f(uv)$ is a curve in the plane connecting $f(u)$ and $f(v)$ disjoint from $f(w)$ for every vertex $w$ of $G$ distinct from $u$ and $v$.
We say that a drawing $f$ of a graph $G$ is \defin{planar} if the interiors of the images of edges of $G$ under $f$ are pairwise disjoint.
A graph \(G\) is \defin{planar} if it admits a planar drawing.
A \defin{plane graph} is a planar graph with a fixed planar drawing.
In plane graphs, we usually identify vertices with the assigned points in the plane and edges with the assigned curves in the plane.
More generally, we identify subgraphs of plane graphs with unions of respective vertices and edges in the plane.
For example, every path in a plane graph is a simple curve, and every cycle in a plane graph is a simple closed curve.
The complement of a plane graph $G$ in the plane is a union of arc-connected components.
The topological closure of such a component is a
\defin{face} of $G$.
The only unbounded face is called the \defin{exterior face} of $G$.

Let $P$ be a poset and let $G$ be the cover graph of $P$.
A drawing $f$ of $G$ is a \defin{diagram} of $P$ if for every edge $uv$ of $G$, $f(uv)$ is a vertically monotone curve and additionally, if $u < v$ in $P$, then $u$ is \q{lower} in the drawing than $v$, meaning that if $f(u) = (x_u,y_u)$ and $f(v) = (x_v,y_v)$, then $y_u < y_v$.
In turn, a diagram of a poset $P$ is identified with a plane graph $G$, which is isomorphic to the cover graph of $P$, and its drawing is a diagram of $P$.

Only for algorithmic purposes, we introduce the notion of the combinatorial embedding. 
For an oriented graph $G$, a family $\set{(\mathcal{E}_v,<_v)}_{v\in V(G)}$ is a \defin{combinatorial embedding} of $G$ if 
$\mathcal{E}_v$ is a set of all edges incident to $v$ ordered cyclically by $<_v$, for each $v\in V(G)$. 
Given a drawing of a planar graph $G$, 
the edges incident to each vertex of $G$ are naturally ordered (clockwise) in the drawing. 
This gives rise to the notion of combinatorial embedding of $G$ \defin{witnessed by the drawing}.
A combinatorial embedding of a poset is a combinatorial embedding of its cover graph.

Our algorithm from \cref{thm:approx-algo} takes as input a poset and a combinatorial embedding witnessing a planar diagram of this poset.
Sometimes in the proof, we need an actual drawing of the poset.
However, %to keep the time complexity of the algorithm tame, 
we do not want the time complexity of the algorithm to depend on the arbitrarily complex drawing. 
%i.e.\ we need to work with a drawing want it to be representable by a small number of bits.
Di~Battista and Tamassia~\cite{DiBattista88} proved that every upward planar acyclic digraph on $n$ vertices admits an upward planar drawing where vertices are mapped into points of $n \times (2n-4)$-grid and edges are represented by lines with at most two \q{bends} where the bending points are also mapped into the grid.
Say that such a drawing of an acyclic digraph is \defin{efficient}.
Moreover, Di~Battista and Tamassia gave a linear-time algorithm that given a combinatorial embedding of an acyclic digraph witnessing an upward-planar drawing, outputs an efficient drawing.
Note that, in an efficient drawing of an $n$-vertex digraph, the mapping of each vertex or edge can be encoded in $\Oh(\log n)$ bits.
See also the result of Gronemann~\cite{Gronemann16}, who reduced the number of required bends to one.

It will also be convenient for us to assume that every element of a poset is represented in the drawing with a distinct vertical coordinate.
We say that such a drawing is \defin{distinguishing}.
It is easy to transform an efficient drawing on a grid as described above to an efficient distinguishing drawing by doubling the width and height of the grid and by introducing at most one more bend to each of the edges.

We need a slightly more precise statement.
An \defin{st-graph} is an acyclic directed graph with exactly one source and exactly one sink, and additionally, the source and the sink are connected by an edge.
A directed graph admits an upward planar drawing if and only if it is a subgraph of a planar st-graph as proved by Di~Battista and Tamassia~\cite[Theorem~4.3]{DiBattista88}.
On the other hand, testing such a property is an \textsf{NP}-hard problem as proved by Garg and Tamassia~\cite{upwardtesting}.
In the first step of the algorithm of Di~Battista and Tamassia for finding an efficient drawing of a digraph, they refine the digraph to an st-graph by adding edges using the given combinatorial embedding.
Sometimes we want to preserve a certain minimal element of our poset on the exterior face of the drawing.
Note that this is possible as we have a lot of freedom in the refinement step.
We summarize what we need in the next statement.

\begin{theorem}[\cite{DiBattista88}]\label{thm:encoding-diagrams}
    %\piotr{minimal or maximal?}
    There exists a linear-time algorithm that given an $n$-element poset $P$, its combinatorial embedding witnessing a planar diagram of $P$, and a minimal element $x_0$ of $P$ that is in the exterior face in the witnessed drawing, returns a distinguishing efficient planar diagram of $P$ with $x_0$ in the exterior face.
    % There exists a linear-time algorithm that, given a poset $P$ and its combinatorial embedding witnessing a planar diagram of $P$, returns an efficient planar diagram of $P$.
\end{theorem}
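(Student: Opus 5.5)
The plan is to reduce \cref{thm:encoding-diagrams} to the Di~Battista--Tamassia machinery: first augment the cover graph to a planar st-graph whose source is $x_0$, then compute an efficient drawing of that st-graph, and finally post-process the drawing.

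\textbf{Step 1 (orient and add a super-source).} Orient every cover edge $uv$ with $u<v$ in $P$ upward, so the cover graph becomes an acyclic digraph $D$. The given combinatorial embedding $\calE$ witnesses an upward planar drawing of $D$ in which $x_0$ lies on the exterior face.

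\textbf{Step 2 (refine $D$ to a planar st-graph with source $x_0$).} I would follow the refinement step of \cite{DiBattista88}, which saturates faces by adding edges consistent with the embedding. Two additions are specific to our setting:
\begin{itemize}
\item add a new sink $t$ in the exterior face, joined to all maximal elements on the exterior boundary;
\item make $x_0$ the unique source by adding edges from $x_0$ to the other minimal elements that lie on the exterior face.
\end{itemize}
The second addition is legal because $x_0$ is minimal and on the exterior face. Routing these edges along the exterior face keeps them upward and planar, since they go from a low point upward. Any remaining sources inside inner faces get saturated within their faces, as in \cite{DiBattista88}. Finally, add the edge $x_0t$ along the exterior face.

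\textbf{Step 3 (draw and clean up).} Run the linear-time algorithm of \cite{DiBattista88} on this planar st-graph with the extended embedding. It returns a grid drawing with at most two bends per edge in which source and sink lie on the exterior face, so $x_0$ is on the exterior face. Deleting the added edges and $t$ removes edges but creates no crossings, and $x_0$ stays on the exterior face, because deleting edges only merges faces. To make the drawing distinguishing, double the grid in both directions and perturb the $y$-coordinates of vertices sharing a row into distinct half-rows, processing them in a consistent order. Each incident edge then needs at most one extra bend near the moved vertex to stay vertically monotone, which preserves planarity because the perturbation is smaller than the grid spacing.

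\textbf{Main obstacle.} I expect Step 2 to be the hard part: one must verify that every added edge respects the upward embedding and that the augmented graph is still upward planar with $x_0$ as its unique source. My first attempt would be a facial argument: in an upward planar embedding, each face has a bottom vertex and a top vertex, and one connects local sources to the face's bottom. The only delicate case is the exterior face, where I would insist that the bottom used is $x_0$ itself, which works exactly because $x_0$ is a minimal element on that face.
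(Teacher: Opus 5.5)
\textbf{There is a genuine gap in your Step~2.} You are trying to prove something stronger than the statement, and it is false in general. Being on the exterior face only means that \emph{some} angle at $x_0$ lies in the exterior face. That angle can be the small wedge between two upward edges of $x_0$, while the region directly below $x_0$ belongs to a bounded face. In that case no upward edge from $x_0$ can reach another minimal element through the exterior face. Worse, $x_0$ cannot be drawn as the lowest vertex at all while keeping the given embedding.

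Here is a concrete example. Take the poset with covers $z'<z$, $z<p$, $z<q$, $x_0<p$, $x_0<q$, $p<p'$, $q<q'$, drawn as follows:
\begin{itemize}
\item $z$ is the bottom of a V with arms $zp$ and $zq$;
\item $x_0$ sits inside the V, above $z$, with edges to $p$ and $q$;
\item $z'$, $p'$, $q'$ are pendant, below $z$ and above $p$, $q$ respectively.
\end{itemize}
Then $x_0$ meets the exterior face only through the upward-open wedge between $x_0p$ and $x_0q$, and $z'$ is the other minimal element on the exterior face.

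Now consider any upward drawing realizing this rotation system. The rotation system fixes the facial walks, so $z'$, $p'$, $q'$ lie on one face, hence on the same side of the cycle $C=x_0pzq$. Suppose $x_0$ were lower than $z$.
\begin{itemize}
\item Then $x_0$ would be the lowest vertex of $C$, so its reflex angle would lie outside $C$.
\item A cycle with two local minima and two local maxima has exactly one reflex switch angle inside (the standard angle count for upward drawings).
\item So exactly one of the reflex angles of $C$ at $z$, $p$, $q$ would be inside $C$.
\item These are precisely the angles containing the pendant edges to $z'$, $p'$, $q'$, which would therefore be separated by $C$, a contradiction.
\end{itemize}
Hence $z$ is below $x_0$ in every such drawing. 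So no planar st-graph extending the given embedding has source $x_0$; in particular, the edge $x_0z'$ cannot be added upward. Your justifications that the added edges ``go from a low point upward'' and that $x_0$ can serve as the bottom of the exterior face are exactly what fails.

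\textbf{What the statement actually needs.} It does not require $x_0$ to be the source, only that $x_0$ stays on the exterior face. This is what the paper means by the freedom in the Di~Battista--Tamassia refinement. Keep the upward embedding of the witnessed drawing together with its exterior face $F$, which contains $x_0$. Saturate the faces as in \cite{DiBattista88}, but place the new source, the new sink and the edge between them inside $F$. After drawing and deleting the added vertices and edges, the diagram of $P$ has the same faces with $F$ unbounded, so $x_0$ is on the exterior face even though it need not be lowest. Your Step~3 (doubling the grid and adding a bend to make vertical coordinates distinct) is fine and matches the paper.
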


\section{Reduction to a singly constrained instance: unfolding}
\label{sec:singly_constrained}
Given a poset \(P\) and an element \(x\) of \(P\), we say that a set \(I\subseteq \Inc(P)\) is \defin{singly constrained} by \(x\) in $P$ if for every \((a, b) \in I\) we have \(x \le b\) in \(P\).
In this section, we prove the following lemma that we restate for convenience.
This is an adjustment, algorithmization, and a slight improvement of the result of Joret, Micek, and Wiechert~\cite{JMW17}.

\lemreduction*

% The key tool needed to prove \Cref{lem:singly-constrained-reduction} is \q{unfolding} introduced by Streib and Trotter \cite{ST14}.
% They showed that for every poset \(P\) with a planar cover graph, there exist a poset \(Q\) with a planar cover graph, an element \(x\) in $Q$ such that \(Q - x\) is a subposet of \(P\) or \(P^{-1}\), and a set \(J \subseteq \Inc(Q)\) that is singly constrained by $x$ in $Q$ with $\dim(P) \le 2\dim_Q(J)$.
% See also~\cite[Subsection~3.6]{PD1}.
% Later, Joret, Micek, and Wiechert~\cite{JMW17} with a much more involved proof, showed that for every poset \(P\) with a planar diagram, there exist a convex subposet \(Q\) of $P$ or $P^{-1}$, an element \(x\) in the exterior face of $Q$, and a set \(J \subseteq \Inc(Q)\) that is singly constrained by $x$ in $Q$ with $\dim(P) \le 32\dim_Q(J)$.
% In this subsection, we introduce all the necessary tools, and we reprove (with an adjusted setup) and algorithmize the reduction of Joret, Micek, and Wiechert.
% Also, with a simple observation, we manage to reduce the multiplicative constant from $32$ to $24$.

Let $P$ be a poset. 
An \defin{upset} in \(P\) is a subset \(U\) of elements of \(P\) such that for
all elements \(x, y\) with \(x \le y\) in \(P\), if \(x \in U\), then \(y \in U\).
For each element \(x\) of \(P\), we denote by \defin{\(U_P[x]\)} the upset in \(P\) consisting
of all elements \(y\) such that \(x \le y\) in \(P\).
Dually, a \defin{downset} in \(P\) is a subset \(D\) of elements of \(P\) such that for
all elements \(x, y\) with \(x \le y\) in \(P\), if \(y \in D\), then \(x \in D\).
For each element \(y\) of \(P\), we denote by \defin{\(D_P[y]\)} the downset in \(P\) consisting
of all elements \(x\) such that \(x \le y\) in \(P\).
For a set of elements \(Z\) in \(P\), we denote by \defin{\(D_P[Z]\)} and \defin{\(U_P[Z]\)} the
unions \(\bigcup_{z \in Z} D_P[z]\) and \(\bigcup_{z \in Z} U_P[z]\), respectively.
Note that every upset and downset in a poset induces a convex subposet.

An \defin{unfolding} of a poset \(P\) is a family \((Z_i : i \in \NN)\) of pairwise disjoint subsets of the ground set of \(P\) such that for all nonnegative integers $i,j$ and elements \(x \in Z_i\), \(y \in Z_j\) with \(x \le y\) in \(P\), either
\(i = j\), or \(|i-j| = 1\) and \(i\) is even (and thus \(j\) is odd).
%For convenience, for every nonnegative integer $k$, we write $Z_{\geq k} = \bigcup_{i \geq k} Z_i$ and $Z_{< k} = \bigcup_{0 \leq i < k} Z_i$ where the unions go over integers.
Note that for each $k$, the set \(Z_k\) is a downset if \(k\) is even, and an upset if \(k\) is odd. 
%Moreover, $Z_{\geq k}$ and $Z_{< k+1}$ are downsets in~$P$ whenever $k$ is even, and they are upsets in~$P$ whenever $k$ is odd.
%In particular, $Z_{\geq k}$ and $Z_{< k+1}$ induce convex subposets of~$P$.
In particular, $Z_k \cup Z_{k+1}$ includes a convex subposet of $P$.

An unfolding of a poset is somewhat a parallel of a layering of a graph.\footnote{A \defin{layering} of a graph $G$ is a family $(Z_i : i\in \NN)$ of pairwise disjoint subsets of $V(G)$ such that for every edge $uv$ of $G$, there exists $i \in \NN$ such that $\{u,v\} \subset Z_i \cup Z_{i+1}$.
Given a connected graph \(G\) and \(u,v \in V(G)\), the \defin{distance} between $u$ and $v$ is the minimum number of edges in a path in \(G\) with endpoints \(u\)
and \(v\).
Now, given a connected graph $G$ and a vertex $v$ of $G$, the \bfs\defin{-layering} of $G$ from $v$ is the sequence
\((Z_i : i \in \NN)\), where \(Z_i\)
is the set of all vertices at distance \(i\) from \(v\) in \(G\) for every nonnegative integer $i$.
} 
Next, we discuss a parallel of \textsc{bfs}-layerings for posets.
Let \(P\) be a connected poset, and let \(z_0\) be a minimal
element of \(P\). 
Let \((Z_i : i\in \NN)\) be the \textsc{bfs}-layering of the comparability 
graph of \(P\) from \(z_0\), that is, \(Z_0 = \{z_0\}\), and for each positive integer $k$,
\[
Z_k =
\begin{cases}
  U_P[Z_{k-1}] \setminus (Z_0 \cup \cdots \cup Z_{k-1})&\textrm{if \(k\) is odd,}\\
  D_P[Z_{k-1}] \setminus (Z_0 \cup \cdots \cup Z_{k-1})&\textrm{if \(k\) is even.}  
\end{cases}
\]
Hence, \((Z_i : i\in \NN)\) is an unfolding of~$P$, and we call it the unfolding of \(P\) \defin{from} \(z_0\).
See an example of an unfolding from a minimal element in \cref{fig:unfolding}.

\begin{figure}[tp]
    \centering 
    \includegraphics[scale=1]{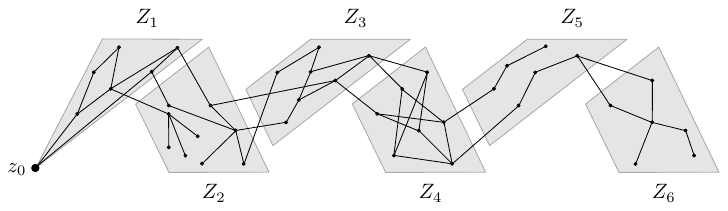} 
    \caption{The unfolding of a poset from $z_0$.
    The drawing is a diagram, that is, for two elements connected by an edge, the lower is less than the higher in the poset.
    In all the following figures, when a segment has no direction and it is not stated otherwise, the comparability is diagram-like (go upwards).
    } 
    \label{fig:unfolding} 
\end{figure} 

Let \((Z_i : i \in \NN)\) be an unfolding of~$P$.
A set \(I \subseteq \Inc(P)\) is
\defin{supported} by
\((Z_i : i \in \NN)\) if there exists a nonnegative integer \(k\) such that 
\[
    \bigcup_{(a,b) \in I}\set{a,b} \subseteq Z_{\geq k}\ \ \ \textrm{and}\ \ \ \begin{cases}\set{b : (a,b) \in I} \subseteq Z_k & \ \ \text{if} \ k \ \text{is odd},\\
        \set{a : (a,b) \in I} \subseteq Z_k & \ \ \text{if} \ k \ \text{is even}.\end{cases}
\]
% \[
%     \bigcup_{(a,b) \in I}\set{a,b} \subseteq Z_{k} \cup Z_{k+1}\ \ \ \textrm{and}\ \ \ \begin{cases}\set{b : (a,b) \in I} \subseteq Z_k & \ \ \text{if} \ k \ \text{is odd},\\
%         \set{a : (a,b) \in I} \subseteq Z_k & \ \ \text{if} \ k \ \text{is even}.\end{cases}
% \]
If $k$ is odd, then we say that \(I\) is \defin{supported
from below} by $(Z_i : i \in \NN)$, if $k$ is even, then we say that \(I\) is \defin{supported
from above} by $(Z_i : i \in \NN)$.
In both cases we say that \(k\) \defin{witnesses} that \(I\) is supported by $(Z_i : i \in \NN)$.
Similarly, we say that a pair $(a,b) \in \Inc(P)$ is supported by an unfolding (from below or above) witnessed by $k$ if $\{(a,b)\}$ is.
%See the set $J'$ in \Cref{fig:unfolding-contracting}, which is supported from below, witnessed by $k = 3$.

It is well-known that
given a graph $G$ and a
layering of $G$, 
if every layer induces a $k$-colorable graph, 
then $G$ is \(2k\)-colorable (one can use two disjoint
palettes of \(k\) colors each, one for even layers, and one for odd layers).
The counterpart of the above for posets is formulated in terms of the unfolding.
Given a connected poset \(P\) and an unfolding of \(P\),
if the union of any two consecutive sets in the unfolding induces a subposet of dimension at most \(d\),
then the dimension of \(P\) is at most \(2d\).
We use this fact to reduce the problem of bounding \(\dim_P(I)\) for any \(I \subseteq \Inc(P)\)
to bounding \(\dim_P(I')\) for some \(I' \subseteq I\) which is supported by the unfolding of \(P\).

\begin{proposition}[{\cite[Proposition 11]{PD1}}]\label{prop:unfolding}
    Let \(P\) be a poset, let $I \subset \Inc(P)$, and let \((Z_i : i \in \NN)\) be an unfolding of \(P\).
    For all positive integers $i$ and $j$, let
      \begin{align*}
          I_{0,i}&=\{(a, b) \in I:\textrm{\(a\in Z_i,b\in Z_k\) with \(i < k\) or (\(i=k\) and \(i\) is even)}\},\\
          I_{1,j}&=\{(a, b) \in I:\textrm{\(a\in Z_k,b\in Z_j\) with \(k > j\) or (\(k=j\) and \(j\) is odd)}\}.
      \end{align*}
    Then,
    \[\{I_{0,i} : i \in \NN\} \cup \{I_{1,j} : j \in \NN\}\]
    is a splitting partition of $I$ in $P$.
\end{proposition}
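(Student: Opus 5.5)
The plan is to verify the three defining conditions of a splitting partition separately. Disjointness and covering are bookkeeping; the real content is that every strict alternating cycle lies inside a single class.

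\emph{Partition.} For a pair $(a,b)\in I$, let $a\in Z_i$ and $b\in Z_k$; here I assume, as for the unfoldings from a minimal element used later, that the layers cover the ground set. Exactly one of $i<k$, $i>k$, $i=k$ holds. In the last case the parity of $i$ decides the class. So $(a,b)$ lies in $I_{0,i}$ if $i<k$, or if $i=k$ with $i$ even. Otherwise it lies in $I_{1,k}$. This gives $\bigcup_i I_{0,i}\cup\bigcup_j I_{1,j}=I$. Any two classes are disjoint for two reasons. The index of a type-$0$ class is forced to be the layer of $a$, and that of a type-$1$ class to be the layer of $b$. Moreover, the defining conditions of the two types are mutually exclusive. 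I will read the index range as $\NN$ rather than the positive integers, so that $Z_0$ is included.

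\emph{Splitting.} Let $((a_1,b_1),\dots,(a_k,b_k))$ be a strict alternating cycle contained in $I$. Write $\lambda(x)$ for the index of the layer containing $x$. The only tool is the unfolding axiom applied to each relation $a_t\le b_{t+1}$. If $\lambda(a_t)$ is odd, then $\lambda(b_{t+1})=\lambda(a_t)$. If $\lambda(a_t)$ is even, then $\lambda(b_{t+1})\in\{\lambda(a_t)-1,\lambda(a_t),\lambda(a_t)+1\}$. Dually, if $\lambda(b_{t+1})$ is even, then $\lambda(a_t)=\lambda(b_{t+1})$.

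My plan is to attach to each pair its class label: $(0,\lambda(a))$ or $(1,\lambda(b))$. I will then prove a transfer lemma stating that consecutive pairs $(a_t,b_t)$ and $(a_{t+1},b_{t+1})$ of the cycle always have the same label. Going around the cycle then shows that all pairs lie in one class.

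To prove the transfer lemma, I will fix $p=\lambda(a_t)$ and run through the few possibilities for $\lambda(b_t)$, $\lambda(b_{t+1})$ and $\lambda(a_{t+1})$. I will combine the displayed constraints, applied to $a_t\le b_{t+1}$ and to $a_{t+1}\le b_{t+2}$, with the facts that $a_{t+1}\parallel b_{t+1}$ and $a_t\parallel b_t$. I will also use that $Z_p$ is a downset for even $p$ and an upset for odd $p$. Incomparability combined with these up/down-closure properties should exclude the configurations in which the label changes.

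If a purely local check does not close, I will fall back on a global extremal argument. Let $m$ be the smallest layer index occurring among $\lambda(a_t),\lambda(b_t)$ over all $t$. I will show that all $a_t$, or all $b_t$, lie in $Z_m$ or in a single adjacent layer, according to the parity of $m$. From this the class is read off directly.

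The main obstacle is this case analysis, specifically pairs with $\lambda(a)=\lambda(b)$, where the parity convention decides the type. Such pairs must not mix, along a single cycle, with pairs that straddle two layers. I expect strictness of the cycle to be needed precisely here, to rule out a pair in $Z_i\times Z_i$ with $i$ odd sitting next to a pair in $Z_{i-1}\times Z_i$ ($i-1$ even).
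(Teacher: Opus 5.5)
Your transfer lemma is false. So is the proposition in the literal form you are trying to prove, namely one splitting partition of all of $I$, so neither the local case analysis nor the extremal fallback can close.

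Take $P$ on $a_1,a_2,b_1,b_2$ whose only strict relations are $a_1<b_2$ and $a_2<b_1$. Fix an odd $i\ge 3$, put $Z_{i-1}=\{a_2\}$ and $Z_i=\{a_1,b_1,b_2\}$, and leave all other layers empty. This is an unfolding: the only relation between different layers, $a_2<b_1$, goes from an even layer up to the adjacent odd one. Moreover, $((a_1,b_1),(a_2,b_2))$ is a strict alternating cycle. Yet $(a_1,b_1)\in I_{1,i}$ while $(a_2,b_2)\in I_{0,i-1}$. This is exactly the configuration you hoped strictness would exclude: a pair in $Z_i\times Z_i$ with $i$ odd next to a pair in $Z_{i-1}\times Z_i$. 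Strictness does not exclude it. Likewise, $Z_1=\{b_2\}$, $Z_2=\{a_1,a_2\}$, $Z_3=\{b_1\}$ puts the same two pairs in $I_{0,2}$ and $I_{1,1}$.

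As a sanity check, combine the literal statement with \cref{prop:dim_of_sum_incomparable_sets}. Every class here has at most one pair, so this would force $\dim_P(I)=1$, whereas $\dim_P(I)=2$.

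The paper gives no proof of this statement; it only cites it. Its use, however, shows what is true. The factor $2$ in $\dim_{P_{s-1}}(I_{s-1})\le 2\max\{\dots\}$ and the ``at most $2d$ sets'' in the reduction point to two separate splitting partitions. Namely, $\{I_{0,i}\}_i$ is a splitting partition of $I^0=\bigcup_i I_{0,i}$, and $\{I_{1,j}\}_j$ is a splitting partition of $I^1=\bigcup_j I_{1,j}$; hence $\dim_P(I)\le\dim_P(I^0)+\dim_P(I^1)$.

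The proof of this is a monotonicity argument rather than a label transfer.
\begin{itemize}
\item A pair $(a,b)$ lies in $I^0$ if and only if $\lambda(a)\le\lambda(b)$, with strict inequality when $\lambda(b)$ is odd.
\item A relation $a_t\le b_{t+1}$ gives $\lambda(b_{t+1})\le\lambda(a_t)+1$, with equality only when $\lambda(b_{t+1})$ is odd.
\item Combining the two, $\lambda(a_{t+1})\le\lambda(a_t)$ for every $t$. Around any alternating cycle contained in $I^0$, all $\lambda(a_t)$ therefore coincide, and the cycle lies in a single $I_{0,i}$.
\item Dually, in $I^1$ one gets $\lambda(b_t)\le\lambda(b_{t+1})$ for every $t$.
\end{itemize}
Strictness of the cycle is never used.

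Your partition paragraph (disjointness and covering, assuming the layers cover the elements and reading indices in $\NN$) is fine and carries over unchanged.
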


In the case of planar cover graph, the reduction to a singly constrained instance is simple: we just construct a prefix of an unfolding to a single vertex, see~\cite[Lemma~12]{PD1}.
In the case of posets with planar diagrams, contracting a prefix of an unfolding may violate
planarity of the diagram.

Let \(P\) be a connected poset with a cover graph \(G\) and a minimal element
\(z_0\). Using the unfolding \((Z_i : i \in \NN)\) of \(P\) from 
\(z_0\), we will define a particular type of a spanning tree of \(G\) rooted at \(z_0\).
Such a tree \(T\) will be called a \defin{zig-zag tree} rooted at \(z_0\) in \(P\), and will be defined inductively. If \(z_0\) is the only element of \(P\), then the tree
\(T\) is the trivial tree only consisting of \(z_0\). Otherwise, let \(k\) be the greatest positive integer such that \(Z_k \neq \emptyset\) (and thus \(Z_{\ge k+1} = \emptyset\)).
Let \(x\in Z_k\) such that if \(k\) is odd, then \(x\) is maximal in \(P\), and if \(k\) is even, then \(x\) is minimal in \(P\).
Note that the unfolding of \(P - x\) from \(z_0\) is \((Z_0, \ldots, Z_{k-1}, Z_k \setminus\{x\}, \emptyset, \emptyset, \ldots)\). Let \(T'\) be a zig-zag tree rooted at \(z_0\) in \(P - x\).
Then, we extend \(T'\) to \(T\) by picking as the parent of \(x\) any neighbour \(x'\) of \(x\) in \(G\).
Observe that \(x' \in Z_{k-1} \cup Z_k\), and by our choice of \(x\),
if \(k\) is odd, then \(x' < x\) in \(P\), and
if \(k\) is even, then \(x < x'\) in \(P\).

\begin{obs}\label{obs:parent_function_gives_tree}
    Let $P$ be a connected poset, let $z_0$ be an element of $P$, let $(Z_i : i \in \NN)$ be the unfolding of $P$ from $z_0$, and let $T$ be a zigzag tree rooted at \(z_0\) in \(P\).
    Let $y$ be an element of $P$, and let $k$ be an integer with $y \in Z_k$.
    Let $z_0[T]y = v_0\cdots v_m$. 
    There exist integers $\ell_0,\dots,\ell_k$ such that for each $j \in [k]$, the vertex set of the intersection of $z_0[T]y$ and $Z_j$ is $\set{v_{\ell_{j-1}+1},\dots,v_{\ell_{j}}}$, and 
        \begin{align*}
            v_{\ell_{j-1}}&<\dots<v_{\ell_j} \textrm{ in $P$ \ if $j$ is odd,}\\
            v_{\ell_{j-1}}&>\dots>v_{\ell_j} \textrm{ in $P$ \ if $j$ is even.}
        \end{align*}
    In particular, $v_{\ell_{j-1}}$ is a minimal (resp.\ maximal) element of the vertex set of the intersection of $z_0[T]y$ and $Z_{j-1}\cup Z_{j}$ if $j$ is odd (resp.\ even).
\end{obs}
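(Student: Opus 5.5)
We prove the observation by induction on the number of elements of $P$, following the inductive construction of the zig-zag tree $T$. If $P$ consists only of $z_0$, the only vertex is $y=z_0\in Z_0$, the path is the single vertex $v_0$, and we set $\ell_0=0$; the statement is vacuous. Otherwise, let $k'$ be the largest index with $Z_{k'}\neq\emptyset$. Let $x\in Z_{k'}$ be the element removed in the construction, with parent $x'$. Let $T'$ be the zig-zag tree of $P-x$ rooted at $z_0$.

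First we check that $P-x$ is again connected. In the cover graph $G$ of $P$, every neighbour of $x$ lies in $Z_{k'-1}\cup Z_{k'}$. Suppose $k'$ is odd, so $x$ is maximal. Each neighbour of $x$ is then below $x$, and it is also comparable to some element of $Z_{k'-1}$, because $x$ itself is above some element of $Z_{k'-1}$. The even case is symmetric. Hence removing $x$ does not disconnect the comparability graph. For the cover graph, we use that the paper already asserts that $T'$ exists, so we may rely on that. The unfolding of $P-x$ from $z_0$ is the truncated one, as noted in the construction, so the layer index of every $y\neq x$ is unchanged. For $y\neq x$ the path $z_0[T]y$ equals $z_0[T']y$, and the claim follows from the induction hypothesis.

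It remains to handle $y=x$, where $k=k'$. The path is $z_0[T']x'$ followed by $x$. We have $x'\in Z_{k-1}\cup Z_k$; by the choice of $x$, if $k$ is odd then $x'<x$, and if $k$ is even then $x<x'$.

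\textbf{Case $x'\in Z_k$.} By induction, $z_0[T']x'$ has indices $\ell_0,\dots,\ell_k$ with $v_{\ell_k}=x'$. We append $x$ and set the new $\ell_k:=\ell_k+1$. The monotone chain on $Z_k$ is extended by one element in the correct direction: increasing for odd $k$, since $x'<x$, and decreasing for even $k$.

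\textbf{Case $x'\in Z_{k-1}$.} By induction, $z_0[T']x'$ has indices $\ell_0,\dots,\ell_{k-1}$ with $v_{\ell_{k-1}}=x'$. We set $\ell_k:=\ell_{k-1}+1$. The block for $Z_k$ is then the single vertex $x$, and the required chain $v_{\ell_{k-1}}<v_{\ell_k}$ (odd $k$), respectively $>$ (even $k$), is exactly the relation between $x'$ and $x$.

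The final ``in particular'' statement is read off directly. In the odd case, the vertices of the path lying in $Z_{j-1}\cup Z_j$ form the descending chain ending at $v_{\ell_{j-1}}$, followed by the ascending chain starting from $v_{\ell_{j-1}}$. So $v_{\ell_{j-1}}$ is below all of them, hence minimal. The even case is dual.

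The only delicate step is the base of the decomposition, namely the initial index $\ell_0=0$ with $Z_0=\{z_0\}$. We also have to respect the convention that the block of $Z_j$ is $\{v_{\ell_{j-1}+1},\dots,v_{\ell_j}\}$. This forces every block to be nonempty, which holds because a vertex of the path cannot jump from $Z_{j-1}$ to $Z_{j+1}$: each tree edge joins consecutive or equal layers. Beyond that, the argument is routine bookkeeping.
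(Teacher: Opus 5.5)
Your proof is correct and follows the argument the paper leaves implicit; the paper records the statement as an observation read off from the recursive construction, with no written proof. You induct along the construction, use that every non-root vertex hangs from a parent in the same or the preceding layer (below the child when the child's layer is odd, above it when even), and carry the strengthened invariant $\ell_0=0$, $\ell_k=m$. One small flaw: your reason that $P-x$ is connected is a non sequitur, since two elements below $x$ need not be comparable, so \q{because $x$ itself is above some element of $Z_{k'-1}$} proves nothing. The correct reason is that every $u\in Z_j$ with $j\ge 1$ is comparable to an element of $Z_{j-1}$, so $u$ reaches $z_0$ through strictly lower layers avoiding $x$, and connectivity of the comparability graph is the same as connectivity of the cover graph. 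As you note, this is in any case guaranteed by $T'$ being given, so the argument stands.
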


\begin{figure}[tp]
    \centering 
    \includegraphics[scale=1]{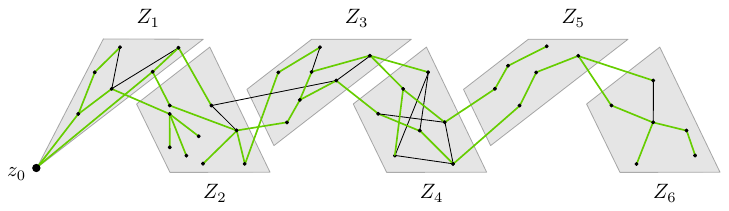} 
    \caption{
    Consider the same unfolding as in \cref{fig:unfolding}.
    For each element, we arbitrarily choose the parent according to the rules. 
    We mark the edges between elements and their parents in green.
    As a result, we obtained a spanning tree of the cover graph of a poset.
    } 
    \label{fig:zigzag-tree} 
\end{figure}

\begin{proposition}\label{lem:planar-subposet}
    Let \(P\) be an $n$-element poset with a fixed efficient planar diagram. 
    Let \(Q\) be a convex subposet of \(P\) and let $H$ be the cover graph of $Q$.
    Let \(x\) be an element of \(P\) which lies in the interior of the exterior face of the drawing of \(H\) inherited from the diagram of \(P\).
    Then the subposet of \(P\) induced by all elements of \(Q\) and \(x\) has a planar diagram with \(x\) on the exterior face. 
    Moreover, the combinatorial embedding of such a diagram can be computed in polynomial time.
\end{proposition}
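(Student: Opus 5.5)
The plan is to build the diagram of $Q+x$ geometrically from the given diagram of $P$: keep the drawing of $H$ untouched and draw the new cover edges incident to $x$ inside the exterior face of $H$. Let $R$ denote the subposet induced by the elements of $Q$ and $x$.

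\textbf{Step 1: the cover graph of $R$.} Since $Q$ is convex in $P$, the cover graph of $Q$ is $H$, a subgraph of the cover graph of $P$. In $R$, a pair $u,v\in Q$ is a cover in $R$ exactly when it is a cover in $Q$: an intermediate element could only be $x$, and $u<x<v$ is harmless for edges of $H$ because we only need to check that covers of $Q$ survive. This needs a small fix if $x$ lies between two elements of $Q$. In the intended applications $x$ is extremal, so I would first handle the case where $x$ is comparable to elements of $Q$ only from one side. The general case can be reduced to that, or the diagram can be adjusted by deleting edges of $H$ that become non-covers; deleting edges preserves planarity and keeps $x$ exterior. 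The new edges are $xq$ for $q$ minimal in $U_R[x]\cap Q$ and maximal in $D_R[x]\cap Q$.

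\textbf{Step 2: realizing the new edges as monotone curves.} For a new cover $x<q$ with $q\in Q$, take a witnessing path $W$ from $x$ to $q$ in the cover graph of $P$; it is a vertically monotone curve in the diagram of $P$. Let $q'$ be the first vertex of $W$ lying in $Q$. By convexity, every element of $W$ after $q'$ lies in $Q$, since it lies between $q'$ and $q$. The initial segment of $W$ from $x$ up to $q'$ avoids $H$ except at its endpoint. This segment is connected and starts in the open exterior face of $H$, so it stays in the exterior face of $H$ and reaches $q'$ on the boundary of that face. By minimality of $q$ among elements of $Q$ above $x$, we get $q'=q$. So the part of $W$ from $x$ to $q$ is an upward curve drawn entirely in the exterior face of $H$.

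\textbf{Step 3: the main obstacle — making the new curves pairwise non-crossing.} Different curves from Step 2 may cross one another, since they may share vertices of $P\setminus Q$ or cross at such vertices. To fix this, I would take the union of all the segments from Step 2. This union is a subgraph of the cover graph of $P$ and hence plane. It is a directed graph in which every vertex not in $Q$ is reachable from $x$ by an upward path, or reaches $x$ by one for the downward covers. I would contract it to a star centered at $x$ as follows. Choose an upward spanning tree of the ``up'' part rooted at $x$, for instance using leftmost paths, and similarly a tree for the ``down'' part. Then replace each root-to-leaf path by a curve drawn in a thin neighbourhood of the tree. Curves that share an initial segment are pushed apart slightly in the order given by the rotation at branch vertices. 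A thin neighbourhood of a tree is a disc, and the drawn tree paths are monotone, so the perturbed curves stay vertically monotone and pairwise disjoint apart from $x$. Because they lie in the exterior face of $H$ and that face is a single unbounded face, $x$ remains on the exterior face of the resulting drawing.

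\textbf{Step 4: computation.} The diagram of $P$ is efficient, so all curves are polygonal with polynomially many bends. The tree from Step 3, the cyclic order of the new edges around $x$, and their insertion positions at each $q$ (read from the rotation of the tree edge entering $q$) can therefore be computed in polynomial time. Together with the inherited rotations of $H$, these give the combinatorial embedding of the new diagram.
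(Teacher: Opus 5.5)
Your proposal is correct and follows essentially the same route as the paper: you keep the inherited drawing of $H$, observe that a witnessing path from $x$ to each minimal element of $Q$ above $x$ stays in the exterior face of $H$ and meets $Q$ only at its endpoint, merge these paths into a tree rooted at $x$ (leftmost paths, being pairwise bottom-consistent, are a clean way to do this), and draw the new edges in a thin strip around that tree. The one superfluous part is the hedge in Step~1: since $x$ lies in the open exterior face of $H$, it is not an element of $Q$, so convexity already forbids $u<x<v$ with $u,v\in Q$; this is exactly how the paper reduces at the outset to the case where $x$ is comparable to $Q$ from one side only.
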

\begin{proof}
        Since $Q$ is a convex subposet of $P$, $x$ can not be simultaneously below some elements of $Q$ and above some other elements of $Q$.
        We assume that $x$ is not above elements of $Q$. 
        %(In fact, we will use the claim only in this case.)
        The proof for the other case is symmetric.
        
        Let $M$ be the set of minimal elements among all $q$ in $Q$ with $x < q$ in $P$.
        Note that $M$ induces an antichain in $P$.
        Moreover, all the elements in $M$ lie on the boundary of the exterior face of $Q$.
        Consider a witnessing path between $x$ and an element of $M$ in $P$.
        Each such witnessing path is contained in the exterior face of $Q$ and is disjoint from $Q$ except its last element in $M$. 
        By a simple iterative process, we construct a family $T$ of such paths that contains one path for each $q\in M$ and that the union of all paths in $T$ gives a tree as a subgraph of the cover graph of $P$.    
        In order to obtain a planar diagram of the subposet of $P$ induced by $x$ and all the elements of $Q$, we start with the diagram of $Q$ inherited from the diagram of $P$.
        We put $x$ in the same point as in the diagram of $P$.
        Finally, we draw diagram edges between $x$ and $M$ in a non-crossing way. We can achieve this by picking a sufficiently small \(\eps > 0\) and drawing the edge between \(x\) and a vertex \(y \in M\) in the strip consisting of all points at most \(\eps\) left or right of the path between \(x\) and \(y\) in \(T\). 
        See \Cref{fig:drawing-along-tree}.
        Note that since we add only edges incident to $x$, the element $x$ remains on the exterior face. This completes the proof of the existence of a planar diagram of the subposet of $P$ induced by elements of $Q$ and $x$ with $x$ on the exterior face. 
        
        In order to compute the combinatorial embedding of such a diagram, we take the combinatorial embedding of the efficient planar diagram of $P$ (given on the input) and compute the inherited combinatorial embedding of the diagram of $Q$. Moreover tracing the paths in the family $T$ in the diagram of $P$, we obtain a total left-to-right order on them which gives us the order of edges incident to $x$ in the new diagram. 
            \begin{figure}[tp]
                \centering 
                \includegraphics[scale=1]{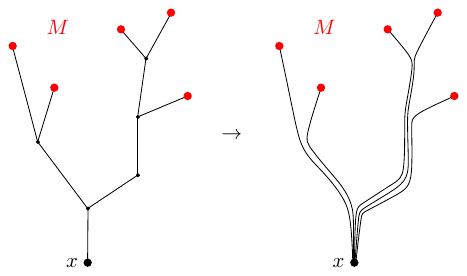} 
                \caption{The tree of edges in the cover graph, where $x$ is the root, and elements in $M$ are the leaves can be transformed into vertically monotone edges connecting $x$ with each element of $M$ preserving planarity of the drawing.} 
                \label{fig:drawing-along-tree} 
            \end{figure} 
    \end{proof}

% \begin{lemma}\label{PlanarDiagramReduction}
%     Let $P$ be a poset with a planar diagram. 
%     There exists a poset $Q$, a set $J\subseteq \Inc(Q)$, and an element $x$ in $Q$ such that
%     \begin{enumerate}
%         \item $Q$ has a planar diagram with $x$ on the boundary of the exterior face, \label{item:planar_diagram_reduction:drawing}
%         \item $Q$ is a subposet of $P$ or a subposet of the dual of $P$, \label{item:planar_diagram_reduction:subposet}
%         \item $J$ is singly constrained by $x$ in $Q$, and        \label{item:planar_diagram_reduction:constrained}
%         \item $\dim(P) \leq 24\dim_Q(J)$.
%         \label{item:planar_diagram_reduction:dimension_bound}
%         %\item \michal{We also need:} \(\kelly(Q) \le \kelly(P)\).
%         %\label{item:planar_diagram_reduction:kelly_bound}
%     \end{enumerate}
% \end{lemma}
\begin{proof}[Proof of \Cref{lem:singly-constrained-reduction}]
    % Assume that we have already designed the required algorithm for connected posets.
    % By \Cref{prop:dim-components}, we may extend this algorithm to disconnected posets.
    Let $(n,P,I,\calE)$ be an input to the algorithm.
    Using the algorithm from \Cref{thm:encoding-diagrams}, we fix an efficient planar diagram of $P$.
    Several times during the proof, we apply the given $f$-good algorithm $\algo{A}$ solving \PDD problem.
    Recall that it outputs either a covering by reversible sets of a given subset of incomparable pairs or a Kelly subposet of order $n$ based on the given subset.
    Each time, we apply $\algo{A}$ to the same $n$, a subposet of $P$, and a subset of $I$.
    Thus, if the output of $\algo{A}$ is a Kelly subposet, we may also output it.
    Therefore, for simplicity, we assume otherwise, i.e.\ that the output from $\algo{A}$ is always a covering.

    We describe a branching procedure that the algorithm executes, producing many new instances of the problem in a richer setting.
    The number of these instances will be polynomial in $|P|$.
    The substantial part of the proof of the lemma deals with solving the problem for such instances.
    Finally, we use \Cref{prop:dim-components,prop:unfolding} to combine the results returned for the instances.
    We use the convention that if we do not quantify objects in \q{max}, then we maximize over all possible valid choices of the algorithm.
    
    %For each \(s \in [3]\), the algorithm chooses a pair \((P_s, I_s)\) such that \(P_s\) is a connected convex subposet of \(P_{s-1}\) and \(I_s \subseteq I_{s-1} \cap \Inc(P_s)\).
    %We also ensure that for all valid choices (according to the rules below) of the algorithm \piotr{flagged!} of $(P_s,I_s)$, we have either $\dim_P(I) \leq 16$ or \(\dim(P) \le 2^s \cdot \max_{(P_s,I_s)} \dim_{P_s}(I_s)\).

    First, the algorithm chooses $P_0$ to be a component of $P$ and sets $I_0 = \Inc(P_0) \cap I$.
    Let \(s \in [3]\), and suppose that the algorithm has already chosen \(P_{s-1}\) and \(I_{s-1}\).
    Let \(y_s\) denote an element of \(P_{s-1}\) which is lowest in the diagram of \(P\).
    Let \((Z_i^s : i\in \NN)\) be the unfolding of \(P_{s-1}\) from \(y_s\).
    The algorithm chooses a nonnegative integer $k_s$ with $Z_{k_s}^s \neq \emptyset$.
    Let $I_s'$ be the set of all pairs in $I_{s-1}$ supported by $(Z_i^s : i \in \NN)$, which is witnessed by $k_s$.
    Let $P_{s}'$ be the subposet of $P_{s-1}$ induced by \(Z_{\ge k_s}^s\).
    The algorithm chooses a component $P_s$ of $P_{s}'$, and let $I_s = I_s' \cap \Inc(P_s)$.
    Note that
    \begin{align*}
        \dim_{P_{s-1}}(I_{s-1}) &\leq 2\max \{\dim_{P_s'}(I_s') : (P_s',I_s') \} && \text{by \Cref{prop:unfolding}}\\
        &\leq 2 \max (\{ \dim_{P_s}(I_s) : (P_s,I_s) \} \cup \{2\}) && \text{by \Cref{prop:dim-components},}
    \end{align*}
    where the first max goes over all valid choices of $k_s$, and the second max goes over all valid choices of $(k_s,P_s)$.
    A more precise application of \Cref{prop:unfolding} yields that, given a covering of $I_s'$ by reversible sets in $P_s'$ of size at most $d$ for each possible $(I_s',P_s')$ given by a choice of $k_s$ by the algorithm, we can efficiently compute a covering of $I_{s-1}$ by reversible sets in $P_{s-1}$ with at most $2d$ sets.
    Subsequently, a more precise application of \Cref{prop:dim-components}, yields that, given a covering by reversible sets of $I_s$ in $P_s$ of size at most $d$ for each possible $(I_s,P_s)$ given by a choice of $P_s$ by the algorithm, we can efficiently compute a covering of $I_{s}'$ by reversible sets in $P_{s}'$ with at most $\max\{d,2\}$ sets.
    
    Note that in each step, the algorithm has polynomial choices in terms of $|P|$.
    Combining and simplifying the inequalities for each $s \in [3]$, we obtain,
    \[\dim_P(I) \leq 2\cdot 2 \cdot 2 \cdot \max  (\{ \dim_{P_3}(I_3) : (P_3,I_3) \} \cup \{2\}).\]
    Again, we will need a more precise statement for the sake of the algorithm.
    Simply combining the steps for each $s \in [3]$ and applying~\Cref{prop:dim-components} to go from $(P,I)$ to $(P_0,I_0)$, we obtain the following property.
    Given a covering of $I_3$ by reversible sets in $P_3$ of size at most $d$ for each possible $(P_3,I_3)$ given by each branch $(P_0,k_1,P_1,k_2,P_2,k_3,P_3)$ of choices of the algorithm, we can efficiently compute a covering of $I$ by reversible sets in $P$ with at most $\max\{8d,16\}$ sets. 
    
    With a simple trick of grouping some of the branches, we will replace \q{$8$} with \q{$6$}.
    Consider a possible branch of the algorithm: $(P_0,k_1,P_1,k_2,P_2,k_3,P_3)$ and respective sets $I_1$, $I_2$, and $I_3$ defined there.
    We record a 3-element bitstring $(\gamma_1,\gamma_2,\gamma_3)$ where for every $s \in [3]$, $\gamma_s = 0$ if $I_s$ is supported from below in $(Z_i^s : i \in \NN)$ ($k_s$ is even) and $\gamma_s = 1$ if $I_s$ is supported from above in $(Z_i^s : i \in \NN)$ ($k_s$ is odd).
    We assign $(\gamma_1)$ to $I_1$, $(\gamma_1,\gamma_2)$ to $I_2$, and $(\gamma_1,\gamma_2,\gamma_3)$ to $I_3$.
    
    For a bitstring $\gamma$ of length at most $3$, let $\calI_\gamma$ be the family of all $I_{|\gamma|}$ produced by the algorithm, which is assigned $\gamma$.
    Let $\Gamma = \{(0,0),(1,1),(0,1,0),(0,1,1),(1,0,0),(1,0,1)\}$.
    This set has two important properties.
    First, for every $3$-element bitstring, there exists $\gamma \in \Gamma$ such that $\gamma$ is a prefix of this bitstring.
    Second, each $\gamma \in \Gamma$ contains two equal entries.
    The first property implies that the following statement is true.
    Given a covering of $J \in \calI_\gamma$ for every $\gamma \in \Gamma$ by reversible sets in $P$ of size at most $d$ of choices of the algorithm, we can efficiently compute a covering of $I$ by reversible sets in $P$ with at most $\max\{|\Gamma|\cdot d,16\} = \max\{6d,16\}$ sets.

    In the next step, for each $J \in \calI_\gamma$ for every $\gamma \in \Gamma$, and for every corresponding poset $Q$ (if $J = I_2$ in some branch of the algorithm, then $Q = P_2$ and if $J = I_3$, then $Q = P_3$), we will construct a covering of $J$ by reversible sets in $Q$ of size at most $4f(n)$.
    %Note that by the monotonicity of $f$, $f(Q,J) \leq f(n)$.
    As announced, this will give a covering of $I$ by reversible sets in $P$ with at most $\max\{24f(n),16\}$ sets.
    Combinatorially, we obtain
        \[\dim_P(I) \leq 24 f(n).\]
    
    % After completing the branching process, the algorithm inspects each choice of $(P_3,I_3)$ and tests whether $\dim_{P_3}(I_3)=1$ (or in other words, whether $I_3$ is reversible). 
    % By \Cref{prop:detect-reversibility} this can be done efficiently. % and if it is true the covering of $I_3$ by reversible sets in $P$ is just $\{I_3\}$.
    % There are two cases.
    % If for all valid choices of $(P_3,I_3)$, we have that $I_3$ reversible, then using \Cref{prop:dim-components,prop:unfolding} the algorithm returns a covering of $I$ by at most $16$ reversible sets.
    %Since $f(n)\geq1$, we have $16 \leq 24f(n) \leq 24$.
    
    Fix a branch of the algorithm $(P_0,k_1,P_1,k_2,P_2,k_3,P_3)$. 
    Recall that the algorithm has defined in this branch the following objects: $y_S$, $(Z_i^s : i \in \NN)$, $I_s'$, $P_s'$, and $I_s$ for each $s \in [3]$. 
    We will first state some general observations.
    %Note that in some of the branches we never use $P_3$ or $I_3$.
    For every $s \in [3]$, let \(G_s\) be the cover graph of \(P_s\), let \(T_s\) be a zig-zag tree rooted at \(y_s\) in \(P_{s-1}\), and let $Z_s = Z_{k_s}^s$.
By construction, we have
  \begin{enumerateNumeo}
      \setcounter{enumi}{-1}
      \item $y_s$ lies in the exterior face of the diagram of $P_{s-1}$ inherited from the diagram of $P$. \label{item:eo0}
      %\item for every $(a,b) \in I_s$, we have $a,b \in Z_{\geq k_s}^s$. \label{item:eo1}
  \end{enumerateNumeo}  
  Moreover by \cref{obs:parent_function_gives_tree}, if $k_s$ is odd
  \begin{enumerateNumo}
      \setcounter{enumi}{0}
      \item for every $u \in Z_s$, the graph $y_s[T_s]u \cap G_s$ is a witnessing path in $P$ contained in $Z_s$ with the greatest element being $u$;\label{item:k_s_odd_first}
      \item the subpath $W$ of $y_s[T_s]u$ restricted to 
      $D_{P_{s-1}}[Z_s]$ 
      %$Z_{k_s-1}^s \cup Z_{k_s}^s$ 
      contains an element $w$ such that for every $v$ in $W$, we have $w \leq v$ in $P$;
      \item for every $(a,b) \in I_s$, we have $b \in Z_s$; and\label{item:k_s_odd_last}
  \end{enumerateNumo}
  if $k_s$ is even
  \begin{enumerateNume}
      \setcounter{enumi}{0}
      \item for every $u \in Z_s$, the graph $y_s[T_s]u \cap G_s$ is a witnessing path in $P$ contained in $Z_s$ with the least element being $u$;\label{item:k_s_even_first}
      \item the subpath $W$ of $y_s[T_s]u$ restricted to 
      $U_{P_{s-1}}[Z_s]$
      %$Z_{k_s-1}^s \cup Z_{k_s}^s$ 
      contains an element $w$ such that for every $v$ in $W$, we have $w \geq v$ in $P$;
      \item for every $(a,b) \in I_s$, we have $a \in Z_s$.\label{item:k_s_even_last}
  \end{enumerateNume}

  For every \(s \in [3]\), the set \(I_s\)
  is supported by the unfolding \((Z_i^s : i \in \NN)\)
  either from below ($k_s$ is odd and \ref{item:k_s_odd_first}-\ref{item:k_s_odd_last} hold) or from above ($k_s$ is even and \ref{item:k_s_even_first}-\ref{item:k_s_even_last} hold).
  By the pigeonhole principle, there
  exist \(\alpha,\beta\in[3]\) with \(\alpha < \beta\) such that
  the sets \(I_\alpha\) and \(I_\beta\) are supported from the same side by
  the corresponding unfoldings.
  Fix such \(\alpha\) and \(\beta\) with $\beta$ minimal.
  In other words, the set $I_\beta$ is a member of $\calI_\gamma$ for some $\gamma \in \Gamma$. 
  For each \(s \in \{0, 1,2, 3\}\), let \((P_s^*,I_s^*)=(P_s,I_s)\)
  if the sets \(I_\alpha\) and \(I_\beta\) are supported from below, and
  let \((P_s^*,I_s^*)=(P_s^{-1},I_s^{-1})\)
  if the sets \(I_\alpha\) and \(I_\beta\) are supported from above.
  Note that $G_s$ is the cover graph of $P_s^*$.
  Let also $P^* = P$ if the sets are supported from below and let $P^* = P^{-1}$ otherwise.
  We fix the diagram of $P^*$: if $P^*=P$ then we take the fixed diagram of $P$, 
  and if $P^*=P^{-1}$ then we take the fixed diagram of $P$ and rotate it by $180^\circ$ in the plane.
  For each $s \in [3]$, we consider $P_s^*$ with a fixed diagram inherited from the diagram of $P^*$.
  % \michal{I have been consistently using ``inherited drawing/diagram'', we should stick to one version (I am fine with induced)}
  % \jedrzej{Ok.}
  By \ref{item:eo0}, by \ref{item:k_s_odd_first}-\ref{item:k_s_odd_last} in the former case and by \ref{item:k_s_even_first}-\ref{item:k_s_even_last} in the latter case, for each $s \in \{\alpha,\beta\}$, we obtain
  \begin{enumerateNumU}
      \setcounter{enumi}{-1}
      \item $y_s$ lies in the exterior face of the diagram of $P^*_{s-1}$;  \label{items:extracted_from-unfoldings:y_s_on_exterior_face_u0}
      %\item for every $(a,b) \in I_s^*$, we have $a,b \in Z_{\geq k_s}^s$;\label{items:extracted_from-unfoldings:a_b_in_suffix}
      \item for every $u \in Z_s$, the graph $y_s[T_s]u \cap G_s$ is a witnessing path in $P^*$ contained in $Z_s$ with the greatest element being $u$;\label{items:extracted_from-unfoldings:vertical_paths}
      \item the subpath $W$ of $y_s[T_s]u$ restricted to 
      $D_{P^*}[Z_s]$
      %$Z_{k_s-1}^s \cup Z_{k_s}^s$ 
      contains an element $w$ such that for every $v$ in $W$, we have $w \leq v$ in $P^*$;\label{items:extracted_from-unfoldings:V_paths}
      \item for every $(a,b) \in I_s^*$, we have $b \in Z_s$.\label{items:extracted_from-unfoldings:bs_in_right_cell}
  \end{enumerateNumU}
  Let $J = I_\beta^*$, $Q = P_\beta^*$, 
  and $G_Q = G_\beta$.
  See an example in \cref{fig:unfoldings-alpha-beta}.
  Moreover, combining \ref{items:extracted_from-unfoldings:y_s_on_exterior_face_u0} and 
 \ref{items:extracted_from-unfoldings:bs_in_right_cell} for both $s=\alpha$ and $s=\beta$, we have
  \begin{enumerateNumU}
      \setcounter{enumi}{3}
      \item $y_\alpha$ and $y_\beta$ lie in the exterior face of the diagram of $P^*_{\beta-1}$; \label{items:extracted_from-unfoldings:y_s_on_exterior_face}
      \item for every $(a,b) \in J$, we have $b \in Z_\alpha \cap Z_\beta$.\label{items:extracted_from-unfoldings:bs_in_right_cell_combined}
  \end{enumerateNumU}

    The algorithm will construct four subsets $J_1,J_2,J_3,J_4$ of $J$ covering $J$, four subposets $Q_1,Q_2,Q_3,Q_4$ of $P^*$, four combinatorial embeddings $\calE_1,\calE_2,\calE_3,\calE_4$, and four elements $x_1,x_2,x_3,x_4$ of $P^*$.
    Moreover, for each $\eps \in \{3,4\}$, the algorithm will construct a splitting partition $\{J_\eps^i : i \in [m]\}$ of $J_\eps$ and elements $\{x_\eps^i : i \in [m]\}$ of $Q_\eps$.
    The required properties are that for every tuple $(n,Q',J',x',\calE')$ of the form either $(n,Q_\eps,J_\eps,x_\eps,\calE_\eps)$ for $\eps \in \{1,2\}$ or $(n,Q_\eps,J_\eps^i,x_\eps^i,\calE_\eps)$ for $\eps \in \{3,4\}$ and $i\in[m]$ we have,
    \begin{enumerate}
        \item  $J' \subset \Inc(Q')$ and $x'$ is in $Q'$,
        \item $\calE'$ is a combinatorial embedding of $Q'$,
        \item there exists a planar diagram of $Q'$ with $x'$ on the boundary of the exterior face witnessed by $\calE'$,
        \item $J'$ is singly constrained by $x'$ in $Q'$.
    \end{enumerate}
    Having all these objects, the algorithm calls a polynomial-time $f$-good algorithm solving \PDD given by the assumption for each instance of the form $(n,Q',J',x',\calE')$ as above.
    The results can be efficiently combined to a covering of $J$ by $4f(n)$ reversible sets in $Q$.
    Note that some of the sets have to be \q{reversed} in the final covering of $I$ by reversible sets in $P$.
    This will complete the proof and the construction of the algorithm.

    \begin{figure}[tp]
        \centering 
        \includegraphics[scale=1]{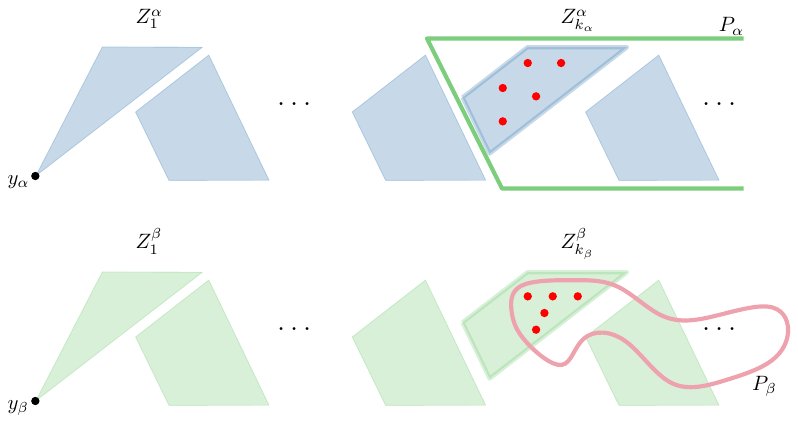} 
        \caption{
            Red elements depict elements $b$ in pairs $(a,b) \in J$.
            The top unfolding is an unfolding of $P_{\alpha-1}$ and the bottom unfolding is an unfolding of $P_{\beta-1}$.
            We show the case, where the sets $I_\alpha$ and $I_\beta$ are supported from below (in particular, $P^* = P$). 
            The poset $Q$ is a convex subposet of $P$.
        } 
        \label{fig:unfoldings-alpha-beta} 
    \end{figure} 
    
  For each $s \in \set{\alpha,\beta}$, define a subtree \(T_s'\) of \(T_s\) rooted at $y_s$ as
    \[T_s' = \bigcup_{(a, b) \in J} y_s [T_s] b.\]
  % Moreover, by \ref{T1},
  % \[V(T_s') \cap V(G_s) = Z_{k_s}^s.\]
  %\michal{the current form of \ref{T1} doe not imply this. gotta fix.}
  Let \(H = T_{\beta}' \cup G_Q\).
  Since $Q$ is a convex subposet of $P_{\beta-1}^*$, by \ref{items:extracted_from-unfoldings:y_s_on_exterior_face}, the element $y_\beta$ lies in the exterior face of the diagram of $Q$.
  Recall that $G_Q$ is the cover graph of $Q$ and the leaves of $T_\beta'$ are the elements of $Q$ by \ref{items:extracted_from-unfoldings:bs_in_right_cell_combined}.
  Therefore, there is a facial walk \(\calQ\) along the exterior face of $G_Q$ in the fixed diagram of $Q$ of the form
    \[\calQ = v_0 e_0 \cdots e_{t-1} v_t\]
  so that there is \(\zeta \in [t]\) with the facial walk along the exterior face of \(H\)
  being the concatenation of \(y_\beta [T_\beta'] v_\zeta\), \(v_\zeta \calQ v_t\) and
  \(v_t [T_\beta'] y_\beta\). See \Cref{fig:unfoldingH}
  Fix such $\calQ$ and $\zeta$.
  Recall that by definition of a facial walk $v_0 = v_t$. 
  %\fig.
  \begin{figure}
      \centering
      \includegraphics{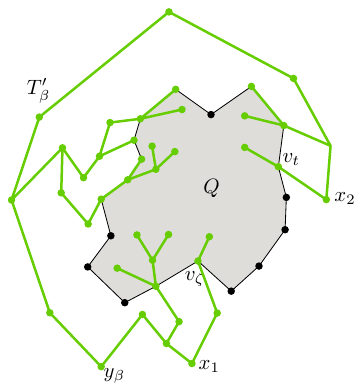}
      \caption{The facial walk along the exterior face of graph \(H = T_{\beta}' \cup G_Q\) is the concatenation of walks \(y_\beta[T_{\beta}']v_\zeta\), \(v_\zeta \mathcal{Q} v_t\) and \(v_t[T_\beta']y_\beta\). Only the elements in \(T_\beta'\) and \(\mathcal{Q}\) are drawn on the figure.}
      \label{fig:unfoldingH}
  \end{figure}

  Let $s \in \set{\alpha,\beta}$.
  For each element $q$ in $Q$, let $v^s(q)$ be the first vertex of $y_s[T_s]q$ in $Q$.
  For emphasis, we state the following straightforward claim.
  \begin{claim}\label{claim:planar_diagram_reduction:green_entries}
      For every element $q$ in $Q$ such that $q$ is a vertex of $T_\beta'$, we have $v^\beta(q) \in \{v_0,\dots,v_\zeta\}$.
  \end{claim}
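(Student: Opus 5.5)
Let $q$ be an element of $Q$ that is a vertex of $T_\beta'$. I argue topologically: the path $y_\beta[T_\beta]q$ starts at $y_\beta$, which lies in the exterior face of the diagram of $Q$. It then first enters $Q$ at $v^\beta(q)$. I want to show that this entry point lies on the portion $v_0\calQ v_\zeta$ of the exterior facial walk of $G_Q$ that is also traced by the exterior facial walk of $H$.

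\emph{Step 1: the prefix avoids $Q$.} Write $W = y_\beta[T_\beta]v^\beta(q)$. By definition of $v^\beta(q)$, all vertices of $W$ other than its last one lie outside $Q$. Since the diagram of $P^*$ is planar, the interior of $W$ is disjoint from the drawing of $G_Q$. Hence $W$ minus its endpoint lies in the interior of the exterior face of $G_Q$, so $v^\beta(q)$ lies on the boundary of that exterior face, that is, it appears in $\calQ$.

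\emph{Step 2: $W \subseteq T_\beta'$.} Since $q$ is a vertex of $T_\beta'$ and $T_\beta'$ is the union of the root paths $y_\beta[T_\beta]b$, it is closed under taking the root path of any of its vertices. Therefore $W$ is a subpath of $T_\beta'$, and it contains no edge of $G_Q$.

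\emph{Step 3: locating the entry point.} Consider the exterior face of $H = T_\beta'\cup G_Q$. The path $W$ joins $y_\beta$ to $v^\beta(q)$ through the exterior face of $G_Q$, so $W$ bounds the exterior region of $H$ from one of its sides. It follows that $v^\beta(q)$ is a vertex of $\calQ$ that is visited by the facial walk of $H$ at a moment when this walk is on $\calQ$ or at a junction with $T_\beta'$. By the chosen decomposition, the facial walk of $H$ traverses $y_\beta[T_\beta']v_\zeta$, then $v_\zeta\calQ v_t$, then $v_t[T_\beta']y_\beta$. The vertices $v_{\zeta+1},\dots,v_{t-1}$ of $\calQ$ are therefore exactly those exposed to the exterior face of $H$ but not touched by $T_\beta'$ from outside. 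If $v^\beta(q) = v_i$ with $\zeta < i < t$ (and $v_i\neq v_0$), then the $T_\beta'$-path $W$ would reach $v_i$ from the exterior of $G_Q$. This would split the portion of the exterior face of $H$ bordering $v_\zeta\calQ v_t$, contradicting the fact that that portion of the facial walk is a single contiguous stretch of $\calQ$. Since $v_t=v_0$, we conclude $v^\beta(q)\in\{v_0,\dots,v_\zeta\}$.

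\emph{Main obstacle.} The delicate point is Step 3, because facial walks may revisit vertices. I handle this by choosing the index representation appropriately: the claim only asks that $v^\beta(q)$ equal some $v_i$ with $i\le\zeta$. So whenever $v^\beta(q)$ occurs in $\calQ$ at an index $i\le\zeta$, we are done. It remains to treat a vertex occurring only at indices strictly between $\zeta$ and $t$. For such a vertex, every edge of $T_\beta'$ ending at it would have to lie inside the face angle corresponding to those indices, and that angle belongs to the part of the exterior face of $H$ bounded only by $\calQ$. This contradicts the fact that $W$ enters there from $y_\beta$.
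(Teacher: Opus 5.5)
Your proposal is correct and follows the reasoning the paper leaves implicit; the paper calls the claim straightforward from the chosen decomposition of the exterior facial walk of $H$, and your corner argument is what makes it precise: the last edge of $W=y_\beta[T_\beta]v^\beta(q)$ lies in a corner of the exterior face of $G_Q$ at $v^\beta(q)$, each such corner is an occurrence $v_i$ in $\calQ$, and for $\zeta<i<t$ that corner contains no edge of $H$, since the exterior facial walk of $H$ passes through it as $e_{i-1}v_ie_i$. Drop the assertions in the first half of Step~3 that $W$ bounds the exterior face of $H$ and that $v^\beta(q)$ is visited by the exterior facial walk of $H$: they are false in general (a branch of $T_\beta'$ lying between two other branches enters $\calQ$ at a vertex bordering only bounded faces of $H$), and your final corner argument does not need them.
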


    By \ref{items:extracted_from-unfoldings:V_paths}, 
    we fix an element $x_1$ of $y_\beta[T_\beta']v_\zeta$ such that for all $v$ in the restriction of $y_\beta[T_\beta']v_\zeta$ to $D_{P^*}[Z_\beta]$, 
    we have $x_1 \leq v$ in $P^*$.
    Similarly, we fix an element $x_2$ of $y_\beta[T_\beta']v_t$ such that for all $v$ in the restriction of $y_\beta[T_\beta']v_t$ to $D_{P^*}[Z_\beta]$, 
    we have $x_2 \leq v$ in $P^*$.
    %See \fig.

    Let $\eps \in \{1,2\}$, let $Q_\eps$ be the subposet of $P^*$ induced by all the elements of $Q$ and $x_\eps$, and let
        \[
            J_\eps = \{(a, b) \in I : \textrm{\(x_{\eps} \le b\) in \(P^*\)}\}.
        \]
    Let $\calE_\eps$ be a combinatorial embedding obtained from \Cref{lem:planar-subposet} witnessing a planar diagram of $Q_\eps$ with $x_\eps$ in the exterior face.
    We have \(J_\eps \subseteq \Inc(Q_\eps)\), $x_\eps$ is in $Q_\eps$, and $J_\eps$ is singly constrained by $x_\eps$ in $Q_\eps$ (by definition), as desired.

  Let $B'$ be the set of all the elements $q$ in $Q$ that are vertices of $V(T_\alpha') \cap V(T_\beta')$ but $x_1 \not\leq q$~in~$P^*$ and $x_2 \not\leq q$~in~$P^*$.
    %\[B' = V(G_\beta) \cap V(T_\alpha') \cap V(T_\beta') \setminus U_{P^*}[\{x_1, x_2\}].\]
  Note that $B' \subset Z_\alpha \cap Z_\beta$, and observe that if  \((a, b) \in J \setminus (J_1 \cup J_2)\), then \(b \in B'\).
  %See \fig.
  
  For each \(q \in B'\), let $m(q)$ be a minimal element of the subposet of $P^*$ induced by  
    \[ \{u \in B' : u \leq q \text{ in } Q\}. \]
  %choose an element $m(b) \in B'$ that is a minimal element in the subposet of $P^*$ induced by $B'$ such that \(m(b) \le b\) in \(P^*\).
  %For each $b \in B'$ and \(s \in \{\alpha, \beta\}\), let \(v^s(b)\) denote the first element on \(y_s [T_s] m(b)\) in \(Q\), and let \(M_s(b) = v^s(b) [T_s] m(b)\).
  For each $q \in B'$ and $s \in \{\alpha,\beta\}$, let \(M_s(q) = v^s(m(q)) [T_s'] m(q)\).
  Since $Q$ is a convex subposet of $P_\alpha^*$, and by \ref{items:extracted_from-unfoldings:vertical_paths}, for each \(q \in B'\) and \(s \in \{\alpha, \beta\}\), the path \(M_s(q)\) is a witnessing path in \(Q\) contained in $Z_s$ with the greatest element $m(q)$.
  %See \fig.
  For emphasis, we state the following easy observation.
  
  \begin{claim}\label{IntersectionMsMs}
      Let \(q,q'\in B'\) and \(s\in\{\alpha,\beta\}\). 
      If $M_s(q)$ and $M_s(q')$ intersect, and $u$ is their greatest common element, then $v^s(m(q))[M_s(q)]u = v^s(m(q'))[M_s(q')]u$.
      In particular, $v^s(m(q)) = v^s(m(q'))$.
  \end{claim}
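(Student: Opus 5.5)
The plan is to reduce everything to the tree structure of $T_s$: both $M_s(q)$ and $M_s(q')$ are segments of paths in the tree $T_s$ that start at the root $y_s$. Set $p = y_s[T_s]m(q)$ and $p' = y_s[T_s]m(q')$. Since $T_s$ is a tree, two paths starting at the root have as intersection exactly a common prefix. So there is a vertex $w$ such that $p \cap p' = y_s[T_s]w$, and after $w$ the two paths are vertex-disjoint.

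First, I will identify $u$ with a vertex on this common prefix. Every common element of $M_s(q)$ and $M_s(q')$ lies in $p\cap p' = y_s[T_s]w$. By the discussion before the claim, $M_s(q)$ is a witnessing path in $Q$ whose greatest element is $m(q)$. Hence along $p$, from $v^s(m(q))$ towards $m(q)$, the elements strictly increase in $P^*$. The same holds for $q'$. Consequently, among the common vertices, the greatest one in $P^*$ is the one farthest from the root $y_s$ along the tree. Both $M_s(q)$ and $M_s(q')$ contain $u$, and both are subpaths of root paths, so each of them contains the whole segment of $y_s[T_s]u$ that starts at its initial vertex $v^s(\cdot)$.

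Second, I will show that the two initial vertices coincide. By definition, $v^s(m(q))$ is the first vertex of $p$ that lies in $Q$. Since $u \in M_s(q)$ lies on $p$ at or after $v^s(m(q))$, this vertex is also the first vertex of the prefix $y_s[T_s]u$ lying in $Q$. The identical argument shows that $v^s(m(q'))$ is the first vertex of $y_s[T_s]u$ lying in $Q$. Therefore $v^s(m(q)) = v^s(m(q'))$.

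Finally, both $v^s(m(q))[M_s(q)]u$ and $v^s(m(q'))[M_s(q')]u$ are the unique path in $T_s$ between the same two vertices, namely the segment of $y_s[T_s]u$ from that common first $Q$-vertex to $u$. Hence the two paths are equal.

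The only mildly delicate point is the first step: justifying that "greatest common element" in $P^*$ corresponds to "farthest from the root" in the tree. This follows from the monotonicity of $M_s(\cdot)$, namely property (u1) applied through the convexity of $Q$. I expect no real obstacle.
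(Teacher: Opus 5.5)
Your proposal is correct. It is the tree argument the paper leaves implicit when it states this claim as an easy observation without proof. Since $M_s(q)$ and $M_s(q')$ are segments of the root paths $y_s[T_s]m(q)$ and $y_s[T_s]m(q')$, both $v^s(m(q))$ and $v^s(m(q'))$ are the first vertex in $Q$ of the common prefix $y_s[T_s]u$, and both subpaths equal $v^s(m(q))[T_s]u$. Your first step, identifying $u$ as the common vertex farthest from the root, is not needed: your second and third steps work for any common vertex $u$, so the monotonicity of $M_s(\cdot)$ plays no role.
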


  % \begin{claim}
  %     For each \(b \in B'\) and \(s \in \{\alpha, \beta\}\), the path \(M_s(b)\) is a witnessing path from \(v^s(b)\) to \(m(b)\) in \(Q\).
  % \end{claim}
  % \begin{proof}
  % If \(s = \beta\), the claim follows
  % directly from \ref{T1}.
  % Suppose then that \(s = \alpha\). By
  % \ref{T1},
  % \(y_\alpha [T_\alpha'] m(b) \cap G_\alpha\) is a witnessing path in \(P_\alpha\) whose greatest element is \(m(b)\).
  % Since \(P_\beta^*\) is a convex subposet of
  % \(P_\alpha^*n\) which contains \(m(b)\),
  % the intersection \(y_\alpha [T_\alpha'] m(b) \cap G_\beta\)
  % must be a witnessing path in \(P_\beta\)
  % whose greatest element is \(m(b)\).
  % But \(y_\alpha(b) [T_\alpha'] m(b) \cap G_\beta
  % = v^\alpha(b) [T_\alpha'] m(b)\), so the claim
  % holds.    
  % \end{proof}

  \begin{claim}\label{IntersectionMalphaMbeta}
      Let \(q,q'\in B'\).
      If $M_\alpha(q)$ and $M_\beta(q')$ intersect, then $m(q) = m(q')$ and no other element lies in the intersection of the two paths.
      %\(V(M_\alpha(b)) \cap V(M_\beta(b')) \neq \emptyset\), then \(V(M_\alpha(b)) \cap V(M_\beta(b')) = \{m(b)\} = \{m(b')\}\).
  \end{claim}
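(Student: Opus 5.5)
Suppose $M_\alpha(q)$ and $M_\beta(q')$ share an element $u$. The plan is to locate $u$ in both unfoldings and show that it must be the common top element $m(q)=m(q')$.

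First, the layering facts. By the discussion preceding \Cref{IntersectionMsMs}, $M_\alpha(q)$ is a witnessing path in $Q$ contained in $Z_\alpha$ with greatest element $m(q)$. Likewise $M_\beta(q')$ is a witnessing path in $Q$ contained in $Z_\beta$ with greatest element $m(q')$. Hence $u \in Z_\alpha \cap Z_\beta$, $u \le m(q)$ in $P^*$ and $u \le m(q')$ in $P^*$.

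Second, I show that $u\in B'$, which is the crucial step. We need three facts.
\begin{itemize}
\item $u$ is a vertex of $T_\alpha'$ and of $T_\beta'$. This holds because $u$ lies on $y_\alpha[T_\alpha']m(q)$ and on $y_\beta[T_\beta']m(q')$, and both $T_\alpha'$ and $T_\beta'$ are closed under taking root paths.
\item $u$ is an element of $Q$. This holds because $M_\beta(q')$ lies in $Q$ by definition.
\item $x_\eps\not\le u$ in $P^*$ for each $\eps\in\{1,2\}$. Otherwise $x_\eps\le u\le m(q)$ in $P^*$, contradicting $m(q)\in B'$.
\end{itemize}
So $u\in B'$ and $u\le m(q)$ in $Q$. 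Since $m(q)$ is a minimal element of $\{w\in B' : w\le q\}$ and $u\le m(q)\le q$, minimality forces $u=m(q)$. The same argument with $q'$ gives $u=m(q')$. Hence every common element of the two paths equals $m(q)=m(q')$, so the intersection consists of this single element.

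The main obstacle is the minimality step. It needs the relation $u\le m(q)$ to hold in $Q$ rather than only in $P^*$. This is fine because $Q$ is an induced (convex) subposet of $P^*$ containing both elements. It also needs $m(q)$ to be minimal within the set of elements of $B'$ below $q$, which holds since $u\le m(q)\le q$ puts $u$ in that set.

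This argument does not seem to use planarity, nor the fact that $\alpha\neq\beta$ beyond the membership $u\in Z_\alpha\cap Z_\beta$. That is consistent with the claim being stated as an easy observation.
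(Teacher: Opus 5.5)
Your proof is correct and is essentially the paper's argument. You show that a common element $u$ lies in $B'$: it is a vertex of both $T_\alpha'$ and $T_\beta'$, it is an element of $Q$, and it is not above $x_1$ or $x_2$ because $u \le m(q) \le q$. Minimality of $m(q)$ and $m(q')$ then forces $u = m(q) = m(q')$. The only cosmetic difference is that you derive $x_1, x_2 \not\le u$ from $m(q) \in B'$, whereas the paper derives it from $q \in B'$.
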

  \begin{proofclaim}
    Assume that $M_\alpha(q)$ and $M_\beta(q')$ intersect in $u$.
    It follows that $u$ is a vertex in $V(T_\alpha') \cap V(T_\beta')$.
    Since $u \leq m(q) \leq q$ in $P^*$ and $q \in B'$, we have $x_1 \not\leq q$ in $P^*$ and $x_2 \not\leq q$ in $P^*$, therefore, $x_1 \not\leq u$ in $P^*$ and $x_2 \not\leq u$ in $P^*$, and hence $u \in B'$.
    Since $u \leq m(q) \leq q$ in $Q$ and $u \leq m(q') \leq q'$ in $Q$, we obtain $u = m(q) = m(q')$ by definition of $m(q)$ and $m(q')$.
  \end{proofclaim}

  \cref{IntersectionMalphaMbeta} implies that for every \(q \in B'\) the paths \(M_\alpha(q)\) and \(M_{\beta}(q)\) intersect only in \(m(q)\).
  Therefore, \(M_\alpha(q)\cup M_{\beta}(q)\)
  is a path between \(v^\alpha(m(q))\) and \(v^\beta(m(q))\) in $Q$, and we denote it by
  \(M(q)\).
  %Observe that for every \(b \in B'\), we have \(m(m(b)) = m(b)\), so \(M(m(b)) = M(b)\).

  The path $M(q)$ has two endpoints: $v^{\alpha}(m(q))$ and $v^{\beta}(m(q))$. 
  \cref{claim:planar_diagram_reduction:green_entries} locates $v^{\beta}(m(q))$ 
  within the section $\{v_0, \ldots, v_\zeta\}$ of the walk $\mathcal{Q}$ around the exterior face of $Q$.
  %one of the endpoints of $M(q)$ on the walk $\calQ$ for every $q \in B'$.
  In the next claim, we locate the other endpoint in the complementary section of $\mathcal{Q}$.
  %See \fig.

  % Recall that \(H = T_\beta' \cup G_\beta\),
  % \(\calQ = v_0 e_0 \cdots e_{t-1} v_t\)
  % is the facial walk along the exterior face
  % of \(G_\beta\), and the facial walk along the exterior face of \(H\)
  % is the concatenation of \(y_\beta[T_\beta']v_\zeta\), \(v_\zeta [\calQ] v_t\) and
  % \(v_t[T_\beta'] y_\beta\).
  % every vertex which lies on the boundary of the exterior face of \(H\)
  % but not on the boundary of the exterior face of \(G_\beta\)
  % lies on one of the paths \(y_\beta[T_\beta'] v_\zeta\) or \(y_\beta [T_\beta'] v_t\)
  % the only edges of \(\calQ\) on the exterior face of \(H\)
  % are \(e_0, \ldots, e_{\zeta-1}\).
  % Therefore,
  % \[\{v^\beta(b): b \in B'\} \subseteq \{v_0, \ldots, v_\zeta\}.\]
  
  \begin{claim}
  \label{claim:planar_diagram_reduction:green_and_blue_entries}
    For every \(q \in B'\), we have \(v^\beta(m(q)) \in \{v_0, \ldots, v_\zeta\}\) and
    \(v^\alpha(m(q)) \in \{v_{\zeta+1}, \ldots, v_{t-1}\}\).
  \end{claim}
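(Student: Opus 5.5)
The first part, \(v^\beta(m(q)) \in \{v_0,\dots,v_\zeta\}\), is immediate from \cref{claim:planar_diagram_reduction:green_entries}, since \(m(q) \in B' \subseteq V(T_\beta')\) is a vertex of \(T_\beta'\) lying in \(Q\). So the content is the location of \(v^\alpha(m(q))\). The plan is to show two things: that \(v^\alpha(m(q))\) lies on the boundary of the exterior face of \(G_Q\), so it is some \(v_i\); and that it cannot lie in the section \(v_0,\dots,v_\zeta\), which (with \(v_t=v_0\)) leaves \(\{v_{\zeta+1},\dots,v_{t-1}\}\).

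\textbf{Step 1: \(v^\alpha(m(q))\) is some \(v_i\).} The path \(y_\alpha[T_\alpha]m(q)\) starts at \(y_\alpha\). By \ref{items:extracted_from-unfoldings:y_s_on_exterior_face}, \(y_\alpha\) lies in the exterior face of the diagram of \(P^*_{\beta-1}\), hence in the exterior face of \(G_Q\). The vertex \(v^\alpha(m(q))\) is the first vertex of this path in \(Q\). Since \(Q\) is an induced convex subposet, the initial segment before \(v^\alpha(m(q))\) uses no vertex of \(Q\). It also crosses no edge of \(G_Q\), by planarity of the diagram of \(P^*\), because every edge of \(G_Q\) is an edge of the diagram of \(P^*\). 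So this segment stays in the exterior face of \(G_Q\), and its endpoint \(v^\alpha(m(q))\) is on the boundary of that face, i.e.\ equals some \(v_i\).

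\textbf{Step 2: exclude \(\{v_0,\dots,v_\zeta\}\).} Suppose instead that \(v^\alpha(m(q)) = v_i\) with \(0 \le i \le \zeta\) (including \(v_t=v_0\)). Look at the subgraph \(H = T_\beta' \cup G_Q\). By the choice of \(\calQ\) and \(\zeta\), the exterior face of \(H\) is bounded by \(y_\beta[T_\beta']v_\zeta\), then \(v_\zeta\calQ v_t\), then \(v_t[T_\beta']y_\beta\). Therefore the section \(v_0\cdots v_\zeta\) of \(\calQ\) is enclosed by \(H\): it lies inside the closed region bounded by the walk \(y_\beta[T_\beta']v_0\), then \(v_0\calQ v_\zeta\), then \(v_\zeta[T_\beta']y_\beta\). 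Any curve in the exterior face of \(G_Q\) that reaches \(v_i\) from far away must therefore meet \(y_\beta[T_\beta']v_\zeta\cup y_\beta[T_\beta']v_t\) outside \(Q\). Apply this to the initial segment of \(y_\alpha[T_\alpha]m(q)\) from \(y_\alpha\) to \(v_i\). Then either \(y_\alpha\) lies inside that enclosed region (Step 3 below), or the segment shares a vertex \(u\notin Q\) with one of the two tree paths \(y_\beta[T_\beta']v_\zeta\), \(y_\beta[T_\beta']v_t\). In the second case, the aim is a contradiction with the choice of \(x_1\) (respectively \(x_2\)) and with \(m(q)\in B'\). By \ref{items:extracted_from-unfoldings:V_paths} applied to \(s=\alpha\) and convexity, \(u\) is in \(D_{P^*}[Z_\beta]\). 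Hence \(x_\eps \le u\) for the relevant \(\eps\in\{1,2\}\). Also \(u \le m(q)\): on \(y_\alpha[T_\alpha]m(q)\), the part inside \(D_{P^*}[Z_\alpha]\) ends with an ascending witnessing path into \(m(q)\) by \ref{items:extracted_from-unfoldings:vertical_paths}. This gives \(x_\eps\le m(q)\le q\), contradicting \(q\in B'\). To make this work, I would refine the argument to track exactly which portion of each path lies in the final layer.

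\textbf{Step 3, the main obstacle.} The remaining work is to make Step 2 rigorous: justify the topological separation, and handle the case where \(y_\alpha\) itself sits inside the region enclosed by the \(T_\beta'\)-paths. For the latter, I would use that \(y_\beta\) is lowest in \(P_{\beta-1}\) and lies in the exterior face of \(P^*_{\beta-1}\), together with \ref{items:extracted_from-unfoldings:y_s_on_exterior_face}. These should force \(y_\alpha\) to the outside, since \(y_\alpha\) is also on the exterior face of the diagram of \(P^*_{\beta-1}\), which contains the enclosing paths. A closed-curve argument via \cref{obs:region_containment} would then finish the separation step.
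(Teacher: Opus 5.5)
Your overall route matches the paper's. The point $y_\alpha$ lies outside $H$, so the segment of $y_\alpha[T_\alpha]m(q)$ ending at $v^\alpha(m(q))$ must meet $y_\beta[T_\beta']v_\zeta$ or $y_\beta[T_\beta']v_t$ in some element $v$; then $x_\eps\le v\le m(q)\le q$ contradicts $q\in B'$. As written, though, the proposal is not yet a proof.

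\textbf{The step you call the main obstacle is one line.} $H=T_\beta'\cup G_Q$ is a subgraph of the cover graph of $P^*_{\beta-1}$, drawn in the same diagram. So the exterior face of the diagram of $P^*_{\beta-1}$ is contained in the exterior face of $H$, and by \ref{items:extracted_from-unfoldings:y_s_on_exterior_face} $y_\alpha$ lies there. Hence the case ``$y_\alpha$ inside the enclosed region'' never arises.

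\textbf{Drop the enclosed region altogether.} The closed walk $y_\beta[T_\beta']v_0$, $v_0\calQ v_\zeta$, $v_\zeta[T_\beta']y_\beta$ is in general not a simple closed curve: the two tree paths share an initial segment, and a facial walk can repeat vertices and edges. So neither the Jordan curve theorem nor \cref{obs:region_containment} applies to it. For the same reason, do not parametrize by an index $i\le\zeta$, since a vertex can occur on $\calQ$ on both sides of $\zeta$. Work with the negation of the claim and the exterior face of $H$ directly. If $v^\alpha(m(q))\notin\{v_{\zeta+1},\dots,v_{t-1}\}$, the description of the facial walk of $H$ leaves two cases. Either $v^\alpha(m(q))$ itself lies on one of the two tree paths (as for $v_\zeta$ and $v_t$), or it is not on the boundary of the exterior face of $H$. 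In the latter case the segment from $y_\alpha$ must meet that boundary. All its vertices except the last are outside $Q$, so it shares no edge with $G_Q$. It therefore meets the boundary at a vertex of $y_\beta[T_\beta']v_\zeta\cup y_\beta[T_\beta']v_t$. Your Step~1 is then unnecessary.

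\textbf{Do not require $v\notin Q$.} When $v^\alpha(m(q))$ lies on a tree path, the segment can reach it through the exterior face of $H$ without touching the tree paths anywhere else. So the dichotomy in your Step~2 fails there. Just take $v=v^\alpha(m(q))$ in that case.

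\textbf{The order-theoretic chain is misjustified.} \ref{items:extracted_from-unfoldings:V_paths} for $s=\alpha$ says nothing about $Z_\beta$, and you assert $v\in D_{P^*}[Z_\beta]$ before you have $v\le m(q)$. The correct order is as follows.
\begin{itemize}
\item $v$ is a vertex of $T_\beta$, hence an element of $P^*_{\beta-1}\subseteq P^*_\alpha$, so it lies on $y_\alpha[T_\alpha]m(q)\cap G_\alpha$.
\item By \ref{items:extracted_from-unfoldings:vertical_paths}, using $m(q)\in B'\subseteq Z_\alpha\cap Z_\beta$, this is a witnessing path with greatest element $m(q)$. Thus $v\le m(q)\le q$ in $P^*$.
\item Since $m(q)\in Z_\beta$, we get $v\in D_{P^*}[Z_\beta]$.
\item The defining property of $x_1$ (respectively $x_2$), obtained from \ref{items:extracted_from-unfoldings:V_paths} for $s=\beta$, then gives $x_\eps\le v$.
\end{itemize}
No further tracking of layers is needed.
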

  \begin{proofclaim}
      The first part follows directly from \cref{claim:planar_diagram_reduction:green_entries}, thus, we proceed with the proof of the second part.
      First, note that since $H$ is a subgraph of the cover graph of $P_{\beta-1}^*$, by \ref{items:extracted_from-unfoldings:y_s_on_exterior_face}, the element \(y_\alpha\) lies in the exterior face of $H$.
      Suppose to the contrary that for some \(q \in B'\), we have
      \(v^{\alpha}(m(q)) \not \in \{v_{\zeta+1}, \ldots, v_{t-1}\}\).
      It follows that the path \(y_\alpha [T_\alpha] v^\alpha(m(q))\) intersects the boundary of the exterior face of $H$ in an element of \(y_\beta [T_\beta'] v_\zeta\) or
      \(y_\beta [T_\beta'] v_t\), call this element~$v$.
      Note that $v$ is an element of $P_{\beta-1}^*$, hence, it is an element of $P_\alpha^*$, and so, $v$ is a vertex of $y_\alpha[T_\alpha] v^\alpha(m(q)) \cap G_\alpha$.
      Therefore, by \ref{items:extracted_from-unfoldings:vertical_paths}, we have $v \leq v^\alpha(m(q))$ in $P^*$, and so, $v \leq q$ in $P^*$.
      On the other hand, $q \in B'$, hence, $q \in Z_\beta$, thus, $v \in D_{P^*}[Z_\beta]$.
      It follows that by \ref{items:extracted_from-unfoldings:V_paths}, $x_1 \leq v$ in $P^*$ or $x_2 \leq v$ in $P^*$. 
      Thus $x_1 \leq q$ in $P^*$ or $x_2 \leq q$ in $P^*$, which contradicts $q \in B'$ and ends the proof.
  \end{proofclaim}

    Let $M$ be a path in $Q$ with endpoints $v,v'$ such that $v \in \{v_0,\dots,v_\zeta\}$ and $v' \in \{v_{\zeta+1},\dots,v_{t-1}\}$.
    We define a region associated with $M$.
    Let $i$ be the greatest integer in $\{0, \ldots, \zeta\}$ with \(v_{i} = v\), and let $j$ be the least integer in $\{\zeta+1, \ldots, t-1\}$ with \(v_{j} = v'\).
    Let \(\calQ' = v_{i} e_{i} \cdots e_{j-1} v_{j}\) be a subwalk of \(\calQ\), and let $\calQ''$ be the closed walk obtained by concatenating $\calQ'$ with the walk along $M$ starting in $v'$ and ending in~$v$.
    Finally, let $R(M)$ be the region bounded by the closed walk $\calQ''$, and let $I(M) = \{i,\dots,j\}$.
    For convenience, let us divide all the elements of $Q$ into three pairwise disjoint parts.
    The first part is all the elements of $M$.
    Among the remaining elements, let $R^{+}(M)$ be all the elements in $R(M)$, and let $R^{-}(M)$ be all the elements not in $R(M)$.
    Note that it can be checked efficiently if a given element is in a given region in the plane of the form as above.
    %See \fig.
    
    We state the following straightforward topological observations for emphasis.
    Let $M$ be a path in $Q$ with endpoint in $\{v_0,\dots,v_\zeta\}$ and the other in $\{v_{\zeta+1},\dots,v_{t-1}\}$.
    Let $W$ be a path in $Q$, let $w,w'$ be the endpoints of $W$, and let $v$ be an element of $Q$ that lies in the exterior face of $Q$.
    \begin{enumerateNumt}
        %\item If $w \in R^{+}(M)$ and $w' \in R^{-}(M)$, then $W$ intersects $M$.
        %    \label{items:topological_observations:intersect_M}
        \item If $W$ is disjoint from $M$, then either $w,w' \in R^{+}(M)$ or $w,w' \in R^{-}(M)$.
            \label{items:topological_observations:dont_intersect_M}
        \item If $v \in R^{+}(M)$, then $v = v_\eta$ for some $\eta \in I(M)$.
        %there is $\eta \in \{i_q,\dots,j_q\}$ such that $v = v_\eta$.
            \label{items:topological_observations:disjoint_with_R-}
        \item If $v \in R^{-}(M)$, then $v \neq v_\eta$ for every $\eta \in I(M)$.
        %for every integer $\eta$ with $v = v_\eta$, we have $\eta \in \{0,\dots,i_q\} \cup \{j_q,t-1\}$.
            \label{items:topological_observations:disjoint_with_R+}
        %\item If $\{i_{q'},\dots,j_{q'}\} \subset \{i_{q},\dots,j_{q}\}$ and $M(q') \subset R(q)$, then $R(q') \subset R(q)$.
        %    \label{items:topological_observations:force_inclusion_on_Rs}
    \end{enumerateNumt}

    By \Cref{claim:planar_diagram_reduction:green_and_blue_entries}, for each $q \in B'$, the region $R(M(q))$ and the interval $I(M(q))$ are well-defined.
    Since we will use the above notions only in such a situation, let $R(q) = R(M(q))$ and $I(q) = I(M(q))$.
    Note that $R(q)$ and $I(q)$ depend only on $m(q)$ and not on $q$ itself.
    Next, we discuss how the regions $R(q)$ and the intervals $I(q)$ relate to each other.

    \begin{claim}\label{claim:diagram_reduction:regions_ordered}
      Let \(q, q' \in B'\). If \(q' \in R^{+}(q)\), then \(I(q') \subseteq I(q)\) and if \(q' \in R^{-}(q)\), then \(I(q) \subseteq I(q')\).
      % \begin{enumerate}
      %     \item\label{b'inr} if \(q' \in R(q)\), then \(R(q') \subseteq R(q)\) and
      %     \item\label{b'notinr} if \(q \not \in R(q)\), then \(R(q) \subseteq R(q')\).
      % \end{enumerate}
  \end{claim}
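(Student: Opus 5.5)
The plan is to use the fact that paths $M(q)$ and $M(q')$ are either disjoint or share a common initial portion, so regions are nested. By \cref{IntersectionMsMs,IntersectionMalphaMbeta}, if $M(q)$ and $M(q')$ intersect, then either $m(q)=m(q')$ (so $M(q)=M(q')$ and $I(q)=I(q')$, and the claim is trivial), or they intersect only within $M_\alpha$-parts or only within $M_\beta$-parts. In the latter case they share an endpoint ($v^\alpha$ or $v^\beta$) and a common segment up to their greatest common element $u$, and are disjoint afterwards.

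\textbf{Case 1: $M(q)$ and $M(q')$ are disjoint.} The endpoints $v^\alpha(m(q'))$ and $v^\beta(m(q'))$ of $M(q')$ lie on the exterior face of $Q$. By \ref{items:topological_observations:dont_intersect_M}, both endpoints lie in $R^+(M(q))$ or both lie in $R^-(M(q))$. Since $q'$ is connected to $m(q')$ by a witnessing path disjoint from $M(q)$ (I will need to check this, using minimality of $m$ and the fact that $M_s$ ends at $m$), the element $q'$ lies on the same side.
\begin{itemize}
\item If $q'\in R^+(q)$, then by \ref{items:topological_observations:disjoint_with_R-} the endpoints of $M(q')$ are $v_\eta$ for some $\eta\in I(q)$. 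Combined with \cref{claim:planar_diagram_reduction:green_and_blue_entries}, which places one endpoint in $\{v_0,\dots,v_\zeta\}$ and the other in $\{v_{\zeta+1},\dots,v_{t-1}\}$, the choice of the greatest index $i$ and the least index $j$ gives $I(q')\subseteq I(q)$.
\item If $q'\in R^-(q)$, then by \ref{items:topological_observations:disjoint_with_R+} the endpoints of $M(q')$ avoid every $v_\eta$ with $\eta\in I(q)$, so $i'<i$ and $j'>j$ (interpreting carefully when vertices repeat on the walk). This gives $I(q)\subseteq I(q')$.
\end{itemize}

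\textbf{Case 2: the paths share a common endpoint and a common segment.} Then one side of the intervals coincides, and the other endpoints separate as in Case 1. I would argue as follows. The tail of $M(q')$ beyond $u$ is disjoint from $M(q)$ apart from $u$. It leaves $u$ either into $\Int R(q)$ or into its exterior, and this side is determined by which side $q'$ lies on, because $q'$ connects to $m(q')$ through the tail. The rest of the argument then follows Case 1.

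\textbf{Main obstacle.} The hard step is the walk-index bookkeeping. The facial walk $\calQ$ may revisit vertices, so an endpoint may occur several times on it, and the maximal choice of $i$ and minimal choice of $j$ must be shown to be consistent with planarity. I would handle this using planarity of $H$: the subwalks $v_i\calQ v_j$ and $v_{i'}\calQ v_{j'}$ cannot cross, because the $M$-paths are vertical witnessing paths lying inside $Q$.
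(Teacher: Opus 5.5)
Your proposal follows the paper's proof. After disposing of $m(q)=m(q')$, the paper uses $m(q)\parallel m(q')$ and \ref{items:topological_observations:dont_intersect_M} to place $m(q')$ on the same side as $q'$. It then argues endpoint by endpoint: $v^s(m(q'))$ either equals $v^s(m(q))$ (when $M_s(q')$ meets $M(q)$, via \cref{IntersectionMalphaMbeta,IntersectionMsMs}) or lies on that same side (by \ref{items:topological_observations:dont_intersect_M}). This handles uniformly the trivial case your ``only $\alpha$-parts or only $\beta$-parts'' dichotomy overlooks, where both halves meet and $I(q)=I(q')$; the paper then reads the inclusions directly off \ref{items:topological_observations:disjoint_with_R-} and \ref{items:topological_observations:disjoint_with_R+}, without treating the repeated-vertex bookkeeping you flag as a separate step.
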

  \begin{proofclaim}
    If $m(q) = m(q')$, then $I(q) = I(q')$, and so, the claim holds.
    Thus, we assume that $m(q) \neq m(q')$.
    Note that by definition, $m(q) \parallel m(q')$ in $Q$, and therefore, every witnessing path from $m(q')$ to $q'$ is disjoint with $M(q)$.
    In particular, by \ref{items:topological_observations:dont_intersect_M}, if $q' \in R^{+}(q)$, then $m(q') \in R^{+}(q)$, and if $q' \in R^{-}(q)$, then $m(q') \in R^{-}(q)$.

    Next, we prove that for each $\sigma \in \{+,-\}$ if $m(q') \in R^{\sigma}(q)$, then for each $s \in \{\alpha,\beta\}$, either $v^s(m(q')) = v^s(m(q))$ or $v^s(m(q')) \in R^{\sigma}(q)$.
    Indeed, if $M_s(q')$ is disjoint from $M(q)$, then by~\ref{items:topological_observations:dont_intersect_M}, $v^s(m(q')) \in R^{\sigma}(q)$.
    On the other hand, if $M_s(q')$ intersects $M(q)$, then by \Cref{IntersectionMalphaMbeta}, $M_s(q')$ intersects $M_s(q)$, and by \Cref{IntersectionMsMs}, $v^s(m(q')) = v^s(m(q))$.

    This completes the proof of the claim by~\ref{items:topological_observations:disjoint_with_R-} in the case, where $\sigma = +$, and by~\ref{items:topological_observations:disjoint_with_R+} in the case, where $\sigma = -$.
  \end{proofclaim}

    For each $q \in B'$, the elements of $Q$ are partitioned into three sets $R^{+}(q)$, $R^{-}(q)$, and elements of $M(q)$.
    All elements of $M(q)$ are comparable with $q$ in $Q$.
    Consider a pair $(a,b) \in J$.
    It follows that $a \in R^{+}(b) \cup R^{-}(b)$.
    Thus, we define 
        \[J^+ = \{(a,b) \in J : b \in B' \text{ and } a \in R^{+}(b) \} \text{ and } J^- = \{(a,b) \in J : b \in B' \text{ and } a \in R^{-}(b)\}.\]
    Recall that $b \in B'$ if and only if $(a,b) \notin J_1 \cup J_2$.
    Therefore, $J^{+} \cup J^{-} = J \setminus (J_1 \cup J_2)$.

    % Consider an alternating cycle $((a_1,b_1),\dots,(a_k,b_k))$ in $Q$ fully contained in one of the sets $I^+$ or $I^-$.
    % We claim that $m(b_1) = \dots = m(b_k)$.

    \begin{claim}\label{claim:diagram_reduction_alternating_cycle}
        Let $((a_1,b_1),\dots,(a_k,b_k))$ be alternating cycle in $Q$ with all pairs in $J^+$ or in all sets in $J^-$.
        Then, $I(b_1) = \dots = I(b_k)$.
        In particular, $v^\alpha(b_1) = \dots = v^\alpha(b_k)$.
    \end{claim}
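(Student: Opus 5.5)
The plan is to show that the intervals along an alternating cycle form a monotone chain under inclusion; since the cycle closes up, the chain must be constant. Concretely, for each $i$ we use the comparability $a_i \le b_{i+1}$ in $Q$ together with \cref{claim:diagram_reduction:regions_ordered}. Once $I(b_1)=\dots=I(b_k)$ is established, the ``in particular'' part follows. Indeed, $v^\alpha(m(b_i))$ is the element $v_j$ with $j$ the largest index of $I(b_i)$, by the definition of $R(M)$ and $I(M)$ and \cref{claim:planar_diagram_reduction:green_and_blue_entries}. Hence equal intervals give equal $v^\alpha$ entries. (We read $v^\alpha(b_i)$ in the statement as $v^\alpha(m(b_i))$, or we note that they coincide.)

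Consider first the case where all pairs lie in $J^+$. Fix $i$ and write $q=b_i$, $q'=b_{i+1}$. We have $a_i\in R^{+}(q)$ and $a_i\le q'$ in $Q$, so we take a witnessing path $W$ from $a_i$ to $q'$ in $Q$. We claim that $W$ is disjoint from $M(q)$. Every element of $M(q)$ is at most $m(q)\le q$ in $Q$. If $W$ met $M(q)$ at some $u$, then $a_i\le u\le q=b_i$, contradicting $(a_i,b_i)\in\Inc(Q)$. By \ref{items:topological_observations:dont_intersect_M}, both endpoints of $W$ lie on the same side, so $q'=b_{i+1}\in R^{+}(b_i)$. \Cref{claim:diagram_reduction:regions_ordered} then gives $I(b_{i+1})\subseteq I(b_i)$. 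Going around the cycle yields
\[
I(b_1)\supseteq I(b_2)\supseteq\dots\supseteq I(b_k)\supseteq I(b_1),
\]
so all these intervals are equal.

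The case where all pairs lie in $J^-$ is symmetric. The same disjointness argument, with $a_i\in R^{-}(b_i)$, gives $b_{i+1}\in R^{-}(b_i)$. Then \cref{claim:diagram_reduction:regions_ordered} gives $I(b_i)\subseteq I(b_{i+1})$, and the reverse chain again closes up to equality.

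The main obstacle is the small subtlety in applying \ref{items:topological_observations:dont_intersect_M}: the endpoint $q'$ must not itself lie on $M(q)$, since otherwise it is in neither $R^{+}(q)$ nor $R^{-}(q)$. This is excluded by the disjointness argument above, because $q'$ is an endpoint of $W$ and $W$ avoids $M(q)$. We should also check that each $b_i$ lies in $B'$, so that the regions are defined. This holds by the definitions of $J^{+}$ and $J^{-}$.
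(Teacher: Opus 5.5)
Your proof is correct and is essentially the paper's argument. Every element of $M(b_i)$ lies below $b_i$, so any witnessing path from $a_i$ to $b_{i+1}$ avoids $M(b_i)$; hence $b_{i+1}\in R^{\sigma}(b_i)$ by \ref{items:topological_observations:dont_intersect_M}, and \cref{claim:diagram_reduction:regions_ordered} gives a chain of inclusions that closes up around the cycle. Your derivation of the ``in particular'' part via $v^\alpha(m(b_i))=v_{\max I(b_i)}$ is right and matches how the paper later uses it in the definition of $J^\sigma_v$, but drop the aside that $v^\alpha(b_i)$ and $v^\alpha(m(b_i))$ coincide, since nothing guarantees that $m(b_i)$ lies on $y_\alpha[T_\alpha]b_i$.
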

    \begin{proofclaim}
        Let $i \in [k]$, and let $\sigma \in \set{+,-}$.
        Assume that all the pairs in the alternating cycle are in $J^\sigma$.
        For every element $u$ in $M(b_i)$, we have $u \leq b_i$ in $Q$.
        Therefore, every witnessing path from $a_i$ to $b_{i+1}$ (we consider indices cyclically) is disjoint from $M(b_i)$.
        Since $a_i \in R^\sigma(b_i)$, by \ref{items:topological_observations:dont_intersect_M}, we have $b_{i+1} \in R^\sigma(b_i)$.
        By \cref{claim:diagram_reduction:regions_ordered}, this yields $I(b_{i+1}) \subset I(b_{i})$ if $\sigma = +$ and $I(b_{i}) \subset I(b_{i+1})$ if $\sigma = -$.
        This holds for each $i \in [k]$, thus, we obtain $I(b_1) = \dots = I(b_k)$.
    \end{proofclaim}

    For each $\sigma \in \{+,-\}$ and $v$ in the exterior face of $Q$, let $J^\sigma_v = \{(a,b) \in J^\sigma : \ v^\alpha(m(b)) = v\}$.
    Note that the choice of $\alpha$ is arbitrary and we could choose $\beta$.
    By \Cref{claim:diagram_reduction_alternating_cycle}, the collection of all $J^\sigma_v$ over $v$ in the exterior face of $Q$ is a splitting partition of $J^\sigma$.
    Moreover, $J^\sigma_v$ is singly constrained by $v$ in $Q$, and $v$ lies in the exterior face of $Q$.
    Let $\calE$ be a combinatorial embedding witnessing a diagram of $Q$ inherited from $P$. 
    Let
    \[J_3 = J^+, \ \ J_4 = J^-, \ \ Q_3 = Q_4 = Q, \ \ \calE_3 = \calE_4 = \calE.\]
    This completes the description of the algorithm and the proof as discussed before.
\end{proof}

\section{Navigating in a planar diagram}
\label{sec:navigation}

In this section, we discuss topological properties of vertically monotone curves and regions that they may form.
Some of the definitions and statements are specific for posets, thus, for the whole section, we fix a poset $P$ with a fixed planar diagram.
Recall that in a fixed planar diagram, we identify elements with points in the plane and cover relations with vertically monotone curves.
Furthermore, witnessing paths are also identified with vertically monotone curves.

%We introduce a notion of left and right specific to the planar diagram.
%For each point \(p \in \mathbb{R}^2\), which may or may not be an element of $P$, we denote its coordinates by \((x(p), y(p))\) \jedrzej{Do we really need named coordinates?}
%\michal{It is easier and more natural this way}.
Let $\gamma_1$ and $\gamma_2$ be vertically monotone curves in the plane.
We say that \(\gamma_1\) is \defin{left} of \(\gamma_2\) if
%(1) there exists at least one
%pair of points \(p_1 \in \gamma_1\) and \(p_2 \in \gamma_2\)
%such that \(y(p_1) = y(p_2)\), and 
%(2) for any such a pair \((p_1, p_2)\),
%we have \(x(p_1) \le x(p_2)\).
% (1) there exists a horizontal line intersecting both $\gamma_1$ and $\gamma_2$,
% (2) for each such line, either $\gamma_1$ and $\gamma_2$ intersect the line at the same point or the intersection of $\gamma_1$ with the line is left of the intersection of $\gamma_2$ with the line.
\begin{enumerate}
    \item there exists a horizontal line $\ell$ intersecting both $\gamma_1$ and $\gamma_2$ such that the intersection of $\gamma_1$ with $\ell$ is left of the intersection of $\gamma_2$ with $\ell$,
    \item for each horizontal line $\ell$ intersecting both $\gamma_1$ and $\gamma_2$, either $\gamma_1$ and $\gamma_2$ intersect $\ell$ at the same point or the intersection of $\gamma_1$ with $\ell$ is left of the intersection of $\gamma_2$ with $\ell$.\label{item:diagram:definition_left_curves_all_lines}
\end{enumerate}
%the $x$-coordinate of the intersection with $\gamma_1$ is not greater than the $x$-coordinate of the intersection with $\gamma_2$.
When \(\gamma_1\) is left of \(\gamma_2\), we also say that
\(\gamma_2\) is \defin{right} of \(\gamma_1\).
% If \(\gamma_1\) is left of \(\gamma_2\) and \(\gamma_1 \cap \gamma_2 = \emptyset\), we say that \defin{\(\gamma_1\) is strictly left of \(\gamma_2\)} and \defin{\(\gamma_2\) is strictly right of \(\gamma_1\)}.
See \cref{fig:curves-order} for some examples.

\begin{figure}[tp]
    \centering
    \includegraphics{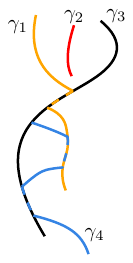}
    \caption{The relation of ``being left of'' for vertically monotone curves is not transitive: here, \(\gamma_i\) is left of \(\gamma_j\) if and only if \(j\equiv i+1 \pmod 4\).}
    \label{fig:curves-order}
\end{figure}

    % \begin{figure}[tp]
    %     \centering 
    %     \includegraphics[scale=1]{figs/left-right-curves.pdf} 
    %     \caption{
    %         Examples of relations between vertically monotone curves.
    %         \michal{
    %         Would be a nice to have a figure with a pair of consistent curves. Also, in my opinion,
    %         4 subfigures would look nicer than 1 figure split into 4 parts.}
    %     } 
    %     \label{fig:left-right-curves} 
    % \end{figure}

We identify each point of the plane with a curve consisting only of that point, so
we can say that a point is left/right of a vertically monotone curve.
Clearly, if \(\gamma\) and \(\gamma'\) are two vertically monotone curves in the plane such that
\(\gamma'\) contains a point left of \(\gamma\) and a point right of \(\gamma\),
then \(\gamma \cap \gamma' \neq \emptyset\).
Furthermore, we identify each witnessing path in $P$ 
with the vertically monotone curve representing it in the diagram.
This way, we can say that \(\gamma_1\) is left/right of \(\gamma_2\)
whenever each of \(\gamma_1\) and \(\gamma_2\) is a
vertically monotone curve in the plane, or a point in the plane
(possibly an element of $P$), or a witnessing path in $P$.

The following topological observation will be very useful.

\begin{obs}
\label{obs:curves-region}
    Let $\gamma$ be a simple closed curve that is the union of finitely many vertically monotone curves.
    Then, a point $p$ not in $\gamma$ lies in the region of $\gamma$ if and only if $p$ is right of exactly an odd number of the curves.
\end{obs}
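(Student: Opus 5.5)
This is a ray-casting (even–odd) argument, with horizontal rays.

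Fix a point $p\notin\gamma$ with height $y_0$, and write $\gamma=\gamma_1\cup\dots\cup\gamma_m$ with each $\gamma_i$ vertically monotone. We may assume, and the statement should be read this way, that no two of the $\gamma_i$ overlap except at shared endpoints, so they are the consecutive arcs of the closed curve.

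Let $\rho$ be the horizontal ray from $p$ going to the left. By the definition of "left of", $p$ is right of $\gamma_i$ exactly when $\gamma_i$ meets the horizontal line through $p$, and the intersection point lies strictly left of $p$. Since $p\notin\gamma$, the intersection point cannot equal $p$. Hence the number of curves that $p$ is right of equals the number of indices $i$ with $\gamma_i\cap\rho\neq\emptyset$. Vertical monotonicity means each such $\gamma_i$ meets $\rho$ in exactly one point.

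The plan has three steps.

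\begin{enumerate}
\item \emph{Perturbation.} We may shift $y_0$ by a tiny amount without changing either side of the equivalence. Membership of $p$ in the region is unchanged because $p$ has a neighbourhood disjoint from $\gamma$. We choose the shift so that $y_0$ differs from the heights of all endpoints of the $\gamma_i$ and of all segment breakpoints; these are finitely many, since curves are finite unions of segments. We must also check that the count of curves $p$ is right of is preserved. A curve crossing the horizontal line through $p$ at an interior height still crosses it at nearby heights, on the same side of $p$, because $p\notin\gamma$. A curve whose lowest or highest point lies exactly at height $y_0$ is the delicate case. To handle it, we first merge maximal runs of consecutive $\gamma_i$ that together form one vertically monotone curve. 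Then turning points of $\gamma$ occur only at shared endpoints. At a turning point, one of the two adjacent arcs still meets nearby horizontal lines on the same side, and that arc has the same left/right status as before. If the original decomposition is not merged, we should note that the parity count is unaffected by merging: a crossing at a shared endpoint where monotonicity continues is counted by one arc in the merged version, and by exactly one arc after generic perturbation.
\item \emph{Generic position.} With $y_0$ generic, $\rho$ meets $\gamma$ transversally in finitely many points, none of them endpoints, and each crossing lies on exactly one $\gamma_i$. So the number of curves $p$ is right of equals $|\rho\cap\gamma|$.
\item \emph{Standard crossing-parity lemma.} For a polygon $\gamma$ and a generic ray $\rho$ from $p\notin\gamma$, $p$ lies in the bounded component if and only if $|\rho\cap\gamma|$ is odd. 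This is the standard ingredient in the proof of the Jordan curve theorem for polygons (\cite[Chapter~4]{Diestel-book}). The argument is short: the parity is locally constant in $p$ off $\gamma$, since moving $p$ changes the crossing count only by passing vertices, and generically those changes come in pairs. Far away the ray misses $\gamma$, giving parity $0$ in the unbounded component. Crossing $\gamma$ once changes the parity, so the bounded component has parity $1$.
\end{enumerate}

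The main obstacle is the degenerate configuration in step 1: horizontal lines passing through turning points of $\gamma$, or through endpoints shared by two arcs. There we need to confirm that the count of arcs with $p$ to their right is unchanged by the perturbation. The rest is the classical even–odd rule.
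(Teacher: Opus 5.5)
Your Steps~2 and~3 are the standard even--odd rule, which is presumably what the paper has in mind, since it states this observation without proof. The problem is the last claim of Step~1, that the parity is unaffected by merging. That claim is false, and the gap cannot be closed, because the observation fails as literally stated.

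Suppose consecutive curves $\gamma_i,\gamma_{i+1}$ meet at a point $v\in\rho$ at which monotonicity continues, i.e.\ $v$ is the highest point of one and the lowest point of the other. Then both $\gamma_i$ and $\gamma_{i+1}$ contain $v$, so $p$ is right of both and $v$ contributes $2$ to the count at $p$. In the merged decomposition it contributes $1$, and after a generic perturbation of $p$ it also contributes $1$. Your justification compares the merged count with the unmerged count \emph{after} perturbation. It never compares it with the unmerged count at $p$ itself, which is the quantity the statement is about.

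Concretely, write the boundary of the square with vertices $(\pm1,0)$, $(0,\pm1)$ as the union of its four edges, and take $p=(1/2,0)$. Then $p$ lies in the region, but $p$ is right of exactly two of the curves, namely the two edges meeting at $(-1,0)$.

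What your argument does prove, once the merging step is deleted, is the observation under an additional hypothesis. The hypothesis is that any two consecutive curves share their common lowest point or their common highest point; equivalently, no two consecutive curves can be merged into one vertically monotone curve. For an arbitrary decomposition you only get the conclusion for those $p$ whose leftward ray $\rho$ avoids the junctions at which monotonicity continues. The extra hypothesis holds in the paper's main applications: for example, the four arcs of the boundary of $\mathcal{R}(a,b',b'')$ meet at the extreme points $q_1,m_2,q_3,m_4$, each of which is a common lowest or highest point of its two arcs. You should state this hypothesis explicitly.

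You should also correct your description of turning points. At a junction $v\in\rho$ that is a common lowest (or highest) point, \emph{both} adjacent curves contain $v$. After a small vertical shift of $p$, either both or neither of them meet the new ray near $v$. So the count changes by $0$ or $2$, and only its parity is preserved. The behaviour you describe (``one of the two adjacent arcs still meets nearby horizontal lines'') is that of a junction where monotonicity continues, which is exactly the case where the statement breaks.
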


%For every element $u$ in $P$, let $e_u^{\leftarrow}$ be a curve leaving $u$ horizontally to the left short enough such that the only common point of $e_u^{\leftarrow}$ with the diagram is $u$.
%Note that $e_u^{\leftarrow}$ is a half-edge incident to $u$.
%For all elements \(u, v \) in $P$ with \(u \le v\) in $P$, let $\WL(u,v)=\WL(e_u^{\leftarrow},u,v)$ and $\WR(u,v) = \WR(e_u^{\leftarrow},u,v)$.
%For every $u$ in $P$ with $x_0 \leq u$ in $P$, let $\WL(u) = \WL(e_{x_0}^{\leftarrow},x_0,u)$ and $\WR(u) = \WR(e_{x_0}^{\leftarrow},x_0,u)$.

We introduce similar notation for concatenating curves as for paths in \Cref{ssec:graphs}.
For a vertically monotone curve $\gamma$ and two points $u$ and $v$ in $\gamma$, let $u[\gamma]v$ be the section of $\gamma$ between $u$ and $v$.
Let \(p_0, \ldots, p_k\) be points in the plane, and let \(\gamma_1, \ldots, \gamma_k\) be curves in the plane such that each \(\gamma_i\) contains \(p_{i-1}\) and \(p_i\). 
Then we denote by \defin{\(p_0 [\gamma_1] p_1 [\gamma_2] \cdots [\gamma_k] p_k\)} the union of the curves \(p_{i-1} [\gamma_i] p_i\).

The following straightforward observation will be useful.

\begin{obs}\label{obs:where-curves-end}
    Let $\gamma$ and $\gamma'$ be two disjoint vertically monotone curves such that the lowest point of $\gamma'$ is right (resp.\ left) of $\gamma$.
    Then, the highest point of $\gamma'$ is either right (resp.\ left) of $\gamma$ or higher than any point of $\gamma$.
    Moreover, in the latter case, the highest point of $\gamma$ is left (resp.\ right) of $\gamma'$.
\end{obs}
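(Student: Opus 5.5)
We argue via the one-dimensional structure along horizontal lines, comparing heights of endpoints and using continuity (intermediate value theorem). Since $\gamma$ and $\gamma'$ are vertically monotone, each is the graph of a continuous function $x=g(y)$, resp.\ $x=g'(y)$, over a closed interval $[y_0,y_1]$, resp.\ $[y_0',y_1']$. Treat the case ``right''; the case ``left'' follows by mirroring the plane. The hypothesis that the lowest point $p=(g'(y_0'),y_0')$ of $\gamma'$ is right of $\gamma$ means, by definition (a point being a degenerate curve), that the horizontal line at height $y_0'$ meets $\gamma$, i.e.\ $y_0' \in [y_0,y_1]$, and $g(y_0') < g'(y_0')$.

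\emph{Main step.} Consider $h(y)=g'(y)-g(y)$ on the common interval $[y_0',\min(y_1,y_1')]$, which is nonempty and contains $y_0'$. The function $h$ is continuous, $h(y_0')>0$, and $h$ never vanishes since $\gamma\cap\gamma'=\emptyset$. By the intermediate value theorem $h>0$ on the whole interval. Now split on whether $y_1'\le y_1$. If so, the highest point of $\gamma'$ is at height $y_1'$ inside the domain of $g$ and $h(y_1')>0$, so it is right of $\gamma$. Otherwise $y_1'>y_1$, so the highest point of $\gamma'$ is higher than every point of $\gamma$; this is the latter case. In that case $y_1\in[y_0',y_1']$ (as $y_0'\le y_1<y_1'$), so the horizontal line at height $y_1$ meets $\gamma'$, and $h(y_1)>0$ gives $g(y_1)<g'(y_1)$, i.e.\ the highest point of $\gamma$ is left of $\gamma'$, which is the ``moreover'' part.

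The only delicate point is matching the paper's definition of ``left/right'' for a point versus a curve: condition (i) requires a common horizontal line with strict inequality, and (ii) is automatic for a single point. So ``a point is right of $\gamma$'' is exactly ``its height lies in the height range of $\gamma$ and it is strictly to the right there'', which is what we used. No genuine obstacle is expected beyond this bookkeeping.
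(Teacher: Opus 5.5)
Your proof is correct. Writing both curves as graphs $x=g(y)$ and $x=g'(y)$, then applying the intermediate value theorem to $g'-g$ on $[y_0',\min(y_1,y_1')]$, gives the two cases and the ``moreover'' part, including the degenerate point cases. The paper states this observation as straightforward and gives no proof; your argument is the same Darboux-property reasoning the paper uses for analogous facts (e.g.\ in \cref{prop:diagram:for_consistent_one_distinction_is_enough}), so nothing is missing.
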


We say that two vertically monotone curves $\gamma$ and $\gamma'$ are \defin{bottom-consistent} if they share the lowest point $u$, and there exists a point $v$ in the plane such that $u[\gamma]v = u[\gamma']v$ and $\gamma - u[\gamma]v$ is disjoint from $\gamma' - u[\gamma']v$.
We say that two vertically monotone curves $\gamma$ and $\gamma'$ are \defin{top-consistent} if they share the highest point $u$, and there exists a point $v$ in the plane such that $v[\gamma]u = v[\gamma']u$ and $\gamma - v[\gamma]u$ is disjoint from $\gamma' - v[\gamma']u$.
We say that two vertically monotone curves $\gamma$ and $\gamma'$ are \defin{consistent} if they are bottom-consistent, top-consistent, or disjoint.
Since each witnessing path in $P$ is identified with a vertically monotone curve, the notions of bottom-consistency, top-consistency, and consistency also apply to witnessing paths.
%See \fig.

Next, we prove that checking the relation of being left/right between two vertically monotone curves is much easier for consistent curves as it suffices to find only one point witnessing the relation.

% Let $U=u_0\ldots u_\ell$, $V = v_0\dots v_{m}$ be two witnessing paths in $P$ with $u_0 \leq u_\ell$ in $P$ and $v_0 \leq v_m$ in $P$.
% We say that 
% Two paths $U=u_0\ldots u_\ell$, $V = v_0\dots v_{m}$ in $P$ are \defin{consistent} if $u_0=v_0$ and there exist non-negative integer $i$ such that $u_0Uu_i = v_0Vv_i$ and the sets $\{u_{i+1},\dots,u_\ell\}$ and $\{v_{i+1},\dots,v_m\}$ are disjoint.

% For two curves satisfying a certain condition, it suffices to find one horizontal line as in \ref{item:diagram:definition_left_curves_all_lines} in order to verify that one is left of the other.
% The condition is a generalization of the one stated in \cref{sec:preliminaries} for witnessing paths.
% First, for a curve $\gamma$ and two points $u,v$ in $\gamma$, let $u[\gamma]v$ be the section of gamma between $u$ and $v$.
% We say that two vertically monotone curves $\gamma$ and $\gamma'$ are \defin{consistent} if they share the lowest point $u$, and there exists a point $v$ in the plane such that $u[\gamma]v = u[\gamma']v$ and $\gamma - u[\gamma]v$ is disjoint from $\gamma' - u[\gamma']v$.

\begin{proposition}\label{prop:diagram:for_consistent_one_distinction_is_enough}
    % Let $W,W'$ be consistent witnessing paths in $P$. 
    % Then, $W$ is left of $W'$ if and only if there is a horizontal line $\ell$ such that $\ell$ intersects both $W$ and $W'$ and the intersection of $W$ with $\ell$ is left of the intersection of $W'$ with $\ell$.
    Let $\gamma$ and $\gamma'$ be two vertically monotone curves that are
    consistent or disjoint.
    %consistent
    Then, $\gamma$ is left of $\gamma'$ if and only if there is a horizontal line $\ell$ such that $\ell$ intersects both $\gamma$ and $\gamma'$ and the intersection of $\gamma$ with $\ell$ is left of the intersection of $\gamma'$ with $\ell$.
\end{proposition}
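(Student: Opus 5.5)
The forward direction is immediate from the first condition in the definition of \q{left of}, so the content lies in the converse. Suppose a horizontal line $\ell_0$ meets both curves with $\gamma\cap\ell_0$ strictly left of $\gamma'\cap\ell_0$. We must show that no horizontal line $\ell$ meets both curves with $\gamma\cap\ell$ strictly right of $\gamma'\cap\ell$. Since both curves are vertically monotone, the set $Y$ of heights at which both curves are present is a closed interval (the intersection of two closed intervals). On $Y$ the two curves are graphs of continuous functions $x=g(y)$ and $x=g'(y)$. The plan is to study the sign of $h=g'-g$ on $Y$: we know $h(y_0)>0$ for the height $y_0$ of $\ell_0$, and we want to rule out $h(y_1)<0$ for any $y_1\in Y$.

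\emph{Disjoint case.} If $\gamma\cap\gamma'=\emptyset$, then $h$ never vanishes on $Y$. By the intermediate value theorem on the interval $Y$, $h$ keeps the sign of $h(y_0)$, i.e.\ it is positive throughout. This is exactly the second condition of \q{left of}.

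\emph{Bottom-consistent case.} The curves share their lowest point $u$, and there is $v$ with $u[\gamma]v=u[\gamma']v$ and the remaining parts disjoint. Then $Y$ starts at the height of $u$, and $h=0$ on the heights $[y(u),y(v)]$ of the common part. Above $y(v)$ the curves are disjoint: $h$ cannot vanish there, and any point of intersection would lie in the common part. Hence $h$ has constant sign on $Y\cap(y(v),\infty)$, which is an interval. Since $h(y_0)>0$, we have $y_0>y(v)$, so this constant sign is positive. Consequently, on all of $Y$ we have $h\ge 0$, with equality exactly on the shared section, where the two curves meet $\ell$ at the same point. This is the required condition.

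\emph{Top-consistent case.} This is symmetric to the previous one under reflection in a horizontal line. Alternatively, we rerun the same argument with the common part $[y(v),y(u)]$ at the top, where $u$ is the shared highest point.

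The only mildly delicate step is justifying the restriction to an interval, where the vertical-monotonicity hypothesis is essential. Since curves are finite unions of segments, the height ranges of the disjoint parts are intervals: for instance, $\gamma-u[\gamma]v$ projects to a half-open interval above $y(v)$. Their intersection is therefore again an interval, and the intermediate value theorem applies to $h$ there. I expect no real obstacle beyond this bookkeeping.
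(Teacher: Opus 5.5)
Your proof is correct and takes essentially the same approach as the paper: both apply the intermediate value (Darboux) property to the horizontal offset between the two curves, and use consistency or disjointness to rule out a crossing. Your direct sign analysis of $h=g'-g$ replaces the paper's argument by contradiction with two lines, and it also spells out a step the paper leaves implicit. For consistent curves, the heights where the curves differ lie above (resp.\ below) the shared section, so the forced crossing cannot occur there.
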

\begin{proof}
    We need to prove an equivalence and the forward implication is clear by definition of $\gamma$ being left of $\gamma'$. 
    In order to prove the backward implication, suppose to the contrary that 
    there is a horizontal line $\ell$ such that 
    the intersection of $\gamma$ with $\ell$ is left of the intersection of $\gamma'$ with $\ell$, and 
    $\gamma$ is \emph{not} left of $\gamma'$. 
    This implies that there is a horizontal line $\ell'$ such that 
    the intersection of $\gamma$ with $\ell'$ is right of the intersection of $\gamma'$ with $\ell'$. 
    Clearly, $\ell\neq\ell'$. 
    Since $\gamma$ and $\gamma'$ are both vertically monotone, the intersection of $\gamma$ and $\gamma'$ differ on both $\ell$ and $\ell'$, and they are in the opposite order, by Darboux property, $\gamma$ and $\gamma'$ intersect in the horizontal strip between $\ell$ and $\ell'$.
    This contradicts the consistency or disjointness 
    of $\gamma$ and $\gamma'$.
\end{proof}

Given two comparable elements in $P$, we want to find the extreme witnessing paths between them, that is, the leftmost path and the rightmost path.

\begin{proposition}\label{claim:diagram:extreme_paths_exist}
    Let $u$ and $v$ be elements of $P$ with $u \leq v$ in $P$.
    There exist witnessing paths $W_L$ and $W_R$ from $u$ to $v$ such that for each witnessing path $W$ from $u$ to $v$, either $W = W_L$ or $W_L$ is left of $W$ and either $W = W_R$ or $W_R$ is right of $W$.
    Moreover, the paths $W_L$ and $W_R$ can be computed in polynomial time.
\end{proposition}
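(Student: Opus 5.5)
We construct $W_L$ greedily and then show it lies weakly left of every other witnessing path; $W_R$ is symmetric (mirror the diagram).

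\emph{Construction.} Restrict attention to the elements $w$ with $u \le w \le v$ in $P$; these are exactly the vertices lying on some witnessing path from $u$ to $v$. Starting at $w_0 = u$, we repeatedly step to a cover of the current element $w_i$ that is still $\le v$. Among the edges leaving $w_i$ upward toward such covers, we choose the leftmost one. Since all these edges start at $w_i$ and go upward, they are bottom-consistent, so the leftmost one is well defined from the cyclic order at $w_i$ given by $\calE$.

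Each step strictly increases the element, so the walk terminates at $v$ and gives a witnessing path $W_L$. Computability in polynomial time is immediate. The relation $\le$ can be found by reachability in the cover graph, and the rotation system gives the left-to-right order of the upward edges at each vertex.

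\emph{Extremality.} Let $W \neq W_L$ be any witnessing path from $u$ to $v$. We show $W_L$ is left of $W$, by contradiction. Suppose some horizontal line meets $W$ strictly left of $W_L$. Both paths start at $u$, and they agree on an initial segment up to a last common vertex $w$ before first diverging. At $w$, the path $W$ leaves along an edge to some element $w' \le v$. By the greedy choice, $W_L$'s edge at $w$ is left of or equal to that edge, so just above $w$ the path $W$ is to the right of $W_L$.

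For $W$ to reach a point left of $W_L$ higher up, the two vertically monotone curves must cross. In a planar drawing they can only meet at a common vertex $z$. Take the first such $z$ at which $W$ passes from the right side of $W_L$ to the left side.

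At $z$, both paths arrive from below and leave upward. $W$ leaves $z$ along an edge to the left of $W_L$'s outgoing edge, and that edge leads to an element still $\le v$. This contradicts the greedy choice made by $W_L$ at $z$, since $z$ is on $W_L$ and the greedy step there would have taken the edge at least as far left.

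Hence no horizontal line meets $W$ strictly left of $W_L$. Since $W \neq W_L$, the paths differ on some vertex or edge, so some horizontal line separates them. By the above, on that line $W_L$ is strictly left of $W$, which verifies both conditions of ``left of''.

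\emph{Main obstacle.} The delicate step is making rigorous the claim that a crossing between two witnessing paths must happen at a vertex where the order of the outgoing edges is reversed. The cleanest route is to work only at shared vertices $z$. Between two consecutive common vertices, the two paths are disjoint vertically monotone curves, so by the argument of \cref{prop:diagram:for_consistent_one_distinction_is_enough} (the Darboux argument), one of them lies entirely left of the other on that segment. Their relative side on that segment is therefore determined by the order of the two outgoing edges at the lower shared vertex. The greedy rule fixes that order at every shared vertex, so $W_L$ is on the left on every segment.
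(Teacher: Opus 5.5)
Your proof is correct, but it takes a different route from the paper's. The paper argues non-constructively. Given any two witnessing paths $W_1,W_2$ from $u$ to $v$, it takes on each horizontal line the leftmost of the two intersection points. Planarity forces the resulting curve to follow whole edges of $W_1\cup W_2$, so it is again a witnessing path. Hence the finite family of witnessing paths is closed under this pointwise left envelope and has a leftmost member.

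You instead build $W_L$ by a greedy leftmost-first walk inside $\{w : u\le w\le v\}$ and prove extremality locally. At every common vertex of $W$ and $W_L$, the greedy rule puts the outgoing edge of $W_L$ weakly left of that of $W$. Between consecutive common vertices the two subpaths either coincide or are consistent and internally disjoint, so by \cref{prop:diagram:for_consistent_one_distinction_is_enough} that order persists along the whole segment.

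What each approach buys:
\begin{itemize}
\item The paper's envelope argument is shorter and yields the extra fact that witnessing paths from $u$ to $v$ are closed under pointwise left (and right) envelopes.
\item It gives no algorithm, since there can be exponentially many witnessing paths, so the ``moreover'' clause is not actually justified there.
\item Your greedy construction is precisely what delivers the polynomial-time computation.
\end{itemize}

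Two small remarks:
\begin{itemize}
\item Your final segment-by-segment argument already proves the claim directly. The earlier contradiction via a ``first crossing vertex $z$'' is redundant and the less rigorous of the two.
\item The cyclic order at a vertex does not determine the leftmost upward edge when the vertex has no lower covers, e.g.\ $u$ itself when it is minimal. At that step you should read the order off the coordinates of the fixed efficient drawing, which is available in the paper's setting. At every later vertex of the walk, the incoming edge you arrived on does fix the linear order.
\end{itemize}
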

\begin{proof}
    We will find the path $W_L$, the proof for existence of $W_R$ is symmetric. 
    We claim that for each two witnessing paths $W_1$ and $W_2$ from $u$ to $v$ there is a witnessing path $W$ from $u$ to $v$ such that $W$ is left of both $W_1$ and $W_2$.
    For each horizontal line that intersects \(W_1\) and \(W_2\), consider the leftmost point of the intersection with either of these curves.
    These points form a vertically monotone curve $\gamma$ between $u$ and $v$.
    By construction, $\gamma$ is left of $W_1$ and $\gamma$ is left of $W_2$.
    It suffices to show that $\gamma$ corresponds to a witnessing path from $u$ to $v$. 
    When we traverse $\gamma$ from $u$ to $v$, 
    we move monotone vertically, and whenever we traverse a point corresponding to an element of $P$ we enter an edge $e$ in the union of $W_1$ and $W_2$. 
    Since the drawing is planar, no two edges cross, and so, $\gamma$ contains the whole $e$, therefore, also its other endpoint. 
    This shows that $\gamma$ corresponds to a witnessing path from $u$ to $v$ in $P$ as desired.
\end{proof}

Let $u$ and $v$ be elements in $P$ with $u \leq v$ in $P$. 
% \michal{Why not \(u \le v\)?}
% \jedrzej{Ok}
By \cref{claim:diagram:extreme_paths_exist}, there exist the \defin{leftmost} witnessing path from $u$ to $v$ and the \defin{rightmost} witnessing path from $u$ to $v$, denoted by \defin{$W_L(u,v)$} and \defin{$W_R(u,v)$}, respectively.
See \Cref{fig:WLWR}.
\begin{figure}
    \centering
    \includegraphics{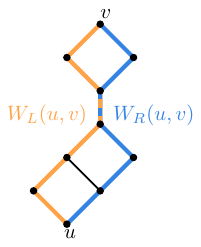}
    \caption{The leftmost and rightmost witnessing paths from \(u\) to \(v\).}
    \label{fig:WLWR}
\end{figure}
% \michal{Add: Observe that for any elements \(u, v\) with \(u \le v\) in $P$ and any elements \(u', v' \in V(W_L(u, v))\) with \(u' \le v'\) in $P$, the path \(W_L(u', v')\) is a subpath of \(W_L(u, v)\). Similarly, for any elements \(u', v' \in V(W_R(u, v))\), the path \(W_R(u', v')\) is
% a subpath of \(W_R(u, v)\). This implies that for any \(b_1, b_2 \in B\), the paths \(W_L(b_1)\) and \(W_L(b_2)\) are consistent, and likewise are \(W_R(b_1)\) and \(W_R(b_2)\).}
% \jedrzej{You want to remove the next claim?}
% \michal{Yes.} 
% \piotr{We disagree.}
Next, we derive some properties of the extreme witnessing paths.

\begin{proposition}\label{claim:diagram:extremal_paths_are_consistent}
    % Let $b,b' \in B$.
    % The paths $W_L(b)$ and $W_L(b')$ are consistent, and the paths $W_R(b)$ and $W_R(b')$ are consistent.    
    Let $D \in \set{L,R}$.
    Let $a,b,c,d$ be elements of $P$ with $a \leq b$, $a \leq d$, and $c \leq b$ in $P$. 
    Then, the paths $W_D(a,b)$ and $W_D(a,d)$ are bottom-consistent, and the paths $W_D(a,b)$ and $W_D(c,b)$ are top-consistent.
\end{proposition}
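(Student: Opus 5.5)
By the symmetry of mirroring the diagram (which swaps $L$ and $R$) and rotating it by $180^\circ$ (which passes to $P^{-1}$, reverses paths, and swaps bottom- and top-consistency), it suffices to prove that $W_L(a,b)$ and $W_L(a,d)$ are bottom-consistent. Both paths start at $a$, and since they are witnessing paths, their common lowest point is $a$.

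Let $W = W_L(a,b)$ and $W' = W_L(a,d)$. Since both are paths in the cover graph starting at $a$, let $v$ be the last vertex of the maximal common initial segment, i.e., $a[W]v = a[W']v$ as vertex sequences, and the next edges (if any) differ. Both paths are drawn in a planar diagram, so they can meet only at vertices. Bottom-consistency is then equivalent to the following statement: no vertex of $W$ strictly after $v$ also lies on $W'$ strictly after $v$. (Edge interiors cannot overlap unless the edges are equal, and if an edge were shared after the split, its lower endpoint would already be a common vertex after $v$.)

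Suppose to the contrary that $w$ is the lowest common vertex of $W$ and $W'$ above $v$. Then $v[W]w$ and $v[W']w$ are two distinct witnessing paths from $v$ to $w$ that meet only at their endpoints. They are internally disjoint, so by \cref{prop:diagram:for_consistent_one_distinction_is_enough} (applied to the two curves minus their endpoints, or directly via the Darboux argument used there), one of them is left of the other.

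\textbf{Case 1:} $v[W]w$ is left of $v[W']w$. Replace the segment $v[W']w$ in $W'$ by $v[W]w$ to obtain a witnessing path $W''$ from $a$ to $d$. Then $W''$ differs from $W'$, and along every horizontal line it is weakly left of $W'$, strictly left on some line. Hence $W''$ is left of $W'$, contradicting the fact that $W' = W_L(a,d)$ is leftmost.

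\textbf{Case 2:} $v[W']w$ is left of $v[W]w$. Symmetrically, replacing the segment in $W$ gives a witnessing path from $a$ to $b$ that is left of $W$, contradicting \cref{claim:diagram:extreme_paths_exist}.

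In both cases we reach a contradiction, so the paths are bottom-consistent.

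The main obstacle is the careful verification that the spliced path is genuinely ``left of'' in the sense of the definition. Condition (ii) of the definition is checked per horizontal line. Outside the strip between $v$ and $w$ the two paths coincide. Inside the strip, the subpaths are ordered by the assumption of the case, and strict inequality holds at some height because the two subpaths are distinct and internally disjoint. The remaining points are routine: the spliced object is a path (no repeated vertices, since heights are monotone), and it is a witnessing path.
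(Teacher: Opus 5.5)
Your proof is correct and follows the same route as the paper. You locate two common vertices bounding internally disjoint segments of $W_L(a,b)$ and $W_L(a,d)$, observe that one segment is left of the other, and splice it into the other path to contradict extremality via \cref{claim:diagram:extreme_paths_exist}. You are slightly more careful than the paper in two places: you treat both orientation cases explicitly, and you justify that one segment lies left of the other by the Darboux argument (needed, since the two segments share both endpoints and so are not consistent in the paper's sense).
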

\begin{proof}
    We will prove the statement for $D = L$ and for the paths $W_L(a,b)$, $W_L(a,d)$ as the proofs of other variants are symmetric. 
    In order to get a contradiction, suppose that $W_L(a,b)$ and $W_L(a,d)$ are not consistent. 
    Therefore, we have two elements $u$ and $v$ in $P$ such that 
    both of them lie in $W_L(a,b)$ and $W_L(a,d)$, and 
    the paths $u[W_L(a,b)]v$ and $u[W_L(a,d)]v$ intersect only in $\set{u,v}$. 
    Since both $u[W_L(a,b)]v$ and $u[W_L(a,d)]v$ have the same endpoints, they are internally disjoint, and they are both vertically monotone, 
    one of them must be left of the other, say $u[W_L(a,d)]v$. 
    Consider a witnessing path $W=x_0 [W_L(a,b)] u [W_L(a,d)] v [W_L(a,b)] b$.
    We claim that $W$ is left of $W_L(a,b)$, which will be a contradiction. 
    Indeed, for every horizontal line $\ell$ in the strip between $u$ and $v$, 
    the intersection of $W$ and $\ell$ is left of the intersection of $W_L(a,b)$ and $\ell$. 
    Also, for every other horizontal line $\ell$ either 
    the intersection of $W$ with $\ell$ and the intersection of $W_L(a,b)$ and $\ell$ coincide, or 
    $\ell$ is disjoint from both $W$ and $W_L(a,b)$.
    This completes the proof.
    % We will prove the statement for $W_L(b)$ and $W_L(b')$, the proof of the statement for the rightmost paths is symmetric.
    % In order to get a contradiction, suppose that $W_L(b)$ and $W_L(b')$ are not consistent. 
    % Therefore, we have two elements $u$ and $v$ in $P$ such that 
    % both of them lie in $W_L(b)$ and $W_L(b')$, and 
    % the paths $u[W_L(b)]v$ and $u[W_L(b')]v$ intersect only in $\set{u,v}$. 
    % Since both $u[W_L(b)]v$ and $u[W_L(b')]v$ have the same endpoints, they are internally disjoint, and they are both vertically monotone, 
    % one of them must be left of the other, say $u[W_L(b')]v$. 
    % Consider a witnessing path $W=x_0 [W_L(b)] u [W_L(b')] v [W_L(b)] b$.
    % We claim that $W$ is left of $W_L(b)$, which will be a contradiction. 
    % Indeed, for every horizontal line $\ell$ in the strip between $u$ and $v$, 
    % the intersection of $W$ and $\ell$ is left of the intersection of $W_L(b)$ and $\ell$. 
    % Also, for every other horizontal line $\ell$ either 
    % the intersection of $W$ with $\ell$ and the intersection of $W_L(b)$ and $\ell$ coincide, or 
    % $\ell$ is disjoint from both $W$ and $W_L(b)$.
    % This completes the proof.
\end{proof}

\cref{claim:diagram:extremal_paths_are_consistent} in particular, implies the following natural property.

\begin{proposition}\label{claim:extreme_subpaths}
    Let $D \in \set{L,R}$. 
    Let $a$ and $b$ be elements of $P$ with $a \leq b$ in $P$, and let $u$ and $v$ be elements of $W_D(a,b)$ with $u \leq v$ in $P$.
    The path $W_D(u,v)$ is a subpath of $W_D(a,b)$.
\end{proposition}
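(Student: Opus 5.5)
We argue for $D=L$; the case $D=R$ is symmetric (mirror the diagram). Put $W=W_L(a,b)$ and $W'=W_L(u,v)$. Since $u\le v$ lie on the witnessing path $W$, the subpath $u[W]v$ is itself a witnessing path from $u$ to $v$. Hence, by \cref{claim:diagram:extreme_paths_exist}, either $W'=u[W]v$, which is what we want, or $W'$ is left of $u[W]v$. We assume the latter and aim for a contradiction with the minimality of $W$.

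Apply \cref{claim:diagram:extremal_paths_are_consistent} twice. First, take the quadruple $(u,v,u,v)$: then $W'$ and $W_L(u,v)=W'$ are trivially consistent, so this gives nothing; instead we compare $W'$ with $u[W]v$ directly. Form the closed walk $W'' = a[W]u[W']v[W]b$. We first check that $W''$ is a path. Every vertex $w$ of $W'$ satisfies $u\le w\le v$, so it lies strictly between $u$ and $v$ in height. The vertices of $a[W]u$ lie weakly below $u$, and those of $v[W]b$ lie weakly above $v$. Therefore $W'$ meets those two pieces only in $u$ and $v$. Consequently $W''$ is a witnessing path from $a$ to $b$.

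Now compare $W''$ with $W$ line by line. On horizontal lines outside the strip between $u$ and $v$, the two paths coincide. Inside the strip, $W''$ follows $W'$, which lies left of or equal to $u[W]v$. By assumption there is at least one line on which $W'$ is strictly left of $u[W]v$. Hence $W''$ is left of $W$ and $W''\ne W$, contradicting the fact from \cref{claim:diagram:extreme_paths_exist} that every witnessing path from $a$ to $b$ is either equal to or right of $W_L(a,b)$. Thus $W'=u[W]v$, as required.

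The only delicate point is the verification that $W''$ is a genuine path, namely that there are no repeated vertices. This follows from the strict height ordering along witnessing paths, as explained above. Everything else is the same exchange argument already used in the proof of \cref{claim:diagram:extremal_paths_are_consistent}.
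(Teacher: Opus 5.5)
Your argument is correct, but it takes a different route from the paper. The paper derives the statement from \cref{claim:diagram:extremal_paths_are_consistent} in two steps:
\begin{itemize}
\item $W_D(a,v)$ and $W_D(a,b)$ are bottom-consistent and share $v$, so $W_D(a,v)=a[W_D(a,b)]v$;
\item $W_D(u,v)$ and $W_D(a,v)$ are top-consistent and share $u$, so $W_D(u,v)=u[W_D(a,v)]v$.
\end{itemize}
So your abandoned idea of applying that proposition twice does work, but to these two pairs of paths rather than to $(u,v,u,v)$. Delete that aside from your write-up, since it only confuses the reader.

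You instead splice $W_L(u,v)$ directly into $W_L(a,b)$ and contradict the extremality in \cref{claim:diagram:extreme_paths_exist}. For this you only need that comparable elements have strictly ordered heights, which makes the splice a path. Your version is self-contained and bypasses both the intermediate path $W_D(a,v)$ and the notion of consistency; in effect it inlines the exchange argument behind \cref{claim:diagram:extremal_paths_are_consistent}. The paper's version is shorter only because that work has already been done.

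Some minor wording should be fixed:
\begin{itemize}
\item $W''$ is a walk from $a$ to $b$, not a closed walk.
\item Only the interior vertices of $W'$ lie strictly between $u$ and $v$ in height.
\item The case $u=v$ is trivial but should be mentioned.
\item State explicitly why ``$W''$ is left of $W$ and $W''\neq W$'' contradicts extremality: $W''$ can never be strictly right of $W$ on any horizontal line, so it cannot be right of $W$.
\end{itemize}
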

\begin{proof}
    By \cref{claim:diagram:extremal_paths_are_consistent}, the path $W_D(a,v)$ is bottom-consistent with $W_D(a,b)$.
    Since $v$ is an element of $W_D(a,b)$, this yields that $W_D(a,v)$ is a subpath of $W_D(a,b)$.
    By \cref{claim:diagram:extremal_paths_are_consistent}, the path $W_D(u,v)$ is top-consistent with $W_D(a,v)$.
    Since $u$ is an element of $W_D(a,v)$, this yields that $W_D(u,v)$ is a subpath of $W_D(a,v)$.
    The path $W_D(u,v)$ is a subpath of $W_D(a,v)$ and $W_D(a,v)$ is a subpath of $W_D(a,b)$, hence, $W_D(u,v)$ is a subpath of $W_D(a,b)$.
\end{proof}

For two vertically monotone curves $\gamma$ and $\gamma'$ with a common lowest point $w$, we define \defin{$\gcpe(\gamma,\gamma')$} to be the element $w'$ in $\gamma \cap \gamma'$, where $w[\gamma]w' = w[\gamma']w'$ and this prefix is the longest possible.
Symmetrically, for two vertically monotone curves $\gamma$ and $\gamma'$ with a common highest point $w$, we define \defin{$\lcse(\gamma,\gamma')$} to be the element $w'$ in $\gamma \cap \gamma'$, where $w'[\gamma]w = w'[\gamma']w$ and this suffix is the longest possible.
The abbreviations stand for \q{greatest common prefix-element} and \q{least common suffix-element}.
\begin{figure}
    \centering
    \includegraphics{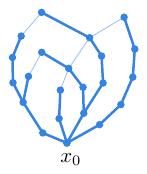}
    \caption{The paths \(W_R(b)\) for \(b \in B\) are pairwise bottom-consistent, so their union is a tree.
    %\jedrzej{I removed the reference to using it implicitly as in fact we try to be explicit always. :)}
    }
    \label{fig:placeholder}
\end{figure}

\begin{proposition}\label{prop:sandwich}
    Let $\gamma_1$, $\gamma_2$, and $\gamma_3$ be vertically monotone curves sharing the lowest (resp.\ highest) element where each pair of the curves is bottom-consistent (resp.\ top-consistent).
    Assume that $\gamma_1$ is left of $\gamma_2$ and $\gamma_2$ is left of $\gamma_3$.
    Let $q = \gcpe(\gamma_1,\gamma_3)$ (resp.\ $m = \lcse(\gamma_1,\gamma_3)$).
    Then, $q$ (resp.\ $m$) lies in $\gamma_2 \setminus \{b\}$ where $b$ is the highest (resp.\ lowest) point of $\gamma_2$.
    %\michal{We should define the interior of a curve (topologically, the interior of a 1-dimensional curve in the 2-dimensional plane is empty.)}
\end{proposition}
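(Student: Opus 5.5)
We prove the bottom-consistent version; the top-consistent one follows by reflecting the plane in a horizontal line. Let $w$ be the common lowest point and $q = \gcpe(\gamma_1,\gamma_3)$, so that $w[\gamma_1]q = w[\gamma_3]q$. Since $\gamma_1$ is left of $\gamma_3$, the two curves are not identical. Bottom-consistency of $\gamma_1$ and $\gamma_3$ shows that above $q$ the remainders $\gamma_1 - w[\gamma_1]q$ and $\gamma_3 - w[\gamma_3]q$ are disjoint. Hence, at least one of $\gamma_1,\gamma_3$ continues strictly above $q$; otherwise $\gamma_1=\gamma_3$.

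The plan has two steps. First we show that $w[\gamma_1]q$ is contained in $\gamma_2$. Second we show that $q$ is not the highest point $b$ of $\gamma_2$.

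\emph{First step.} Fix a horizontal line $\ell$ at height between that of $w$ and that of $q$. The line $\ell$ meets $\gamma_1$ and $\gamma_3$ in the same point $p$, because both curves contain the common prefix $w[\gamma_1]q$. We need $\ell$ to meet $\gamma_2$; this holds since $\gamma_2$ starts at $w$ and should reach at least the height of $q$, but that fact is itself part of what we must show. So we argue as follows. Suppose $\gamma_2$ deviates from the common prefix at some point below $q$, or ends below $q$. Let $r = \gcpe(\gamma_1,\gamma_2)$. If $r$ is strictly below $q$, then bottom-consistency gives that immediately above $r$ the curve $\gamma_2$ is disjoint from $\gamma_1$. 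Because $\gamma_1$ and $\gamma_3$ coincide near and above $r$ (up to $q$), $\gamma_2$ is also disjoint from $\gamma_3$ there. Take a line $\ell$ slightly above $r$ that meets $\gamma_2$. It meets $\gamma_1=\gamma_3$ at a point $p$ with $p \neq \gamma_2\cap\ell$. If $\gamma_2\cap \ell$ is left of $p$, then $\gamma_2$ is not right of $\gamma_1$; if it is right of $p$, then $\gamma_2$ is not left of $\gamma_3$. Either way we contradict the hypothesis. This argument needs $\gamma_2$ to extend above $r$. If instead $r$ is the top of $\gamma_2$, we handle that case in the second step.

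\emph{Second step: the main obstacle.} Suppose $\gamma_2$ ends at a point $b$ lying on $w[\gamma_1]q$, possibly $b = q$. Then every horizontal line meeting $\gamma_2$ meets $\gamma_1$, $\gamma_2$ and $\gamma_3$ in the same point. Consequently condition (i) of "$\gamma_1$ is left of $\gamma_2$" fails, since there is no line where $\gamma_1$ lies strictly left of $\gamma_2$. This contradicts the hypothesis, so $b$ is strictly above $q$, and by the first step $r = q$. Hence $q \in \gamma_2\setminus\{b\}$. The delicate point is to organise the first step so that it covers both possibilities: that $\gamma_2$ leaves the common prefix, and that $\gamma_2$ stops on it. The clean way is to let $r=\gcpe(\gamma_1,\gamma_2)$ and split into the cases "$r$ is the top of $\gamma_2$" and "$\gamma_2$ continues above $r$". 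In both cases we use condition (i) or (ii) of the definition of left at a line just above $r$.
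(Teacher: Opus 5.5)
Your proof is correct and follows essentially the paper's argument: below $q$ the curves $\gamma_1$ and $\gamma_3$ coincide, so condition (ii) of ``left of'' pins $\gamma_2$ to them there, and condition (i) for $\gamma_1$ and $\gamma_2$ rules out $\gamma_2$ ending at or below $q$ (you are more explicit than the paper about why $\gamma_2$ reaches the height of $q$). One small slip: your first step shows only that $r=\gcpe(\gamma_1,\gamma_2)$ is not strictly below $q$, not that $r=q$ ($\gamma_2$ may follow $\gamma_1$ beyond $q$), but $q\in w[\gamma_2]r\subseteq\gamma_2$ is all you need.
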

\begin{proof}
    Let $x$ be the common lowest point of $\gamma_1$, $\gamma_2$, and $\gamma_3$. 
    Let $b_2$ be the highest point of $\gamma_2$.
    Consider a horizontal line $\ell$ with the vertical coordinate at least as high as $x$ and at most as high as $q$. 
    %\michal{Between can be understood as `connecting`.} \jedrzej{True, better now?} \michal{Better. Changed at least as \(x\) by }
    Thus, $\ell$ intersects all three curves $\set{\gamma_i}_{i\in[3]}$. 
    Say that $\ell$ intersects $\gamma_i$ in the point $p_i$ for each $i \in [3]$. 
    By the assumption, $p_1$ is left of or equal to $p_2$ and $p_2$ is left of or equal to $p_3$.
    Since the line is not higher than $q$, we have $p_1 = p_3$ by the definition of $q$.
    It follows that $p_1 = p_2 = p_3$.
    Since $\gamma_2$ is not contained in $\gamma_1$ (nor in $\gamma_3$), we conclude that $q$ lies in $\gamma_2$ and $q \neq b_2$.
    The proof for the top-consistent case is symmetric.
\end{proof}

\section{The proof}
\label{sec:proof}
In this section, we give an algorithm that solves \SCPDD problem.
Namely, we prove \cref{lem:singly-constrained-proof} that we restate below for convenience.
Along with \Cref{lem:singly-constrained-reduction} proved in \cref{sec:singly_constrained}, this gives \cref{thm:main}.

\lemproof*

The proof follows the lines given in \cref{sec:outline}.
We fix an input to \SCPDD, i.e., a tuple $(n,P,I,x_0,\calE)$ where $n$ is an integer with $n\geq 3$, $P$ is a poset, $x_0$ is an element of $P$, $I \subset \Inc(P)$ is singly constrained in $P$ by $x_0$, and $\calE$ is a combinatorial embedding of $P$ witnessing a planar diagram of $P$ with $x_0$ in the exterior face.

For every \((a, b) \in I\) we have \(x_0 \leq b\) in $P$.
Thus, for each $(a,b)\in I$, \(a \not \le x_0\) in $P$ 
as otherwise $a\leq x_0 \leq b$ in $P$.
Therefore, after removing all elements \(x\) with \(x < x_0\) in \(P\), we obtain
a convex subposet which still satisfies all the requirements of the input.
Hence, without loss of generality, we may assume that $x_0$ is a minimal element of $P$.
Applying a procedure from \Cref{thm:encoding-diagrams}, the algorithm fixes a distinguishing efficient planar diagram of $P$ with $x_0$ in the exterior face.
%Let $G$ be the emerging plane graph isomorphic to the cover graph of $P$.
%\jedrzej{Do we use $G$?}

Let $\defin{B}=U_P[x_0]$ and let $\defin{A}$ be the set of all the elements of $P$ not in $U_P[x_0]$.
Note that for every $(a,b) \in I$, we have $a \in A$ and $b \in B$.

Let \defin{\(\preccurlyeq_{\uparrow}\)} be an ordering of the points in the plane such that for all \(q_1\) and \(q_2\) in $P$, if $q_1$ is not higher than $q_2$, then \(q_1 \preccurlyeq_{\uparrow} q_2\) and if $q_1$ is lower than $q_2$, then \(q_1 \prec_{\uparrow} q_2\). 
Note that $\preccurlyeq_\uparrow$ restricted to the elements of $P$ is a linear order since the fixed diagram is distinguishing.
%\later{In most of the cases when we write $\preccurlyeq_\uparrow$, we could just wrtie $\prec_\uparrow$.}

For each $b \in B$, we abbreviate $\defmath{W_L(b)} = W_L(x_0,b)$ and $\defmath{W_R(b)} = W_R(x_0,b)$.
Let \defin{$\preccurlyeq_L$} and \defin{$\preccurlyeq_R$} be two orderings on $B$ such that for all $b_1,b_2 \in B$, we have $b_1 \preccurlyeq_L b_2$ if either $b_1 = b_2$ or $W_L(b_1)$ is left of $W_L(b_2)$, and $b_1 \preccurlyeq_R b_2$ if either $b_1 = b_2$ or $W_R(b_1)$ is left of $W_R(b_2)$.

\begin{proposition}\label{claim:diagram:L_and_R_are_posets}
    The set $B$ is partially ordered by $\preccurlyeq_L$ and $\preccurlyeq_R$.
\end{proposition}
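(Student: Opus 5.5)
We must show that $\preccurlyeq_L$ is reflexive, antisymmetric, and transitive on $B$; the case of $\preccurlyeq_R$ is the mirror image. The key input is \cref{claim:diagram:extremal_paths_are_consistent}. Since all $W_L(b)$ for $b\in B$ start at $x_0$, it shows that for all $b_1,b_2\in B$ the paths $W_L(b_1)$ and $W_L(b_2)$ are bottom-consistent. Hence, by \cref{prop:diagram:for_consistent_one_distinction_is_enough}, whether $W_L(b_1)$ is left of $W_L(b_2)$ is decided by a single horizontal line on which the two paths meet at different points.

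Reflexivity holds by definition. For antisymmetry, suppose $b_1\neq b_2$ and both $W_L(b_1)$ is left of $W_L(b_2)$ and $W_L(b_2)$ is left of $W_L(b_1)$. Condition (i) of the definition, applied to the first relation, gives a horizontal line meeting $W_L(b_1)$ strictly left of $W_L(b_2)$. Condition (ii), applied to the second relation, forbids exactly this, a contradiction. Note that antisymmetry needs no consistency; it follows from the definition alone.

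Transitivity is the only place where consistency is essential, and the non-transitivity example of \cref{fig:curves-order} shows why. Let $W_i=W_L(b_i)$, and assume $W_1$ is left of $W_2$ and $W_2$ is left of $W_3$, with $b_1,b_2,b_3$ pairwise distinct (otherwise the claim is trivial or follows from antisymmetry). I would argue as follows.
\begin{enumerate}
\item Condition (ii) for the pair $W_1,W_3$ is immediate. For every horizontal line $\ell$ meeting both $W_1$ and $W_3$, the line $\ell$ also meets $W_2$, because all three paths start at $x_0$ and are vertically monotone, and $W_2$ is wedged between them at that height. This needs care: if $\ell$ lies above the top of $W_2$, I would invoke \cref{obs:where-curves-end} or use consistency to rule the situation out. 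Once $\ell$ meets $W_2$, the order of the three points on $\ell$ chains correctly.
\item For condition (i), which asks for a line separating $W_1$ and $W_3$, I would argue by contradiction. If there were no such line, then on their common vertical range $W_1$ and $W_3$ coincide, so by bottom-consistency one of them is a prefix of the other, say $W_1\subseteq W_3$ or $W_3\subseteq W_1$. In either case $W_2$ would be squeezed between them on the common range, so $W_2$ coincides with the shorter path there.
\item Then on every line meeting both $W_1$ and $W_2$, the two paths agree, since they are then squeezed together with $W_3$. This contradicts condition (i) for the pair $W_1,W_2$ (or for $W_2,W_3$).
\end{enumerate}

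The main obstacle is the bookkeeping of vertical ranges in the transitivity argument, since the paths have different top endpoints. I would handle it with \cref{prop:sandwich}: for $q=\gcpe(W_1,W_3)$, that proposition places $q$ on $W_2$ below the top of $W_2$. Above $q$ the paths $W_1$ and $W_3$ are disjoint, and the witnessing line for condition (i) can be taken just above $q$ within all three ranges. If $q$ is the top of $W_1$ or of $W_3$, then one path is a prefix of the other, and I would derive the contradiction from the squeezing argument in step 2.
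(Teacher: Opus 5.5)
Your proof is correct, but the transitivity part takes a longer route than the paper's.

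\textbf{The paper's route.} The paper never checks condition (ii) separately and does not use \cref{prop:sandwich}. It takes the line $\ell$ on which $W_1$ is strictly left of $W_2$, and the line $\ell'$ on which $W_2$ is strictly left of $W_3$. It then assumes, without loss of generality, that $\ell$ is not higher than $\ell'$. Because all three paths start at $x_0$, the lower line $\ell$ also meets $W_3$. On $\ell$, condition (ii) for the pair $(W_2,W_3)$ gives a weak inequality, which chains with the strict one to show that $W_1$ is strictly left of $W_3$ on $\ell$. \Cref{prop:diagram:for_consistent_one_distinction_is_enough} then finishes, and the case where $\ell'$ is lower is symmetric.

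\textbf{Your route.} Your final paragraph reaches the same conclusion differently. \Cref{prop:sandwich} places $q=\gcpe(W_1,W_3)$ on $W_2$ strictly below its top. A line just above $q$ then meets all three paths. On it, $W_1$ and $W_3$ differ by bottom-consistency, and the hypotheses give $p_1\le p_2\le p_3$, so $p_1<p_3$. The prefix cases are correctly handled by your squeezing argument. Take care there to contradict condition (i) for the pair $(W_1,W_2)$ when $q=b_1$, and for the pair $(W_2,W_3)$ when $q=b_3$. This is valid, but it costs an extra lemma and a case analysis that choosing the lower witnessing line avoids entirely. Your antisymmetry argument is fine.

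\textbf{Discard steps 1--3.} Your step 1 is unnecessary. Once you have one line on which $W_1$ is strictly left of $W_3$, \cref{prop:diagram:for_consistent_one_distinction_is_enough} gives condition (ii) for free, as you note in your first paragraph. Step 1 is also wrongly justified as written. A line meeting $W_1$ and $W_3$ need not meet $W_2$ when $b_2$ is low, and this situation cannot be ``ruled out'': it genuinely occurs, and only the consistency of $W_1$ and $W_3$ handles it. Step 2's inference, that having no separating line forces $W_1$ and $W_3$ to coincide, also depends on step 1. Discard steps 1--3 and keep the sandwich argument as the proof.
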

\begin{proof}
    We will prove the statement for $\preccurlyeq_L$.
    The proof for $\preccurlyeq_R$ is symmetric.
    It suffices to show that for all $b_1,b_2,b_3 \in B$, if $b_1 \prec_L b_2$ and $b_2 \prec_L b_3$, then $b_1 \prec_L b_3$.
    Since $b_1 \prec_L b_2$, there is a horizontal line $\ell$ such that the intersection of $W_L(b_1)$ with $\ell$ is left of the intersection of $W_L(b_2)$ with $\ell$.
%    \michal{Add: by Claim 25? Or make Claim 25 an implication?}
%    \piotr{We don't care.}
    Since $b_2 \prec_L b_3$, there is a horizontal line $\ell'$ such that the intersection of $W_L(b_2)$ with $\ell'$ is left of the intersection of $W_L(b_3)$ with $\ell'$.
    Without loss of generality, assume that $\ell$ is not higher in the plane than $\ell'$.
    It follows that $\ell$ intersects $W_L(b_3)$, and the intersection of $W_L(b_2)$ with $\ell$ is either the same as the intersection of $W_L(b_3)$ with $\ell$ or left of it.
    In particular, the intersection of $W_L(b_1)$ with $\ell$ is left of the intersection of $W_L(b_3)$ with $\ell$.
    By \cref{claim:diagram:extremal_paths_are_consistent}, $W_L(b_1)$ and $W_L(b_3)$ are consistent, and so, by \cref{prop:diagram:for_consistent_one_distinction_is_enough}, $W_L(b_1)$ is left of $W_L(b_3)$ (i.e.\ $b_1 \prec_L b_3$), which ends the proof.
\end{proof}

\begin{proposition}\label{claim:diagram:incomparable_b_are_comparable}
    Let $b,b' \in B$ be incomparable in $P$.
    Then, $b$ and $b'$ are comparable in $\preccurlyeq_L$ and $\preccurlyeq_R$.
\end{proposition}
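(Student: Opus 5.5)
We prove the statement for $\preccurlyeq_L$; the case of $\preccurlyeq_R$ is symmetric, by mirroring the diagram. Let $b,b'\in B$ with $b \parallel b'$ in $P$. By \cref{claim:diagram:extremal_paths_are_consistent}, applied with $a=x_0$, the paths $W_L(b)$ and $W_L(b')$ are bottom-consistent. By \cref{prop:diagram:for_consistent_one_distinction_is_enough}, it therefore suffices to find a single horizontal line $\ell$ meeting both paths at distinct points. Whichever of the two intersection points is further left then decides whether $b \prec_L b'$ or $b' \prec_L b$.

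Let $w=\gcpe(W_L(b),W_L(b'))$. First observe that $w\neq b$ and $w\neq b'$. Indeed, if $w=b$, then $b$ lies on $W_L(b')$, which is a witnessing path from $x_0$ to $b'$, so $b\le b'$ in $P$, contradicting $b\parallel b'$. The case $w=b'$ is symmetric.

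Hence both paths continue strictly above $w$, and by bottom-consistency the two continuations are disjoint after $w$. Let $e$ and $e'$ be the edges of $W_L(b)$ and $W_L(b')$ leaving $w$ upwards. They are distinct edges of the cover graph, both having $w$ as their lowest point. Choose a horizontal line $\ell$ slightly above $w$, strictly below the upper endpoints of both $e$ and $e'$ (this is possible because the upper endpoints are higher than $w$). Then $\ell$ meets $e$ and $e'$, so it meets both $W_L(b)$ and $W_L(b')$. The intersection points are distinct, since the paths share no point above $w$. This is exactly the line required above, and so $b$ and $b'$ are comparable in $\preccurlyeq_L$.

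The only step needing care is ruling out $w\in\{b,b'\}$, and this is precisely where incomparability in $P$ is used: the paths are witnessing paths, so lying on one of them forces comparability. Everything else follows directly from the consistency proposition and \cref{prop:diagram:for_consistent_one_distinction_is_enough}.
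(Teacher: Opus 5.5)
Your proof is correct and follows essentially the same approach as the paper. You use the bottom-consistency of $W_L(b)$ and $W_L(b')$ together with \cref{prop:diagram:for_consistent_one_distinction_is_enough} to reduce the claim to exhibiting a single horizontal line that meets the two paths in distinct points. The only difference is the choice of that line: the paper takes the line through the lower of $b,b'$, whose intersection with the other path differs from that element because $b \parallel b'$, which spares you the analysis of $\gcpe$ and of the edges leaving it.
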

\begin{proof}
    We will prove the statement for $\preccurlyeq_L$.
    The proof for $\preccurlyeq_R$ is symmetric.
    Assume without loss of generality that $b$ is lower in the diagram than $b'$, and consider the horizontal line $\ell$ intersecting $b$.
    In particular, $\ell$ intersects $W_L(b')$.
    Since $b\not\leq b'$ in $P$ are incomparable, the intersection point of $W_L(b')$ with $\ell$ and $b$ are distinct.
    Therefore, by \cref{prop:diagram:for_consistent_one_distinction_is_enough}, either $W_L(b)$ is left of $W_L(b')$ or $W_L(b')$ is left of $W_L(b)$, which yields that $b$ and $b'$ are comparable in $\preccurlyeq_L$.
\end{proof}

We will need the following simple property of extreme paths.

\begin{proposition}\label{prop:exposed-paths}
    Let $a \in A$ and $b \in B$ with $a \leq b$ in $P$. 
    %\later{Find applications}
    \begin{enumerate}
        \item If $W_L(a,b)$ is left of $W_L(b)$, then for $m = \lcse(W_L(a,b),W_L(b))$, all the elements of $a[W_L(a,b)]m\setminus\{m\}$ are in $A$. \label{prop:exposed-paths:left}
        \item If $W_R(a,b)$ is right of $W_R(b)$, then for $m = \lcse(W_R(b),W_R(a,b))$, all the elements of $a[W_R(a,b)]m\setminus\{m\}$ are in $A$. \label{prop:exposed-paths:right}
    \end{enumerate}
\end{proposition}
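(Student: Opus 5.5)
The argument for \ref{prop:exposed-paths:left} is by contradiction; \ref{prop:exposed-paths:right} follows by mirroring the diagram, which swaps left and right and the roles of $W_L$ and $W_R$. By \cref{claim:diagram:extremal_paths_are_consistent}, $W_L(a,b)$ and $W_L(b)=W_L(x_0,b)$ are top-consistent, since both end in $b$. Hence $m=\lcse(W_L(a,b),W_L(b))$ is well defined. Moreover, $m[W_L(a,b)]b=m[W_L(b)]b$, and the parts of the two paths strictly below $m$ are disjoint.

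Suppose some element $u\neq m$ of $a[W_L(a,b)]m$ lies in $B$, that is, $x_0\le u$ in $P$. Then $u\le m\le b$ in $P$. I would form the witnessing path
\[W = x_0[W_L(u)]u[W_L(a,b)]b\]
from $x_0$ to $b$. It is a witnessing path because $W_L(u)$ lies below $u$ and $u[W_L(a,b)]b$ lies above $u$, so the two pieces can only share the point $u$.

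The aim is to show that $W$ is left of $W_L(b)$ and differs from it, which contradicts the minimality of $W_L(b)$ from \cref{claim:diagram:extreme_paths_exist}. First, consider the horizontal line through $u$. It meets $W$ at $u$, and it meets $W_L(b)$ at a point $p\neq u$, because $u$ is strictly below $m$ and so lies on the part of $W_L(a,b)$ disjoint from $W_L(b)$. Since $W_L(a,b)$ is left of $W_L(b)$, the point $u$ lies left of $p$. Next, on every line at or above $u$, the paths $W$ and $W_L(a,b)$ agree, so $W$ is left of or equal to $W_L(b)$ there.

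The main obstacle is the part of $W$ below $u$, namely $W_L(u)$. I would argue that $W_L(u)$ cannot cross to the right of $W_L(b)$. Both are leftmost paths from $x_0$, so they are bottom-consistent by \cref{claim:diagram:extremal_paths_are_consistent}. Hence they are either equal up to their greatest common prefix-element and disjoint afterwards, or one is contained in the other. Above the split point, $W_L(u)$ and $W_L(b)$ are disjoint and $W_L(u)$ ends at $u$, which is strictly left of $W_L(b)$. By \cref{obs:where-curves-end}, this forces the whole disjoint remainder of $W_L(u)$ to stay left of $W_L(b)$; equivalently, \cref{prop:diagram:for_consistent_one_distinction_is_enough} applied to the line through $u$ gives $W_L(u)$ left of $W_L(b)$.

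Combining the three ranges of lines, $W$ is left of $W_L(b)$, and $W\neq W_L(b)$ because $u\notin W_L(b)$. This contradicts the choice of $W_L(b)$ as the leftmost witnessing path from $x_0$ to $b$. Therefore every element of $a[W_L(a,b)]m\setminus\{m\}$ lies in $A$.
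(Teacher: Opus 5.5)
Your proof is correct and is essentially the paper's argument. The paper proves the right-hand version directly and calls the left one symmetric, but the route is the same: splice $W_D(u)$ with $u[W_D(a,b)]b$ and contradict the extremality of $W_D(b)$ from \cref{claim:diagram:extreme_paths_exist}.

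The step you flag as the main obstacle is not needed. Once $u\notin W_L(b)$ lies strictly left of $W_L(b)$ on its horizontal line, $W_L(b)$ already fails to be left of $W$, so the contradiction follows without analyzing $W_L(u)$. In that step, your appeal to \cref{obs:where-curves-end} is loose, since the remainder's lowest point lies on $W_L(b)$; the fallback via \cref{prop:diagram:for_consistent_one_distinction_is_enough} is fine.
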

\begin{proof}
    We prove only \ref{prop:exposed-paths:right} as the proof of \ref{prop:exposed-paths:left} is symmetric.
    Assume that $W_R(a,b)$ is right of $W_R(b)$.
    Suppose to the contrary that $a[W_R(a,b)]m\setminus\{m\}$ contains an element $u$ in $B$.
    It follows that $x_0 \prec_\uparrow u \prec_\uparrow b$, and so, $u$ is right of $W_R(b)$.
    Thus, $x_0[W_R(u)]u[W_R(a,b)]b$ is a witnessing path from $x_0$ to $b$ in $P$ containing an element right of $W_R(b)$, which contradicts \cref{claim:diagram:extreme_paths_exist}.
\end{proof}

\subsection{Four easy sets}\label{ssec:four-easy-sets}

The first four reversible subsets $J_1,J_2,J_3,J_4 \subset \closure{I}$ will be obtained by applying
\cref{prop:reversible-from-order} to appropriate partial
orders on \(B\).
The following properties may or may not be satisfied for a pair $(a,b) \in \Inc(P)$
    \begin{enumerateNumI}
        \item there is \(q' \in B\) with $a \leq q'$ in $P$ and \(q' \prec_{\uparrow} b\),  \label{items:diagram_pairs:b'_vertical} \label{I-first}
        \item there is \(q'' \in B\) with $a \leq q''$ in $P$ and \(b \prec_{\uparrow} q''\).\label{items:diagram_pairs:b''_vertical}
    \end{enumerateNumI}
Let 
    \begin{align*}
        % \defmath{J_1} = \ext(\closure{I}, \preccurlyeq_{\uparrow}\vert_B) \qquad \text{and} \qquad \defmath{J_2} = \ext(\closure{I}, \succcurlyeq_\uparrow\vert_B).
        %J_1 &= \{(a, b) \in I:\textrm{there is no \(b' \in B\) with $a \leq b'$ in $P$ and \(b' \prec_{\uparrow} b\)}\} \ \text{and}\\
        %J_2 &= \{(a, b) \in I:\textrm{there is no \(b'' \in B\) with $a \leq b''$ in $P$ and \(b \prec_{\uparrow} b''\)}\}.
        \defmath{J_1} &= \{(a,b) \in \closure{I} : (a,b) \text{ does not satisfy \ref{items:diagram_pairs:b'_vertical}}\},\\
        \defmath{J_2} &= \{(a,b) \in \closure{I} : (a,b) \text{ does not satisfy \ref{items:diagram_pairs:b''_vertical}}\},\\
        \defmath{I_1} &= \{(a,b) \in \closure{I} : (a,b) \text{ satisfies \ref{items:diagram_pairs:b'_vertical} and \ref{items:diagram_pairs:b''_vertical}}\}.
    \end{align*}
Also note that $\closure{I} \setminus (J_1 \cup J_2) = I_1$.
\cref{prop:reversible-from-order} implies that the sets $J_1$ and $J_2$ are reversible.
\begin{corollary}\label{cor:J1J2}
    $J_1$ and $J_2$ are reversible in $P$.
\end{corollary}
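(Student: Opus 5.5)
We obtain both statements directly from \cref{prop:reversible-from-order}, applied with $I$ replaced by $\closure{I}$ and with the orders $\preccurlyeq_\uparrow$ restricted to $B$ (for $J_1$) and its reverse $\succcurlyeq_\uparrow$ restricted to $B$ (for $J_2$). Since the diagram is distinguishing, $\preccurlyeq_\uparrow$ restricted to the elements of $P$ is a linear order. Hence both it and its reverse are partial orders on $B$ in which any two distinct elements are comparable. In particular, the hypothesis of \cref{prop:reversible-from-order} is satisfied automatically: for any strict alternating cycle $((a_1,b_1),\dots,(a_k,b_k))$ contained in $\closure{I}$ and any $i\neq j$, we need $b_i\prec b_j$ or $b_j\prec b_i$. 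The elements $b_1,\dots,b_k$ are pairwise distinct, because they form a $k$-element antichain by strictness. So this holds by linearity.

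Next we check that the sets match the definitions. For $(a,b)\in\closure{I}$, the condition defining $\ext(\closure{I},\preccurlyeq_\uparrow|_B)$ is that no $q'\in B$ has $a\le q'$ in $P$ and $q'\prec_\uparrow b$. This is precisely the negation of \ref{items:diagram_pairs:b'_vertical}, so $J_1=\ext(\closure{I},\preccurlyeq_\uparrow|_B)$. Likewise, with the reversed order, $q''\prec b$ means $b\prec_\uparrow q''$. Negating this gives the failure of \ref{items:diagram_pairs:b''_vertical}, so $J_2=\ext(\closure{I},\succcurlyeq_\uparrow|_B)$.

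One small point remains. \cref{prop:reversible-from-order} is stated for a set $I$ whose second coordinates lie in $B$. We must check that $\closure{I}$ has this property too. If $(a,b)\in\closure{I}$, then some $(a',b)\in I$, so $x_0\le b$ and $b\in B$. With that, the proposition applies verbatim, and both sets are reversible. There is no real obstacle here. The only care needed is the distinctness of the $b_i$ in a strict alternating cycle and the membership $b\in B$ for pairs of the closure.
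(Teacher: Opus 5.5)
Your proof is correct and is the paper's argument: the paper writes $J_1=\ext(\closure{I},\preccurlyeq_\uparrow|_B)$ and $J_2=\ext(\closure{I},\succcurlyeq_\uparrow|_B)$ and invokes \cref{prop:reversible-from-order}, with the hypothesis holding automatically because $\preccurlyeq_\uparrow$ is linear on the elements of the distinguishing diagram. Your two extra checks, that the $b_i$ in a strict alternating cycle are distinct and that $b\in B$ for every pair of $\closure{I}$, are details the paper leaves implicit.
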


\begin{proposition}\label{claim:diagram:enclosed:pairs}
  Each \((a, b) \in I_1\) satisfies
    \begin{enumerateNumI}
        \setcounter{enumi}{2}
        \item $a \prec_{\uparrow} b$, \label{items:diagram_pairs:a_lower_b}
        \item \(b \neq x_0\),\label{items:b_in_B}
        %\item there is no $r$ in $P$ with $a \leq r$ in $P$ such that \(r\) is both right of \(\WL(b)\) and left of \(\WR(b)\).\label{items:diagram_pairs:a_not_enclosed_by_bs}
        \item $a$ is not both right of \(\WL(b)\) and left of \(\WR(b)\), and \label{items:diagram_pairs:a_not_enclosed_by_bs}
        %\item for each $q$ in $P$ with $a\leq q$ in $P$, either $q$ is left of $\WL(b)$ or $q$ is right of $\WR(b)$. \label{items:diagram_pairs:a_not_enclosed_by_bs}
        \item for every $d \in B$ with $a \leq d$ in $P$, a witnessing path from $a$ to $d$ is either left of $W_L(b)$ or right of $W_R(b)$.\label{items:diagram_pairs:a-d_paths}
    \end{enumerateNumI}
\end{proposition}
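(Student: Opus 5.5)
We will verify the four properties in order, using that $(a,b)\in I_1$ comes with elements $q',q''\in B$ satisfying $a\le q'$, $a\le q''$ in $P$ and $q'\prec_\uparrow b\prec_\uparrow q''$. Recall also that $(a,b)\in\closure{I}$ forces $a\in A$ and $b\in B$: there are $a',b'$ with $(a',b)\in I$ and $(a,b')\in I$, so $x_0\le b$, and $a\notin B$ because $a\le b'$ would otherwise hold. (We also need $a\not\le x_0$, which holds since $x_0$ is minimal and $a\ne x_0$; this is because $a\parallel b$ while $x_0\le b$.)

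For \ref{items:diagram_pairs:a_lower_b}, note that $a\le q'$ in $P$ and $a\ne q'$ (since $a\in A$ and $q'\in B$), so $a$ is strictly lower than $q'$ in the diagram. Hence $a\prec_\uparrow q'\prec_\uparrow b$. For \ref{items:b_in_B}, suppose $b=x_0$. Since $q'\in B$ we have $x_0\le q'$, and $q'\ne x_0$, so $x_0\prec_\uparrow q'$; this contradicts $q'\prec_\uparrow b$.

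For \ref{items:diagram_pairs:a-d_paths}, fix $d\in B$ with $a\le d$ and a witnessing path $W$ from $a$ to $d$. Take $m$ to be the lowest element of $W$ lying in $B$; it exists because $d\in B$, and $m\ne a$. The subpath $a[W]m$ minus $m$ consists of elements of $A$, so it is disjoint from $W_L(b)\cup W_R(b)\subseteq B$. The path $W$ cannot meet $W_L(b)$ or $W_R(b)$ at all. Indeed, a common element $u$ would lie in $B$ with $u\le b$, and since $a\le u$ this would give $a\le b$, a contradiction. So $W$ is disjoint from both curves.

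Next consider $x_0[W_L(b)]b\cup x_0[W_R(b)]b$. Its portions between $\gcpe$ and $\lcse$ bound closed regions, and since $W_L(b)$ is left of $W_R(b)$, the horizontal strip between them is "enclosed". The main obstacle is to show that $W$ does not lie in this strip, i.e.\ is not right of $W_L(b)$ and left of $W_R(b)$. By \cref{prop:diagram:for_consistent_one_distinction_is_enough} applied to the disjoint curves, it suffices to place one point of $W$ outside the strip.

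The plan is to use $q''$ higher than $b$ together with $q'$ lower than $b$, arguing by contradiction. Suppose $a$ lies strictly between $W_L(b)$ and $W_R(b)$ (this is \ref{items:diagram_pairs:a_not_enclosed_by_bs}). Then the witnessing path from $a$ to $q''$ must leave the strip at a height below $b$'s height, since it ends above $b$ and so beyond the tops of $W_L(b)$ and $W_R(b)$. It cannot cross these curves, so it exits through the top at $b$. Here we use \cref{obs:where-curves-end}: the only points at the top height are on the two curves, which meet at $b$. Exiting there would force the path to pass through $b$, giving $a\le b$, a contradiction.

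Hence \ref{items:diagram_pairs:a_not_enclosed_by_bs} holds; the companion element $q'$ is not needed for this step, though it does secure the preceding items. Consequently $a$ is left of $W_L(b)$ or right of $W_R(b)$, or lies below $x_0$'s height in neither. In the last case we use that $x_0$ is in the exterior face and lies at the bottom of both curves.

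Then $W$, being disjoint from both curves and starting at $a$, stays on the same side. Via \cref{prop:diagram:for_consistent_one_distinction_is_enough} this yields that $W$ is left of $W_L(b)$ or right of $W_R(b)$. The one delicate point is the case where the start of $W$ is lower than $x_0$. There we compare at the first height at which both curves exist, using \cref{obs:where-curves-end} and the same enclosure argument.
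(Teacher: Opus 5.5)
Your argument is correct and follows the paper's outline. (I3) and (I4) come from $q'$. (I5) is proved by contradiction using $q''$ and the fact that every element of $\WL(b)\cup\WR(b)$ is $\le b$ in $P$. (I6) splits on whether $a$ lies above or below $x_0$ and finishes with \cref{prop:diagram:for_consistent_one_distinction_is_enough} applied to disjoint curves.

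The only real difference is in (I5). The paper encloses $a$ in the region bounded by $\WL(b)$ and $\WR(b)$ between their consecutive common elements around the height of $a$, and then appeals to the Jordan curve theorem. You instead trap a witnessing path $W''$ from $a$ to $q''$ between the two curves up to the height of $b$. That works and is slightly lighter. Stated cleanly: $W''$ is disjoint from both curves, so \cref{obs:where-curves-end} applied to the pairs $(\WL(b),W'')$ and $(\WR(b),W'')$ says that $b$ is both left and right of $W''$, which is absurd. Your sentence that the path ``must leave the strip at a height below $b$'s height'' contradicts the sentence after it and should be replaced by this.

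One auxiliary claim of yours is false, although nothing depends on it. Membership $(a,b)\in\closure{I}$ does not force $a\in A$, because $x_0\le a$ gives no relation between $a$ and $b'$. For example, take $x_0<a$, $x_0<b$, $a\parallel b$ and $I=\{(a,b)\}$. The paper establishes $a\in A$ only later, for pairs of $I_2$, in \cref{clm:ainA}, using (I7).

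You can simply delete the claim:
\begin{itemize}
\item In (I3), $a\le q'$ already gives $a\preccurlyeq_\uparrow q'\prec_\uparrow b$.
\item In (I6), the disjointness of $W$ from $\WL(b)\cup\WR(b)$ follows from your second argument ($a\le u\le b$), so the detour through the lowest element $m$ of $W$ in $B$ is superfluous.
\end{itemize}

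Likewise, in the case $a\prec_\uparrow x_0$ you need neither the exterior face nor any enclosure argument. $W$ meets the horizontal line through $x_0$ in a point other than $x_0$, which is where both curves start, and \cref{prop:diagram:for_consistent_one_distinction_is_enough} concludes.
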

\begin{proof}
    Let $(a,b) \in I_1$.
    By \ref{items:diagram_pairs:b'_vertical}, there is \(q' \in B\) with $a \leq q'$ in $P$ and \(q' \prec_{\uparrow} b\).
    It follows that $a \preccurlyeq_{\uparrow} q' \prec_\uparrow b$, hence, $a \prec_{\uparrow} b$, which proves \ref{items:diagram_pairs:a_lower_b}.
    Since \(x_0\) is the lowest element in \(B\), we have \(x_0 \preccurlyeq_\uparrow q' \prec_{\uparrow} b\), so \ref{items:b_in_B} holds.

    For the proof of \ref{items:diagram_pairs:a_not_enclosed_by_bs} suppose to the contrary that $a$ is right of $\WL(b)$ and $a$ is left of $\WR(b)$. 
    Consider the horizontal line $\ell$ containing $a$. 
    Let $p$ be the intersection of $\WL(b)$ with $\ell$.
    Let $q$ be the intersection of $\WR(b)$ with $\ell$.
    By assumptions, $p$ is left of $a$, which is left of $q$.
    Let $x$ be the lowest common element of $W_L(b)$ and $W_R(b)$ above $\ell$ and let $y$ be the highest common element of $W_L(b)$ and $W_R(b)$ below $\ell$.
    Let $\calR$ be the region of the simple closed curve $x[W_R(b)]y[W_L(b)]x$.
    By \Cref{obs:curves-region}, $a \in \calR$ and $b \notin \Int \calR$.
    Note also that every element $s$ on the boundary of $\calR$ satisfies $s\leq b$ in $P$. 
    By \ref{items:diagram_pairs:b''_vertical}, there is \(q''\) in $P$ with $a \leq q''$ in $P$ and \(b \prec_{\uparrow} q''\).
    Again, by \Cref{obs:curves-region}, $q'' \notin \Int \calR$.
    Let $W''$ be a witnessing path from $a$ to $q''$ in $P$.
    Note that every element $s$ in $W''$ satisfies $a \leq s$ in $P$.
    Therefore, $W''$ is disjoint from the boundary of $\calR$, which is a contradiction.

    For the proof of \ref{items:diagram_pairs:a-d_paths}, 
    let $d\in B$ with $a\leq d$ in $P$, and 
    let $W$ be a witnessing path from $a$ to $d$ in $P$.
    Recall that by~\ref{items:diagram_pairs:a_lower_b} $a \prec_{\uparrow} b$. 
    We distinguish two cases $a\prec_{\uparrow} x_0$ and $x_0 \prec_{\uparrow} a \prec_{\uparrow} b$.
    In the latter case, the horizontal line through $a$ intersects both $\WL(b)$ and $\WR(b)$, so by~\ref{items:diagram_pairs:a_not_enclosed_by_bs}, $a$ is left of $\WL(b)$ or $a$ is right of $\WR(b)$. 
    Since $W$ and $\WL(b)$ (or $\WR(b)$) are disjoint, 
    \cref{prop:diagram:for_consistent_one_distinction_is_enough} implies that $W$ is left of $\WL(b)$ or $W$ is right of $\WR(b)$, as desired. 
    In the case that $a\prec_{\uparrow} x_0$, the path $W$ must intersect the horizontal line going through $x_0$ since $d \in B$, and clearly the intersection point is distinct from $x_0$. 
    Again, \cref{prop:diagram:for_consistent_one_distinction_is_enough} implies that $W$ is left of $\WL(b)$ or $W$ is right of $\WR(b)$. 
    This completes the proof of \ref{items:diagram_pairs:a-d_paths}.
\end{proof}

\begin{proposition}\label{prop:one-side-implies-the-other}
    Let $(a,b) \in I_1$ and let $d \in B$ with $a \leq d$ in $P$.
    If $d \prec_R b$, then $d \prec_L b$. 
    Also, if $b \prec_L d$, then $b \prec_R d$.
\end{proposition}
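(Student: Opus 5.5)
The plan is to prove the first implication; the second is its mirror image (reflect the diagram in a vertical line, which swaps $\preccurlyeq_L$ with the reverse of $\preccurlyeq_R$). So fix $(a,b)\in I_1$ and $d\in B$ with $a\le d$ in $P$ and $d\prec_R b$.

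First I would show that $d \parallel b$ in $P$. We cannot have $d\le b$, since then $a\le b$, contradicting $(a,b)\in\Inc(P)$. Suppose $b<d$. The path $x_0[W_R(b)]b$ followed by a witnessing path from $b$ to $d$ is a witnessing path from $x_0$ to $d$, so by \cref{claim:diagram:extreme_paths_exist} the path $W_R(d)$ is right of it or equal to it. On the other hand, $W_R(d)$ is left of $W_R(b)$. On the horizontal line through $b$ these two facts force $W_R(d)$ to pass through $b$. Then \cref{claim:extreme_subpaths} makes $W_R(b)$ a prefix of $W_R(d)$. Every horizontal line meeting both paths then meets them in the same point, so $W_R(d)$ is not left of $W_R(b)$, a contradiction. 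Hence $d\parallel b$, and by \cref{claim:diagram:incomparable_b_are_comparable} $d$ and $b$ are comparable in $\preccurlyeq_L$. It therefore suffices to rule out $b\prec_L d$, or to show $d\prec_L b$ directly.

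Next I would fix a witnessing path $W$ from $a$ to $d$. Every element of $W$ is $\ge a$ and every element of $W_L(b)\cup W_R(b)$ is $\le b$, and $a\parallel b$, so $W$ is disjoint from both extreme paths. By \ref{items:diagram_pairs:a-d_paths} of \cref{claim:diagram:enclosed:pairs}, $W$ is left of $W_L(b)$ or right of $W_R(b)$. I would show that the first case gives $d\prec_L b$ and the second gives $b\prec_R d$. The second conclusion contradicts $d\prec_R b$ by antisymmetry (\cref{claim:diagram:L_and_R_are_posets}), so the proposition follows.

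Take the case where $W$ is right of $W_R(b)$; the other case is symmetric. If $d$ is lower than $b$, the horizontal line through $d$ meets $W_R(b)$ at a point left of $d$. This is a point where $W_R(b)$ is left of $W_R(d)$, and since $W_R(b)$ and $W_R(d)$ are bottom-consistent (\cref{claim:diagram:extremal_paths_are_consistent}), \cref{prop:diagram:for_consistent_one_distinction_is_enough} gives $b\prec_R d$.

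If $d$ is higher than $b$, \cref{obs:where-curves-end} applied to the disjoint curves $W_R(b)$ and $W$ shows that $b$ is left of $W$. I then need that $W_R(d)$ also passes strictly right of $b$ on the line through $b$; once that holds, the same consistency argument yields $b\prec_R d$. I expect this to be the main obstacle, because $W$ need not start at $x_0$. My plan is a planarity argument. Suppose instead that $W_R(d)$ crosses the line through $b$ at or left of $b$. Since $W_R(d)$ cannot pass through $b$ (as $d\parallel b$), it crosses strictly left of $b$. Let $m=\lcse(W,W_R(d))$ and consider the closed curve $x_0[W_R(d)]m$ together with $W$ from $m$ down to $a$. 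I would then argue that the element $b$ (together with $W_R(b)$ from $b$ down to $x_0$) lies on the side of this curve from which reaching $x_0$ would force $W_R(b)$ to meet $W$, which is impossible. Failing that, I would splice $x_0[W_R(d)]$ up to its first common element with $W$ and then follow $W$ to $d$, obtaining a witnessing path from $x_0$ to $d$ strictly right of $W_R(d)$ somewhere, contradicting \cref{claim:diagram:extreme_paths_exist}.
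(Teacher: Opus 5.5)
You handle $d\parallel b$ and the subcase where $d$ is lower than $b$ correctly; the latter is essentially the paper's first case. The gap is in the subcase where $d$ is higher than $b$. There the implication you set out to prove, ``$W$ right of $W_R(b)$ implies $b\prec_R d$'', is false, so neither your planarity argument nor your splicing fallback can be completed.

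\textbf{Counterexample.} Take $x_0=(0,0)$, $a=(2,3)$, $q'=(1,4)$, $b=(0,5)$, $u=(-1,6)$, $w=(2,7)$, $d=(0,10)$. Use cover relations $x_0<q'$, $a<q'$, $x_0<b$, $x_0<u<d$, $a<w<d$, drawn as straight segments, and let $I=\{(a,b)\}$.
\begin{itemize}
\item This is a planar diagram with $x_0$ minimal on the exterior face, $B=\{x_0,q',b,u,d\}$, and $(a,b)\in I_1$, witnessed by $q'$ and $d$.
\item $W_R(d)=x_0ud$ meets the line $y=5$ at $x=-5/6$, so $d\prec_R b$ (and $d\prec_L b$, as the proposition asserts).
\item Yet the only witnessing path $W=awd$ from $a$ to $d$ is right of $W_R(b)=x_0b$, and $d$ is above $b$.
\item Here $\lcse(W,W_R(d))=d$, and $W_R(b)$ reaches $x_0$ through the gap between $x_0$ and $a$ without touching $W$. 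The first common element of $W_R(d)$ and $W$ is $d$, so your splice just returns $W_R(d)$.
\end{itemize}
The mirror image of this example shows that ``$W$ left of $W_L(b)$'' alone does not force $d\prec_L b$ when $d$ is above $b$. So your ``symmetric'' Case~1 has the same defect unless it invokes the hypothesis $d\prec_R b$.

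\textbf{The missing idea.} When $b\prec_\uparrow d$, do not use $W$ or \ref{items:diagram_pairs:a-d_paths} at all; use the hypothesis $d\prec_R b$ directly.
\begin{enumerate}
\item $d\prec_R b$ says that $W_R(d)$ meets the horizontal line through $b$ at a point left of or equal to $b$. That point is not $b$, because $d\parallel b$ by your first step.
\item By \cref{claim:diagram:extreme_paths_exist}, $W_L(d)$ is left of or equal to $W_R(d)$, so $W_L(d)$ also meets this line strictly left of $b$.
\item $W_L(d)$ and $W_L(b)$ are bottom-consistent (\cref{claim:diagram:extremal_paths_are_consistent}), so \cref{prop:diagram:for_consistent_one_distinction_is_enough} gives that $W_L(d)$ is left of $W_L(b)$, i.e.\ $d\prec_L b$.
\end{enumerate}
This is exactly how the paper handles this case; property \ref{items:diagram_pairs:a-d_paths} is needed only when $d\prec_\uparrow b$.
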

\begin{proof}
    We prove only the first statement as the proof of the second is symmetric.
    Suppose that $d \prec_R b$.
    First, assume that $d \prec_\uparrow b$.
    By \ref{items:diagram_pairs:a-d_paths}, every witnessing path from $a$ to $d$ is either left of $W_L(b)$ or right of $W_R(b)$.
    In particular, since $d \prec_\uparrow b$, either $d$ is left of $W_L(b)$ or $d$ is right of $W_R(b)$.
    Since \(d \prec_{R} b\), we have $d$ left of $W_R(b)$, and we conclude that $d$ is left of $W_L(b)$.
    By \cref{prop:diagram:for_consistent_one_distinction_is_enough}, this yields that $W_L(d)$ is left of $W_L(b)$, and so, $d \prec_L b$.
    Next, we assume that $b \prec_\uparrow d$.
    Since \(d \prec_{R} b\) and by \cref{claim:diagram:extremal_paths_are_consistent}, we have $W_R(d)$ left of $b$.
    It follows that $W_L(d)$ is left of $b$.
    Again by \cref{prop:diagram:for_consistent_one_distinction_is_enough}, we conclude that $W_L(d)$ is left of $W_L(b)$, and so, $d \prec_L b$.
\end{proof}

The following properties may or may not be satisfied for a pair $(a,b) \in \Inc(P)$ 
    \begin{enumerateNumI}
        \setcounter{enumi}{6}
        \item there is \(b' \in B\) with $a \leq b'$ in $P$, \(b' \prec_{L} b\), and \(b' \prec_{R} b\), \label{items:diagram_pairs:b'}
        %(in other words $B'(a,b) \neq \emptyset$),  
        \item there is \(b'' \in B\) with $a \leq b''$ in $P$, \(b \prec_{L} b''\), and \(b \prec_{R} b''\). \label{items:diagram_pairs:b''}
        %(in other words $B''(a, b) \neq \emptyset$).
    \end{enumerateNumI}
Let
\begin{align*}
    \defmath{J_3} &= \{(a,b) \in I_1 : (a,b) \text{ does not satisfy \ref{items:diagram_pairs:b'}}\},\\
    \defmath{J_4} &= \{(a,b) \in I_1 : (a,b) \text{ does not satisfy \ref{items:diagram_pairs:b''}}\},\\
    \defmath{I_2} &= \{(a,b) \in I_1 : (a,b) \text{ satisfies \ref{items:diagram_pairs:b'} and \ref{items:diagram_pairs:b''}}\}\\
    &=\{(a,b) \in \closure{I} : (a,b) \text{ satisfies \ref{items:diagram_pairs:b'_vertical}, \ref{items:diagram_pairs:b''_vertical}, \ref{items:diagram_pairs:b'}, and \ref{items:diagram_pairs:b''}}\}.
\end{align*}
Since $\preccurlyeq_L$ and $\preccurlyeq_R$ partially order $B$ 
(by \cref{claim:diagram:L_and_R_are_posets}).\, 
% Let
% % For each $D \in \{L,R\}$, let \(\preccurlyeq_{D}\) be a partial order on \(B\) such that for all $b_1,b_2 \in B$, we have \(b_1 \preccurlyeq_{D} b_2\) whenever either $b_1 = b_2$ or \(W_D(b_1)\) is left of \(W_D(b_2)\) in the diagram and none of the paths \(W_D(b_1)\) and
% % \(W_D(b_2)\) is a subpath of the other.
%     \begin{align*}
%         \defmath{J_3} = \ext(I_1, \preccurlyeq_{R}) \ \ \ \ \text{and} \ \ \ \ \defmath{J_4} = \ext(I_1, \succcurlyeq_L).
%         %J_3 &= \{(a, b) \in I:\textrm{there is no \(b' \in B\) with $a < b'$ in $P$ and \(b' \prec_{R} b\)}\} \ \text{and}\\
%         %J_4 &= \{(a, b) \in I:\textrm{there is no \(b'' \in B\) with $a < b''$ in $P$ and \(b \prec_{L} b''\)}\}.
%     \end{align*}
By \cref{claim:diagram:incomparable_b_are_comparable}, for each $b,b' \in B$ with $b \parallel b'$ in $P$, $b$ and $b'$ are comparable in $\preccurlyeq_L$ and $\preccurlyeq_R$, hence, by \cref{prop:reversible-from-order}, the sets $J_3$ and $J_4$ are reversible.
Additionally, by \Cref{prop:one-side-implies-the-other}, $\closure{I} \setminus (J_1 \cup J_2 \cup J_3 \cup J_4) = I_2$.

\begin{corollary}\label{cor:J3J4}
    $J_3$ and $J_4$ are reversible in $P$.
\end{corollary}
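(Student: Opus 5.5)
The plan is to apply \cref{prop:reversible-from-order} twice: with the order $\preccurlyeq_R$ on $B$ and the set $I_1$ for $J_3$, and with the reverse order $\succcurlyeq_L$ for $J_4$. By \cref{claim:diagram:L_and_R_are_posets}, $\preccurlyeq_L$ and $\preccurlyeq_R$ are partial orders on $B$, and hence so are their reverses.

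First, the hypothesis of \cref{prop:reversible-from-order} holds for any set $I$ of pairs with second coordinate in $B$. If $((a_1,b_1),\dots,(a_k,b_k))$ is a strict alternating cycle, then $\{b_1,\dots,b_k\}$ is an antichain in $P$. So for $i\neq j$ we have $b_i\parallel b_j$, and \cref{claim:diagram:incomparable_b_are_comparable} makes $b_i,b_j$ strictly comparable in both $\preccurlyeq_L$ and $\preccurlyeq_R$. Applying the proposition to $I_1$ shows that
\[\ext(I_1,\preccurlyeq_R)=\{(a,b)\in I_1:\text{no } d\in B \text{ with } a\le d \text{ in } P \text{ and } d\prec_R b\}\]
and $\ext(I_1,\succcurlyeq_L)=\{(a,b)\in I_1:\text{no } d\in B \text{ with } a\le d \text{ in } P \text{ and } b\prec_L d\}$ are reversible.

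Next, I identify these sets with $J_3$ and $J_4$, using \cref{prop:one-side-implies-the-other}. Take $(a,b)\in I_1$. If some $d\in B$ with $a\le d$ satisfies $d\prec_R b$, then that proposition also gives $d\prec_L b$, so $(a,b)$ satisfies \ref{items:diagram_pairs:b'}. Conversely, \ref{items:diagram_pairs:b'} trivially supplies such a $d$. Hence $J_3=\ext(I_1,\preccurlyeq_R)$. In the same way, the second half of \cref{prop:one-side-implies-the-other} shows that $b\prec_L d$ already forces $b\prec_R d$, so $J_4=\ext(I_1,\succcurlyeq_L)$.

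Reversibility of $J_3$ and $J_4$ follows from these identifications. The only delicate point is the second step, where one must pick the right order for each set: $\preccurlyeq_R$ for $J_3$ and $\succcurlyeq_L$ for $J_4$. This choice is what makes \cref{prop:one-side-implies-the-other} turn the one-sided condition of $\ext$ into the two-sided conditions \ref{items:diagram_pairs:b'} and \ref{items:diagram_pairs:b''}. Once that identification is made, the rest is a direct application of earlier results.
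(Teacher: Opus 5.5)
Your proposal is correct and follows the paper's argument: \cref{prop:reversible-from-order} is applied to $\preccurlyeq_R$ and $\succcurlyeq_L$ on $I_1$, with its hypothesis supplied by \cref{claim:diagram:L_and_R_are_posets,claim:diagram:incomparable_b_are_comparable}. You also spell out the identifications $J_3=\ext(I_1,\preccurlyeq_R)$ and $J_4=\ext(I_1,\succcurlyeq_L)$ via \cref{prop:one-side-implies-the-other}, which the paper uses but only cites in passing.
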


\begin{proposition}\label{clm:ainA}
    Each \((a, b) \in I_2\) satisfies
    \begin{enumerateNumI}
        \setcounter{enumi}{8}
        \item \(a \in A\). \label{items:diagram_pairs:a_in_A}
    \end{enumerateNumI}
\end{proposition}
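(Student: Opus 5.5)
Suppose for contradiction that $(a,b)\in I_2$ but $a\notin A$, i.e.\ $a\in B$, so $x_0\le a$ in $P$. The plan is to use \ref{items:diagram_pairs:b'} and \ref{items:diagram_pairs:b''} together with the description of the paths from $a$ in \ref{items:diagram_pairs:a-d_paths}, which holds since $I_2\subseteq I_1$.

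First apply \ref{items:diagram_pairs:a-d_paths} with $d=a$ and the trivial witnessing path consisting of the single point $a$. This gives that $a$ is left of $W_L(b)$ or right of $W_R(b)$. Since $a\in B$ and $a\prec_\uparrow b$ by \ref{items:diagram_pairs:a_lower_b}, the horizontal line through $a$ meets both extreme paths, so this dichotomy is genuine. By symmetry (mirroring the diagram swaps $L$ with $R$ and \ref{items:diagram_pairs:b'} with \ref{items:diagram_pairs:b''}), assume $a$ is left of $W_L(b)$.

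The key step is to show that every $d\in B$ with $a\le d$ in $P$ satisfies $d\prec_L b$. Take the witnessing path $W=x_0[W_L(a)]a[W]d$, where $W$ is any witnessing path from $a$ to $d$. By \ref{items:diagram_pairs:a-d_paths}, $W$ is left of $W_L(b)$ or right of $W_R(b)$. Since $a$ is left of $W_L(b)$, and $a$ lies at a height where $W_R(b)$ is right of $W_L(b)$, the path $W$ must be left of $W_L(b)$. Then $W_L(d)$ is left of or equal to the concatenated path, which is a witnessing path from $x_0$ to $d$. Now pick a horizontal line where $W$ is strictly left of $W_L(b)$ (the line through $a$ works), and note that $W_L(d)$ and $W_L(b)$ are consistent by \cref{claim:diagram:extremal_paths_are_consistent}. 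By \cref{prop:diagram:for_consistent_one_distinction_is_enough}, $W_L(d)$ is left of $W_L(b)$, so $d\prec_L b$.

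This conclusion contradicts \ref{items:diagram_pairs:b''}, which supplies some $b''\in B$ with $a\le b''$ and $b\prec_L b''$, since $\prec_L$ is a partial order by \cref{claim:diagram:L_and_R_are_posets}. In the mirrored case, the same argument gives $d\prec_R b$ for all such $d$, contradicting \ref{items:diagram_pairs:b''} via $b\prec_R b''$.

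The main obstacle is the detail in the key step: checking that the line through $a$ really witnesses that $W_L(d)$ is strictly left of $W_L(b)$. The worry is that $W_L(d)$ might pass through or to the right of the point of $W$ at that height. This cannot happen: $W_L(d)$ is left of or equal to the concatenated path, so its point on that line is at or left of $a$, and $a$ is strictly left of $W_L(b)$.
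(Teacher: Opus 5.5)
Your argument is correct and is essentially the paper's proof. The paper treats the mirror case ($a$ right of $W_R(b)$) and contradicts the rightmost-ness of $W_R(b')$ for the element $b'$ from \ref{items:diagram_pairs:b'} directly, via the path $x_0[W_R(a)]a[W]b'$. You route the same concatenated path through the intermediate claim that every $d\in B$ above $a$ satisfies $d\prec_L b$, and you correctly contradict \ref{items:diagram_pairs:b''} in your case $a$ left of $W_L(b)$.

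There is one slip in your last paragraph. In the mirrored case the argument yields $b\prec_R d$ for every $d\in B$ with $a\le d$ in $P$, not $d\prec_R b$. This contradicts \ref{items:diagram_pairs:b'} via $b'\prec_R b$, not \ref{items:diagram_pairs:b''}.
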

\begin{proof}
    Recall that $a \neq x_0$.
    Suppose to the contrary that \(a \not \in A\), hence, $a \in B \setminus\{x_0\}$.
    Since \(a \parallel b\) in $P$, by \cref{claim:diagram:incomparable_b_are_comparable}, the elements \(a\) and \(b\) are
    comparable in \(\preccurlyeq_L\) and \(\preccurlyeq_R\).
    By~\ref{items:diagram_pairs:a_lower_b}, we have $x_0 \prec_\uparrow a \prec_\uparrow b$.
    In particular, $a$ is either left or right of each of $\WL(b)$ and $\WR(b)$.
    By~\ref{items:diagram_pairs:a_not_enclosed_by_bs}, the element \(a\) is not
    simultaneously right of \(\WL(b)\) and left of \(\WR(b)\).
    It follows that either $a$ is left of $\WL(b)$ or $a$ is right of $\WR(b)$.
    The two cases are symmetric, so without loss of generality, assume that $a$ is right of $\WR(b)$.
    % Since $\WR(b)$ and $\WR(a)$ are consistent by \cref{claim:diagram:extremal_paths_are_consistent}, by \cref{prop:diagram:for_consistent_one_distinction_is_enough}, $\WR(a)$ is right of $\WL(b)$, and so, $b \prec_R a$.
    %Hence, \(a \prec_L b\) or \(b \prec_R a\).
    %The two cases are symmetric, so without loss of generality, we assume that \(b \prec_R a\).
    By~\ref{items:diagram_pairs:b'}, there exists \(b' \in B\) with $a \leq b'$ in $P$, \(b' \prec_{L} b\), and \(b' \prec_{R} b\).
    Choose a witnessing path $W$ from $a$ to $b'$, and let 
    The path $W' = x_0[W_R(a)]a[W]b'$ is a witnessing path from $a$ to $b'$ in $P$ containing an element (specifically $a$) right of $W_R(b)$, which is right of $W_R(b')$, which is a contradiction with \cref{claim:diagram:extreme_paths_exist}.
    % Consider the horizontal line $\ell$ containing $a$.
    % The intersection of $W_R(b)$ with $\ell$ is left of $a$, which is equal to the intersection of $W'$ with $\ell$.
    % Since $b' \prec_R b$, the intersection of $W_R(b')$ with $\ell$ is not right of the intersection of $W_R(b)$ with $\ell$.
    % Summarizing, the intersection of $W_R(b')$ with $\ell$ is left of the intersection of $W'$ with $\ell$, which yields that $W_R(b')$ is not right of $W'$, which contradicts \cref{claim:diagram:extreme_paths_exist}.
\end{proof}

\begin{proposition}\label{prop:strict-cycle-ordering-bs}
    Let $((a_1,b_1), \dots, (a_k,b_k))$ be a strict alternating cycle in $P$ contained in $I_2$.
    For all distinct $i,j \in [k]$, either $b_i \prec_L b_j$ and $b_i \prec_R b_j$ or $b_j \prec_L b_i$ and $b_j \prec_R b_i$.
\end{proposition}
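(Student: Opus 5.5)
The plan is to argue by contradiction, using the planar structure of the extreme witnessing paths. Fix distinct $i,j\in[k]$. In a strict alternating cycle the elements $b_1,\dots,b_k$ form an antichain, so $b_i\parallel b_j$ in $P$. By \cref{claim:diagram:incomparable_b_are_comparable}, $b_i$ and $b_j$ are therefore comparable both in $\preccurlyeq_L$ and in $\preccurlyeq_R$. The only thing to exclude is the \emph{crossed} configuration. By the symmetry $i\leftrightarrow j$ this means ruling out $b_i\prec_L b_j$ together with $b_j\prec_R b_i$. In that configuration $W_L(b_j)$ and $W_R(b_j)$ are both sandwiched between $W_L(b_i)$ and $W_R(b_i)$ (or the roles are swapped). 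So one of $b_i,b_j$ should be trapped inside a region bounded by extreme paths of the other.

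\textbf{Step 1 (trap the lower element).} Since the diagram is distinguishing, one of $b_i,b_j$ is lower; suppose it is $b_j\prec_\uparrow b_i$. The case $b_i\prec_\uparrow b_j$ is handled the same way with the roles of the two elements and of $L,R$ swapped. Consider the horizontal line through $b_j$. Its intersection with $W_L(b_i)$ is left of or equal to the point of $W_L(b_j)$ on that line, which is $b_j$. Its intersection with $W_R(b_i)$ is right of or equal to $b_j$. Both inequalities are strict because $b_j\not\le b_i$. As in the proof of \ref{items:diagram_pairs:a_not_enclosed_by_bs}, let $y$ be the highest common element of $W_L(b_i),W_R(b_i)$ below this line and $x$ the lowest common element above it. Let $\calR$ be the region of $x[W_R(b_i)]y[W_L(b_i)]x$. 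By \cref{obs:curves-region}, $b_j\in\Int\calR$. Every element on $\partial\calR$ is $\le b_i$ in $P$.

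\textbf{Step 2 (the $a$'s attached to $b_j$ are trapped too).} Take the index $j-1$ (cyclically), so $a_{j-1}\le b_j$, and choose a witnessing path $W$ from $a_{j-1}$ to $b_j$. If $j-1\ne i-1$, i.e.\ $i\neq j$, then strictness gives $a_{j-1}\not\le b_i$. Hence $W$ avoids $\partial\calR$, because any common element $s$ would give $a_{j-1}\le s\le b_i$. It follows that $a_{j-1}\in\Int\calR$. The same reasoning shows that every element $d$ with $a_{j-1}\le d$ lies in $\Int\calR$, so in particular every $d\in B$ above $a_{j-1}$ does. Indeed, a witnessing path from $a_{j-1}$ to $d$ cannot cross $\partial\calR$.

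\textbf{Step 3 (contradiction with $(a_{j-1},b_{j-1})\in I_2$).} Here I intend to use \ref{items:diagram_pairs:a_not_enclosed_by_bs} and \ref{items:diagram_pairs:a-d_paths} for the pair $(a_{j-1},b_{j-1})$, together with \ref{items:diagram_pairs:b''_vertical}, \ref{items:diagram_pairs:b'} and \ref{items:diagram_pairs:b''}. All elements of $B$ above $a_{j-1}$ lie strictly inside $\calR$, so they all lie below $x\preccurlyeq_\uparrow b_i$. They are also enclosed between $W_L(b_i)$ and $W_R(b_i)$. The aim is to compare this with $b_{j-1}$. If $b_{j-1}$ is outside $\calR$, then \ref{items:diagram_pairs:b''} (an element $b''\ge a_{j-1}$ with $b_{j-1}\prec_L b''$ and $b_{j-1}\prec_R b''$) or \ref{items:diagram_pairs:b'} should force some path from $a_{j-1}$ to leave $\calR$, which is a contradiction. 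If $b_{j-1}$ is inside $\calR$, I would iterate the argument around the cycle. The element $b_{j-1}$ is then again trapped, with the same $b_i$, and going around the cycle eventually reaches $b_{i}$ itself (via $a_{i-1}\le b_i$), which cannot lie in $\Int\calR$. Alternatively, one can use \cref{prop:exposed-paths} to show that the extreme paths from $x_0$ to trapped $b$'s stay inside $\calR$.

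I expect Step 3 to be the main obstacle: turning ``everything above $a_{j-1}$ is inside $\calR$'' into a clean contradiction. The first thing I would try is the cyclic iteration. Each step $a_{m-1}\le b_m$ with $m\ne i$ keeps the next element inside $\Int\calR$, since its witnessing path avoids $\partial\calR$ by strictness. Eventually the cycle must return to $b_i$, which lies on or above $\partial\calR$, giving the contradiction.
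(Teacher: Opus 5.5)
Your Steps~1 and~2 are correct. Step~3, which carries the real content, is left open, and the iteration you sketch does not close it.

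\textbf{Where the iteration breaks.} Walking backwards around the cycle, the step from $b_m\in\Int\calR$ to $a_{m-1}\in\Int\calR$ (for $m\neq i$) is fine. The next step, from $a_{m-1}$ to $b_{m-1}$, goes across an \emph{incomparable} pair. So the fact that witnessing paths from $a_{m-1}$ avoid $\partial\calR$ says nothing about where $b_{m-1}$ is.

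Your proposed endpoint does not help either. A witnessing path from $a_{i-1}$ to $b_i$ is allowed to meet $\partial\calR$, so reaching $b_i$ ``via $a_{i-1}\le b_i$'' gives no contradiction. The backward walk actually ends at $a_i$, and that is where \ref{items:diagram_pairs:a_not_enclosed_by_bs} applies.

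Membership in the fixed region $\calR$ is also an awkward invariant. The paths $W_L(b_i)$ and $W_R(b_i)$ may touch several times, so $b_{j-1}$ could a priori sit in a different bubble between them. Your branch ``$b_{j-1}$ outside $\calR$'' is not handled by a path from $a_{j-1}$ leaving $\calR$. For instance, if $b_{j-1}$ were in a lower bubble, $b'$ and $b''$ could both lie in $\Int\calR$ with no path leaving it; the contradiction there would come from \ref{items:diagram_pairs:a_lower_b}.

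\textbf{The missing idea.} Track the region-free relation ``$b_i\prec_L d$ and $d\prec_R b_i$'' instead of membership in $\calR$.
\begin{itemize}
\item By your Step~2, every $d\in B$ above $a_{j-1}$ lies strictly between $W_L(b_i)$ and $W_R(b_i)$ and below $b_i$. Bottom-consistency and \cref{prop:diagram:for_consistent_one_distinction_is_enough} then give $b_i\prec_L d$ and $d\prec_R b_i$.
\item Apply this to the witnesses $b'$ of \ref{items:diagram_pairs:b'} and $b''$ of \ref{items:diagram_pairs:b''} for $(a_{j-1},b_{j-1})$. Transitivity (\cref{claim:diagram:L_and_R_are_posets}) yields $b_i\prec_L b'\prec_L b_{j-1}$ and $b_{j-1}\prec_R b''\prec_R b_i$.
\item Hence $b_{j-1}$ is again crossed with $b_i$, regardless of which bubble it sits in.
\end{itemize}

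The paper packages this as a minimal counterexample. Renumber so that $i=1$ and take $j$ minimal with $b_1\prec_L b_j$ and $b_j\prec_R b_1$.
\begin{itemize}
\item If $j=2$, the trapped element is $a_1$ itself, contradicting \ref{items:diagram_pairs:a_not_enclosed_by_bs} for $(a_1,b_1)$.
\item If $j>2$, the transfer above shows $b_{j-1}$ is crossed with $b_1$, contradicting minimality.
\end{itemize}

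\textbf{A smaller point.} Your case $b_i\prec_\uparrow b_j$ is not symmetric to the other, since mirroring preserves both the crossed configuration and the vertical order. That case is in fact impossible. On the horizontal line through $b_i$, the path $W_L(b_j)$ is squeezed between $W_L(b_i)$ and $W_R(b_i)$, so it passes through $b_i$, giving $b_i\le b_j$.
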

\begin{proof}
    Suppose to the contrary that there exist distinct $i,j \in [k]$ with $b_i \prec_L b_j$ and $b_j \prec_R b_i$.
    Note that since the cycle is strict, $b_i$ and $b_j$ are incomparable in $P$.
    By renumbering the cycle, we may assume that $i = 1$ and we set $j$ to be minimal such that $b_1 \prec_L b_j$ and $b_j \prec_R b_1$.

    Note that if for an element $d \in B$, we have $b_1 \prec_L d$ and $d \prec_R b_1$, then $d \prec_\uparrow b_1$.
    Indeed, otherwise, consider the horizontal line $\ell$ containing $b_1$.
    Both paths $W_L(b_1)$ and $W_R(b_1)$ intersect $\ell$ in $b_1$, thus, it is not possible that the intersections with $\ell$ are ordered $W_L(b_1)$, $W_L(d)$, $W_R(d)$, and $W_R(b_1)$ from left to right.
    It follows that indeed, $d \prec_\uparrow b_1$.
    In particular, this applies to $d = b_j$, 
    so $b_j \prec_\uparrow b_1$.

    Let $W$ be a witnessing path from $a_{j-1}$ to $b_j$ in $P$.
    Since the cycle is strict, $W$ is disjoint from $W_L(b_1)$ and $W_R(b_1)$.
    Since $b_j \prec_\uparrow b_1$, $b_j$ is right of $W_L(b_1)$ and left of $W_R(b_1)$.
    In particular, by \Cref{prop:diagram:for_consistent_one_distinction_is_enough}, $W$ is right of $W_L(b_1)$ and left of $W_R(b_1)$.
    It also follows that $x_0 \prec_{\uparrow} a_{j-1}$, and $a_{j-1}$ is right of $W_L(b_1)$ and left of $W_R(b_1)$.
    Note that if $j = 2$, then this contradicts \ref{items:diagram_pairs:a_not_enclosed_by_bs}.
    Hence, from now on, we assume that $j > 2$.

    Let $b'$ and $b''$ be the elements provided by \ref{items:diagram_pairs:b'} and \ref{items:diagram_pairs:b''}, respectively.
    Let $W'$ and $W''$ be witnessing paths from $a_{j-1}$ to $b'$ and $b''$ in $P$, respectively.
    Again, the paths $W'$ and $W''$ are disjoint from $W_L(b_1)$ and $W_R(b_1)$ as the cycle is strict.
    By \cref{prop:diagram:for_consistent_one_distinction_is_enough},  both $W'$ and $W''$ are right of $W_L(b_1)$ and left of $W_R(b_1)$.
    In particular, by the first observation of this proof yields $b' \prec_\uparrow b_1$ and $b'' \prec_\uparrow b_1$.
    It follows that $b_1 \prec_L b'$ and $b'' \prec_R b_1$.
    Since $b' \prec_L b_{j-1}$ and $b_{j-1} \prec_R b''$, we obtain $b_1 \prec_L b_{j-1}$ and $b_{j-1} \prec_R b_1$, which contradicts the minimality of $j$ and completes the proof.
\end{proof}

% Define partial orders \(\peR\) and \(\peL\) on \(B\), where for \(b_1, b_2 \in B\) and
% \(X \in \{L, R\}\) we have \(b_1 \preccurlyeq_X b_2\)  if and only if either \(b_1 = b_2\)
% or \(W_X(b_1)\) left of \(W_X(b_2)\) in the diagram and none of the paths \(W_X(b_1)\) and
% \(W_X(b_2)\) is a subpath of the other. Observe that
% if \(b_1\) and \(b_2\) are incomparable in $P$, then they are comparable in \(\preccurlyeq_X\).
% Therefore, by \cref{prop:reversible-from-order} applied to \(\peR\) and \(\succcurlyeq_L\), the sets
% \[
% \ext(J, \preccurlyeq_R) = \{(a, b) \in J: \textrm{there is no \(b' \in B \cap U_Q[a]\) such that \(b' \prec_R b\)}\}
% \]
% and
% \[
% \ext(J, \succcurlyeq_L) = \{(a, b) \in J: \textrm{there is no \(b'' \in B \cap U_Q[a]\) 
% such that \(b \prec_L b''\)}\}.
% \]
% are reversible.

% For each \((a, b) \in J\), we define sets
% \[
%   B'(a, b) := \{b' \in B \cap U_Q[a]: b' \prec_R b\},
% \]
% and
% \[
%   B''(a, b) := \{b'' \in B \cap U_Q[a]: b \prec_L b''\}.
% \]
% Hence, for every \((a, b) \in J \setminus (\ext(J, \preccurlyeq_R) \cup
% \ext(J, \preccurlyeq_L))\), we have \(B'(a, b) \neq \emptyset\)
% and \(B''(a, b) \neq \emptyset\).

\subsection{Bottom elements}\label{ssec:bottom}

An element $a\in A$ is a \defin{bottom element} if 
%Let us call a pair \((a, b) \in I_2\) a \defin{bottom} pair if
 there exists $b^*,b^{**} \in B$ and witnessing paths $W^*$ from $a$ to $b^*$ and $W^{**}$ from $a$ to $b^{**}$ such that $W^*$ is left of $x_0$ and $W^{**}$ is right of $x_0$.
 Note that $a \prec_\uparrow x_0$.
% \(a\) is drawn below \(x_0\) and there exist witnessing paths
% \(W_1\) and \(W_2\) in $P$ each from \(a\) to an element in \(B\),
% such that \(x_0\) is right of \(W_1\) and left of \(W_2\).
%Let \(J_{\textrm{bot}}\) denote the set of all bottom pairs.
See \cref{fig:bottom-element}.
\begin{figure}
    \centering
    \includegraphics{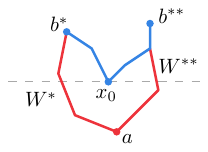}
    \caption{A bottom element \(a\). 
    %\jedrzej{Maybe a dotted / gray horizontal line through $x_0$?} \michal{Added!}
    } 
    \label{fig:bottom-element}
\end{figure}
In this subsection, we construct four reversible sets $J_5,J_6,J_7,J_8$ that cover the set
%We will next show that \(\dim_Q(J_{\textrm{bot}}) \le 4\).
\begin{align*} %\label{eq:bottom_leq_4}
    \set{(a,b)\in I_2 :\ \textrm{$a$ is a bottom element}}.    
\end{align*}

Since $x_0$ is a minimal element of $P$, every edge of the cover graph of $P$ incident with \(x_0\) has its lowest point in \(x_0\).  
Let \(e_1, \ldots, e_t\) be the edges of the cover graph of \(P\) incident with \(x_0\) listed from left to right, and denote \(e_0 = e_t\).
The vertex \(x_0\) is on the boundary
of the exterior face, so for some \(i \in [t]\) the edges \(e_{i-1}\) and \(e_i\) are consecutive edges in the facial walk along the exterior face (the indices are interpreted
cyclically, so \(e_0 = e_t\)).
We choose a non-trivial vertically monotone curve \defin{$\gamma_\infty$} in the exterior face of $P$ which has its lowest point in \(x_0\) and is otherwise disjoint from the diagram.
The curve \(\gamma_\infty\) may be left of \(e_1\) or right of \(e_t\) or 
between \(e_{i-1}\) and \(e_i\) for some \(i \in [t]\).
%\jedrzej{There is a small hack here. Maybe if there is no bottom pair, just define $J_5,\dots,J_8$ as empty and here assume that there is at least one bottom pair.}
In particular, $\gamma_\infty$ is consistent with every witnessing path from $x_0$.
By \cref{prop:diagram:for_consistent_one_distinction_is_enough}, this yields that $\gamma_\infty$ is either left or right of every non-trivial witnessing path from $x_0$.

% The element \(x_0\) is a minimal element of $P$ on the boundary of the exterior face
% of $P$. Therefore, there exists a non-trivial \(y\)-monotone curve \(\gamma_\infty\)
% such that \(x_0\textbf{}\) is the only point of \(\gamma_\infty\)
% on the diagram and the lowest point of \(\gamma_\infty\), and \(\gamma_\infty \setminus \{x_0\}\) is contained 
% in the exterior face of the diagram. Fix such a curve \(\gamma_\infty\).

\begin{proposition}\label{claim:diagram:e_infty}
  For every \(b \in B\)
  either all witnessing paths from $x_0$ to $b$
  are left of \(\gamma_\infty\), or all witnessing paths from $x_0$ to $b$ are right of \(\gamma_\infty\).
\end{proposition}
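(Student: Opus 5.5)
We argue by contradiction and compare a "left" and a "right" witnessing path inside a closed curve built from them and $\gamma_\infty$. Let $b \in B$ and suppose there are witnessing paths $W_1$ and $W_2$ from $x_0$ to $b$ with $W_1$ left of $\gamma_\infty$ and $W_2$ right of $\gamma_\infty$. By the remark preceding the statement, every non-trivial witnessing path from $x_0$ is either left or right of $\gamma_\infty$, so this is the only case to exclude. In particular $b \neq x_0$.

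First we reduce to internally disjoint paths. Since $W_1 \neq W_2$, let $u = \gcpe(W_1,W_2)$. Then let $v$ be the lowest point of $W_1$ strictly above $u$ that lies on $W_2$; such a point exists since $b$ is common to both paths. The subpaths $u[W_1]v$ and $u[W_2]v$ are internally disjoint, and their union $C$ is a simple closed curve.

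If $u = x_0$, then the first edges of $W_1$ and $W_2$ are distinct edges $e_i, e_j$ at $x_0$, with $e_i$ left of $\gamma_\infty$ and $e_j$ right of it. Thus $\gamma_\infty$ leaves $x_0$ strictly between the two edges, in the angular order above $x_0$. Near $x_0$, the points of $\gamma_\infty \setminus\{x_0\}$ are then right of $u[W_1]v$ and left of $u[W_2]v$. By \cref{obs:curves-region} (a point right of exactly one of the two vertically monotone curves forming $C$), these points lie in the interior of the region of $C$. But $\gamma_\infty \setminus \{x_0\}$ lies in the interior of the exterior face of the diagram, and that set is connected and unbounded. It is disjoint from $C$, which is part of the diagram, so it lies entirely in the exterior of the region of $C$. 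This is a contradiction.

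If $u \neq x_0$, the common prefix $x_0[W_1]u$ is non-trivial. Then $W_1$ and $W_2$ leave $x_0$ along the same edge, and that edge is either left of $\gamma_\infty$ or right of it. Both paths are bottom-consistent with $\gamma_\infty$ and coincide near $x_0$. So by \cref{prop:diagram:for_consistent_one_distinction_is_enough}, applied on a horizontal line just above $x_0$, they lie on the same side of $\gamma_\infty$, a contradiction.

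The main obstacle is the case $u = x_0$, specifically justifying that the initial part of $\gamma_\infty$ lies inside the region of $C$. We would do this with the horizontal line just above $x_0$. On it, $\gamma_\infty$ meets strictly between $W_1$ and $W_2$: the inequalities are strict because $\gamma_\infty$ meets the diagram only at $x_0$. This line also meets $u[W_1]v$ and $u[W_2]v$ and no other part of $C$, so \cref{obs:curves-region} applies directly. The remaining facts are standard: a connected subset of the complement of $C$ lies in a single component, and the exterior face is unbounded.
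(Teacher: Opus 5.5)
Your proof is correct and uses the same idea as the paper: trap the initial part of $\gamma_\infty$ inside a region bounded by two witnessing paths from $x_0$ to $b$, which contradicts the fact that $\gamma_\infty$ lies in the exterior face. The paper works with $W_L(b)$ and $W_R(b)$ and just speaks of the region bounded by their union; you instead use the given paths, cut them at their first common element above their greatest common prefix-element to get a genuine simple closed curve, and handle the shared-first-edge case separately, which makes the same argument more rigorous.
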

\begin{proof}
    Suppose to the contrary that there exists $b$ and two witnessing paths 
    $W, W'$ from  $x_0$ to $b$ such that 
    \(W\) is left of \(\gamma_\infty\) and
    \(W'\) is right of \(\gamma_\infty\).
    Therefore \(\gamma_\infty\) contains a point which is right of \(W_L(b)\) and
    right of \(W_R(b)\), and thus lies in the interior of the
    region bounded by the closed walk
    obtained as the union of \(W_L(b)\) and \(W_R(b)\).
    This contradicts the fact that \(\gamma_\infty\) belongs to the exterior face
    of the diagram.
\end{proof}

Let \defin{$B_L$} be the set of all $b \in B$ such that every witnessing path from $x_0$ to $b$ is left of $\gamma_\infty$, and let \defin{$B_R$} be the set of all $b \in B$ such that every witnessing path from $x_0$ to $b$ is right of $\gamma_\infty$.
By \cref{claim:diagram:e_infty}, $B = B_L \cup B_R$.

% \begin{proposition}
%     For every bottom element $a\in A$, every $b'\in B$ with $a\leq b'$ in $P$, 
%     and every witnessing path \(W\) from \(a\) to $b'$, 
%     if \(b' \in B_R\), then \(W\) is right of \(x_0\), and if \(b' \in B_L\), then \(W\) is left of \(x_0\).
% \end{proposition}
% \begin{proof}
%     We only show that if \(b' \in B_R\), then \(W\) is right of \(x_0\) because the
%     proof of the other item is symmetric.
%     Suppose towards a contradiction that \(b' \in B_R\) and \(W\) is not right of \(x_0\).
%     Since \(a\) is incomparable with \(b\), the witnessing path \(W\) does not contain
%     \(x_0\).
%     As \(a\) is below \(x_0\) and \(b'\) is above \(x_0\), this implies that
%     \(W\) is left of \(x_0\). Hence, \(W_R(a, b')\) is left of \(x_0\) as well.
%     The pair \((a, b)\) is bottom, so there exists an witnessing path \(W'\) from
%     \(a\) to an element \(b'' \in B\) which is right of \(x_0\).
%     TODO.
% \end{proof}

    \begin{proposition}\label{claim:diagram:bottom_as}
    For every bottom element $a\in A$, every $d\in B$ with $a\leq d$ in $P$, 
    and every witnessing path \(W\) from \(a\) to $d$, 
    if \(d \in B_R\), then \(W\) is right of \(x_0\), and if \(d \in B_L\), then \(W\) is left of \(x_0\).
    \end{proposition}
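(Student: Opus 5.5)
By symmetry it suffices to treat the case $d \in B_R$ and to show that every witnessing path $W$ from the bottom element $a$ to $d$ is right of $x_0$; the case $d \in B_L$ follows by mirroring the diagram.

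First we note that $W$ avoids $x_0$: otherwise $a \le x_0 \le b$ for any $b$ paired with $a$, and in any case $a \in A$ forbids $x_0 \le a$; we only need that $a \ne x_0$ and that $W$ passes through the horizontal line of $x_0$. Indeed $a \prec_\uparrow x_0$ (noted after the definition of bottom elements) and $x_0 \preccurlyeq_\uparrow d$. If $W$ contained $x_0$, then $x_0$ would be an element of $W$ above $a$, so $a \le x_0$, contradicting $a \in A$. Hence $W$ meets that line at a point distinct from $x_0$. Since $W$ and the point $x_0$ are disjoint, \cref{prop:diagram:for_consistent_one_distinction_is_enough} shows that $W$ is either left of $x_0$ or right of $x_0$. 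So we may suppose, for a contradiction, that $W$ is left of $x_0$.

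Since $a$ is a bottom element, there is a witnessing path $W^{**}$ from $a$ to some $b^{**} \in B$ that is right of $x_0$. Consider the closed curve formed by $W$, $W^{**}$ and witnessing paths from $x_0$ to $d$ and to $b^{**}$. It is cleanest to take $U = W_R(d)$, which is right of $\gamma_\infty$ because $d \in B_R$, and to take any witnessing path $U^{**}$ from $x_0$ to $b^{**}$. The key topological step is as follows. The path $W$ starts below $x_0$ and to its left, while $U$ starts at $x_0$ and, being right of $\gamma_\infty$, leaves $x_0$ to the right of the initial part of $\gamma_\infty$. Both end at $d$. Let $u$ be the lowest common point of $W$ and $U$; it exists since $d$ is common. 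Then $x_0[U]u[W]a$ together with the horizontal segment from $a$'s level is not closed, so instead we use the following closed curve: $a[W]u$, then down $u[U]x_0$, then up from $x_0$ along $U^{**}$ until it meets $W^{**}$, then down $W^{**}$ back to $a$. More simply, we work with $C = a[W]u[U]x_0$, which is a curve from $a$ to $x_0$ that passes around $\gamma_\infty$'s starting direction on the correct side. We aim to show that $\gamma_\infty$ is enclosed: the region bounded by $a[W]u[U]x_0$ together with $W^{**}$ and an $x_0$-to-$b^{**}$ path contains an initial piece of $\gamma_\infty$ in its interior. This uses \cref{obs:curves-region}, counting how many of the constituent vertically monotone curves a point of $\gamma_\infty$ just above $x_0$ is right of. Such a point is right of $W$, left of $U$, and must lie on the appropriate side of $W^{**}$ and of the $x_0$-to-$b^{**}$ path. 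That gives an odd count, which contradicts $\gamma_\infty$ lying in the exterior face.

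The main obstacle is this parity count, because the side of $\gamma_\infty$ relative to the path to $b^{**}$ depends on whether $b^{**} \in B_L$ or $B_R$. We therefore split into those two cases. If $b^{**} \in B_R$, we replace $W^{**}$ by nothing and use $U$ against $W$ alone, closing the curve along the initial segment near $x_0$. If $b^{**} \in B_L$, then $W^{**}$ is right of $x_0$ yet ends left of $\gamma_\infty$, so $W^{**}$ together with a path from $x_0$ to $b^{**}$ already crosses around $x_0$, and the curve $W \cup W^{**}$ plus the two $x_0$-paths separates $\gamma_\infty$. We expect to handle both cases by the same enclosure argument as in \cref{claim:diagram:e_infty}: exhibit a point of $\gamma_\infty$ lying between two vertically monotone curves forming a closed curve, hence in a bounded region, which is the contradiction.
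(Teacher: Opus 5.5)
Your final case split is inverted, and neither branch works as written. Fix $p\in\gamma_\infty$ just above $x_0$, and let $U^{**}$ be a path from $x_0$ to $b^{**}$.

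\emph{The case $b^{**}\in B_R$.} Here $U^{**}$ is right of $\gamma_\infty$. Among $W$, $W_R(d)$, $U^{**}$, $W^{**}$, the point $p$ is right of $W$ only, which is an odd count. So this is exactly the case where the four-curve enclosure of your middle paragraph succeeds, and it is the paper's argument. The paper replaces $W^{**}$ by $W_R(a,b^{**})$. It also first disposes of the cases where $W$ meets $W_R(b^{**})$ or $W_R(a,b^{**})$ meets $W_R(d)$, using smaller closed curves, so that \cref{obs:curves-region} is applied to a simple closed curve; you never address simplicity.

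Your plan to drop $W^{**}$ here and ``close the curve near $x_0$'' cannot work. $W$ starts at $a$ and $W_R(d)$ at $x_0$, and nothing in the diagram joins them. A closed curve containing an arc outside the diagram can enclose $\gamma_\infty$ without contradicting that $\gamma_\infty$ lies in the exterior face. That branch also never uses that $a$ is a bottom element, and the statement fails without it. Take covers $a<c<d$ and $x_0<f<d$ drawn as a tree, with the edge from $a$ to $c$ passing left of $x_0$, the edge from $c$ to $d$ passing over $\gamma_\infty$, and the edge from $x_0$ to $f$ leaving to the right of $\gamma_\infty$. Then $d\in B_R$ and $W=acd$ is left of $x_0$.

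\emph{The case $b^{**}\in B_L$.} Now $U^{**}$ leaves $x_0$ to the left of $\gamma_\infty$. So $p$ is right of $W$ and of $U^{**}$, and left of $W_R(d)$ and of $W^{**}$. That is an even count, so this curve does \emph{not} separate $\gamma_\infty$, contrary to your claim.

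You are right that $b^{**}\in B_L$ needs attention: the paper's proof takes $W_R(b^{**})$ to be right of $\gamma_\infty$ without justifying $b^{**}\in B_R$. But this case needs a different idea. One way to handle it:
\begin{enumerate}
\item Since $W_R(b^{**})$ and $W_R(d)$ leave $x_0$ on opposite sides of $\gamma_\infty$, bottom-consistency gives that they meet only in $x_0$, and $W_R(b^{**})$ is left of $W_R(d)$.
\item Suppose $W$ meets $W_R(b^{**})$. Let $w$ be the lowest common element, and $u$ the lowest element of $w[W]d$ on $W_R(d)$. Then $x_0[W_R(b^{**})]w[W]u[W_R(d)]x_0$ is a simple closed curve. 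Choosing $p$ below $w$, the point $p$ is right of exactly one of its three pieces, a contradiction. The case where $W_R(a,b^{**})$ meets $W_R(d)$ is symmetric.
\item Otherwise, \cref{prop:diagram:for_consistent_one_distinction_is_enough} gives that $W$ is left of $W_R(b^{**})$ and that $W_R(d)$ is left of $W_R(a,b^{**})$, with both pairs disjoint.
\item If $d$ were not higher than $b^{**}$, take the point of $W_R(b^{**})$ at the height of $d$. It would be strictly right of $d$, since $d\in W$. It would also be strictly left of $d$, since $d\in W_R(d)$ and $W_R(b^{**})$ meets $W_R(d)$ only in $x_0$.
\item Symmetrically, $b^{**}$ must be higher than $d$, which is a contradiction.
\end{enumerate}

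A minor point: $x_0\notin W$ needs the minimality of $x_0$, since $a\le x_0$ alone does not contradict $a\in A$.
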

    \begin{proof}
        We prove only the first statement, as the proof of the second one is symmetric.
        Suppose to the contrary that there is a witnessing path $W$ from $a$ to $d$ that is left of $x_0$.
        Since $a$ is a bottom element, we can fix $b^{**}\in B$ and a witnessing path $W^{**}$ from $a$ to $b^{**}$ such that $W^{**}$ is right of $x_0$.
        By \Cref{claim:diagram:extreme_paths_exist}, $W_R(a,b^{**})$ is right of $x_0$.
        Let $q = \gcpe(W,W_R(a,b^{**}))$, $m = \lcse(W,W_R(d))$, and $W' = a[W_R(a,b^{**}]q[W]m[W_R(d)]d$.
        Let $m^{**} = \lcse(W_R(b^{**}),W_R(a,b^{**}))$.

        Since $\gamma_\infty$ intersects the diagram of $P$ only in $x_0$, $W$ left of $x_0$ implies $W$ left of $\gamma_\infty$, and $W_R(a,b^{**})$ right of $x_0$ implies $W_R(a,b^{**})$ right of $\gamma_\infty$.
        By \Cref{claim:diagram:e_infty}, both $W_R(d)$ and $W_R(b^{**})$ are right of $\gamma_\infty$.
        
        First, suppose that $W'$ intersects $W_R(b^{**})$ and say that the lowest element of this intersection is $w$.
        By \Cref{obs:curves-region}, the region of $q[W_R(a,b^{**})]m^{**}[W_R(b^{**})]w[W']q$ contains $\gamma_\infty$, which is a contradiction.
        Similarly, suppose that $W_R(a,b^{**})$ intersects $W_R(d)$ and say that the lowest element of this intersection is $w$.
        By \Cref{obs:curves-region}, the region of $q[W_R(a,b^{**})]w[W_R(d)]m[W']q$ contains $\gamma_\infty$, which is a contradiction.

        Thus, we may assume that $W'$ and $W_R(b^{**})$ are disjoint, and $W_R(a,b^{**})$ and $W_R(d)$ are disjoint.
        Consider a tuple $\calT = (W',W_R(d),W_R(b^{**}),W_R(a,b^{**}))$.
        Each pair of consecutive (cyclically) curves in the tuple shares an endpoint and is consistent, either by definition or by \Cref{claim:diagram:extremal_paths_are_consistent}.
        It follows that $\calT$ is a valid system of curves.
        Since $\gamma_\infty$ is right of $W'$ and left of all the other curves in $\calT$, the interior points of $\gamma_\infty$ are contained in the region of $\calT$.
        This contradicts $\gamma_\infty$ being in the exterior face.
    \end{proof}

Let \defin{\(\preccurlyeq_L^{\infty}\)} be a relation on $B$ defined as follows: 
for all $b_1,b_2 \in B$, 
$b_1 \preccurlyeq_L^{\infty} b_2$ if 
($b_1\in B_R$ and $b_2\in B_L$) or
($b_1,b_2 \in B_R$ and $b_1\preceq_L b_2$) or
($b_1,b_2 \in B_L$ and $b_1\preceq_L b_2$).
Note that \(\preccurlyeq_L^{\infty}\) partially orders $B$.
Moreover, for all $b_1,b_2 \in B$ if $b_1$ and $b_2$ are comparable in $\preccurlyeq_L$, then they are comparable in $\preccurlyeq_L^\infty$.
In particular, by \cref{claim:diagram:incomparable_b_are_comparable},
for all $b_1,b_2\in B$ with $b_1\parallel b_2$ in $P$, $b_1,b_2$ are comparable in \(\preccurlyeq_L^{\infty}\). 
Symmetrically, 
let \defin{\(\preccurlyeq_R^{\infty}\)} be a relation on $B$ defined as follows: 
for all $b_1,b_2 \in B$, 
$b_1 \preccurlyeq_R^{\infty} b_2$ if 
($b_1\in B_R$ and $b_2\in B_L$) or
($b_1,b_2 \in B_R$ and $b_1\preceq_R b_2$) or
($b_1,b_2 \in B_L$ and $b_1\preceq_R b_2$).
Again, 
note that \(\preccurlyeq_R^{\infty}\) partially orders $B$ and 
for all $b_1,b_2\in B$ with $b_1\parallel b_2$ in $P$, $b_1,b_2$ are comparable in \(\preccurlyeq_R^{\infty}\). 
% For each \(X \in \{L, R\}\), let \(\preccurlyeq_X^{\infty}\) denote the partial order on \(B\)
% where for any \(b_1, b_2 \in B\) we have \(b_1 \preccurlyeq_X^{\infty} b_2\) when
% one of the following holds:
% \begin{itemize}
% \item \(b_1 \preccurlyeq_X b_2\) and either
% \(W_X(b_2)\) is left of \(\gamma_\infty\) or \(W_X(b_1)\) is right of \(\gamma_\infty\), or
% \item \(W_X(b_1)\) is right of \(\gamma_\infty\) and \(W_X(b_2)\) is left of \(\gamma_\infty\).
% \end{itemize}
% By \cref{clm:e-infty}, \(\preccurlyeq_X^{\infty}\) is indeed a partial order, and it is
% obtained by ``shifting cyclically'' \(\preccurlyeq_X\).
% It is easy to see that if \(b_1, b_2 \in B\) are incomparable in $P$, then they are
% comparable in each \(\preccurlyeq_R^{\infty}\). 
% Therefore, by \cref{prop:reversible-from-order} applied to \(\preccurlyeq_R^{\infty}\) and \(\succcurlyeq_L^{\infty}\), the sets
% \[
% \ext(J, \preccurlyeq_R^{\infty}) = \{(a, b) \in J: \textrm{there does not exist \(b' \in B \cap U_Q[a]\)
% such that \(b' \prec_R^{\infty} b\)}\},
% \]
% and
% \[
% \ext(J, \succcurlyeq_L^{\infty}) = \{(a, b) \in J: \textrm{there does not exist \(b' \in B \cap U_Q[a]\)
% such that \(b \prec_L^{\infty} b''\)}\}
% \]
% are reversible.
Let
    \begin{align*}
    \defmath{J_5} &= \set{(a,b) \in I_2 : \text{there is no $b' \in B$ such that $a \leq b'$ in $P$ and $b' \prec_L^\infty b$}},\\
    \defmath{J_6} &= \set{(a,b) \in I_2 : \text{there is no $b'' \in B$ such that $a \leq b''$ in $P$ and $b \prec_R^\infty b''$}},\\
    \defmath{J_7} &= \set{(a,b) \in I_2 \setminus (J_5 \cup J_6) : \text{$a$ is a bottom element and } b \in B_L},\\
    \defmath{J_8} &= \set{(a,b) \in I_2 \setminus (J_5 \cup J_6) : \text{$a$ is a bottom element and } b \in B_R}.
    \end{align*}
All the sets $J_5$, $J_6$, $J_7$, and $J_8$ can be computed in polynomial time.
% It follows that for every $(a,b) \in I_2 \setminus (J_5 \cup J_6)$, we have
%     \begin{enumerateNumI}
%         \setcounter{enumi}{9}
%         \item there is \(b' \in B\) with $a \leq b'$ in $P$ and \(b' \prec_{L}^\infty b\),  \label{items:diagram_pairs:b'_infty}
%         \item there is \(b'' \in B\) with $a \leq b''$ in $P$ and \(b \prec_{R}^\infty b''\).\label{items:diagram_pairs:b''_infty}
%     \end{enumerateNumI}
% Denote
% \[J_0 := \bigcup\{\ext(J, \preceq) : {\preceq} \in \{\preccurlyeq_y, \succcurlyeq_y, \preccurlyeq_R, \succcurlyeq_L, \preccurlyeq_R^\infty, \succcurlyeq_L^\infty\}\},
% \]
% so that \(\dim_Q(J_0) \le 6\).
\begin{proposition}\label{claim:diagram:dim_I_7_leq_2}
    $J_5$, $J_6$, $J_7$, and $J_8$ are reversible in $P$.
\end{proposition}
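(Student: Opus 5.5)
For $J_5$ and $J_6$ I would apply \cref{prop:reversible-from-order} directly. As already noted, $\preccurlyeq_L^\infty$ and $\preccurlyeq_R^\infty$ partially order $B$. Moreover, any two elements of $B$ that are incomparable in $P$ are comparable in both orders. In a strict alternating cycle the $b_i$'s form an antichain in $P$, so the hypothesis of \cref{prop:reversible-from-order} holds for both orders, applied to $I_2$. For $J_5$ this gives $J_5=\ext(I_2,\preccurlyeq_L^\infty)$, which is reversible. For $J_6$ I apply the same proposition to the reversed order $\succcurlyeq_R^\infty$, which gives $J_6=\ext(I_2,\succcurlyeq_R^\infty)$.

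For $J_7$ I work by contradiction; $J_8$ is symmetric under mirroring the diagram, which swaps $B_L$ and $B_R$. Suppose $((a_1,b_1),\dots,(a_k,b_k))$ is a strict alternating cycle contained in $J_7$. Then every $a_i$ is a bottom element and every $b_i\in B_L$. The strategy is to show that every pair in $J_7$ is "extremal" for $\preccurlyeq_L^\infty$ in the same sense as for $J_5$. Concretely, the key claim is: if $(a,b)\in J_7$, $d\in B$ and $a\le d$ in $P$, then $b\prec_L^\infty d$ fails to be reversed, i.e., we never have $d\prec_L^\infty b$ with $d\in B_L$. Once this holds, the proof of \cref{prop:reversible-from-order} applies verbatim. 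Pick the $\preccurlyeq_L^\infty$-minimal $b_k$ in the cycle; then $a_{k-1}\le b_k$ and $b_k\prec_L^\infty b_{k-1}$ with $b_k\in B_L$, contradicting the claim.

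The claim itself I would prove as follows. Since $(a,b)\notin J_5$, there is $b'$ with $a\le b'$ and $b'\prec_L^\infty b$. So either $b'\in B_R$, or $b'\in B_L$ with $b'\prec_L b$. Now take $d\in B_L$ with $a\le d$ and $d\prec_L b$. By \cref{claim:diagram:bottom_as}, every witnessing path from $a$ to $d$ is left of $x_0$, and hence left of $\gamma_\infty$. By \cref{prop:one-side-implies-the-other}, $d\prec_L b$ together with $(a,b)\in I_1$ gives no new information directly. Instead I would argue with $(a,b)\notin J_3$ and $(a,b)\notin J_6$. Since $a$ is bottom, $a$ reaches both $B_L$ (to the left of $x_0$) and $B_R$ (to the right of $x_0$). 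The path from $a$ into $B_L$ passes left of $x_0$ and then continues up to $d$, staying left of $W_L(b)$ by \ref{items:diagram_pairs:a-d_paths}. I expect that this in fact forces the cycle's pairs $a_{i-1}\le b_i$ with $b_i\prec_L b_{i-1}$ to be impossible: the path from $a_{i-1}$ to $b_i$ is left of $x_0$, and $b_{i-1}$ lies in $B_L$. Moreover, the path $W_L(b_{i-1})$ together with a path from $a_{i-1}$ through the left side to $b_i$ and $W_L(b_i)$ encloses $\gamma_\infty$, or else traps $a_{i-1}$ between $W_L(b_{i-1})$ and $W_R(b_{i-1})$. That would contradict \ref{items:diagram_pairs:a_not_enclosed_by_bs} or the fact that $\gamma_\infty$ lies in the exterior face, by a region argument as in the proof of \cref{claim:diagram:bottom_as}.

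The main obstacle is exactly this topological step. I need to show that for a bottom $a_{i-1}$, if $a_{i-1}\le b_i$ with $b_i,b_{i-1}\in B_L$ and $a_{i-1}\parallel b_{i-1}$, then the cycle cannot decrease in $\preccurlyeq_L$. The difficulty is choosing the right closed curve, built from $W_L(b_{i-1})$, $W_L(a_{i-1},b_i)$, $W_L(b_i)$ and a path to $B_R$ right of $x_0$, so that \cref{obs:curves-region} places $\gamma_\infty$ or $b_{i-1}$ on the wrong side. I would first try a curve modelled directly on the system $\calT$ from the proof of \cref{claim:diagram:bottom_as}.
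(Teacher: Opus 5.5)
Your treatment of $J_5$ and $J_6$ is correct and is exactly the paper's argument. The argument for $J_7$ (and, by mirroring, $J_8$) has a genuine gap. Your key claim is false: it says no $(a,b)\in J_7$ admits $d\in B_L$ with $a\le d$ and $d\prec_L b$. So is the per-step statement in your last paragraph, which only involves $a_{i-1}$, $b_{i-1}$, $b_i$.

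Here is a counterexample.
\begin{itemize}
\item Let $x_0$ have upper covers $d,b,e,b^{**}$, leaving $x_0$ in this order from left to right, and let $e<b''$.
\item Let $a$, drawn below $x_0$, have upper covers $d$, $b''$, $b^{**}$. The edge $ad$ passes just left of $x_0$, the edge $ab^{**}$ passes right of $x_0$, and the edge $ab''$ climbs far to the left and then passes above $b$ to reach $b''$.
\item Heights increase in the order $a,x_0,d,b^{**},e,b,b''$. Let $\gamma_\infty$ leave $x_0$ between $x_0e$ and $x_0b^{**}$, and let $I=\{(a,b)\}$.
\end{itemize}
Then $d,b,e,b''\in B_L$, $b^{**}\in B_R$, and $a$ is a bottom element. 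Also $W_L(b'')=W_R(b'')=x_0eb''$ lies right of the edge $x_0b$. Hence $d$ witnesses (I1) and (I7), and $b''$ witnesses (I2) and (I8). Moreover $(a,b)\notin J_5$ (witness $d$) and $(a,b)\notin J_6$ (witness $b''$). So $(a,b)\in J_7$, yet $a\le d\in B_L$ and $d\prec_L b$.

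Since here also $b\prec_L b''$ with $b''\in B_L$, the pairs of $J_7$ are not extremal in either direction of these orders on the $b$'s. As you noticed, $(a,b)\notin J_5$ is automatic when $a$ is bottom and $b\in B_L$. So the region argument you sketch at the end would be trying to prove a false statement.

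The missing idea is to compare the $a_i$ by height rather than the $b_i$ by $\preccurlyeq_L$.
\begin{enumerate}
\item In a cycle contained in $J_7$, let $a_1$ be the highest $a_i$. Use the informative exclusion $(a_1,b_1)\notin J_6$: it gives $b''\in B_L$ with $a_1\le b''$ and $b_1\prec_R b''$.
\item A witnessing path $W$ from $a_1$ to $b''$ is left of $x_0$ by \cref{claim:diagram:bottom_as}. Since $a_1$ is a bottom element, there is a witnessing path $W^{**}$ from $a_1$ into $B$ right of $x_0$; let $p^{**}$ be its point on the horizontal line through $x_0$.
\item Consider the region bounded by the ray going right from $p^{**}$, by $p^{**}[W^{**}]a_1[W]b''$, and by the ray going right from $b''$. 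It contains $x_0$ in its interior.
\item $W_R(b_1)$ cannot leave this region. It cannot meet $W$ because $a_1\parallel b_1$, and it cannot reach the upper ray because $b_1\prec_R b''$. So $b_1$ is interior, while $a_k$, being lower than $a_1$, lies outside.
\item A witnessing path from $a_k$ to $b_1$ is left of $x_0$, again by \cref{claim:diagram:bottom_as}, so it cannot cross the lower ray. Crossing $p^{**}[W^{**}]a_1[W]b''$ would give $a_1\le b_1$, a contradiction.
\end{enumerate}
This is the mirror image of the paper's proof for $J_8$, which uses $(a_1,b_1)\notin J_5$ instead. Note that mirroring exchanges $J_5$ with $J_6$ as well as $B_L$ with $B_R$.
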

\begin{proof}
    The sets $J_5$ and $J_6$ are reversible by \cref{prop:reversible-from-order}.
    We will prove that $J_8$ is reversible. The argument that $J_7$ is reversible is symmetric.
    Let $((a_1,b_1),\dots,(a_k,b_k))$ be an alternating cycle in $P$ contained in $J_8$.
    Without loss of generality assume that $a_1$ is the highest in the plane among all $\{a_i : i \in [k]\}$.

    Since $(a_1,b_1) \in J_8$, we have $b_1 \in B_R$. 
    Since $(a_1,b_1) \notin J_5$, there exists $b' \in B$ with 
    $a_1\leq b'$ in $P$ and $b' \prec_L^{\infty} b_1$.
    Let $W$ be a witnessing path from $a_1$ to $b'$.
    Since $b_1\in B_R$ and $b' \prec_L^{\infty} b_1$, we have $b'\in B_R$.
    Therefore, by \cref{claim:diagram:bottom_as}, $W$ is right of $x_0$.
    Since $a_1$ is a bottom element, there is $b^* \in B$ and a witnessing path $W^*$ from $a_1$ to $b^*$ with $W^*$ left of $x_0$.
    Let $W'$ be a witnessing path from $a_k$ to $b_1$.
    Since $(a_k,b_k) \in J_8$, $b_1 \in B_R$, and $a_k \leq b_1$ in $P$, by \cref{claim:diagram:bottom_as}, $W'$ is right of $x_0$.

    Let $\ell$ be the horizontal line containing $x_0$ and let $\ell'$ be the horizontal line containing $b'$.
    Let $p^*$ be the common point of $\ell$ and $W^*$.
    Let $\calR$ be the unbounded region containing $x_0$ enclosed by the ray emanating left from $p^*$ (contained in $\ell$), $p^* [W^*] a_1 [W] b'$, and the ray emanating left from $b'$ (contained in $\ell'$).
    Since $a_k$ is lower in the plane than $a_1$, $a_k$ lies in the exterior of $\calR$.
    Next, we prove that $b_1$ is in the interior of $\calR$.
    Since $x_0$ is right of $W^*$ and $x_0$ is left of $W$, $x_0$ is in the interior of $\calR$.
    Consider $W_L(b_1)$.
    It is a vertically monotone curve, so the only way to escape from $\calR$ is through $a_1 [W] b'$ or the ray emanating left from $b'$.
    The former is impossible as it would imply $a_1 \leq b_1$ in $P$.
    Suppose to the contrary that $W_L(b_1)$ intersects the ray emanating left from $b'$.
    In particular, $b'$ is right of $W_L(b_1)$, which implies that $W_L(b')$ is not left of $W_L(b_1)$, which contradicts $b' \prec_L b_1$.
    This shows that $b_1$ lies in the interior of $\calR$.
    The path $W'$ connects $a_k$ in the exterior of $\calR$ and $b_1$ in the interior of $\calR$.
    Therefore, $W'$ must intersect the boundary of $\calR$.
    Moreover, since $W'$ is a vertically monotone curve, it must intersect either the ray emanating left from $p^*$ or $p^* [W^*] a_1 [W] b'$.
    The former case implies that $W'$ is left of $x_0$, which is false, and the latter case implies that $a_1 \leq b_1$ in $P$, which is also false.
    This contradiction yields that there is no alternating cycle in $P$ contained in $J_8$, thus, $J_8$ is reversible.    
\end{proof}

For a pair $(a,b) \in \Inc(P)$ we may or may not have that
    \begin{enumerateNumI}
        \setcounter{enumi}{9}
        \item $a$ is not a bottom element.  \label{items:diagram_pairs:a_bottom}
    \end{enumerateNumI}

Let 
\begin{align*}
    \defmath{I_3} &= \{(a,b) \in I_2 : (a,b) \text{ satisfies \ref{items:diagram_pairs:a_bottom}}\}\\
    &=\{(a,b) \in \closure{I} : (a,b) \text{ satisfies \ref{items:diagram_pairs:b'_vertical}, \ref{items:diagram_pairs:b''_vertical}, \ref{items:diagram_pairs:b'}, \ref{items:diagram_pairs:b''}, and \ref{items:diagram_pairs:a_bottom}}\}.
\end{align*}
We also have $\closure{I} \setminus \bigcup_{i=1}^8 J_i \subset I_3$.

\subsection{Left and right pairs}\label{ssec:left-right}
For each $(a,b) \in I_3$, we say that $(a,b)$ is a \defin{left pair} if for every $d \in B$ with $a \leq d$ in $P$, all witnessing paths from $a$ to $d$ are left of $W_L(b)$.
Symmetrically, we say that $(a,b)$ is a \defin{right pair} if for every $d \in B$ with $a \leq d$ in $P$, all witnessing paths from $a$ to $d$ are right of $W_R(b)$.
\begin{align*}
    \defmath{I_L'} &= \{(a,b) \in I_3 : \text{$(a,b)$ is a left pair}\},\\
    \defmath{I_R'} &= \{(a,b) \in I_3 : \text{$(a,b)$ is a right pair}\}.
\end{align*}
We prove that $I_L'$ and $I_R'$ is a partition of $I_3$.
\begin{proposition}\label{claim:diagram:left_right_parititon}
    $I_3 = I_L' \cup I_R'.$
\end{proposition}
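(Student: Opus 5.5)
The plan is to show that every pair in $I_3$ is a left pair or a right pair. Fix $(a,b)\in I_3$. By \ref{items:diagram_pairs:a-d_paths}, each witnessing path $W$ from $a$ to an element $d\in B$ is either left of $W_L(b)$ or right of $W_R(b)$. So we only have to rule out a mixed situation: a witnessing path $W_1$ from $a$ to $d_1\in B$ that is left of $W_L(b)$, and a witnessing path $W_2$ from $a$ to $d_2\in B$ that is right of $W_R(b)$. Suppose such paths exist; we aim for a contradiction with \ref{items:diagram_pairs:a_bottom} or with the fact that $x_0$ lies in the exterior face.

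First consider the case $x_0\prec_\uparrow a$. By \ref{items:diagram_pairs:a_lower_b}, the horizontal line $\ell$ through $a$ meets both $W_L(b)$ and $W_R(b)$. The point $a$ lies on both $W_1$ and $W_2$. Hence $a$ is left of $W_L(b)$, because $W_1$ is left of $W_L(b)$ and is disjoint from it ($a\not\le b$). Likewise $a$ is right of $W_R(b)$. On the line $\ell$, however, the point of $W_L(b)$ is not to the right of the point of $W_R(b)$, since $W_L(b)$ is left of or equal to $W_R(b)$ by \cref{claim:diagram:extreme_paths_exist}. These two facts together give a contradiction. The boundary case $a=x_0$ is impossible because $a\in A$ by \ref{items:diagram_pairs:a_in_A}.

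Now consider the case $a\prec_\uparrow x_0$. Both $W_1$ and $W_2$ end in $B$, so they cross the horizontal line through $x_0$. They cannot pass through $x_0$, since $x_0\le b$ while $a\not\le b$. We will show that $W_1$ is left of $x_0$: it is left of $W_L(b)$, whose lowest point is $x_0$, so at the height of $x_0$ its point lies left of $x_0$. By the same argument $W_2$ is right of $x_0$. By definition this makes $a$ a bottom element, contradicting \ref{items:diagram_pairs:a_bottom}.

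It remains to show that the two classes are disjoint, since we want a partition. A left pair and a right pair simultaneously would force every witnessing path from $a$ into $B$ to be both left of $W_L(b)$ and right of $W_R(b)$. By \ref{items:diagram_pairs:b'} at least one such path exists, and the same horizontal-line comparison shows this is impossible.

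The main obstacle is the comparison step at the height of $x_0$ in the second case, where we must check that "left of $W_L(b)$" really transfers to "left of the point $x_0$". This needs care when the horizontal line hits $W_L(b)$ only at its endpoint, and we will handle it using \cref{prop:diagram:for_consistent_one_distinction_is_enough} for disjoint curves.
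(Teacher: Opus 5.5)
Your proof is correct and is essentially the paper's argument. You use \ref{items:diagram_pairs:a-d_paths} to reduce to two witnessing paths, one left of $W_L(b)$ and one right of $W_R(b)$, rule out $x_0 \prec_\uparrow a$ by comparing intersection points on the horizontal line through $a$, and in the case $a \prec_\uparrow x_0$ conclude that $a$ is a bottom element, contradicting \ref{items:diagram_pairs:a_bottom}. The disjointness paragraph goes beyond what the statement asserts, and the step you flag as the main obstacle follows directly from the definition of \q{left of}, since the horizontal line through $x_0$ meets $W_L(b)$ exactly in $x_0$.
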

\begin{proof}
    Suppose to the contrary that there is a pair $(a,b) \in I_3 \setminus (I_L' \cup I_R')$.
    It follows that there are $d',d'' \in B$ with $a \leq d'$ in $P$, $a \leq d''$ in $P$, and witnessing paths $W'$ from $a$ to $d'$, $W''$ from $a$ to $d''$ such that $W'$ is not right of $W_R(b)$ and $W''$ is not left of $W_L(b)$.
    By \ref{items:diagram_pairs:a-d_paths}, $W'$ is left of $W_L(b)$ and $W''$ is right of $W_R(b)$.
    Suppose that $x_0 \prec_\uparrow a$, and consider the horizontal line $\ell$ containing $a$. 
    Since $a\prec_{\uparrow} b$ (by~\ref{items:diagram_pairs:a_lower_b}), $\ell$ intersects both $\WL(b)$ and $\WR(b)$. 
    The order of intersection of paths with $\ell$ starting with the leftmost is $W',W_L(b),W_R(b),W''$.
    Since $W'$ and $W''$ coincide on $\ell$ in $a$, all four paths coincide, which is a clear contradiction.
    Therefore, $a \prec_\uparrow x_0$.
    Since $W'$ is left of $W_L(b)$ and $W''$ is right of $W_R(b)$, we have $W'$ left of $x_0$ and $W''$ right of $x_0$, which yields that $a$ is a bottom element.
    This contradicts that $(a,b) \in I_3$ by~\ref{items:diagram_pairs:a_bottom}.
\end{proof}

For every pair $(a,b) \in \Inc(P)$, we define two sets:
\begin{align*}
    \defmath{B'(a,b)} &= \{ b' \in B : \text{$a \leq b'$ in $P$, $b' \prec_L b$, and $b' \prec_R b$}\}, \\
    \defmath{B''(a,b)} &= \{ b'' \in B : \text{$a \leq b''$ in $P$, $b \prec_L b''$, and $b \prec_R b''$}\}.
\end{align*}
Items \ref{items:diagram_pairs:b'} and \ref{items:diagram_pairs:b''} assure that for every $(a,b) \in I_3$, both sets $B'(a,b)$ and $B''(a,b)$ are nonempty.
We define two further sets of incomparable pairs
\begin{align*}
    \defmath{J_L} &= \{(a,b) \in I_L' : \text{for every $b' \in B'(a,b)$, $W_L(a,b')$ is right of $W_L(b')$\}},\\
    \defmath{J_R} &= \{(a,b) \in I_R' : \text{for every $b'' \in B''(a,b)$, $W_R(a,b'')$ is left of $W_R(b'')$\}}.
\end{align*}
We will prove that they are reversible.

% \begin{proposition}
%   For every \((a, b) \in J_R\), and every \(b'' \in B''(a, b)\) we have
%   not only \(b \prec_L b''\), but also \(b \prec_R b''\).
% \end{proposition}
% \begin{proof}
%   Let \(u\) denote the greatest common element of \(\WL(b)\) and \(\WL(b'')\).
%   Therefore, the subpaths \(u\WL(b)b\) and \(u\WL(b'')b''\) are non-trivial and
%   only intersect in \(u\).
%   Since \(b \prec_L b''\), \(u\WL(b)b\) is left of \(u\WL(b'')b''\).
%   In particular, \(b''\) is not right of \(\WL(b)\).
%   Therefore, by \cref{clm:escape-pairs}, the element \(b''\) is not left of \(\WR(b)\).
%   Hence, the subpath \(u\WL(b'')b''\) must intersect \(\WR(b)\).
%   Let \(v\) denote the greatest common element of \(\WR(b)\) and \(u\WL(b'')b''\).
%   Since \(b \prec_L b''\), the subpath \(v\WR(b)b\) is non-trivial, and
%   Since \(b''\) is not left of \(\WR(b)\), the subpath \(v\WR(b')b'\) is non-trivial,
%   and \(v\WR(b')b'\) is right of \(\WR(b)\).
%   Therefore, the witnessing \(x_0\)--\(b''\) path \(x_0 \WR(b) v \WL(b'') b''\)
%   is right of \(\WR(b)\), is strictly right of some point on \(\WR(b)\),
%   and contains a point strictly right of \(\WR(b)\)
%   This implies that the rightmost witnessing \(x_0\)--\(b''\) path \(\WR(b'')\)
%   is right of \(\WR(b)\), is strictly right of some point ob \(\WR(b)\),
%   and contains a point strictly right of \(\WR(b)\), which means that
%   \(b \prec_R b''\).
% \end{proof}

\begin{proposition}\label{prop:a-higher-x0}
    For every $(a,b) \in J_L \cup J_R$, we have $x_0 \prec_{\uparrow} a$.
    In particular, if $(a,b) \in J_R$, then $a$ is right of $W_R(b)$; and if $(a,b) \in J_L$, then $a$ is left of $W_L(b)$.
\end{proposition}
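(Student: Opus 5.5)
By the symmetry of mirroring the diagram, I treat only $(a,b) \in J_R$. The plan is to argue by contradiction: suppose $a \prec_\uparrow x_0$. Since $(a,b)\in I_3$, we have $B''(a,b)\neq\emptyset$, so fix $b''\in B''(a,b)$. By the definition of $J_R$, the path $W_R(a,b'')$ is left of $W_R(b'')$. Since $(a,b)$ is a right pair, $W_R(a,b'')$ is also right of $W_R(b)$.

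The key observation is that $a$ lies below $x_0$ while $b''$ lies above $x_0$. Hence $W_R(a,b'')$ meets the horizontal line $\ell$ through $x_0$, and it does so at a point $p\neq x_0$, because $a\not\le x_0$ would otherwise fail. The paths $W_R(b)$ and $W_R(b'')$ both start at $x_0$, which lies on $\ell$. On $\ell$, the ``right of $W_R(b)$'' relation forces $p$ to be right of $x_0$, and the ``left of $W_R(b'')$'' relation forces $p$ to be left of $x_0$. This is a contradiction. Therefore $x_0\prec_\uparrow a$. (Distinctness of vertical coordinates rules out $a$ being level with $x_0$, and $a\ne x_0$ because $a\in A$ by \ref{items:diagram_pairs:a_in_A}.)

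For the ``in particular'' part, $x_0\prec_\uparrow a\prec_\uparrow b$ by \ref{items:diagram_pairs:a_lower_b}. So the horizontal line through $a$ meets both $W_L(b)$ and $W_R(b)$. Take any $d\in B$ with $a\le d$, for instance $d=b''$. Since $(a,b)$ is a right pair, a witnessing path from $a$ to $d$ is right of $W_R(b)$, and its point on that line is $a$ itself. Moreover, $a$ does not lie on $W_R(b)$, since every element of $W_R(b)$ is below $b$ while $a\parallel b$. Hence $a$ is strictly right of $W_R(b)$.

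The only delicate point is the definition of ``left of'' for curves. I need that, on the line $\ell$, a curve that is both right of $W_R(b)$ and left of $W_R(b'')$ is squeezed to the single point $x_0$. Condition (ii) in the definition gives exactly this weak inequality on every common horizontal line. I expect no further obstacle.
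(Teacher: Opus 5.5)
Your proof is correct and follows essentially the paper's argument. The paper runs the same squeeze on the horizontal line through $q=\gcpe(W_R(b),W_R(b''))$, which yields the slightly stronger $q\prec_\uparrow a$, whereas you use the line through $x_0$ (where $W_R(b)$ and $W_R(b'')$ trivially coincide), which is all the statement needs; your ``in particular'' step, via \ref{items:diagram_pairs:a_lower_b} and the right-pair condition, matches the paper's.
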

\begin{proof}
    We prove the statement for $(a,b) \in J_R$, the proof for $(a,b) \in J_L$ is symmetric.
    Let $b'' \in B''(a,b)$.
    Let $q = \gcpe(W_R(b),W_R(b''))$ and let $\ell$ be the horizontal line containing $q$.
    Clearly, $x_0 \preccurlyeq_{\uparrow} q$, hence, it suffices to prove that $q \prec_{\uparrow} a$, so we suppose to the contrary that $a \preccurlyeq_{\uparrow} q$.
    Let $p$ be the point of $W_R(a,b'')$ in $\ell$.
    Since $a$ and $b$ are incomparable in $P$, $p \neq q$.
    Since $(a,b)$ is a right pair, $p$ is right of $W_R(b)$, and so, $p$ is right of $q$.
    However, this implies that $W_R(a,b'')$ is not left of $W_R(b'')$, which is a contradiction with $(a,b) \in J_R$.

    By \ref{items:diagram_pairs:a_lower_b}, we also have $a \prec_{\uparrow} b$.
    This and the fact that $(a,b)$ is a right pair implies that $a$ is right of $W_R(b)$.
    This completes the proof.
\end{proof}

\begin{proposition}\label{clm:IRempty}
  \(J_L\) and $J_R$ are reversible in $P$. 
\end{proposition}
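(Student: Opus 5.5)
We would argue by contradiction using \cref{prop:dim-alternating-cycles}, and treat only $J_R$ since $J_L$ is its mirror image. So suppose $((a_1,b_1),\dots,(a_k,b_k))$ is a strict alternating cycle in $P$ contained in $J_R$. Since $J_R\subseteq I_2$, \cref{prop:strict-cycle-ordering-bs} shows that the $b_i$ are linearly ordered by $\prec_R$ and that this order agrees with $\prec_L$. After renumbering we fix $b_1$ to be the $\prec_R$-maximum, so $b_k\prec_R b_1$ and $b_2\prec_R b_1$, and the same relations hold for $\prec_L$.

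Next we collect the local facts the cycle gives us.
\begin{enumerate}
\item Since $a_k\le b_1$ and $b_k\prec_L b_1$, $b_k\prec_R b_1$, we have $b_1\in B''(a_k,b_k)$. As $(a_k,b_k)\in J_R$, the path $W_R(a_k,b_1)$ is left of $W_R(b_1)$. As $(a_k,b_k)$ is a right pair, the same path is right of $W_R(b_k)$.
\item By \cref{prop:a-higher-x0}, $a_1$ is right of $W_R(b_1)$ and $x_0\prec_\uparrow a_1$. As $(a_1,b_1)$ is a right pair, every witnessing path $W$ from $a_1$ to $b_2$ is right of $W_R(b_1)$.
\item Suppose $b_2\prec_\uparrow b_1$. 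Then $b_2$ is right of $W_R(b_1)$, so the witnessing path $x_0[W_R(b_1)]\cdots$ reaching $b_2$ contradicts $W_R(b_2)$ being left of $W_R(b_1)$, exactly as in the proof of \cref{clm:ainA}. Hence $b_1\prec_\uparrow b_2$, and $W$ climbs past the height of $b_1$ while staying right of $W_R(b_1)$.
\end{enumerate}

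The core step is a region argument. Let $q=\gcpe(W_R(b_k),W_R(b_1))$ and $m=\lcse(W_R(b_1),W_R(a_k,b_1))$. Consider the closed curve built from $q[W_R(b_1)]m$, $m[W_R(a_k,b_1)]a_k$, and a piece joining $a_k$ back to $W_R(b_k)$. The shapes guaranteed by \cref{prop:exposed-paths} and \cref{prop:sandwich} control how these paths meet. Every element on $W_R(a_k,b_1)$ lies above $a_k$, and every element on $W_R(b_1)$ lies below $b_1$. So, because the cycle is strict, $W$ cannot meet $W_R(b_1)$, and it cannot meet $W_R(a_k,b_1)$ below $a_k$'s comparabilities. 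Similarly, the path from $a_{k-1}$ to $b_k$ and $W_R(b_2)$ cannot cross certain boundary pieces. Using \cref{obs:curves-region} and \cref{obs:where-curves-end}, we would locate $a_1$ on one side of this closed curve and $b_2$, together with $W_R(b_2)$ which is trapped between $W_R(b_k)$ and $W_R(b_1)$, on the other. This would force a forbidden intersection: either $a_1\le b_1$, or $a_k\le b_2$ with $k\neq 1$, or $W_R(b_2)$ is not left of $W_R(b_1)$. Each contradicts strictness or the ordering.

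The main obstacle is this last topological step. We would have to pin down exactly which boundary arcs $W$ and $W_R(b_2)$ may cross, and handle the degenerate cases $k=2$ and $m$ coinciding with $b_1$. If the direct choice of $b_1$ as $\prec_R$-maximum does not close the argument, we would instead run the argument of \cref{prop:reversible-from-order} directly: pick the $\prec_R$-minimal $b_i$ and show that $b_i\in B''(a_{i-1},b_{i-1})$ cannot hold with the $J_R$ condition, comparing the heights of $a_{i-1}$ and $b_i$ via \cref{prop:a-higher-x0}.
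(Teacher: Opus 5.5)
\paragraph{Assessment.} There is a genuine gap: your facts (i)--(iii) are essentially right, but the decisive step is not supplied, and as you set it up it cannot be.

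\paragraph{Why the core step fails.} Your closed curve needs ``a piece joining $a_k$ back to $W_R(b_k)$'', and no witnessing path can play that role.
\begin{itemize}
\item Since $a_k\in A$, no element of $W_R(b_k)$ lies below $a_k$. If $a_k\le u$ for some $u$ on $W_R(b_k)$, then $a_k\le b_k$.
\item So that piece would be a non-diagram arc. Paths of the cycle can cross it without creating any comparability, and \cref{obs:curves-region} then gives nothing.
\item The claim that $W_R(b_2)$ is ``trapped between $W_R(b_k)$ and $W_R(b_1)$'' is also unjustified: $b_2$ and $b_k$ may lie in either $\prec_R$-order.
\end{itemize}

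More fundamentally, normalizing $b_1$ as the $\prec_R$-maximum is the wrong choice. As your (iii) shows, it forces $b_1\prec_\uparrow b_2$. So $W$ and $b_2$ live above the height of $b_1$, where $W_R(b_1)$ provides no boundary. Also $b_2\in B'(a_1,b_1)$, so the $J_R$ condition of $(a_1,b_1)$ cannot be used with the cycle's own next element.

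Your fallback does not help. If $b_i$ is $\prec_R$-minimal, then $b_i\prec_R b_{i-1}$, so $b_i\notin B''(a_{i-1},b_{i-1})$ trivially and no contradiction arises. Moreover, $J_R$ is defined by a geometric condition, not as some $\ext(\cdot,\preccurlyeq)$, so \cref{prop:reversible-from-order} does not apply.

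\paragraph{The missing idea.} Take $b_1$ highest in $\preccurlyeq_\uparrow$ among $b_1,\dots,b_k$. Then every $a_i$, $b_i$ and every path $W_R(a_i,b_{i+1})$ lies no higher than $b_1$.
\begin{itemize}
\item By \cref{prop:a-higher-x0}, $a_1$ is right of $W_R(b_1)$. The path $W_R(a_1,b_2)$ is disjoint from $W_R(b_1)$, and $b_2$ is not higher than $b_1$. So \cref{obs:where-curves-end} puts $b_2$ right of $W_R(b_1)$, that is, $b_1\prec_R b_2$.
\item Hence $b_2\in B''(a_1,b_1)$. The $J_R$ condition then makes $W_R(a_1,b_2)$ left of $W_R(b_2)$, and in particular $a_1$ is left of $W_R(b_2)$.
\item Applying the same observation to any $b'\in B'(a_1,b_1)$ gives $b_1\prec_\uparrow b'$. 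So $W_R(a_1,b')$ spans the whole strip between the heights of $b_2$ and $b_1$.
\end{itemize}

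Now let $S$ consist of two kinds of elements: those at or above $x_0$ and below $b_2$ that are not left of $W_R(b_2)$, and those strictly above $b_2$ and at most as high as $b_1$ that are not left of $W_R(a_1,b')$. For $i=2,\dots,k$, prove two implications:
\begin{enumerate}
\item $b_2\preccurlyeq_R b_i$ implies $a_i\in S$;
\item $a_i\in S$ implies $b_2\prec_R b_{i+1}$.
\end{enumerate}
Both use strictness: $W_R(a_i,b_{i+1})$ and $W_R(b_i)$ are disjoint from $W_R(b_2)$ and $W_R(a_1,b')$ where needed. Both also use $b_{i+1}\preccurlyeq_\uparrow b_1\prec_\uparrow b'$ to exclude escaping over the top via \cref{obs:where-curves-end}. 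At $i=k$ this yields $b_2\prec_R b_1$, which contradicts $b_1\prec_R b_2$.
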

\begin{proof}
    The proofs for both sets are symmetric, thus, we show the proof only for $J_R$.
    Suppose
to the contrary that there is a strict alternating cycle \(((a_1, b_1), \ldots, (a_k, b_k))\) in $P$ contained in \(J_R\). Without loss of generality, assume that \(b_1\) is a maximal element of \(\preccurlyeq_\uparrow\) among all \(b_1, \ldots, b_k\). Let \(b' \in B'(a_1, b_1)\). We claim that $b_1 \prec_\uparrow b'$.
By \Cref{prop:a-higher-x0}, $a_1$ is right of $W_R(b_1)$.
The paths $W_R(b_1)$ and $W_R(a_1,b')$ are disjoint, hence, by \Cref{obs:where-curves-end}, $b'$ is either right of $W_R(b_1)$ or $b_1 \prec_\uparrow b'$.
The former is false as $b' \in B'(a_1,b_1)$, hence, $b_1 \prec_\uparrow b'$, as desired.
Next, we claim that \(b_2\) is right of \(W_R(b_1)\).
The paths $W_R(b_1)$ and $W_R(a_1,b_2)$ are disjoint, so $b_2$ is either right of $W_R(b_1)$ or $b_1 \prec_\uparrow b_2$. The latter is false, hence, $b_2$ is right of \(W_R(b_1)\), as desired.
Thus,  \(b_1 \prec_R b_2\).

Since \((a_1, b_1) \in J_R\), \(W_R(a_1,b_2)\) is left of \(W_R(b_2)\), and in particular $a_1$ is left of $W_R(b_2)$ by \cref{prop:a-higher-x0}.
Let \(S\) denote the set of all elements \(x\) such that either
\begin{enumerate}
  \item \(x_0 \preccurlyeq_\uparrow x \prec_\uparrow b_2\) and \(x\) is not left of \(W_R(b_2)\), or
  \item \(b_2 \prec_\uparrow x \preccurlyeq_\uparrow b_1\) and \(x\) is not left of \(W_R(a_1, b')\).
\end{enumerate}
See~\Cref{fig:IRempty}.
\begin{figure}
    \centering
    \includegraphics{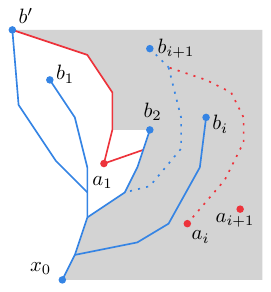}
    \caption{Proof that \(J_R\) is reversible. \(S\) is the set of elements in the gray area. For \(i \in [k] \setminus \{1\}\), we show that if \(b_2 \preccurlyeq_R b_i\) then \(a_i \in S\), if \(a_i\in S\) then \(b_{i+1} \in S\), and if \(b_{i+1} \in S\), then \(b_2 \prec_R b_{i+1}\).}
    \label{fig:IRempty}
\end{figure}
We prove inductively that for each \(i \in [k] \setminus \{1\}\), we have \(a_i \in S\) and \(b_2 \prec_R b_{i+1}\) (here $b_{k+1} = b_1$). 
This will imply \(b_1 \prec_R b_2 \prec_R b_{k+1} = b_1\), which is a contradiction.
It suffices to show two implications for \(i \in [k] \setminus \{1\}\): (1) if \(b_2 \preccurlyeq_R b_{i}\), then \(a_i \in S\), and  (2) if \(a_i \in S\) then \(b_2 \prec_R b_{i+1}\).
%\later{nit: Make (1) a clickable reference (inline enumerate). Maybe unhardcode all enumerates.}

For the proof of (1), observe that by \Cref{prop:a-higher-x0}, \(a_i\) is right of \(W_R(b_i)\).
If \(i = 2\), then \(a_i \in S\) as required.
Otherwise, we have \(3 \le i \le k\) and \(b_2 \prec_R b_{i}\).
Suppose first that \(a_i \preccurlyeq_{\uparrow} b_2\).
Since \(b_2 \prec_R b_{i}\) and \(a_i\) is right of \(W_R(b_i)\), \(a_i\) is right of \(W_R(b_2)\), and thus \(a_i \in S\).
Now suppose that \(b_2 \preccurlyeq_{\uparrow} a_i\) (\(\preccurlyeq_{\uparrow} b_i \preccurlyeq_{\uparrow} b_1\)).
Then, since \(b_2 \prec_R b_{i}\), \(b_2\) is left of \(W_R(b_i)\).
The path \(W_R(b_i)\) is disjoint from \(W_R(a_1, b')\) as otherwise, the union \(W_R(b_i) \cup W_R(a_1, b')\) would contain a witnessing path from \(x_0\) to \(b'\) which is right of \(W_R(b_2)\), contrary to the assumption \(b' \prec_R b_1 \prec_R b_2\).
Consider the horizontal line containing $b_2$.
Its intersection with $W_R(b_i)$ is right of $b_2$, and so, right of the intersection of the line with $W_R(a_1,b')$.
Thus, by \Cref{obs:where-curves-end}, the highest point of $W_R(b_i)$, i.e.\ $b_i$, is either right of $W_R(a_1,b')$ or $b' \prec_\uparrow b_i$.
The latter is false. 
The former implies that  \(W_R(b_i)\) is right of \(W_R(a_1, b')\). 
Since $b_2\prec_\uparrow a_i \prec_\uparrow b_i$, 
we get that \(a_i\) is right of \(W_R(a_1, b')\), so $a_i \in S$.

For the proof of (2), suppose that \(a_i \in S\).
Since the alternating cycle is strict, the witnessing path \(W_R(a_i, b_{i+1})\) is disjoint from \(W_R(a_1, b')\) and from \(W_R(b_2)\). 
By \Cref{obs:where-curves-end}, 
either $b_{i+1}$ is right of $W_R(b_2)$  or 
$b_2$ is left of $W_R(a_i,b_{i+1})$. 
In the former case, we obtain $b_2\prec_R b_{i+1}$ as desired. 
Thus, we assume that $b_2$ is left of $W_R(a_i,b_{i+1})$. 
By \Cref{obs:where-curves-end} again 
either $b_{i+1}$ is right of $W_R(a_1,b')$ or 
$b'$ is left of $W_R(b_{i+1})$. 
The latter case contradicts $b_{i+1} \preceq_\uparrow b_1 \prec_\uparrow b'$. 
Thus, we have that $b_{i+1}$ is right of $W_R(a_1,b')$.
Note that if \(b_2\) is left of \(W_R(b_{i+1})\), then \(b_2 \prec_R b_{i+1}\) as required, so suppose that \(b_2\) is right of \(W_R(b_{i+1})\).
Since the path \(W_R(b_{i+1})\) is disjoint from \(W_R(a_1, b')\) and \(b_{i+1}\) is right of \(W_R(a_1, b')\), \Cref{obs:where-curves-end} implies that \(a_1\) is left of \(W_R(b_{i+1})\).
This combined with the fact that \(b_2\) is right of \(W_R(b_{i+1})\) implies that the paths \(W_R(b_{i+1})\) intersects \(W_R(a_1, b_2)\), so \(a_1 \le b_{i+1}\) in $P$ contrary to strictness of the cycle.
\end{proof}

For each $(a,b) \in \Inc(P)$, we define
\begin{align*}
    \defmath{B_*'(a,b)} &= \{b' \in B'(a,b) : W_R(a,b') \text{ is left of } W_R(b')\},\\ 
    \defmath{B_*''(a,b)} &= \{ b'' \in B''(a,b) : \text{$W_R(a,b'')$ is right of $W_R(b'')$}\}.
    %&= \{ b'' \in B : \text{$a \leq b''$ in $P$, $b \prec_L b''$, $b \prec_R b''$, and $W_R(a,b'')$ is right of $W_R(b'')$}\}.
\end{align*}
Let
\begin{align*}
    \defmath{I_L}&= \{ (a,b) \in I_L' : B_*'(a,b) \neq \emptyset\},\\
    \defmath{I_R} &= \{ (a,b) \in I_R' : B_*''(a,b) \neq \emptyset\}.
\end{align*}
Note that $I_L' \setminus J_L = I_L$ and $I_R'\setminus J_R = I_R$.
In particular, we have
\begin{align*}
    \dim_P(I) \leq \dim_P(\closure{I}) &\leq \dim_P(I_3) + \dim_P(J_1 \cup \dots \cup J_8) \leq \dim_P(I_3) + 8 \\
    &\leq \dim_P(I_L') + \dim_P(I_R') + 8 \leq \dim_P(I_L) + \dim_P(I_R) + 10.
\end{align*}

Next, the algorithm processes $I_L$ and $I_R$ in parallel.
We will show the procedure for $I_R$ as the one for $I_L$ is symmetric.
More formally, we may take the mirror image of the diagram of $P$ along a vertical line, then the notions of left pairs and right pairs are swapped.

For convenience, we repeat the definition of $I_R$.
The following properties may or may not be satisfied for each $(a,b) \in \Inc(P)$,
    \begin{enumerateNumI}
        \setcounter{enumi}{10}
        \item $(a,b)$ is a right pair,\label{items:diagram_pairs:right}
        \item $B_*''(a,b) \neq \emptyset$.\label{items:diagram_pairs:B''_*} \label{I-last}
    \end{enumerateNumI}
We have
\begin{align*}
    I_R &= \{(a,b) \in I_3 : (a,b) \text{ satisfies \ref{items:diagram_pairs:right} and \ref{items:diagram_pairs:B''_*}}\}\\
    &=\{(a,b) \in \closure{I} : (a,b) \text{ satisfies \ref{items:diagram_pairs:b'_vertical}, \ref{items:diagram_pairs:b''_vertical}, \ref{items:diagram_pairs:b'}, \ref{items:diagram_pairs:b''}, \ref{items:diagram_pairs:a_bottom}, \ref{items:diagram_pairs:right}, and \ref{items:diagram_pairs:B''_*}}\}.
\end{align*}
Note that every $(a,b) \in I_R$ also satisfies \ref{items:diagram_pairs:a_lower_b}--\ref{items:diagram_pairs:a-d_paths} and \ref{items:diagram_pairs:a_in_A}.

\subsection{Regions}\label{ssec:regions}

For all $a \in A$ and $b',b'' \in B$, we list several conditions that may or may not be satisfied:
\begin{enumerateNumr}
    \item $b' \prec_R b''$, \label{item-regions-b'-r-b''} \label{item-regions-first}
    \item $a \leq b'$ and $a \leq b''$ in $P$, \label{item-regions-leq}
    \item $a$ is not a bottom element, \label{item-regions-not-bottom}
    \item $W_R(b'')$ is left of $W_R(a,b'')$, \label{item-regions-W-left-W}
    \item $\gcpe(W_R(b'),W_R(b''))$ and $a$ are incomparable in $P$. \label{item-regions-inc}\label{item-regions-last}
\end{enumerateNumr}

\begin{proposition}\label{prop:tuple-is-valid}
    Let $a \in A$ and $b',b'' \in B$ be such that \ref{item-regions-first}--\ref{item-regions-last} hold.
    Let
    \[\gamma_1 = W_R(b'), \ \gamma_2 = W_R(b''), \ \gamma_3 = W_R(a,b''), \ \gamma_4 = W_R(a,b'),\]
    \[q_1 = \gcpe(\gamma_1, \gamma_2), \ q_3 = \gcpe(\gamma_3,\gamma_4), \ m_2 = \lcse(\gamma_2, \gamma_3), \ m_4 = \lcse(\gamma_1, \gamma_4),\]
    \[\gamma = q_1[\gamma_2]m_2[\gamma_3]q_3[\gamma_4]m_4[\gamma_1]q_1.\]
    Then
    \begin{enumerateNump}
    \item $q_1 < m_4$, $q_1<m_2$, $q_3<m_2$, $q_3<m_4$ in $P$, \label{prop:tuple-is-valid:qs-ms}
    \item $\gamma_i$ is left of $\gamma_j$ for all $i,j \in [4]$ with $i < j$, \label{prop:tuple-is-valid:left}
    \item $m_2\prec_\uparrow m_4$, \label{prop:tuple-is-valid:m2-m4}
    \item $\gamma_1 \cap \gamma_3 = \emptyset$ and $\gamma_2 \cap \gamma_4 = \emptyset$, \label{prop:tuple-is-valid:disjointness}
    \item $\gamma$ is a simple closed curve,\label{prop:tuple-is-valid:simple-closed}
    \item all elements of $P$ in $a[W_R(a,b'')]m_2$ and $a[W_R(a,b')]m_4$ except $m_2$ and $m_4$ are in $A$. \label{prop:tuple-is-valid:inA}
    \end{enumerateNump}
\end{proposition}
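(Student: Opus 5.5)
The overall strategy is to pin down the combinatorial shape of the four rightmost paths first, and then read off each item from that shape. The bottom-consistency given by \cref{claim:diagram:extremal_paths_are_consistent} will make the left/right comparisons reduce to a single horizontal line, via \cref{prop:diagram:for_consistent_one_distinction_is_enough}.

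\emph{Step 1: the pairwise consistencies.} We record the following.
\begin{itemize}
\item $\gamma_1,\gamma_2$ are bottom-consistent at $x_0$, since both are $W_R(x_0,\cdot)$.
\item $\gamma_3,\gamma_4$ are bottom-consistent at $a$.
\item $\gamma_2,\gamma_3$ are top-consistent at $b''$.
\item $\gamma_1,\gamma_4$ are top-consistent at $b'$.
\end{itemize}
In particular $q_1,q_3,m_2,m_4$ are well defined. By \ref{item-regions-b'-r-b''} we have $\gamma_1$ left of $\gamma_2$, and by \ref{item-regions-W-left-W} we have $\gamma_2$ left of $\gamma_3$.

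\emph{Step 2: item \ref{prop:tuple-is-valid:inA}, then \ref{prop:tuple-is-valid:qs-ms}.} For \ref{prop:tuple-is-valid:inA} on the $b''$ side, I mimic \cref{prop:exposed-paths}: an element $u\in B$ strictly below $m_2$ on $a[\gamma_3]m_2$ would give the witnessing path $x_0[W_R(u)]u[\gamma_3]b''$, which lies right of $\gamma_2=W_R(b'')$ somewhere and so contradicts extremality. The $b'$ side needs to know that $\gamma_4$ is right of $\gamma_1$, which is proved in Step 3; I will order the argument accordingly. Once \ref{prop:tuple-is-valid:inA} is known, \ref{prop:tuple-is-valid:qs-ms} follows. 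We have $q_1\le m_4$ and $q_1\le m_2$ because $m_4\in\gamma_1$ lies above the common prefix, and similarly for $q_3$. Strictness and the ordering need one further fact: $m_2,m_4$ lie strictly above $q_1$ and $q_3$ on the respective paths. For $q_3$ this holds because elements below $m_2$ are in $A$ while $m_2\in B$. For $q_1$ it holds because $q_1\parallel a$ by \ref{item-regions-inc}, whereas $a\le m_2,m_4$; hence $m_2,m_4\ne q_1$ and cannot lie on the common prefix below $q_1$.

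\emph{Step 3: items \ref{prop:tuple-is-valid:left}--\ref{prop:tuple-is-valid:disjointness}.} This is where the real work lies. First, $\gamma_1\cap\gamma_3=\emptyset$: a common point $u$ would lie in $B$ below $m_2$, or would yield a path $x_0[\gamma_1]u[\gamma_3]b''$ lying right of $\gamma_2$ or left of $\gamma_1$ in a contradictory way. I will argue directly with \ref{prop:tuple-is-valid:inA}, since elements of $\gamma_1$ are in $B$. Second, $\gamma_2\cap\gamma_4=\emptyset$ is the symmetric statement. Third, $\gamma_3$ is left of $\gamma_4$: both start at $a$, and $\gamma_4$ cannot cross $\gamma_3$ to its right after $q_3$ without leaving the region bounded by $\gamma_2\cup\gamma_3$. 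Here I use that $b'\prec_R b''$ forces $b'$ to lie left of $\gamma_2$, or below $b''$ and right of it, together with the requirement that $a$ is not a bottom element \ref{item-regions-not-bottom}, which excludes $\gamma_3,\gamma_4$ going around $x_0$ on opposite sides. Transitivity then needs only one horizontal line witnessing each inequality, by \cref{prop:diagram:for_consistent_one_distinction_is_enough}; disjointness supplies the consistency needed for the non-adjacent pairs. For \ref{prop:tuple-is-valid:m2-m4}, a horizontal line through $m_4$ that lies no higher than $m_2$ would meet $\gamma_2$ and $\gamma_3$ at the same point, and that point would lie left of $m_4\in\gamma_1\cap\gamma_4$, contradicting the left-to-right order $\gamma_1,\gamma_2,\gamma_3,\gamma_4$.

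\emph{Step 4: item \ref{prop:tuple-is-valid:simple-closed}.} The four arcs meet only at their shared endpoints: nonadjacent arcs are disjoint by \ref{prop:tuple-is-valid:disjointness}, and adjacent arcs meet only at their gcpe or lcse points by definition. Hence $\gamma$ is a simple closed curve.

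The main obstacle is Step 3, specifically ruling out that $\gamma_4$ winds around so that it lies left of $\gamma_3$, or meets $\gamma_2$. I expect to use \ref{item-regions-not-bottom} there together with \cref{obs:where-curves-end}, and to handle separately the cases $a\prec_\uparrow x_0$ and $x_0\prec_\uparrow a$.
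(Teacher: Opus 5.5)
Reordering the proof is workable, and it differs from the paper, which proves \ref{prop:tuple-is-valid:qs-ms} first and directly. You do disjointness and the $b''$-half of \ref{prop:tuple-is-valid:inA} first and \ref{prop:tuple-is-valid:qs-ms} last. However, Step~3, which you yourself call the heart of the matter, is not yet an argument. There is no ``region bounded by $\gamma_2\cup\gamma_3$'': $\gamma_2$ starts at $x_0$ and $\gamma_3$ at $a$, so their union is not a closed curve. Also, after $q_3$ the curves $\gamma_3,\gamma_4$ are disjoint by bottom-consistency, so nothing ``crosses''; the only question is on which side $\gamma_4$ lies. Your reading of \ref{item-regions-first} is off as well. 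The correct dichotomy is: either $b'\prec_\uparrow b''$ and $b'$ is strictly left of $\gamma_2$ (strictly, since $b'\in\gamma_2$ would give $a\le b'\le q_1$, against \ref{item-regions-inc}), or $b''\prec_\uparrow b'$. The missing chain is as follows. First, take a horizontal line on which $\gamma_2$ is strictly left of $\gamma_4$. If $x_0\prec_\uparrow a$, use the line through $a$, by \ref{item-regions-W-left-W} and $a\notin B$. If $a\prec_\uparrow x_0$, use the line through $x_0$: there $\gamma_3$ passes strictly right of $x_0$ by \ref{item-regions-W-left-W}, hence so does $\gamma_4$ by \ref{item-regions-not-bottom}. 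Together with $\gamma_2\cap\gamma_4=\emptyset$ and \cref{prop:diagram:for_consistent_one_distinction_is_enough}, this gives $\gamma_2$ left of $\gamma_4$. The dichotomy then forces $b''\prec_\uparrow b'$. So the line through $m_2$ meets all four curves, with $m_2$ strictly between $\gamma_1$ and $\gamma_4$, and this yields the remaining relations of \ref{prop:tuple-is-valid:left}, using \ref{prop:tuple-is-valid:disjointness} for the non-consistent pairs. This is in substance the paper's two-line Darboux argument.

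Three of the steps you do spell out also fail as written.

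(a) ``$\gamma_2\cap\gamma_4=\emptyset$ is the symmetric statement'' is circular if it mirrors your \ref{prop:tuple-is-valid:inA}-based proof. The $b'$-half of \ref{prop:tuple-is-valid:inA} needs $\gamma_4$ right of $\gamma_1$, which is Step~3. Argue by extremality instead. A common element $y$ lying on $\gamma_1$ gives $a\le y\le q_1$, contradicting \ref{item-regions-inc}. Otherwise $y$ is strictly right of $\gamma_1$, and $x_0[\gamma_2]y[\gamma_4]b'$ contradicts the choice of $W_R(b')$. Likewise, for a common point $u$ of $\gamma_1,\gamma_3$ at or above $m_2$, the contradiction is \ref{item-regions-inc}; the path $x_0[\gamma_1]u[\gamma_3]b''$ is then just $\gamma_2$.

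(b) Your argument for \ref{prop:tuple-is-valid:m2-m4} is wrong. $\gamma_2$ and $\gamma_3$ share only the suffix from $m_2$ to $b''$, so a line strictly below $m_2$ meets them at distinct points. The correct argument: on the line through $m_4$, the point of $\gamma_2$ is squeezed between those of $\gamma_1$ and $\gamma_4$, both equal to $m_4$, so $m_4\in\gamma_2\cap\gamma_4$.

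(c) ``Elements below $m_2$ are in $A$'' does not exclude $m_2\le q_3$. What does exclude it: $m_2\le q_3$ puts $m_2$ on the common prefix of $\gamma_3,\gamma_4$, i.e.\ in $\gamma_2\cap\gamma_4$. Similarly $m_4\le q_3$ puts $m_4$ in $\gamma_1\cap\gamma_3$. Both are excluded by \ref{prop:tuple-is-valid:disjointness}.
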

\begin{proof}
    First,  we prove \ref{prop:tuple-is-valid:qs-ms}.
    Clearly, each pair of elements in the required inequalities is comparable in $P$ as they lie in one of the witnessing paths $\gamma_i$ for some $i \in [4]$.
    Note also that $m_4 \not\leq q_1$ and $m_2 \not\leq q_1$ in $P$, as the opposite gives $a \leq q_1$ in $P$, which is false by \ref{item-regions-inc}.
    This implies $q_1 < m_4$ and $q_1 < m_2$ in $P$.
    Next, suppose to the contrary that $m_2 \leq q_3$ in $P$.
    Consider the path $W = x_0[W_R(b'')]m_2[W_R(a,b'')]q_3[W_R(a,b')]b'$. 
    Since  $q_1=\gcpe(W_R(b'),W_R(b''))$ and $q_1<m_2$ in $P$, by \ref{item-regions-b'-r-b''}, we have that 
    $W_R(b')$ is left of $x_0[W_R(b'')]m_2$. 
    Therefore, $W_R(b')$ is not right of $W$, which is a contradiction with \cref{claim:diagram:extreme_paths_exist}.    
    % It contains the element $m_2$, hence $m_2 \prec_\uparrow b'$.
    % Also, $q_1 < m_2$ in $P$, thus, $m_2$ does not lie in $W_R(b')$ by the bottom-consitency of $W_R(b')$ and $W_R(b'')$ (\cref{claim:diagram:extremal_paths_are_consistent}).
    % On the other hand, $m_2$ lies in $W_R(b'')$, thus, by \ref{item-regions-b'-r-b''}, $m_2$ is right of $W_R(b')$.
    % It follows that $W$ is a witnessing path from $x_0$ to $b'$ that is not left or equal to $W_R(b')$, which contradicts \cref{claim:diagram:extreme_paths_exist}.
    The obtained contradiction gives $q_3 < m_2$ in $P$.
    Finally, suppose to the contrary that $m_4 \leq q_3$ in $P$.
    Since $m_4$ and $q_3$ lie in $W_R(a,b')$, $q_3$ lie in $W_R(a,b'')$, and the two paths $W_R(a,b')$, $W_R(a,b'')$ are bottom-consistent, 
    we get that $m_4$ lies in $W_R(a,b'')$.
    Consider the horizontal line $\ell$ containing $m_4$.
    Note that we have $q_1 < m_4 \leq q_3 < m_2$ in $P$.
    Since $q_1$ and $m_2$ are both elements of $W_R(b'')$, $W_R(b'')$ intersects $\ell$.
    Also, $m_4$ lies in $W_R(b')$ and $q_1 < m_4$ in $P$.
    Since $W_R(b')$ and $W_R(b'')$ are bottom-consistent and by \ref{item-regions-W-left-W}, $m_4$ is left of $W_R(b'')$.
    However, this yields a point in $W_R(a,b'')$ left of $W_R(b'')$, which contradicts \ref{item-regions-W-left-W}.
    The obtained contradiction gives $q_3 < m_4$ in $P$ and completes the proof of \ref{prop:tuple-is-valid:qs-ms}.

    By \ref{item-regions-b'-r-b''} and \ref{item-regions-W-left-W}, $\gamma_1$ is left of $\gamma_2$ and $\gamma_2$ is left of $\gamma_3$.
    We argue now that $\gamma_3$ is left of $\gamma_4$. 
    %Later, to complete the proof of \ref{prop:tuple-is-valid:left}, we need \ref{prop:tuple-is-valid:m2-m4} that we prove later.
    The first step is to find a horizontal line $\ell_1$ that intersects both $W_R(b'')$ and $W_R(a,b')$, and the intersection with $W_R(b'')$ is left of 
    the intersection with $W_R(a,b')$.
    If $a \prec_\uparrow x_0$, then we set $\ell_1$ to be the horizontal line containing $x_0$.
    By \ref{item-regions-W-left-W}, $W_R(a,b'')$ is right of $x_0$. 
    This implies that $W_R(a,b')$ is also right of $x_0$ as otherwise $a$ would be a bottom element which contradicts \ref{item-regions-not-bottom}. 
    If $x_0 \preccurlyeq_\uparrow a$, then we set $\ell_1$ to be the horizontal line containing $a$.
    Here, $a$ is right of $W_R(b'')$ by \ref{item-regions-W-left-W}. 
    This completes the construction of $\ell_1$. 
    Let $d$ be one of $b'$ and $b''$ such that $d \preccurlyeq_\uparrow b'$ and $d\preccurlyeq_\uparrow b''$ and let $\ell_2$ be the horizontal line containing $d$.
    Note that both $W_R(a,b')$ and $W_R(b'')$ intersect $\ell_2$, say in the points $v$ and $w$, respectively.
    If $d = b'$, then $v = b'$ and by \ref{item-regions-b'-r-b''}, $v$ is left of $w$.
    If $d = b''$, then $w = b''$.
    Assume that $v$ is left of $w$ or $v = w$.
    Since the intersections of $W_R(a,b')$ and $W_R(b'')$ are in different order on $\ell_1$ and $\ell_2$, by Darboux property, these paths intersect, say in an element $y$.
    If $y$ lies in $W_R(b')$, then since $y$ is a common element of $W_R(b')$ and $W_R(b'')$, $y \leq q_1$ in $P$, which implies $a \leq q_1$ in $P$ contradicting \ref{item-regions-inc}.
    If $y$ does not lie in $W_R(b')$, then $y$ is right of $W_R(b')$ by \ref{item-regions-b'-r-b''}.
    Thus, the path $x_0[W_R(b'')]y[W_R(a,b')]b'$ is a witnessing path from $x_0$ to $b'$ in $P$ and contains an element right of $W_R(b')$, which contradicts \cref{claim:diagram:extreme_paths_exist}.
    Summarizing, the case where $v$ is left of $w$ or $v = w$ leads to a contradiction.
    Next, we assume that $w$ is left of $v$.
    Note that in this case $d = w = b''$ as discussed before.
    Since $w$ is left of $W_R(a,b')$, by \cref{prop:diagram:for_consistent_one_distinction_is_enough}, $W_R(b'')$ is left of $W_R(a,b')$. 
    This completes the proof that $\gamma_3$ is left of $\gamma_4$.

    Next, we prove \ref{prop:tuple-is-valid:m2-m4}.
    Suppose to the contrary that $m_4 \preccurlyeq_\uparrow m_2$.
    It follows that all four curves $\gamma_1$, $\gamma_2$, $\gamma_3$, and $\gamma_4$ intersect the horizontal line containing $m_4$.
    Moreover, since $\gamma_1$ is left of $\gamma_2$, $\gamma_2$ is left of $\gamma_3$, $\gamma_3$ is left of $\gamma_4$, and $m_4$ is an element of both $\gamma_1$ and $\gamma_4$, we conclude that $m_4$ is an element of all the four curves.
    In particular, $m_4$ is a common element of $W_R(b')$ and $W_R(b'')$, which implies that $m_4 \leq q_1$ in $P$.
    On the other hand, $a \leq m_4$ in $P$ as $m_4$ is an element of $\gamma_4$.
    This is a contradiction with \ref{item-regions-inc} that completes the proof of \ref{prop:tuple-is-valid:m2-m4}.

    With item \ref{prop:tuple-is-valid:m2-m4} proven, we can complete the proof of \ref{prop:tuple-is-valid:left}.
    Since $m_4$ is an element of $\gamma_1$ and $\gamma_4$, 
    since $m_2\prec_\uparrow m_4$ (by~\ref{prop:tuple-is-valid:m2-m4}) and since $a \prec_\uparrow m_2$ and $x_0 \prec_\uparrow m_2$, we conclude that all four curves intersect the horizontal line $\ell$ containing $m_2$.
    Let $p_1$ and $p_4$ be the points of $\gamma_1$ and $\gamma_4$ in $\ell$, respectively.
    By the definitions of $q_1$, $q_3$, and $m_4$, by \ref{prop:tuple-is-valid:qs-ms}, by the consistency of the respective curves, and since $\gamma_1$ is left of $\gamma_2$, $\gamma_2$ is left of $\gamma_3$, and $\gamma_3$ is left of $\gamma_4$, we obtain that $p_1$ is left of $m_2$ and $m_2$ is left of $p_4$.
    By \cref{prop:diagram:for_consistent_one_distinction_is_enough}, this implies that $\gamma_1$ is left of $\gamma_3$, $\gamma_2$ is left of $\gamma_4$, and $\gamma_1$ is left of $\gamma_4$, which completes the proof of \ref{prop:tuple-is-valid:left}.

    Next, we prove \ref{prop:tuple-is-valid:disjointness} starting with $\gamma_1 \cap \gamma_3 = \emptyset$.
    Let $p$ be any point of $\gamma_3$ and let $\ell$ be the horizontal line containing $p$.
    We show that either there is no point of $\gamma_1$ in $\ell$, or the point of $\gamma_1$ in $\ell$ is left of $p$.
    We distinguish cases depending on the vertical coordinate of $p$.
    If $p \prec_\uparrow x_0$ or $b' \prec_\uparrow p$, then there is no point of $\gamma_1 = W_R(b')$ in $\ell$.
    Note that $m_2 \prec_\uparrow m_4 \preccurlyeq_\uparrow b'$.
    Suppose that $x_0 \preccurlyeq_\uparrow p \prec_\uparrow m_2$.
    The curve $\gamma_2$ intersects $\ell$, say in a point $p_2$.
    By the definition of $m_2$, since $\gamma_2$ and $\gamma_3$ are top-consistent and $\gamma_2$ is left of $\gamma_3$, $p_2$ is left of $p$.
    Since $\gamma_1$ is left of $\gamma_2$, the point of $\gamma_1$ in $\ell$ is left of $p_2$, and so, left of $p$.
    Finally, assume that $m_2 \preccurlyeq_\uparrow p \preccurlyeq_\uparrow b'$.
    In this case, $p$ is an element of $\gamma_2$ by the definition of $m_2$.
    Since $m_2 \prec_\uparrow b'$, the curve $\gamma_1 = W_R(b')$ intersects $\ell$, say in $p_1$.
    By \ref{prop:tuple-is-valid:qs-ms}, $q_1 < m_2$ in $P$, hence, by the definition of $q_1$ and since $\gamma_1$ and $\gamma_2$ are bottom-consistent, $p_1$ is left of $p$, as desired.
    This completes the proof that $\gamma_1 \cap \gamma_3 = \emptyset$.

    With a very similar idea, we now prove that $\gamma_2 \cap \gamma_4 = \emptyset$.
    Let $p$ be any point of $\gamma_4$ and let $\ell$ be the horizontal line containing $p$.
    We show that either there is no point of $\gamma_2$ in $\ell$, or the point of $\gamma_2$ in $\ell$ is left of $p$.
    We distinguish cases depending on the vertical coordinate of $p$.
    If $p \prec_\uparrow x_0$ or $b'' \prec_\uparrow p$, then there is no point of $\gamma_2 = W_R(b'')$ in $\ell$.
    Suppose that $x_0 \preccurlyeq_\uparrow p \prec_\uparrow q_3$.
    In this case, $p$ is an element of $\gamma_3$ by the definition of $q_3$.
    Since $q_3 \prec_\uparrow m_2 \preccurlyeq_\uparrow b''$ (by \ref{prop:tuple-is-valid:qs-ms}), the curve $\gamma_2 = W_R(b'')$ intersects $\ell$, say in $p_2$.
    Since $q_3 < m_2$ in $P$, by the definition of $m_2$, and since $\gamma_2$ and $\gamma_3$ are top-consistent, $p_2$ is left of $p$, as desired.
    Finally, assume that $q_3 \preccurlyeq_\uparrow p \preccurlyeq_\uparrow b''$.
    The curve $\gamma_3$ intersects $\ell$, say in a point $p_3$.
    By the definition of $q_3$, since $\gamma_3$ and $\gamma_4$ are bottom-consistent and $\gamma_3$ is left of $\gamma_4$, $p_3$ is left of $p$.
    Since $\gamma_2$ is left of $\gamma_3$, the point of $\gamma_2$ in $\ell$ is left of $p_3$, and so, left of $p$.
    This completes the proof that $\gamma_2 \cap \gamma_4 = \emptyset$, and the proof of \ref{prop:tuple-is-valid:disjointness}.

    Item \ref{prop:tuple-is-valid:simple-closed} follows directly from \ref{prop:tuple-is-valid:disjointness} and the consistency of pairs of respective curves among $\gamma_1$, $\gamma_2$, $\gamma_3$, and $\gamma_4$.
    Item \ref{prop:tuple-is-valid:inA} follows directly from \cref{prop:exposed-paths}.\ref{prop:exposed-paths:right}.
\end{proof}

Let $a \in A$ and $b',b'' \in B$ satisfy \ref{item-regions-first}--\ref{item-regions-last}.
Let $q_1$, $m_2$, $q_3$, $m_4$, and $\gamma$ be as in the statement of \Cref{prop:tuple-is-valid}.
We define the region \defin{$\calR(a,b',b'')$} to be the region of $\gamma$.
Moreover, we say that $(q_1,m_2,q_3,m_4)$ are the \defin{extreme points} of this region.

\begin{proposition}\label{prop:properties-regions}
    Let $a \in A$ and $b',b'' \in B$ satisfy \ref{item-regions-first}--\ref{item-regions-last} and let $\calR = \calR(a,b',b'')$.
    Let $(q_1,m_2,q_3,m_4)$ be the extreme points of $\calR$.
    \begin{enumerate}
        \item For every $p \in \calR\setminus\{m_4\}$, we have $p \prec_\uparrow m_4$. \label{prop:properties-regions:b-lower-m4}
        \item $W_R(a,q_3)\setminus \{q_3\}$ is disjoint from $\calR$. \label{prop:properties-regions:a-outside}
        \item For every $p \in \calR$, $p$ is not left of $W_R(b')$ and $p$ is not right of $W_R(a,b')$. \label{prop:properties-regions:p-inside}
        \item For every witnessing path $W$ from $m_2$ to an element $d$ in $P$, we have
        $m_2[W]d\setminus\set{m_2}\subset \Int\calR$.
        \label{prop:properties-regions:d-over-m2}
        \item For every $d \in B$ with $d \in \calR$, $q_1$ lies in $W_R(d)$. \label{prop:properties-regions:d-inside}
        \item $x_0[W_R(b'')]q_1\setminus\{q_1\}$ is disjoint from $\calR$ and $m_2[W_R(b'')]b'' \setminus\{m_2\} \subset \Int \calR$. \label{prop:properties-regions:b''}
        \item For every $b \in B$ such that $b' \prec_R b \prec_R b''$ and $m_4$ does not lie in $W_R(b)$ in $P$, we have $b \in \Int \calR$, and moreover $q_1$ lies in $W_R(b)$ and $q_1[W_R(b)]b \subset \calR$. \label{prop:properties-regions:b-inside}
        \item For every $d \in B$ such that $d \prec_R b'$, we have $d \notin \calR$. \label{prop:properties-regions:d-outside}
        %\item $b' \notin \calR\setminus \{m_4\}$. \jedrzej{This is never used.} \label{prop:properties-regions:b'}
        \item For all $c \in A$ and $d \in B$ with $c \leq d$, $c \parallel b'$ in $P$, and $d \prec_R b'$, we have $c \notin \calR$. \label{prop:properties-regions:c-outside}
    \end{enumerate}
\end{proposition}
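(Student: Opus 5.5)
The plan is to prove the nine items in order, using the picture from \cref{prop:tuple-is-valid}. The boundary $\gamma$ consists of four arcs: the left arc $q_1[\gamma_1]m_4$ (part of $W_R(b')$), the top-left arc $q_1[\gamma_2]m_2$ (part of $W_R(b'')$), the right arc $q_3[\gamma_3]m_2$ (part of $W_R(a,b'')$), and the bottom-right arc $q_3[\gamma_4]m_4$ (part of $W_R(a,b')$). By \ref{prop:tuple-is-valid:left}, horizontally the order is $\gamma_1,\gamma_2,\gamma_3,\gamma_4$. The main tool is \cref{obs:curves-region}: a point off $\gamma$ lies in $\calR$ iff it is right of an odd number of the four arcs. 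This is combined with \cref{obs:where-curves-end}, \cref{prop:diagram:for_consistent_one_distinction_is_enough}, and extremality of rightmost paths (\cref{claim:diagram:extreme_paths_exist}, \cref{claim:diagram:extremal_paths_are_consistent}).

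\textbf{Items \ref{prop:properties-regions:b-lower-m4}--\ref{prop:properties-regions:p-inside}.} Every boundary point has height at most $\max(m_2,m_4)=m_4$ by \ref{prop:tuple-is-valid:m2-m4}, and the strict inequality for $p\ne m_4$ follows because the diagram is distinguishing. For \ref{prop:properties-regions:p-inside}, on each horizontal line the region lies between $\gamma_1$ and $\gamma_4$, since the four curves are ordered left to right.

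For \ref{prop:properties-regions:a-outside}, use \ref{prop:tuple-is-valid:inA} and that $q_3=\gcpe(\gamma_3,\gamma_4)$. The path $a[\gamma_4]q_3$ is a common prefix lying below the region's lowest-on-right point. I would argue it meets $\gamma$ only at $q_3$: it is disjoint from $\gamma_1,\gamma_2$ by \ref{prop:tuple-is-valid:disjointness} together with \ref{item-regions-inc}, since $a\not\le q_1$. It is then outside by parity, being right of or outside the arcs in the right pattern. Here $W_R(a,q_3)$ is a subpath of $\gamma_4$ by \cref{claim:extreme_subpaths}.

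\textbf{Items \ref{prop:properties-regions:d-over-m2}--\ref{prop:properties-regions:b''}.} For \ref{prop:properties-regions:d-over-m2}, a witnessing path $W$ from $m_2$ upward cannot cross $\gamma_1$ or $\gamma_2$ above $m_2$, since that would yield a path to $b'$ or $b''$ right of the rightmost one. It cannot cross $\gamma_3$ or $\gamma_4$ either, since that would force $a\le m_2\le$ something forming a cycle, or again contradict extremality. Its first edge leaves $m_2$ into the interior, because $m_2=\lcse(\gamma_2,\gamma_3)$ and $\gamma_2$ is left of $\gamma_3$. Item \ref{prop:properties-regions:b''} is the special case $W=m_2[W_R(b'')]b''$, together with the observation that $x_0[W_R(b'')]q_1$ is shared with $\gamma_1$ below $q_1$ and so lies outside.

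For \ref{prop:properties-regions:d-inside}, take $d\in B\cap\calR$. If $W_R(d)$ avoided $q_1$, it would have to enter $\calR$ through some arc. Entering through $\gamma_3$ or $\gamma_4$ gives $a\le$ an element of $W_R(d)$; compare with elements in $A$ via \ref{prop:tuple-is-valid:inA}. This is not immediately contradictory, so instead I would reroute $W_R(d)$ along $\gamma_2$ or $\gamma_1$ to get a witnessing path to $d$ right of $W_R(d)$. Entering through $\gamma_1$ or $\gamma_2$ above $q_1$ forces $W_R(d)$ to share a prefix through $q_1$ by bottom-consistency.

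\textbf{Items \ref{prop:properties-regions:b-inside}--\ref{prop:properties-regions:c-outside}.} For \ref{prop:properties-regions:b-inside}, $W_R(b)$ is bottom-consistent with $\gamma_1$ and $\gamma_2$ and sandwiched between them. By \cref{prop:sandwich}, $q_1$ lies on $W_R(b)$ below $b$. After $q_1$, $W_R(b)$ lies between $\gamma_1$ and $\gamma_2$, so it can leave only through the top via $\gamma_3$, $\gamma_4$, or $m_4$. Crossing $\gamma_3$ or $\gamma_4$ would create a path to $b'$ or $b''$ contradicting extremality, and $m_4\notin W_R(b)$ is assumed.

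For \ref{prop:properties-regions:d-outside}, $d\prec_R b'$ means $W_R(d)$ is left of $\gamma_1$, and combined with \ref{prop:properties-regions:p-inside} it cannot reach $\calR$ except on $\gamma_1$. I will need a small extra check that $d$ itself is not on $\gamma_1$; this holds since $d\ne$ a predecessor on $W_R(b')$, by strictness of $\prec_R$ in the consistent situation.

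For \ref{prop:properties-regions:c-outside}, suppose $c\in\calR$. By \ref{prop:properties-regions:d-outside}, $d\notin\calR$, so a witnessing path from $c$ to $d$ exits through $\gamma$. Exiting through $\gamma_1$ gives $c\le b'$, which is excluded. Exiting through $\gamma_2$, $\gamma_3$, or $\gamma_4$ above their splits leads either to a path to $d$ crossing $\gamma_1$ (again $c\le b'$) or, by \ref{prop:properties-regions:d-over-m2}, reenters. The main obstacle is this exit-case analysis in \ref{prop:properties-regions:c-outside} and \ref{prop:properties-regions:d-inside}. I would first try ordering by which arc is hit first and reducing every case to either $c\le b'$ or a contradiction with rightmost-path extremality.
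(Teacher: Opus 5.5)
The real gap is in item~\ref{prop:properties-regions:c-outside}, the step you flag yourself. Your plan reduces every way a witnessing path $W$ from $c$ to $d$ can leave $\calR$ to either $c\le b'$ or a violation of rightmost-path extremality. Hypothesis~\ref{item-regions-not-bottom} ($a$ is not a bottom element) never enters, yet \ref{prop:properties-regions:c-outside} depends on it.

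The easy exits are fine:
\begin{itemize}
\item meeting $q_1[W_R(b')]m_4$ or $q_3[W_R(a,b')]m_4$ gives $c\le b'$;
\item meeting $q_1[W_R(b'')]m_2$ in an element $u$ makes $x_0[W_R(b'')]u[W]d$ a witnessing path to $d$ through a point right of $W_R(d)$.
\end{itemize}
So $W$ must leave through an element $u$ of $q_3[W_R(a,b'')]m_2$. When $u$ is higher than $x_0$, your trapping idea can be completed. The path is confined between $W_R(b'')$ and $W_R(a,b'')$ below $m_2$, and passing through $m_2$ is handled by~\ref{prop:properties-regions:d-over-m2}.

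However, $q_3$ may be lower than $x_0$. Then $W$ can cross $q_3[W_R(a,b'')]m_2$ below the height of $x_0$, pass to the left of $x_0$, and climb to $d$ on the left of $W_R(b')$. It does so without meeting $W_R(b')$, $W_R(a,b')$ or $q_1[W_R(b'')]m_2$. Small planar examples exist where every hypothesis except~\ref{item-regions-not-bottom} holds, the region is exactly as in \cref{prop:tuple-is-valid}, and this escape makes~\ref{prop:properties-regions:c-outside} fail. What rules it out is that $a[W_R(a,b'')]u[W]d$ is then a witnessing path from $a$ into $B$ that is left of $x_0$. Meanwhile $W_R(a,b'')$ is right of $x_0$ by~\ref{item-regions-W-left-W}, so $a$ would be a bottom element.

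The paper handles this with the curve $\rho=r_0\cup W_R(b')\cup r'$. Here $r_0$ is the horizontal ray leftward from $x_0$ and $r'$ the horizontal ray rightward from $b'$. $\calR$ lies on the side of $\rho$ containing points right of $W_R(b')$, while $d$ lies on the other side because $d\prec_R b'$. Since $W$ avoids $W_R(b')$, it must cross $r'$ or $r_0$ after its exit element $u\in W_R(a,b'')$:
\begin{itemize}
\item crossing $r'$ forces, by the Darboux property, a meeting with $W_R(a,b')$, so $c\le b'$;
\item crossing $r_0$ contradicts~\ref{item-regions-not-bottom} as above.
\end{itemize}
Some such global separation, together with~\ref{item-regions-not-bottom}, is the missing ingredient.

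Smaller slips:
\begin{itemize}
\item In~\ref{prop:properties-regions:b-lower-m4}, the distinguishing property only covers elements. For other points of $\calR$ on the line of $m_4$ you need the parity argument.
\item In~\ref{prop:properties-regions:d-inside}, entering through $W_R(a,b'')$ or $W_R(a,b')$ is impossible except at $m_2$ or $m_4$, because $W_R(d)\subseteq B$ and by~\ref{prop:tuple-is-valid:inA}. No rerouting is needed.
\item In~\ref{prop:properties-regions:d-outside}, the case $b'\prec_\uparrow d$ needs~\ref{prop:properties-regions:b-lower-m4}, since $m_4\preccurlyeq_\uparrow b'$.
\end{itemize}
Your trapping argument for~\ref{prop:properties-regions:b-inside} is a valid and somewhat simpler alternative to the paper's Darboux case analysis.
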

\begin{proof}
    Let
    \[\gamma_1' = q_1[W_R(b')]m_4, \ \gamma_2' = q_1[W_R(b'')] m_2, \ \gamma_3' = q_3[W_R(a,b'')]m_2, \ \gamma_4' = q_3[W_R(a,b')]m_4.\] 

    Since $m_2 \prec_\uparrow m_4$ by \ref{prop:tuple-is-valid:m2-m4}, $m_4$ is the highest point among all the points of $\gamma_1'$, $\gamma_2'$, $\gamma_3'$, and $\gamma_4'$.
    Thus, any point $p$ with $m_4 \prec_\uparrow p$ is neither left nor right of any of $\gamma_i'$ for $i \in [4]$.
    By \Cref{obs:curves-region}, this implies $p \notin \calR$.
    A point $p \neq m_4$ on the horizontal line containing $m_4$ is either only left of both $\gamma_1'$ and $\gamma_4'$ or is only right of both $\gamma_1'$ and $\gamma_4'$, and thus, again $p \notin \calR$ by \Cref{obs:curves-region}.
    This gives \ref{prop:properties-regions:b-lower-m4}.

    Items \ref{prop:properties-regions:a-outside} and \ref{prop:properties-regions:p-inside} follow directly from \ref{prop:tuple-is-valid:left}, \ref{prop:tuple-is-valid:disjointness}, and \cref{obs:curves-region}.

    Next, we prove~\ref{prop:properties-regions:d-over-m2}. 
    Let $d$ be an element in $P$ such that
    $m_2<d$ in $P$. 
    Let $W$ be a witnessing path from $m_2$ to $d$ in $P$. 
    Let $p$ be a point in $W \setminus\{m_2\}$.
    Note that $q_3\prec_\uparrow m_2 \prec_\uparrow p$. 
    We claim that $m_2[W]p$ is disjoint from $W_R(b')$. 
    Indeed, otherwise we have an element $u$ in the intersection and 
    we can construct a witnessing path $x_0[W_R(b'')]m_2[W]u[W_R(b')]b'$ 
    that contains an element (namely $m_2$) that is right of $W_R(b')$, which contradicts \cref{claim:diagram:extreme_paths_exist}. 
    Now we claim that $W$ is disjoint from $q_3[W_R(a,b')]m_4\setminus\set{m_4}$. 
    Indeed, $W$ contains only elements of $B$ while by~\ref{prop:tuple-is-valid:inA} $q_3[W_R(a,b')]m_4\setminus\set{m_4}$ contains only elements of $A$, a contradiction.
    By \cref{prop:tuple-is-valid:left}, $m_2$ is right of $W_R(b')$ and left of $W_R(a,b')$.
    Since $W$ is disjoint from both $W_R(b')$ and $W_R(a,b')$, by \cref{prop:diagram:for_consistent_one_distinction_is_enough}, $W$ is right of $W_R(b')$ and $W$ is left of $W_R(a,b')$.
    By \cref{obs:where-curves-end}, either $m_4 \prec_\uparrow p$ or $p$ is right of $W_R(b')$ and left of $W_R(a,b')$.
    The former case leads to a contradiction by considering the element of $W$ in the horizontal line containing $m_4$ as this point has to be simultaneously left and right of $m_4$.
    Thus, we have $p$ is right of $W_R(b')$ and left of $W_R(a,b')$.
    Finally, consider the horizontal line $\ell$ containing $p$. 
    Since $m_2\prec_\uparrow p \prec_\uparrow m_4$, the line $\ell$ intersects $\partial\calR$ only in $q_1[W_R(b')]m_4$ and $q_3[W_R(a,b')]m_4$.
    It follows that $p \in \Int \calR$ by \cref{obs:curves-region}.

    % Next, we prove \ref{prop:properties-regions:b''}.
    % Clearly, $q_3[W_R(b'')]m_2 \subset \calR$.
    % We claim that $m_2[W_R(b'')]b'' \setminus \{m_2\} \subset \Int \calR$.
    % Note that $m_2 \preccurlyeq_\uparrow b''$.
    % We prove that $b'' \preccurlyeq_\uparrow m_4$.
    % Suppose otherwise.
    % By \ref{item-regions-b'-r-b''} and \ref{prop:tuple-is-valid:m2-m4}, $m_4$ is left of $W_R(b'')$.
    % However, this contradicts \ref{prop:tuple-is-valid:left} stating that $W_R(b'')$ is left of $\gamma_4$.
    % It follows that indeed, $b'' \preccurlyeq_\uparrow m_4$.
    % Consider the horizontal line $\ell$ containing any element $u$ of $m_2[W_R(b'')]b'' \setminus\{m_2\}$.
    % Since $m_2 \prec_\uparrow u \preccurlyeq_\uparrow m_4$, $\ell$ intersects only $\gamma_1'$ and $\gamma_4'$ among the curves $\gamma_1'$, $\gamma_2'$, $\gamma_3'$, and $\gamma_4'$.
    % By \ref{prop:tuple-is-valid:qs-ms}, \ref{prop:tuple-is-valid:left}, \ref{prop:tuple-is-valid:disjointness}, and the consistency of $W_R(b')$ and $W_R(b'')$, $u$ is right of $\gamma_1'$ and left of $\gamma_4'$.
    % Thus, by \cref{obs:curves-region}, it follows that $m_2[W_R(b'')]b'' \subset \calR$, as claimed.

    Item \ref{prop:properties-regions:d-over-m2} implies that $m_2[W_R(b'')]b'' \setminus\{m_2\} \subset \Int \calR$.
    To complete the proof of \ref{prop:properties-regions:b''}, it suffices to verify that $x_0[W_R(b'')]q_1\setminus\{q_1\}$ is disjoint from $\calR$.
    %\later{When showing disjointness / containment of curves, take points not elements!}
    Let $p$ be a point of $x_0[W_R(b'')]q_1\setminus\{q_1\}$.
    If $p \prec_\uparrow q_3$, then $p$ is neither left nor right of any of $\gamma_i'$ for $i \in [4]$, thus, $p \notin \calR$ by \cref{obs:curves-region}.
    If $q_3\preccurlyeq_\uparrow p$, then by \ref{prop:tuple-is-valid:left}, $p$ is left of $\gamma_3'$ and $\gamma_4'$.
    Since $p \prec_\uparrow q_1$, $p$ is neither left nor right of $\gamma_1'$ and $\gamma_2'$.
    Therefore, again by \cref{obs:curves-region}, $p \notin \calR$.
    This completes the proof of \ref{prop:properties-regions:b''}.

    For the proof of \ref{prop:properties-regions:d-inside}, let $d \in B$ with $d \in \calR$.
    By \ref{prop:properties-regions:b''}, $x_0 \notin \Int \calR$.
    Thus, $W_R(d)$ intersects $\partial \calR$.
    By \ref{prop:tuple-is-valid:inA}, $W_R(d)$ intersects $\gamma_1'$ or $\gamma_2'$. 
    In each case, by \cref{claim:diagram:extremal_paths_are_consistent}, we obtain that $q_1$ lies in $W_R(d)$, as desired.

    Next, we prove \ref{prop:properties-regions:b-inside}.
    Let $b \in B$ be such that $b' \prec_R b \prec_R b''$ and $m_4 \parallel b$ in $P$.
    Since $W_R(b')$ is left of $W_R(b)$ which is left of $W_R(b'')$, 
    by \cref{prop:sandwich}, $q_1$ lies in $W_R(b) \setminus \{q_1\}$.
    In particular, $q_1 \prec_\uparrow b$.
    Since $b' \prec_R b \prec_R b''$ and since $b \in B$, $b$ does not lie in the boundary of $\calR$.
    Suppose that $b \prec_\uparrow m_2$.
    By \ref{prop:tuple-is-valid:m2-m4}, we have $m_2\prec_\uparrow b'$ and we also have $m_2 \preccurlyeq_\uparrow b''$.
    In particular, $b$ is right of $\gamma_1'$ and left of $\gamma_2'$.
    By \ref{prop:tuple-is-valid:left}, $b$ is right of exactly one among the curves $\gamma_1'$, $\gamma_2'$, $\gamma_3'$, and $\gamma_4'$.
    Thus, by \cref{obs:curves-region}, $b \in \Int \calR$.

    Hence, we may assume that $m_2 \prec_\uparrow b$.
    %Let $\gamma_4 = W_R(a,b')$.
    Note that $W_R(b)$ does not intersect $W_R(a,b')$.
    Indeed, otherwise, if they intersect in an element $u$, then $x_0[W_R(b)]u[W_R(a,b')]b'$ contains an element right of $W_R(b')$, which contradicts \cref{claim:diagram:extreme_paths_exist}.

    We will either conclude that $b \in \Int \calR$ or we will find two horizontal lines $\ell_1$ and $\ell_2$ such that the intersection of $W_R(b)$ and $W_R(a,b')$ with these lines are in the opposite order.
    By the Darboux property, this will imply that $W_R(b)$ intersects $W_R(a,b')$.
    Since $W_R(b)$ has only elements in $B$, by \cref{prop:exposed-paths}.\ref{prop:exposed-paths:right}, they must intersect in $m_4[W_R(a,b')]b'$, which contradicts $m_4$ not lying in $W_R(b)$.

    Let $\ell_1$ be the horizontal line containing $m_2$. 
    Since $b\prec_R b''$, 
    the intersection point of $W_R(b)$ with $\ell_1$ is left of or equal to $m_2$, 
    which is left of the intersection point of $W_R(a,b')$ with $\ell_1$ 
    (by \ref{prop:tuple-is-valid:left} and \ref{prop:tuple-is-valid:disjointness}). 
    Thus, the point of $W_R(b)$ in $\ell_1$ is also left of the point of $W_R(a,b')$ in $\ell_1$.

    Next, we construct $\ell_2$.
    First, consider the case where $m_2 \prec_\uparrow b \prec_\uparrow m_4$. 
    In this case $\ell_2$ is the horizontal line containing $b$. 
    By the assumption, $b$ is right of $\gamma_1'$.
    By the case distinction, the horizontal line containing $b$ intersects only $\gamma_1'$ and $\gamma_4'$ among the curves $\gamma_1'$, $\gamma_2'$, $\gamma_3'$, and $\gamma_4'$.
    If $b$ is left of $\gamma_4'$, then $b \in \calR$ by \cref{obs:curves-region}. 
    Thus, we obtain that $b$ is right of $\gamma_4'$, and also of $W_R(a,b')$, as desired.
    In the case where $m_4 \prec_\uparrow b \preccurlyeq_\uparrow b'$, $b$ is right of $W_R(a,b')$ as $m_4[W_R(a,b')]b' = m_4[W_R(b')]b'$. 
    Thus, again we take $\ell_2$ to be the horizontal line containing $b$. 
    Finally, when $b' \prec_\uparrow b$, $b'$ is left of $W_R(b)$ so we take 
    $\ell_2$ to be the horizontal line containing $b'$.
    This completes the proof that $b \in \Int \calR$.
    The path $q_1[W_R(b)]b$ is disjoint from $\gamma_3'$ and $\gamma_4'$.
    Since this path is consistent with $W_R(b')$ and $W_R(b'')$, the fact that $b \in \Int \calR$ implies that $q_1[W_R(b)]b \subset \calR$, so~\ref{prop:properties-regions:b-inside} holds.

    For the proof of \ref{prop:properties-regions:d-outside}, let $d \in B$ with $d \prec_R b'$.
    If $d\preccurlyeq_\uparrow b'$, then $d$ is left of $W_R(b')$, thus $d \notin \calR$ by \ref{prop:tuple-is-valid:left} and \cref{obs:curves-region}.
    Otherwise, $d$ is neither left nor right of any $\gamma_i'$ for $i \in [4]$, hence, by~\cref{obs:curves-region}, $d \notin \calR$.
    %In fact, the last argument applies also to $b'$ assuming that $b' \neq m_4$.
    %Thus, we also obtain \ref{prop:properties-regions:b'}.

    For the proof of \ref{prop:properties-regions:c-outside}, fix $c \in A$ and $d \in B$ with $c \leq d$, $c \parallel b'$ in $P$, and $d \prec_R b'$.
    By \ref{prop:properties-regions:d-outside}, we have $d \notin \calR$.
    Suppose to the contrary that $c \in \calR$ and let $W$ be a witnessing path from $c$ to $d$ in $P$.

    Let $\ell_0$ be the horizontal line containing $x_0$ and let $r_0$ be the ray contained in $\ell_0$ emanating left from $x_0$.
    Let $\ell'$ be the horizontal line containing $b'$ and let $r'$ be the ray contained in $\ell'$ emanating right from $b'$.
    We define $\rho = r_0 \cup W_R(b') \cup r'$.
    Let $P_L$ and $P_R$ be the closures of the two components of the complement of $\rho$ in the plane.
    We set $P_L$ to be the component containing points left of $W_L(b')$ and $P_R$ to be the component containing points right of $W_R(b')$.
    Since every point of $P_L$ is left of $W_R(b')$ or higher than $b'$ or is in $W_R(b')$, $P_L$ does not contain points of $\calR$ other than ones in $W_R(b')$ (by \cref{obs:curves-region}).
    It follows that $\calR \subset P_R$ and in particular $c \in P_R$.
    On the other hand, we claim that $d \in P_L$.
    If $d \preccurlyeq_\uparrow b'$, then $d$ is left of $W_R(b')$ and $d \in P_L$.
    If $b' \prec_\uparrow d'$, then $d$ is higher than $\ell'$, hence, again $d \in P_L$, as claimed.

    Recall that $c \in \calR$ and $d \notin \calR$, and $c \in P_R$ and $d \in P_L$.
    It follows that $W$ intersects $\partial \calR$ and $\rho$.
    Since $c \parallel b'$ in $P$, $W$ is disjoint from $W_R(b')$ and $W_R(a,b')$.
    Let $u$ be an element of $W \cap \partial \calR$ and let $p$ be the point in $W \cap \rho$.
    We also show that $u$ does not lie in $\gamma_2'$.
    If this is the case, consider the path $x_0[W_R(b'')]u[W]d$.
    By \ref{prop:tuple-is-valid:left}, $u$ is right of $W_R(b')$, and hence, $u$ is right of $W_R(d)$. 
    Thus, $x_0[W_R(b'')]u[W]d$
    contains an element right of $W_R(d)$, which contradicts \cref{claim:diagram:extreme_paths_exist}.
    We have $u$ in $W_R(a,b'')$, and either $p \in r'$ or $p \in r_0$. 
    Since $\calR \subset P_R$ and $W$ is disjoint from $W_R(b')$, $p$ is a point in $u[W]d$.

    First, suppose that $p \in r'$.
    By \ref{prop:tuple-is-valid:left}, $u$ is left of $W_R(a,b')$ and by the assumptions $p$ is right of $b'$.
    It follows that $W$ and $W_R(a,b')$ cross two horizontal lines in opposite orders, and so, by the Darboux property, they intersect.
    However, since $c \parallel b'$ in $P$, this is a contradiction.
    Next, we suppose that $p \in r_0$.
    It follows that $a \prec_\uparrow x_0$.
    Thus, by \ref{item-regions-W-left-W}, $x_0$ is left of $W_R(a,b'')$.
    On the other hand, $x_0$ is left of $W$ and $W$ is a witnessing path from $a$ to an element of $B$ in $P$.
    Therefore, $a$ is a bottom element, which contradicts \ref{item-regions-not-bottom}.
    This completes the proof of \ref{prop:properties-regions:c-outside}.    
\end{proof}

\begin{proposition}\label{prop:two-regions}
    Let $a,c \in A$ and $b',d',b'' \in B$ such that $a$, $b'$, $b''$, and $c$, $d'$, $b''$ satisfy \ref{item-regions-first}--\ref{item-regions-last}.
    Let $\calR' = \calR(a,b',b'')$ with the extreme points $(q_1',m_2',q_3',m_4')$ and $\calR = \calR(c,d',b'')$ with the extreme points $(q_1,m_2,q_3,m_4)$.
    %Let $x,y \in A$ with $x \leq y$ in $P$, and let $W$ be a witnessing path from $x$ to $y$ in $P$
    Assume that $\calR' \subset \calR$. %, $x \in \calR'$, and $y \notin \Int \calR$. 
    Let $W$ be a witnessing path in $P$ with all the elements in $A$ starting in $\calR'$ and ending in an element not in $\Int \calR$.
    %\later{Unify enumerates (each is a statement or there is one statement).}
    \begin{enumerate}
        \item $q_1 \leq q_1'$ in $P$. \label{prop:two-regions:q}
        \item $m_2 \leq m_2'$ in $P$. \label{prop:two-regions:m}
        % \item If $W$ intersects $q_3'[W_R(a,b')]m_4' \setminus\{q_3'\}$, then $y$ does not lie in $q_3[W_R(c,b'')]m_2$. \label{prop:two-regions:gamma4}
        % \item If $W$ is disjoint from $q_3'[W_R(a,b')]m_4' \setminus\{q_3'\}$, then $y$ lies in $q_3[W_R(c,b'')]m_2$. \label{prop:two-regions:gamma3}
        % \item if $W$ is disjoint from $\gamma_4(\calR')$, then         
        % %$x$ lies in $\gamma_3(\calR')$ and 
        % $y$ lies in $\gamma_3(\calR)$;
        % \item if $W$ intersects $\gamma_4(\calR')$, then 
        % $y$ lies in $\gamma_4(\calR)$; 
        \item If $W$ intersects $q_3'[W_R(a,b')]m_4' \setminus\{q_3'\}$, then 
        $W$ intersects $q_3[W_R(c,d')]m_4 \setminus\{q_3\}$. \label{prop:two-regions:gamma3}
        \item If $W$ is disjoint from $q_3'[W_R(a,b')]m_4'$, then 
        $W$ intersects $q_3[W_R(c,b'')]m_2 \setminus\{q_3\}$.\label{prop:two-regions:gamma4} % $q_3'[W_R(a,b'')]m_2' \setminus\{q_3'\}$ and $q_3[W_R(c,b'')]m_4 \setminus\{q_3\}$.\label{prop:two-regions:gamma4}
    \end{enumerate}
\end{proposition}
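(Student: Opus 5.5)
We prove the four items in order, relying throughout on the description of $\partial\calR$ and $\partial\calR'$ as the union of the four arcs $\gamma_1'=q_1[W_R(d')]m_4$, $\gamma_2'=q_1[W_R(b'')]m_2$, $\gamma_3'=q_3[W_R(c,b'')]m_2$, $\gamma_4'=q_3[W_R(c,d')]m_4$ (and the primed analogues for $\calR'$). The two facts used repeatedly are the following. First, elements of $B$ on $\partial\calR$ lie only on $\gamma_1'\cup\gamma_2'$, together with $m_2$ and $m_4$; this follows from \ref{prop:tuple-is-valid:inA}. Second, \Cref{prop:properties-regions} locates elements of $B$ relative to $\calR$.

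\emph{Items (i) and (ii).} Since $\calR'\subset\calR$, we have $q_1'\in\calR$ and $q_1'\in B$. By \ref{prop:properties-regions:d-inside}, $q_1$ lies in $W_R(q_1')$, so $q_1\le q_1'$ in $P$. For (ii), use that $m_2'\in\calR$. Both $m_2'$ and $m_2$ lie on $W_R(b'')$, so they are comparable. Suppose $m_2'<m_2$ in $P$. Then $m_2'\in\calR$ lies on $W_R(b'')$ strictly below $m_2$. By \ref{prop:properties-regions:b''}, the part of $W_R(b'')$ in $\calR$ is $q_1[W_R(b'')]m_2\cup m_2[W_R(b'')]b''$, so $m_2'$ lies on $\gamma_2'$, in particular on $\partial\calR$.

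Now consider $m_2'[W_R(b'')]b''\setminus\{m_2'\}$. By \ref{prop:properties-regions:d-over-m2} applied to $\calR'$, it lies in $\Int\calR'\subset\calR$. On the other hand, it passes through points of $\gamma_2'$ above $m_2'$, including $m_2$ itself. This gives no contradiction yet, so we argue with $W_R(c,b'')$ instead. Its final segment $m_2[W_R(c,b'')]b''$ coincides with $W_R(b'')$, while just below $m_2$ it runs on the right side. Meanwhile $W_R(a,b'')$ must join $W_R(b'')$ at $m_2'<m_2$ from the right side, starting from $a\in\calR'\subset\calR$. Comparing the rightmost paths $W_R(c,b'')$ and $W_R(a,b'')$, both top-consistent with $W_R(b'')$, the sandwich/extremality argument of \Cref{claim:diagram:extreme_paths_exist} forces $W_R(a,b'')$ to cross $\gamma_3'$. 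That would give $c\le$ an element of $W_R(a,b'')$ below $m_2'$. The resulting path from $c$ to $b''$ would meet $W_R(b'')$ lower than $m_2$ while staying to the right, contradicting the definition of $m_2=\lcse$. Making this crossing argument precise is where we expect the work to lie.

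\emph{Items (iii) and (iv).} Here $W$ consists only of elements of $A$. It starts in $\calR'\subset\calR$ and ends outside $\Int\calR$, so it meets $\partial\calR$. Elements of $A$ on $\partial\calR$ lie on $\gamma_3'\cup\gamma_4'$ minus $\{m_2,m_4\}$, so the task is to decide which of the two arcs is hit. For (iii), suppose $W$ meets $\gamma_4'{}^{\prime}=q_3'[W_R(a,b')]m_4'\setminus\{q_3'\}$ at an element $u$. Then $u\le b'$, and $u$ lies to the right of the ``$b''$ side''. If $W$ left $\calR$ only through $\gamma_3'$, then $u\le m_2$ would follow, since $W$ is monotone and the exit lies above $u$. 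Using (ii) and the region geometry, we would then obtain an element of $W_R(c,b'')$ reachable from $u$. Combined with $u$ lying strictly right of $W_R(b'')$ above $m_2'$, the Darboux property would force crossing into $\gamma_2'$, which is impossible. For (iv), if $W$ avoids $\gamma_4'{}^{\prime}$, then by the left/right ordering \ref{prop:tuple-is-valid:left} applied to $\calR'$, $W$ stays left of $W_R(a,b')$. Hence it stays left of $\gamma_4'$ as well, because $W_R(c,d')$ lies right of $\gamma_4'{}^{\prime}$ via $\calR'\subset\calR$ and consistency. Therefore its exit point lies on $\gamma_3'\setminus\{q_3\}$; the exit cannot be $q_3$, by \ref{prop:properties-regions:a-outside}.

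We expect the main obstacle to be the case analysis in (ii) and (iii), which requires carefully controlling the left/right position of paths that leave the region.
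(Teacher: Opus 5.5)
Your item (i) is the paper's argument. Item (iv) is essentially the paper's argument stated directly: $W$ stays strictly left of $W_R(a,b')$, while by item (iii) of \cref{prop:properties-regions} applied to $\calR$, every point of $q_3'[W_R(a,b')]m_4'\subset\calR'\subset\calR$ is weakly left of $W_R(c,d')$. Two details remain there. You must sandwich $W$ between $W_R(b')$ and $W_R(a,b')$ to keep the exit point below $m_4'$. And $q_3$ is excluded by this same strict-left comparison, not by item (ii) of \cref{prop:properties-regions}.

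\textbf{Item (ii).} You had the contradiction and walked past it. You showed $m_2\in m_2'[W_R(b'')]b''\setminus\{m_2'\}\subset\Int\calR'$. Since $\Int\calR'$ is open and contained in $\calR'\subset\calR$, this puts $m_2$ in $\Int\calR$. But $\Int\calR$ is disjoint from $\partial\calR\ni m_2$, which is the paper's whole proof. The detour you take instead is not a proof:
\begin{itemize}
\item it uses $a\in\calR'$, which is false in general, since item (ii) of \cref{prop:properties-regions} says $W_R(a,q_3')\setminus\{q_3'\}$ misses $\calR'$;
\item the crossing of $q_3[W_R(c,b'')]m_2$ is never justified;
\item a different $c$--$b''$ path reaching $W_R(b'')$ below $m_2$ does not contradict $m_2=\lcse(W_R(b''),W_R(c,b''))$ unless it is somewhere strictly right of $W_R(c,b'')$.
\end{itemize}

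\textbf{Item (iii).} The Darboux step is aimed at the wrong curve. The point where $W$ meets $q_3'[W_R(a,b')]m_4'\setminus\{q_3'\}$ and a hypothetical exit point on $q_3[W_R(c,b'')]m_2$ both lie weakly right of $W_R(b'')$. So there is no order reversal relative to $W_R(b'')$, and nothing forces $W$ toward $q_1[W_R(b'')]m_2$. The missing idea is that $W_R(a,b'')$ is the rightmost $a$--$b''$ path. One way to run it:
\begin{itemize}
\item Let $x$ be the last element of $W$ on $\partial\calR'$. It still lies on $q_3'[W_R(a,b')]m_4'\setminus\{q_3'\}$, because once $W$ touches that arc it cannot later meet $W_R(a,b'')$ above $q_3'$, by the same extremality argument. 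Let $y$ be the first element of $\partial\calR$ from $x$ on. This choice is what makes ``the exit lies above $u$'' legitimate.
\item Suppose $y\in q_3[W_R(c,b'')]m_2$. Then $q_3'\prec_\uparrow x\preccurlyeq_\uparrow y\preccurlyeq_\uparrow m_2\preccurlyeq_\uparrow m_2'$, using (ii). So the horizontal lines through $x$ and $y$ both meet $W_R(a,b'')$.
\item The witnessing path $a[W_R(a,b')]x[W]y[W_R(c,b'')]b''$ shows $y$ is not strictly right of $W_R(a,b'')$. On the other hand, $x$ is strictly right of $W_R(a,b'')$ by (p2) and the disjointness of the two branches above $q_3'$.
\item By Darboux, $W$ meets $W_R(a,b'')$ in an element $v$ with $x\prec_\uparrow v\preccurlyeq_\uparrow y$. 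Then $a[W_R(a,b')]x[W]v[W_R(a,b'')]b''$ is an $a$--$b''$ witnessing path through $x$, strictly right of $W_R(a,b'')$. This contradicts \cref{claim:diagram:extreme_paths_exist}.
\end{itemize}
Since elements of $A$ on $\partial\calR$ lie on $q_3[W_R(c,b'')]m_2\cup q_3[W_R(c,d')]m_4$ by (p6), $y$ must lie on $q_3[W_R(c,d')]m_4\setminus\{q_3\}$.
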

\begin{proof}
    Assume that $\calR' \subset \calR$.
    Since $q_1' \in \calR$, by \cref{prop:properties-regions}.\ref{prop:properties-regions:d-inside}, $q_1$ lies in $W_R(q_1')$, and so, $q_1 \leq q_1'$ in $P$ giving \ref{prop:two-regions:q}.
    For the proof of \ref{prop:two-regions:m}, suppose to the contrary that $m_2' < m_2$ in $P$.
    Then, by \cref{prop:properties-regions}.\ref{prop:properties-regions:b''}, $m_2 \in \Int \calR'$.
    However, $m_2 \in \partial \calR$, thus, every open ball centered at $m_2$ contains a point outside of $\calR$.
    This contradicts $\calR' \subset \calR$ and yields \ref{prop:two-regions:m}.

    For the proofs of the remaining items, we assume without loss of generality that $W$ is a witnessing path from an element $x \in A$ to an element $y \in A$ with $x \in \partial \calR'$ and $y \in \partial \calR$ and these are the only elements in the respective boundaries.
    For the proof of~\ref{prop:two-regions:gamma3}, assume that $x$ lies in $q_3'[W_R(a,b')]m_4' \setminus\{q_3'\}$. 
    Since $y\in A$ and $y\in\partial \calR$, by \ref{prop:tuple-is-valid:inA},
    $y$ lies either in $q_3[W_R(c,b'')]m_2$ or in 
    $q_3[W_R(c,d')]m_4 \setminus \{q_3\}$.
    Suppose to the contrary that $y$ lies in $q_3[W_R(c,b'')]m_2$.
    Let $\ell_x$ and $\ell_y$ be the horizontal lines containing $x$ and $y$, respectively.
    We have 
    \[ q_3' \prec_\uparrow x \preccurlyeq_\uparrow y \preccurlyeq_\uparrow m_2 \preccurlyeq_\uparrow m_2'.\]
    In particular, both $\ell_x$ and $\ell_y$ intersect $W_R(a,b'')$.
    Let $u$ be the point of $W_R(a,b'')$ in $\ell_y$.
    If $y$ is right of $u$, then $a[W_R(a,b')]x[W]y[W_R(c,b'')]b''$ is a witnessing path from $a$ to $b''$ in $P$ with an element right of $W_R(a,b'')$, which contradicts \cref{claim:diagram:extreme_paths_exist}.
    Thus, $y$ is not right of $u$.
    In other words, the intersection of $W_R(a,b'')$ with $\ell_y$ is not left of $y$.
    By \ref{prop:tuple-is-valid:left}, the intersection of $W_R(a,b'')$ with $\ell_x$ is left of $x$.
    By the Darboux property, it follows that $W$ and $W_R(a,b'')$ intersect in an element $v$ with $x \prec_\uparrow v \preccurlyeq_\uparrow y$.
    However, $a[W_R(a,b')]x[W]v[W_R(a,b'')]b''$ is a witnessing path from $a$ to $b''$ containing a point right of $W_R(a,b'')$, which again contradicts \cref{claim:diagram:extreme_paths_exist}.
    This completes the proof of \ref{prop:two-regions:gamma3}.

    For the proof of~\ref{prop:two-regions:gamma4}, assume that $W$ is disjoint from $q_3'[W_R(a,b')]m_4'$, and so, $x$ lies in $q_3'[W_R(a,b'')]m_2' \setminus\{q_3'\}$.
    Since $y\in A$ and $y\in\partial \calR$, by \ref{prop:tuple-is-valid:inA},
    $y$ lies either in $q_3[W_R(c,b'')]m_2  \setminus \{q_3\}$ or in 
    $q_3[W_R(c,d')]m_4$.
    Suppose to the contrary that $y$ lies in $q_3[W_R(c,d')]m_4$.
    If $x_0 \preccurlyeq_\uparrow x$, then let $p = x$ and otherwise, let $p$ be the point of $W$ in the horizontal line containing $x_0$.
    By \ref{prop:tuple-is-valid:left}, $p$ is right of $W_R(b')$ and left of $W_R(a,b')$.
    The path $W$ is disjoint from both $W_R(b')$ and $W_R(a,b')$.
    By \cref{prop:diagram:for_consistent_one_distinction_is_enough}, $W$ is right of $W_R(b')$ and left of $W_R(a,b')$.
    In particular, $y \prec_\uparrow m_4'$.
    Consider the element $u$ of $q_3'[W_R(a,b')]m_4'$ in the horizontal line containing $y$.
    It follows that $u$ is right of $W_R(c,d')$, which by \cref{prop:properties-regions}.\ref{prop:properties-regions:p-inside}, $u \notin \calR$, which contradicts $\calR' \subset \calR$.
    This completes the proof of \ref{prop:two-regions:gamma4}.
\end{proof}

\subsection{The smallest region of a pair}\label{ssec:smallest-region}

In this subsection, for each pair $(a,b) \in I_R$ and fixed $b'' \in B_*''$, we identify $b' \in B'(a,b)$ so that the region $\calR(a,b',b'')$ is the smallest possible.
First, we justify that $\calR(a,b',b'')$ is actually well-defined in this setup.
%\later{I'm a bit annoyed by the spacing before and after statements. Maybe it can be nicer.}

\begin{proposition}\label{prop:satisfy-items-r}
    Let $(a,b) \in I_R$, $b' \in B'(a,b)$, and $b'' \in B''_*(a,b)$.
    Then, $a$, $b'$, and $b''$ satisfy \ref{item-regions-first}--\ref{item-regions-last}.
\end{proposition}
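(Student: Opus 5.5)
We verify the five conditions one at a time, taking $(a,b)\in I_R$, $b'\in B'(a,b)$ and $b''\in B_*''(a,b)$. Condition \ref{item-regions-leq} is immediate, since both $B'(a,b)$ and $B''_*(a,b)\subset B''(a,b)$ require $a\le b'$ and $a\le b''$ in $P$. Condition \ref{item-regions-not-bottom} is property \ref{items:diagram_pairs:a_bottom}, which every pair of $I_R\subset I_3$ satisfies. Condition \ref{item-regions-W-left-W} is the defining property of $B''_*(a,b)$: $W_R(a,b'')$ is right of $W_R(b'')$. We also note that $a\in A$ by \cref{clm:ainA}, as the hypotheses of the region conditions require.

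For \ref{item-regions-b'-r-b''}, the definitions of $B'(a,b)$ and $B''(a,b)$ give $b'\prec_R b$ and $b\prec_R b''$. Since $\preccurlyeq_R$ is a partial order on $B$ (\cref{claim:diagram:L_and_R_are_posets}), transitivity yields $b'\prec_R b''$.

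The main step is \ref{item-regions-inc}: with $q_1=\gcpe(W_R(b'),W_R(b''))$, we must show $a\parallel q_1$ in $P$. Since $a\in A$ and $q_1\ge x_0$, we have $q_1\in B$, so $q_1\not\le a$. Suppose then that $a\le q_1$. Since $b'\prec_R b\prec_R b''$ and the three paths $W_R(b'),W_R(b),W_R(b'')$ are pairwise bottom-consistent (\cref{claim:diagram:extremal_paths_are_consistent}), \cref{prop:sandwich} places $q_1$ on $W_R(b)$, and therefore $q_1\le b$ in $P$. This gives $a\le q_1\le b$, contradicting $a\parallel b$.

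One detail needs care: \cref{prop:sandwich} is stated for curves that are pairwise "left of" each other. If $b'$ or $b''$ equals $b$, this degenerates, but that case is excluded by the strict relations $b'\prec_R b$ and $b\prec_R b''$. I expect only this application of \cref{prop:sandwich}, including the degenerate cases, to need any checking.
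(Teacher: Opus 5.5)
Your proof is correct and is essentially the paper's own argument. Conditions \ref{item-regions-first}--\ref{item-regions-W-left-W} come from the definitions of $B'(a,b)$ and $B''_*(a,b)$ and from property \ref{items:diagram_pairs:a_bottom}. For \ref{item-regions-inc}, \cref{prop:sandwich} places $\gcpe(W_R(b'),W_R(b''))$ on $W_R(b)$, which rules out $a$ lying below it, and $a\in A$ from \cref{clm:ainA} rules out the reverse comparability; the degenerate cases you flag cannot occur because $\prec_R$ is strict.
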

\begin{proof}
    Items \ref{item-regions-b'-r-b''}, \ref{item-regions-leq}, and \ref{item-regions-W-left-W} follow from the definition of $B'(a,b)$ and $B''_*(a,b)$.
    Item \ref{item-regions-not-bottom} follows from \ref{items:diagram_pairs:a_bottom}.
    Finally, let $q = \gcpe(W_R(b'),W_R(b''))$.
    Since $b' \prec_R b \prec_R b''$, by \cref{prop:sandwich}, $q$ lies in $W_R(b)$.
    Thus, $a \not\leq q$ in $P$.
    On the other hand, $a \in A$ (by \ref{items:diagram_pairs:a_in_A}), hence $q \not< a$ in $P$.
    This completes the proof.
\end{proof}

Let $(a,b) \in I_R$.
Let $B_R'(a,b)$ be the set of all the maximal elements with respect to $\prec_R$ of the elements in $B'(a,b)$.
It follows that $B_R'(a,b)$ is an antichain with respect to $\prec_R$.
In other words for all $d,d' \in B_R'(a,b)$, either $W_R(d)$ is a subpath of $W_R(d')$ or $W_R(d')$ is a subpath of $W_R(d)$.
In particular, the elements of $B_R'(a,b)$ can be labeled $b_1',\dots,b_m'$ so that $W_R(b_i')$ is a subpath of $W_R(b_{i+1}')$ for every $i \in [m-1]$.
We set \defin{$z'(a,b)$} to be $b_1'$, i.e.\ the lowest element of $B_R'(a,b)$ in $P$.
See \cref{fig:smallest-region}.
\begin{figure}
    \centering
    \includegraphics{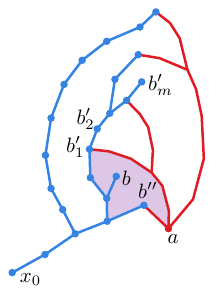}
    \caption{For any \((a, b) \in I_R\) and \(b'' \in B''_*(a, b)\), the region \(\calR(a, b', b'')\) is minimal for \(b' = z'(a, b) = b_1'\).}
    \label{fig:smallest-region}
\end{figure}

\begin{proposition}\label{prop:m4=z'}
    For every $(a,b) \in I_R$, we have $z'(a,b) = \gcpe(W_R(z'(a,b)),W_R(a,z'(a,b)))$.
    %\jedrzej{If at the end, there is no usage, we can remove it (although, it describes the situation.}
\end{proposition}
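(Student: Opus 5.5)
Here $W_R(z'(a,b))$ starts at $x_0$ and $W_R(a,z'(a,b))$ starts at $a$, so the two paths share their \emph{highest} point $z'(a,b)$. I therefore read the element in the statement as the least common suffix-element of the two paths, in the sense of $\lcse$. The claim is then that this element is $z'(a,b)$ itself, i.e.\ the two paths already differ at the edge entering $z'(a,b)$. I would argue by contradiction.

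Write $z=z'(a,b)$ and let $m=\lcse(W_R(z),W_R(a,z))$, and suppose $m\neq z$. Then $m<z$ in $P$, $m$ lies on $W_R(z)$ (so $m\in B$), and $m$ lies on $W_R(a,z)$ (so $a\le m$ in $P$). By \cref{claim:extreme_subpaths}, $W_R(m)$ is a proper prefix of $W_R(z)$.

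First I show $m\in B'(a,b)$.
\begin{itemize}
\item The element $m$ does not lie on $W_R(b)$: otherwise $a\le m\le b$ in $P$, contradicting $a\parallel b$.
\item Since $z\prec_R b$ and $W_R(m)\subseteq W_R(z)$, along any horizontal line meeting $W_R(m)$ and $W_R(b)$ the point of $W_R(m)$ is not right of the point of $W_R(b)$. On the line through $m$ the two points differ.
\item Because $W_R(m)$ and $W_R(b)$ are bottom-consistent (\cref{claim:diagram:extremal_paths_are_consistent}), \cref{prop:diagram:for_consistent_one_distinction_is_enough} gives $m\prec_R b$.
\item Since $(a,b)\in I_1$, \cref{prop:one-side-implies-the-other} then yields $m\prec_L b$.
\end{itemize}
Together with $a\le m$ this shows $m\in B'(a,b)$.

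Next I show $m$ is $\prec_R$-maximal in $B'(a,b)$. Suppose $m\prec_R d$ for some $d\in B'(a,b)$. Then $d$ does not lie on $W_R(m)$, so $W_R(d)$ leaves $W_R(m)$ at $w=\gcpe(W_R(m),W_R(d))$, with $w<m$, and continues to the right of $W_R(m)$. Below $m$, the path $W_R(m)$ coincides with $W_R(z)$. Hence at a line just above $w$ the point of $W_R(d)$ is right of the point of $W_R(z)$. By bottom-consistency and \cref{prop:diagram:for_consistent_one_distinction_is_enough}, this gives $z\prec_R d$. That contradicts $z\in B_R'(a,b)$, since $z$ is a maximal element of $B'(a,b)$ with respect to $\prec_R$. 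Thus $m\in B_R'(a,b)$.

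Finally, $W_R(m)$ is a proper subpath of $W_R(z)=W_R(b_1')$, so $m$ would precede $b_1'$ in the labelling of $B_R'(a,b)$. This contradicts the choice of $z'(a,b)=b_1'$ as the lowest element of $B_R'(a,b)$. Hence $m=z$.

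The delicate step is the maximality argument: one must check carefully that branching right off a common prefix transfers from $W_R(m)$ to $W_R(z)$. This is exactly where the bottom-consistency of the rightmost paths from $x_0$ is used, together with the fact that $w$ lies strictly below $m$.
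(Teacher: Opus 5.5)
Your proof is correct and takes the paper's route. Your $\lcse$ reading is the intended one: $m$ is the point $m_4$ of \cref{prop:tuple-is-valid} for $b'=z'(a,b)$, and the argument shows it lies in $B'(a,b)$, contradicting the choice of $z'(a,b)$. You are in fact more complete than the paper. Its phrase ``by the definition of $z'(a,b)$'' silently relies on $m$ being $\prec_R$-maximal in $B'(a,b)$, which you verify explicitly.

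One step needs a different witness line, because the horizontal line through $m$ never meets $W_R(b)$. Suppose it met $W_R(b)$ in a point $p$. Since $m\in W_R(z)$, $m$ is left of or equal to $p$. Since $(a,b)$ is a right pair and $m\in W_R(a,z)$, $m$ is right of or equal to $p$. Hence $m=p$ and $a\le m\le b$ in $P$, a contradiction.

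Instead, use a horizontal line just above $w=\gcpe(W_R(z),W_R(b))$. We have $w<m$, since otherwise $m$ would lie on the common prefix and hence on $W_R(b)$. We also have $w\neq b$, since $z\prec_R b$ rules out $b$ lying on $W_R(z)$. Such a line therefore meets $W_R(m)$ and $W_R(b)$ in distinct points. This is exactly the paper's observation $\gcpe(W_R(z'(a,b)),W_R(b))<z$, where the paper's $z$ is your $m$.
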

\begin{proof}
    Let $z = \gcpe(W_R(z'(a,b)),W_R(a,z'(a,b)))$.
    Since $a \leq z$ in $P$, we have $\gcpe(W_R(z'(a,b),W_R(b)) < z$ in $P$.
    Thus, $z'(a,b) \prec_R b$ implies $z \prec_R b$, and also $z \prec_L b$ by \cref{prop:one-side-implies-the-other}.
    In particular, $z \in B'(a,b)$.
    By the definition of $z'(a,b)$, we conclude that $z = z'(a,b)$, as desired.
\end{proof}

\begin{proposition}\label{prop:smallest-region}
    Let $(a,b) \in I_R$, $b' \in B'(a,b)$, and $b'' \in B''_*(a,b)$.
    Then, $\calR(a,z'(a,b),b'') \subset \calR(a,b',b'')$.
\end{proposition}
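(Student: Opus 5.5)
We plan to use \cref{obs:region_containment}: it suffices to show that the boundary $\gamma$ of $\calR_z=\calR(a,z,b'')$, where $z=z'(a,b)$, lies in $\calR=\calR(a,b',b'')$. Both regions are well defined by \cref{prop:satisfy-items-r}, since $z\in B'(a,b)$. Let $(q_1,m_2,q_3,m_4)$ be the extreme points of $\calR$ and $(\hat q_1,\hat m_2,\hat q_3,\hat m_4)$ those of $\calR_z$. By definition, $z$ is a $\prec_R$-maximal element of $B'(a,b)$, so either $b'=z$ (and there is nothing to prove) or $b'\prec_R z$, or $W_R(z)$ and $W_R(b')$ are nested (one a subpath of the other).

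The boundary of $\calR_z$ consists of four arcs. We treat them in turn.
\begin{enumerate}
\item The arc $\hat q_3[W_R(a,b'')]\hat m_2$ is part of $\gamma_3=W_R(a,b'')$, which is shared with $\calR$. Since $q_3,\hat q_3$ are gcpe's of $W_R(a,b'')$ with $W_R(a,b')$, $W_R(a,z)$, and $\hat m_2=m_2$ (the lcse of $W_R(b'')$, $W_R(a,b'')$ does not depend on $b'$), we only need $q_3\le \hat q_3$, which should follow from the ordering of $W_R(a,z)$ relative to $W_R(a,b')$ via bottom-consistency.
\item Similarly, $\hat q_1[W_R(b'')]m_2$ lies on $\gamma_2$, and it suffices to show $q_1\le\hat q_1$. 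By \cref{prop:sandwich} applied to $W_R(b')$, $W_R(z)$, $W_R(b'')$ (when $b'\prec_R z$), $q_1$ lies on $W_R(z)$, which gives this.
\item For the arc $\hat q_1[W_R(z)]\hat m_4$, we use \cref{prop:m4=z'}, which gives $\hat m_4=z$, so the arc is $\hat q_1[W_R(z)]z$. If $m_4$ does not lie on $W_R(z)$, then \cref{prop:properties-regions}.\ref{prop:properties-regions:b-inside} (with $b'\prec_R z\prec_R b''$) yields $q_1[W_R(z)]z\subset\calR$. If $m_4$ lies on $W_R(z)$, we argue that then $m_4\in B'(a,b)$ with $W_R(m_4)$ a prefix of $W_R(z)$, so minimality of $z$ in the chain $b_1',\dots,b_m'$ forces $z=m_4$, which is on $\partial\calR$. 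In the nested case we argue directly that $z$ is the lowest such element, and hence $W_R(z)$ is a prefix of $W_R(b')$ ending at some point that equals $m_4$, or else we get a contradiction.
\item For the arc $\hat q_3[W_R(a,z)]z$, it consists of elements of $A$ except $z$ (\cref{prop:tuple-is-valid}.\ref{prop:tuple-is-valid:inA}). It starts at $\hat q_3\in\gamma_3\subset\calR$ and ends at $z\in\calR$. We show it cannot leave $\calR$: it cannot cross $\gamma_1',\gamma_2'$, which consist of elements of $B$ below $z$ (crossing would give a witnessing path contradicting \cref{claim:diagram:extreme_paths_exist}), nor can it cross $W_R(a,b')$ to the right, by the rightmost property of $W_R(a,b')$ together with \cref{prop:properties-regions}.\ref{prop:properties-regions:p-inside}. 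A Darboux/\cref{obs:curves-region} argument then places it inside $\calR$.
\end{enumerate}

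The main obstacle is the fourth arc, $W_R(a,z)$, together with the degenerate case where $m_4$ lies on $W_R(z)$. There we must exclude that $W_R(a,z)$ exits through $\gamma_4'=q_3[W_R(a,b')]m_4$ and re-enters. We would first try showing that $W_R(a,z)$ is left of $W_R(a,b')$, since $z$ is left of, or below, $W_R(b')$ on the relevant horizontal line and the two paths are bottom-consistent (\cref{prop:diagram:for_consistent_one_distinction_is_enough}). Combined with $W_R(a,z)$ being right of $W_R(z)$ and hence of $W_R(b')$, this squeezes the arc into $\calR$.
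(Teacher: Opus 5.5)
Your proposal is correct in substance and has the same skeleton as the paper: reduce to $\partial\calR(a,z,b'')\subset\calR$ via \cref{obs:region_containment} and treat the four arcs. Your handling of the arcs on $W_R(z)$ and $W_R(b'')$ is essentially the paper's. The paper splits on $m_4\prec_R z$ versus $z\in W_R(m_4)$ rather than on $b'\prec_R z$ versus nesting, and uses \cref{prop:properties-regions}.\ref{prop:properties-regions:b-inside} in the same way.

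The real difference is in the arcs on $W_R(a,z)$ and $W_R(a,b'')$. The paper treats $\hat q_3[W_R(a,z)]z$ first, by contradiction:
\begin{itemize}
\item it shows $W_R(a,z)\cap W_R(b'')=\emptyset$ and takes the highest edge of the arc outside $\calR$;
\item by bottom-consistency, the upper endpoint of that edge must lie on $W_R(b')$;
\item it refutes this using that $(a,b)$ is a right pair, a Darboux argument on the line through $b$, and \cref{claim:extreme_subpaths}.
\end{itemize}
Only afterwards does it get $q_3\le\hat q_3$, from $\hat q_3\in\calR$ and \cref{prop:properties-regions}.\ref{prop:properties-regions:a-outside}.

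You reverse the order. You first show $W_R(a,z)$ is left of $W_R(a,b')$, then get $q_3\le\hat q_3$ from \cref{prop:sandwich} applied to $W_R(a,b'')$, $W_R(a,z)$, $W_R(a,b')$, and then squeeze the arc pointwise between the boundary curves. This global argument never uses the right-pair property. The paper's local re-entry argument, by contrast, is a template it reuses almost verbatim in \cref{prop:sac-region-containment}.

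Several of your stated justifications are wrong as written and need replacing.

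\emph{Keeping the arc off $\gamma_1'$.} Extremality of $W_R(z)$ does not keep the arc off $\gamma_1'=q_1[W_R(b')]m_4$. When $b'\prec_R z$, a spliced path $x_0[W_R(b')]u[W_R(a,z)]z$ lies weakly left of $W_R(z)$, so no contradiction arises. The correct reason is the one in your last paragraph. On every horizontal line meeting both $W_R(b')$ and $W_R(a,z)$, the three points appear (weakly) in the order $W_R(b')$, $W_R(z)$, $W_R(a,z)$. This follows from $b'\prec_R z$ (or nesting) and \cref{prop:tuple-is-valid}.\ref{prop:tuple-is-valid:left} for $\calR(a,z,b'')$. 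It is legitimate despite the non-transitivity of ``left of'', because every such line lies at a height where $W_R(z)$ exists.

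\emph{The curve in your last paragraph.} The relevant curve is $W_R(a,b')$, not $W_R(b')$: if $b'\prec_R z$, then $z$ is right of $W_R(b')$. What you need is that $z$ is strictly left of $W_R(a,b')$ on its line. This holds because:
\begin{itemize}
\item $z\in\Int\calR$ by \cref{prop:properties-regions}.\ref{prop:properties-regions:b-inside};
\item $z\prec_\uparrow m_4$ by \cref{prop:properties-regions}.\ref{prop:properties-regions:b-lower-m4};
\item $z$ is not right of $W_R(a,b')$ by \cref{prop:properties-regions}.\ref{prop:properties-regions:p-inside};
\item $z\notin a[W_R(a,b')]m_4$ by \cref{prop:tuple-is-valid}.\ref{prop:tuple-is-valid:inA}.
\end{itemize}
Then bottom-consistency and \cref{prop:diagram:for_consistent_one_distinction_is_enough} give $W_R(a,z)$ left of $W_R(a,b')$. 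Your alternative in the fourth item, via rightmostness of $W_R(a,b')$, also works.

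\emph{The gap between $\gamma_2'$ and $\gamma_3'$.} Not crossing $\gamma_2'$ does not prevent the arc from entering the gap between $\gamma_2'$ and $\gamma_3'$, which lies outside $\calR$. You need $W_R(a,z)$ right of $W_R(a,b'')$, again from \cref{prop:tuple-is-valid}.\ref{prop:tuple-is-valid:left} for $\calR(a,z,b'')$. Also, $\gamma_3\not\subset\calR$, so $\hat q_3\in\calR$ holds only because of your first item.

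\emph{The nested case.} Here $z$ may lie strictly between $q_1$ and $m_4$ on $W_R(b')$, and this is not a contradiction. It is harmless, since then $\hat q_1[W_R(z)]z\subset\gamma_1'$.
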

\begin{proof}
    For brevity, we denote $z' = z'(a,b)$.
    Let $\calR = \calR(a,b',b'')$ with the extreme points $(q_1,m_2,q_3,m_4)$ and $\calR' = \calR(a,z',b'')$ with the extreme points $(q_1',m_2',q_3',m_4')$.
    We will prove that $\partial\calR' \subset \calR$, which by \cref{obs:region_containment} will complete the proof.

    Since $b' \prec_R b$, $q_1 < m_4$ (by \ref{prop:tuple-is-valid:qs-ms}), and $W_R(m_4) = x_0[W_R(b')]b'$, we have $m_4 \in B'(a,b)$.
    It follows that either $m_4 \prec_R z'$ or $z'$ lies in $W_R(m_4)$.
    We claim that $q_1[W_R(z')]z' \subset \calR$.
    If $z'$ lies in $W_R(m_4)$, then we also have $q_1 < z'$ as otherwise $a \leq z' \leq q_1 \leq b$ in $P$, which is a contradiction.
    It follows that $q_1[W_R(z')]z' \subset \calR$, as claimed.
    If $m_4 \prec_R z'$, then $m_4 \parallel z'$ in $P$, hence, by \cref{prop:properties-regions}.\ref{prop:properties-regions:b-inside}, $q_1[W_R(z')]z' \subset \calR$, as claimed.

    Since $q_1$ lies in both $W_R(z')$ and $W_R(b'')$, $q_1 \leq q_1'$ in $P$.
    By \cref{prop:properties-regions}.\ref{prop:properties-regions:b''}, $q_1'[W_R(b'')]b'' \subset \calR$.

    Next, we show that $q_3'[W_R(a,z')]z' \subset \calR$.
    First, we argue that $W_R(a,z')$ is disjoint from $W_R(b'')$.
    Let $q = \gcpe(W_R(b),W_R(b'')$.
    By \Cref{prop:properties-regions}.\ref{prop:properties-regions:b-inside}, $q_1 \leq q$ in $P$.
    Suppose that $W_R(a,z')$ intersects $W_R(b'')$, say in an element $u$.
    If $u \leq q$ in $P$, then $a \leq b$ in $P$, which is a contradiction.
    Thus, $q < u$ in $P$.
    However, since $z' \prec_R b \prec_R b''$, the path $x_0[W_R(b'')]u[W_R(a,z')]z'$ is a witnessing path from $x_0$ to $z'$ in $P$ that contains a vertex right of $W_R(z')$, which contradicts \cref{claim:diagram:extreme_paths_exist}.
    It follows that $W_R(a,z')$ is indeed disjoint from $W_R(b'')$.
    
    Suppose to the contrary that $q_3'[W_R(a,z')]z'$ is not contained in $\calR$.
    Recall that $z' \in \calR$.
    Let $e$ be the highest edge of $q_3'[W_R(a,z')]z'$ not in $\calR$ with the higher endpoint $u$.
    By the previous paragraph, $u$ does not lie in $W_R(b'')$.
    By \cref{claim:diagram:extremal_paths_are_consistent}, if $u$ is an element of $W$ with $W \in \{W_R(a,b'), W_R(a,b'')\}$, then $a[W]u = a[W_R(a,z')]u$, hence, $q_3'$ is an element of $q_3[W_R(a,b'')]b''$. 
    This contradicts the definition of $e$.

    %\later{The next paragraph is repeated in the proof of \cref{prop:sac-region-containment}. However, there is no clear abstraction.}
    We conclude that $u$ lies in $W_R(b')$.
    Note that $q_1 \prec_\uparrow u$.
    There exists a point $p$ in $e$ (take one very close to $u$) such that $p$ is not in $\calR$ and $p$ is left of $W_R(b')$ by \cref{obs:curves-region}.
    Let $\ell$ be the horizontal line containing $p$.
    If $p \preccurlyeq_\uparrow b$, then we have an immediate contradiction with $(a,b)$ being a right pair as $b' \prec_R b$.
    Thus, we may assume that $b \prec_\uparrow p$.
    Let $\ell'$ be the horizontal line containing $b$.
    The point of $W_R(b')$ in $\ell'$ is left of $b$ (as $b' \prec_R b$ and $b \prec_\uparrow m_4 \prec_\uparrow b'$) and $b$ is left of the point of $W_R(a,z')$ in $\ell'$ (as $(a,b)$ is a right pair).
    The paths $W_R(b')$ and $W_R(a,z')$ intersect $\ell$ and $\ell'$ in the opposite order, thus by the Darboux property, they intersect in a point $p'$ with $p' \prec_\uparrow p$.
    However, by \cref{claim:extreme_subpaths}, $p'[W_R(a,z')]u = p'[W_R(b')]u$, which contradicts the definition of $p$.
    The contradiction gives that $q_3'[W_R(a,z')]z'$ is contained in $\calR$, as desired.
    
    By the definition, $q_3'$ lies in $W_R(a,b'')$.
    \cref{prop:properties-regions}.\ref{prop:properties-regions:a-outside} implies that $a[W_R(a,b'')]q_3 \setminus \{q_3\}$ is not in $\calR$.
    Therefore, $q_3'$ lies in $q_3[W_R(a,b'')]b''$.
    In particular, $q_3'[W_R(a,b'')]m_2' \subset \calR$ by \cref{prop:properties-regions}.\ref{prop:properties-regions:b''}.
    This completes the proof that $\partial\calR' \subset \calR$ and by \cref{obs:region_containment}, we obtain $\calR' \subset \calR$, as desired.
\end{proof}

\subsection{Strict alternating cycles}\label{ssec:sacs}

In this subsection, we study the structure of strict alternating cycles in $I_R$.

\begin{proposition}\label{prop:force-inB*}
    Let $(a,b) \in I_R$ and $(c,d) \in I_R'$ such that $((a,b),(c,d))$ is a strict alterntaing cycle in $P$ with $d \prec_R b$.
    Let $b'' \in B_*''(a,b)$. Then
    \begin{enumerate}
        \item $b,b'' \in B_*''(c,d)$,\label{prop:force-inB*:bb'}
        \item $\gcpe(W_R(b''),W_R(c,b'')) \leq \gcpe(W_R(b),W_R(b''))$ in $P$, \label{prop:force-inB*:cleq}
        \item $\gcpe(W_R(d),W_R(b'')) < \gcpe(W_R(b),W_R(b''))$ in $P$.\label{prop:force-inB*:gcpe}
    \end{enumerate}
\end{proposition}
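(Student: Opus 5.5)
The plan is to check the definitions of $B_*''(c,d)$ one condition at a time. The real work is a topological region argument built from the two comparabilities $a\le d$ and $c\le b$ of the strict alternating cycle. Throughout I will use that $a\le d$, $c\le b$, $a\parallel b$, $c\parallel d$, and that both pairs are right pairs lying in $I_3$. In particular they satisfy \ref{items:diagram_pairs:a_lower_b}--\ref{items:diagram_pairs:a-d_paths} and \ref{items:diagram_pairs:a_in_A}, and \Cref{prop:a-higher-x0}-type reasoning applies.

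\emph{Step 1: $b\in B''(c,d)$ and $b''\in B''(c,d)$.} We have $c\le b$ and $d\prec_R b$. Since $b\parallel d$, \cref{claim:diagram:incomparable_b_are_comparable} makes $b,d$ comparable in $\preccurlyeq_L$. If $b\prec_L d$, the second part of \cref{prop:one-side-implies-the-other}, applied to $(c,d)\in I_1$ and $b$, would give $b\prec_R d$, a contradiction; hence $d\prec_L b$. For $b''$, note that $c\le b''$: this should follow by showing $W_R(a,b'')$ is crossed by a path from $c$ to $b$, or more likely by the region argument of Step 2, which forces a witnessing path from $c$ to $b''$. Then $d\prec_R b\prec_R b''$ and $d\prec_L b\prec_L b''$ by transitivity (\cref{claim:diagram:L_and_R_are_posets}).

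\emph{Step 2: the ``right-approach'' conditions, i.e.\ $W_R(c,b)$ is right of $W_R(b)$ and $W_R(c,b'')$ is right of $W_R(b'')$.} Since $(a,b)$ is a right pair, $W_R(a,d)$ is right of $W_R(b)$. On the other hand $d\prec_R b$, so by \cref{obs:where-curves-end} the element $d$ is higher than $b$ and $b$ is left of $W_R(a,d)$. Now consider the closed curve formed by $W_R(d)$, $W_R(a,d)$ and a piece of $W_R(b)$ (joined at the appropriate $\gcpe$/$\lcse$ points). A witnessing path from $c$ cannot meet $W_R(d)$, since that would give $c\le d$. It also cannot meet $W_R(a,d)$ in an element below $b$'s level, since that would give $a\le b$. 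So the path from $c$ to $b$ is trapped on the side of $W_R(b)$ facing $W_R(a,d)$, which is the right side. By \cref{prop:diagram:for_consistent_one_distinction_is_enough} and top-consistency (\cref{claim:diagram:extremal_paths_are_consistent}), $W_R(c,b)$ is then right of $W_R(b)$. For $b''$, I would concatenate $W_R(c,b)$ with the part of $W_R(b'')$ above $\gcpe(W_R(b),W_R(b''))$, or else argue directly with the same trapping region. I expect this step, choosing the right region and ruling out the escape through the top part of $W_R(a,d)$, to be the main obstacle.

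\emph{Step 3: items (ii) and (iii).} For (ii), $W_R(c,b'')$ approaches $W_R(b'')$ from the right. Their $\lcse$ lies on $W_R(b'')$, and everything below it on $W_R(c,b'')$ is in $A$ by \cref{prop:exposed-paths}. I would compare this with the fork $q=\gcpe(W_R(b),W_R(b''))$, using that a rightmost path from $c$ to $b''$ passing through $b$'s branch above $q$ would contradict $W_R(c,b)$ being right of $W_R(b)$. For (iii), the chain $d\prec_R b\prec_R b''$ and \cref{prop:sandwich} give that $\gcpe(W_R(d),W_R(b''))$ lies on $W_R(b)$, hence is $\le q$. Equality is excluded because then $W_R(d)$ would share the branch up to $q$, and $a\le d$ together with the right pair property of $(a,b)$ would produce a path from $a$ to $b$.
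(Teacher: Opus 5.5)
\textbf{There is a genuine gap: your Step~2, together with the unproved inequality $c\le b''$ in Step~1.}

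\emph{The trapping argument does not work.} The closed curve you describe does not exist. Since $a\parallel b$, the paths $W_R(b)$ and $W_R(a,d)$ are disjoint, so $W_R(d)$, $W_R(a,d)$ and a piece of $W_R(b)$ do not close up. More seriously, your two facts do not exclude a bad configuration. Those facts are that a path from $c$ avoids $W_R(d)$, and that it avoids $W_R(a,d)$ (touching it gives $c\le d$). Yet $c$ may still lie strictly between $W_R(d)$ and $W_R(b)$, with its path to $b$ arriving from the left. In that case $W_R(c,b)$ is left of $W_R(b)$, which is what you are trying to exclude. What rules this out is the hypothesis $B'(c,d)\neq\emptyset$ (from $(c,d)\in I_R'$), and your argument never uses it. Likewise, $c\le b''$ is left open. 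Your first suggestion for it is impossible: a path from $c$ to $b$ meeting $W_R(a,b'')$ would give $a\le b$.

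\emph{The missing idea is the region $\calR=\calR(a,d,b'')$.}
\begin{itemize}
\item The region is defined because $d\in B'(a,b)$ (apply \cref{prop:one-side-implies-the-other} to the pair $(a,b)$ and $d$; your application to $(c,d)$ and $b$ yields nothing) and by \cref{prop:satisfy-items-r}.
\item By \cref{prop:properties-regions}.\ref{prop:properties-regions:b-inside}, $b\in\calR$. By \cref{prop:properties-regions}.\ref{prop:properties-regions:c-outside}, applied with some $d'\in B'(c,d)$, $c\notin\calR$.
\item Hence $W_R(c,b)$ meets $\partial\calR$, and only the $W_R(b'')$-side is possible: the $W_R(d)$-side would give $c\le d$, and the sides through $a$ would give $a\le b$.
\item Let $u$ be such a crossing element. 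Then $u\le\gcpe(W_R(b),W_R(b''))$, since otherwise $x_0[W_R(b'')]u[W_R(c,b)]b$ contains an element right of $W_R(b)$. This gives $c\le b''$. Your idea of joining $W_R(c,b)$ to $W_R(b'')$ above the fork is backwards.
\item For (iii): $u$ and $\gcpe(W_R(d),W_R(b''))$ both lie on $W_R(b'')$ and $c\parallel d$, so $\gcpe(W_R(d),W_R(b''))<u\le\gcpe(W_R(b),W_R(b''))$. Your strictness argument via the right-pair property of $(a,b)$ fails. Equality is compatible with all conditions on $a,b,d,b''$; it is $c$ that forces strictness.
\item The right-of conditions in (i) then follow by showing that if $x_0\prec_\uparrow c$ and $c$ were left of $W_R(b'')$, then $c\in\calR$.
\end{itemize}

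\emph{A warning about (ii).} As written, (ii) is mistyped: $W_R(b'')$ and $W_R(c,b'')$ have no common lowest point, so $\gcpe$ is undefined. What the argument proves is the statement about $u$; in particular $\lcse(W_R(b),W_R(c,b))\le\gcpe(W_R(b),W_R(b''))$. The $\lcse(W_R(b''),W_R(c,b''))$ reading you adopt fails in general. Let $m=\lcse(W_R(b''),W_R(a,b''))$. I checked a small explicit diagram satisfying all the hypotheses in which $W_R(c,b'')$ passes through an element of $a[W_R(a,b'')]m\setminus\{m\}$. Then $\lcse(W_R(b''),W_R(c,b''))=m$. This lies strictly above $\gcpe(W_R(b),W_R(b''))$: both lie on $W_R(b'')$, and $m\le\gcpe(W_R(b),W_R(b''))$ would give $a\le m\le b$, contradicting $a\parallel b$.
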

\begin{proof}
    Since $(a,b)\in I_1$, $a\leq d$ in $P$, and $d \prec_R b$, 
    by \cref{prop:one-side-implies-the-other}, 
    we also have $d \prec_L b$.
    So $d \in B'(a,b)$, and hence, by \cref{prop:satisfy-items-r}, $a$, $d$, and $b''$ satisfy \ref{item-regions-first}--\ref{item-regions-last}.
    Let $\calR = \calR(a,d,b'')$ with the extreme points $(q_1,m_2,q_3,m_4)$.

    By \Cref{prop:properties-regions}.\ref{prop:properties-regions:b-inside}, we have $b \in \calR$.
    Since $(c,d) \in I_R'$, there exists $d' \in B'(c,d)$.
    Thus by \Cref{prop:properties-regions}.\ref{prop:properties-regions:c-outside}, $c \notin \calR$. 
    Since $c\not\in\calR$ and $b\in\calR$, $W_R(c,b)$ intersects $\partial \calR$.
    Since $a \parallel b$ and $c \parallel d$ in $P$, $W_R(c,b)$ must intersect $W_R(b'')$.
    We conclude that $c \leq b''$ in $P$. 
    Let $q = \gcpe(W_R(b),W_R(b''))$ and let $u$ be an element in the intersection of $W_R(c,b)$ and $W_R(b'')$.
    We claim that $u \leq q$ in $P$.
    Suppose to the contrary that $q < u$ in $P$.
    Since $b \prec_R b''$, $u$ is right of $W_R(b)$.
    The path $x_0[W_R(b'')]u[W_R(c,b)]b$ contains an element right of $W_R(b)$, which contradicts \cref{claim:diagram:extreme_paths_exist}.
    Therefore, $u \leq q$ in $P$, which by \cref{claim:diagram:extremal_paths_are_consistent} gives \ref{prop:force-inB*:cleq}.
    
    Since $c \parallel d$ in $P$ and $d \prec_R b$, we also have $\gcpe(W_R(d),W_R(b'')) < q \leq \gcpe(W_R(b),W_R(b''))$ in $P$, which gives \ref{prop:force-inB*:gcpe}.

    Recall that $d \prec_R b$, $d \prec_L b$, $b \prec_R b''$, and $b \prec_L b''$.
    It follows that $d \prec_R b''$ and $d \prec_L b''$.
    We also have $c \leq b$ and $c\leq b''$ in $P$.
    To conclude, it suffices to prove that $W_R(c,b)$ is right of $W_R(b)$ and $W_R(c,b'')$ is right of $W_R(b'')$.
    
    Suppose that $c \preccurlyeq_\uparrow x_0$. 
    Then since $(c,d)$ is a right pair, every witnessing path from $c$ to an element of $B$ is right of $x_0$, and so, 
    right of $W_R(b)$ and $W_R(b'')$. 
    We conclude that $W_R(c,b)$ is right of $W_R(b)$ and also that $W_R(c,b'')$ is right of $W_R(b'')$. 
    Thus, $b,b''\in B_*''(c,d)$, as desired.

    Finally, suppose that $x_0\prec_\uparrow c$. 
    All we need to prove is that $c$ is right of $W_R(b)$ and $W_R(b'')$.    
    Since $b \prec_R b''$, if $c$ is right of $W_R(b'')$, then $c$ is right of $W_R(b)$.
    Thus, we suppose to the contrary that $c$ is left of $W_R(b'')$ and the contradiction will complete the proof.
    Since the pair $(c,d)$ is right, $c$ is right of $W_R(d)$.
    Let $q = \gcpe(W_R(d),W_R(b))$.
    By \cref{prop:sandwich}, $q_1 \leq q$ in $P$, and so, $q_1 \preccurlyeq_\uparrow q$.
    Since $W_R(d)$ and $W_R(b)$ have distinct intersections with the horizontal line containing $c$, we also have $q \prec_\uparrow c$.
    Also, $q_1\preceq_\uparrow q\prec_\uparrow c \prec_\uparrow b \prec_\uparrow m_4$ (the last relation follows from $b \in \calR$ and \Cref{prop:properties-regions}.\ref{prop:properties-regions:b-lower-m4}).
    It follows that $c$ is right of $q_1[W_R(d)]m_4$.
    We assumed that $c$ is left of $W_R(b'')$.
    If $q_3[W_R(a,b'')]m_2$ intersects the horizontal line containing $c$, then $W_R(b'')$ also intersects this line.
    Thus, if $c$ is right of $q_3[W_R(a,b'')]m_2$, then $c$ is right of $W_R(b'')$ (by \ref{prop:tuple-is-valid:left}), which is a contradiction.
    Finally, suppose that $c$ is right of $W_R(a,d)$.
    By \Cref{prop:properties-regions}.\ref{prop:properties-regions:p-inside}, $b$ is not right of $W_R(a,d)$.
    Since $b \prec_\uparrow m_4$, $b$ is left of $W_R(a,d)$.
    Thus, by the Darboux property, every witnessing path from $c$ to $b$ in $P$ intersects  $W_R(a,d)$.
    This contradicts $a \parallel b$ in $P$.
    Summarizing, $c$ is right of $q_1[W_R(d)]m_4$ and is not right of $W_R(b'')$, $W_R(a,b'')$, and $W_R(a,d)$.
    By \cref{obs:curves-region}, this implies that $c \in \calR$, which is a contradiction.
\end{proof}

\begin{proposition}\label{prop:cycle-gives-an-edge}
    Let $((a_1,b_1),\dots,(a_k,b_k))$ be a strict alternating cycle in $P$ contained in $I_R$.
    Then, there exist $i,j \in [k]$ with $i \neq j$ and $d \in B$ such that $(a_j,d) \in I_R$, $b_j \prec_R d \prec_R b_i$, and $((a_i,b_i),(a_j,d))$ is a strict alternating cycle in $P$. 
    %\later{Maybe $f$ instead of $d$?} 
\end{proposition}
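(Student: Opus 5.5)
The plan is to use \cref{prop:strict-cycle-ordering-bs} to linearly order $b_1,\dots,b_k$ by $\prec_R$. Since $I_R\subseteq I_2$, any two of them are comparable in both $\prec_L$ and $\prec_R$, and the two orders agree. Choose $i$ so that $b_i$ is the $\prec_R$-maximum, and renumber so that $i=1$. Let $j=k$, so that $a_k\le b_1$ in $P$. In the length-two cycle $((a_1,b_1),(a_k,d))$, the required relations are $a_1\le d$ and $a_k\le b_1$, together with $a_1\parallel b_1$ and $a_k\parallel d$. The second relation already holds, so the task is to find $d$ with $a_1\le d$, $a_k\parallel d$, $(a_k,d)\in I_R$, and $b_k\prec_R d\prec_R b_1$.

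The natural candidate is $d=b_2$, since $a_1\le b_2$ holds and $b_2\prec_R b_1$ by maximality. If $k=2$ this choice gives $(a_2,b_2)$ itself; but then $b_2=b_k$, which violates the strict inequality $b_k\prec_R d$. So for $k=2$ I would instead choose $i$ as the $\prec_R$-larger index of the two and look for a different $d$. In general, I will instead walk along the cycle. Since $b_1$ is maximal, $b_2\prec_R b_1$ and $b_k\prec_R b_1$. If $b_k\prec_R b_2$, then $d=b_2$ satisfies the order constraints. It remains to show that $a_k\parallel b_2$ (I expect a planar argument here) and that $(a_k,b_2)\in I_R$.

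The heart of the argument, and the step I expect to be hardest, is membership of $(a_k,d)$ in $I_R$. For this I would use \cref{prop:force-inB*}. The pair $((a_1,b_1),(a_k,\cdot))$ has the shape of that proposition provided we know $(a_k,d)\in I_R'$. The proposition then yields $b_1\in B_*''(a_k,d)$, so $B_*''\neq\emptyset$ and hence $(a_k,d)\in I_R$. Membership in $\closure{I}$ holds automatically, because $(a_k,b_k)$ and $(a_{j'},d)$ lie in $I_R$ for the relevant $j'$. Properties \ref{items:diagram_pairs:b'_vertical}, \ref{items:diagram_pairs:b''_vertical}, \ref{items:diagram_pairs:b'} and \ref{items:diagram_pairs:b''} for $(a_k,d)$ come from witnesses: $b_k$ lies below $d$ in both orders, and $b_1$ lies above $d$ in both orders (using \cref{prop:one-side-implies-the-other}). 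The right-pair property \ref{items:diagram_pairs:right} holds because it depends only on $a_k$ relative to $W_R(d)$, and the paths from $a_k$ are already right of $W_R(b_k)$; I need them to be right of $W_R(d)$ as well, and I would prove this using the region $\calR(a_k,b_k',b'')$ and \cref{prop:properties-regions}.

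In the remaining case, where the $b$'s do not sit favourably (for instance $b_2\prec_R b_k$), I would choose $j$ differently. I would take the cycle index $j$ such that $b_j\prec_R b_{j+1}$ with $b_{j+1}$ closest below the maximum, and set $i=j+1$ and $d=b_{i+1}$, or alternatively argue by a minimal counterexample over rotations of the cycle. I am not sure which choice works cleanly, and this combinatorial selection, together with verifying the right-pair condition via regions, is the main obstacle.
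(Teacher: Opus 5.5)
Your reduction matches the paper's: $b_1$ is the $\prec_R$-maximum, $i=1$, $j=k$, $d=b_2$, and $(a_k,b_2)\in I_R$ comes from \cref{prop:force-inB*}. But the step that carries the proof is left open. You never show $b_k\prec_R b_2$, and your fallback for the case $b_2\prec_R b_k$ (re-choosing $j$, or a minimal counterexample over rotations) is not an argument.

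The missing idea is that this case cannot occur. The paper proves it by walking around the cycle inside one fixed region. Since $b_2\prec_R b_1$, \cref{prop:strict-cycle-ordering-bs} gives $b_2\in B'(a_1,b_1)$; fix $b_1''\in B_*''(a_1,b_1)$ and let $\calR=\calR(a_1,b_2,b_1'')$. Then alternate for $i=2,3,\dots$:
\begin{enumerate}
\item If $b_i\preccurlyeq_R b_2$, then $a_i\notin\calR$. This is \cref{prop:properties-regions}.\ref{prop:properties-regions:c-outside} applied with $c=a_i$ and $d=b_i'\in B'(a_i,b_i)$, using $a_i\parallel b_2$.
\item If $a_i\notin\calR$ and $i<k$, then $b_{i+1}\prec_R b_2$. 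Otherwise $b_2\prec_R b_{i+1}\prec_R b_1$ and $a_1\parallel b_{i+1}$ put $b_{i+1}$ in $\calR$ by \cref{prop:properties-regions}.\ref{prop:properties-regions:b-inside}. So a witnessing path from $a_i$ to $b_{i+1}$ must meet $\partial\calR$. Strictness ($a_1\parallel b_{i+1}$, $a_i\parallel b_2$, $a_i\parallel b_1$) forces the meeting point onto $W_R(b_1'')$, strictly above $\gcpe(W_R(b_1),W_R(b_1''))$. This yields a witnessing path from $x_0$ to $b_{i+1}$ with an element right of $W_R(b_{i+1})$, contradicting \cref{claim:diagram:extreme_paths_exist}.
\end{enumerate}
Starting from $b_2\preccurlyeq_R b_2$, this gives $b_k\prec_R b_2$ whenever $k\ge 3$. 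For $k=2$, do not look for another $d$: take $d=b_2$. The paper's argument only yields $b_j\preccurlyeq_R d\prec_R b_i$, with equality on the left exactly when $k=2$. This non-strict form is all that \cref{cor:sacs} uses, via \ref{itemH:ordering}.

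Your verification of $(a_k,b_2)\in I_R$ also uses wrong witnesses. Since $a_k\parallel b_k$, the element $b_k$ cannot witness \ref{items:diagram_pairs:b'_vertical} or \ref{items:diagram_pairs:b'}. Also, \ref{items:diagram_pairs:b'_vertical} and \ref{items:diagram_pairs:b''_vertical} concern the vertical order $\prec_\uparrow$, which you do not address. In the paper the witnesses are as follows.
\begin{itemize}
\item $b_1$ witnesses \ref{items:diagram_pairs:b'_vertical} and \ref{items:diagram_pairs:b''}. The former needs $b_1\prec_\uparrow b_2$, which follows from $b_1\in\calR$ and \cref{prop:properties-regions}.\ref{prop:properties-regions:b-lower-m4}. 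So $b_1$ lies vertically below $b_2$ and cannot serve for \ref{items:diagram_pairs:b''_vertical}.
\item An element $b_k'\in B'(a_k,b_k)$ witnesses \ref{items:diagram_pairs:b'} and \ref{items:diagram_pairs:b''_vertical}. The latter needs a separate Darboux argument showing $b_2\prec_\uparrow b_k'$.
\end{itemize}
The right-pair property needs no new region. Once $(a_k,b_2)\in I_3$, \cref{claim:diagram:left_right_parititon} says it is left or right. It is not left, because $a_k\le b_1$ and $b_1$ is right of $W_R(b_2)$. Finally, $a_k\parallel b_2$ follows immediately from strictness of the cycle; no planar argument is needed.
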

\begin{proof}
    Without loss of generality, assume that $b_i \preccurlyeq_R b_1$ for every $i \in [k]$.
    By \cref{prop:strict-cycle-ordering-bs}, $b_2 \prec_R b_1$ implies $b_2 \prec_L b_1$.
    In particular, $b_2 \in B'(a_1,b_1)$.
    Property \ref{items:diagram_pairs:B''_*} states that $B''_*(a_1, b_1) \neq \emptyset$, and so we, fix \(b''_1 \in B''_*(a_1, b_1)\).
    By \cref{prop:satisfy-items-r}, $a_1$, $b_2$, and $b''_1$ satisfy \ref{item-regions-first}--\ref{item-regions-last}.
    Thus, we may define a region $\calR = \calR(a_1,b_2,b''_1)$ with the extreme points $(q_1,m_2,q_3,m_4)$.

    We prove by induction that for every $i \in [k] \setminus\{1\}$, $a_i \notin \calR$ and for every $i \in [k] \setminus [2]$, $b_i \prec_R b_2$.
    We will prove that (1) for each $i\in\set{2,\ldots,k}$ $b_i \preccurlyeq_R b_2$ implies $a_i \notin \calR$ and that 
    (2) for each $i\in\set{2,\ldots,k-1}$ $a_i \notin \calR$ implies $b_{i+1} \prec_R b_2$.
    This suffices to complete the inductive proof.
    For the proof of (1), assume that $b_i \preceq_R b_2$.
    Let $b_i' \in B'(a_i,b_i)$.
    Note that $b_i' \prec_R b_i \preccurlyeq_R b_2$ and $a_i \parallel b_2$ in $P$.
    By \Cref{prop:properties-regions}.\ref{prop:properties-regions:c-outside} applied with $c = a_i$ and $d = b_i'$, we obtain that $a_i \notin \calR$, as desired.
    
    Next, we prove (2), for which we assume that $a_i \notin \calR$.
    By the choice of $b_1$, either $b_{i+1} \prec_R b_2$ or $b_2 \prec_R b_{i+1} \prec_R b_1$.
    The former case is the desired outcome, thus, we suppose to the contrary that the latter holds.
    In particular, $b_2 \prec_R b_{i+1} \prec_R b''_1$.
    Since the cycle is strict, we also have $a_1 \parallel b_{i+1}$ in $P$.
    Therefore, we may apply \Cref{prop:properties-regions}.\ref{prop:properties-regions:b-inside} to obtain that $b_{i+1} \in \calR$.
    Now, we have $a_i \notin \calR$ and $b_{i+1} \in \calR$.
    Let $W$ be a witnessing path from $a_i$ to $b_{i+1}$ in $P$.
    It intersects $\partial \calR$.
    Since $a_1 \parallel b_{i+1}$ and $a_i \parallel b_2$ in $P$, $W$ intersects specifically $q_1[W_R(b''_1)]m_2$, say in an element $u$.
    Let $q = \gcpe(W_R(b_1),W_R(b''_1))$.
    Since $a_i \parallel b_1$ in $P$, $W$ intersects $q[W_R(b''_1)]m_2$ and $q < u$ in $P$.
    Since $b_{i+1} \prec_R b''_1$, $u$ is right of $W_R(b_{i+1})$, and in particular, the path $x_0[W_R(b''_1)]u[W]b_{i+1}$ 
    contains a vertex right of $W_R(b_{i+1})$, which contradicts \cref{claim:diagram:extreme_paths_exist}.
    This implies that indeed, $b_{i+1} \prec_R b_2$ and completes the proof of (2), and the inductive claim.

    We claim that $i = 1$, $j = k$, and $d = b_2$ satisfy the assertion in the statement.
    First, note that $b_k \prec_R b_2 \prec_R b_1$ and $((a_1,b_1),(a_k,b_2))$ is a strict alternating cycle in $P$.
    It suffices to show that $(a_k,b_2) \in I_R$.
    Clearly, $(a_k,b_2) \in \closure{I}$.
    We must prove that $(a_k,b_2)$ satisfies \ref{items:diagram_pairs:b'_vertical}, \ref{items:diagram_pairs:b''_vertical}, \ref{items:diagram_pairs:b'}, \ref{items:diagram_pairs:b''}, \ref{items:diagram_pairs:a_bottom}, \ref{items:diagram_pairs:right}, and \ref{items:diagram_pairs:B''_*}.
    By \ref{items:diagram_pairs:b'} for $(a_k,b_k)$, the set $B'(a_k,b_k)$ is nonempty.
    Let $b_k' \in B'(a_k,b_k)$.

    Property \ref{items:diagram_pairs:b'_vertical} states that there exists $q' \in B$ with $a_k \leq q'$ in $P$ and $q' \prec_\uparrow b_2$.
    We set $q' = b_1$.
    It suffices to prove that $b_1 \prec_\uparrow b_2$.
    Since $b_2 \prec_R b_1 \prec_R b''_1$ and $a_1 \parallel b_1$ in $P$, by \cref{prop:properties-regions}.\ref{prop:properties-regions:b-inside}, we have $b_1 \in \calR$.
    In particular, by \cref{prop:properties-regions}.\ref{prop:properties-regions:b-lower-m4}, $b_1 \prec_\uparrow m_4$, and so, $b_1 \prec_\uparrow b_2$, as desired.
    
    Property \ref{items:diagram_pairs:b''_vertical} states that there exists $q'' \in B$ with $a_k \leq q''$ in $P$ and $b_2 \prec_\uparrow q''$.
    We set $q''= b_k'$.
    It suffices to prove that $b_2 \prec_\uparrow b_k'$.
    Suppose to the contrary that $b_k' \preccurlyeq_\uparrow b_2$.
    If $x_0 \preccurlyeq_\uparrow a_k$, then let $\ell$ be the horizontal line containing $a_k$.
    Otherwise, let $\ell$ be the horizontal line containing $x_0$.
    Let $W$ be a witnessing path from $a_k$ to $b_k'$ in $P$ and let $u$ be the point of $W$ in $\ell$.
    We claim that $u$ is right of $W_R(b_2)$.
    If $a_k \prec_\uparrow x_0$, then this follows from the fact that $(a_k,b_k)$ is a right pair (by \ref{items:diagram_pairs:right}).
    Assume that $x_0 \preccurlyeq_\uparrow a_k$.
    Since $b_1 \prec_\uparrow b_2$ and $b_2 \prec_R b_1$, $b_1$ is right of $W_R(b_2)$.
    Thus, since $a_k \parallel b_2$ in $P$, by \cref{obs:where-curves-end} applied to any witnessing path from $a_k$ to $b_1$ in $P$, we obtain that $a_k = u$ is right of $W_R(b_2)$, as desired.
    Next, consider the horizontal line $\ell'$ containing $b_k'$.
    We assumed that $b_k' \preccurlyeq_\uparrow b_2$, thus $b_k'$ is left of the point of $W_R(b_2)$ in $\ell'$.
    The paths $W$ and $W_R(b_2)$ intersect $\ell$ and $\ell'$ in the opposite order, hence, by the Darboux property, they intersect.
    This is a contradiction as $a_k \parallel b_2$ in $P$.
    Thus $(a_k,b_2)$ indeed satisfies \ref{items:diagram_pairs:b''_vertical}.

    Properties \ref{items:diagram_pairs:b'} and \ref{items:diagram_pairs:b''} state that there exist $b',b'' \in B$ with $a_k \leq b'$ and $a \leq b''$ in $P$ such that $b' \prec_L b_2 \prec_L b''$ and $b' \prec_R b_2 \prec_R b''$.
    We claim that $b' = b_k'$ and $b'' = b_1$ witness the above.
    Since $b_k' \in B'(a_k,b_k)$, we have $b_k' \prec_L b_k$ and $b_k' \prec_R b_k$.
    We have already argued that $b_k \prec_R b_2 \prec_R b_1$.
    Thus, by \cref{prop:strict-cycle-ordering-bs}, $b_k \prec_L b_2 \prec_L b_1$.
    It follows that the properties hold.
     
    Property \ref{items:diagram_pairs:a_bottom} states that $a_k$ is not a bottom element, which follows directly from \ref{items:diagram_pairs:a_bottom} for $(a_k,b_k)$.
    Property \ref{items:diagram_pairs:right} states that $(a_k,b_2)$ is a right pair.
    Note that we already have that $(a_k,b_2) \in I_3$.
    Thus, by~\Cref{claim:diagram:left_right_parititon}, $(a_k,b_2)$ is either left or right.
    Since $a_k \leq b_1$ in $P$ and $b_1$ is right of $W_R(b_2)$ by \cref{claim:diagram:left_right_parititon} $(a_k,b_2)$ is a right pair.
    Property \ref{items:diagram_pairs:B''_*} states that $B_*''(a_k,b_2) \neq \emptyset$.
    \cref{prop:force-inB*}.\ref{prop:force-inB*:bb'} applied to $(a,b) = (a_1,b_1)$, $(c,d) = (a_k,b_2)$, and $b'' = b_1''$, implies that $b_1 \in B_*''(a_k,b_2)$ proving that \ref{items:diagram_pairs:B''_*} holds.
\end{proof}

\begin{proposition}\label{prop:sac-region-containment}
    Let $((a,b),(c,d))$ be a strict alternating cycle in $P$ contained in $I_R$ with $d \prec_R b$.
    Let $d' \in B'(c,d)$ and $b'' \in B''_*(a,b)$.
    Then $d \in B'(a,b)$, $b'' \in B''_*(c,d)$, and $\calR(a,d,b'') \subset \calR(c,d',b'')$.
    %\jedrzej{If we need, we can also have $a,d \in \calR(c,d',b'')$.}
\end{proposition}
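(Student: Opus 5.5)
The first two assertions come directly from the earlier results, and the main work is the region containment, which I would reduce to $\partial\calR(a,d,b'')\subset\calR(c,d',b'')$ via \cref{obs:region_containment}.

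\emph{First two assertions.} Since $(a,b)\in I_1$, $a\le d$ in $P$ and $d\prec_R b$, \cref{prop:one-side-implies-the-other} gives $d\prec_L b$, so $d\in B'(a,b)$. Next, $(c,d)\in I_R\subseteq I_R'$, so \cref{prop:force-inB*}.\ref{prop:force-inB*:bb'} gives $b''\in B''_*(c,d)$. Hence, by \cref{prop:satisfy-items-r}, both triples $(a,d,b'')$ and $(c,d',b'')$ satisfy \ref{item-regions-first}--\ref{item-regions-last}, and the regions $\calR'=\calR(a,d,b'')$ and $\calR=\calR(c,d',b'')$ are well defined. Denote their extreme points by $(q_1',m_2',q_3',m_4')$ and $(q_1,m_2,q_3,m_4)$.

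\emph{Boundary pieces lying on $W_R$-paths from $x_0$.}
\begin{itemize}
\item The piece $q_1'[W_R(b'')]m_2'$. By \cref{prop:force-inB*}.\ref{prop:force-inB*:cleq}, $\gcpe(W_R(b''),W_R(c,b''))\le\gcpe(W_R(b),W_R(b''))$, and \cref{prop:sandwich} places $q_1'$ on $W_R(b)$ below this gcpe, so $q_1'\ge$ the relevant point and the piece sits in the part of $W_R(b'')$ bounding $\calR$. I expect this to give $q_1\le q_1'$, so the piece lies in $q_1[W_R(b'')]b''$, which is in $\calR$ by \cref{prop:properties-regions}.\ref{prop:properties-regions:b''} (using that $m_2'$ is at least $m_2$ or lies inside).
\item The piece $q_1'[W_R(d)]m_4'$. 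Here $d'\prec_R d\prec_R b''$, and $m_4\parallel d$ because $c\le m_4$ and $c\parallel d$. So \cref{prop:properties-regions}.\ref{prop:properties-regions:b-inside} gives $q_1[W_R(d)]d\subset\calR$, which contains this piece.
\end{itemize}

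\emph{Boundary pieces on paths from $a$.} These are $q_3'[W_R(a,b'')]m_2'$ and $q_3'[W_R(a,d)]m_4'$, and they are the main obstacle. First I locate $a$. Every $W_R(a,\cdot)$ path starting at $a$ avoids $W_R(d')$, $W_R(c,d')$ and $W_R(c,b'')$, since $a\parallel$ any element below $d$ (note $d'<$ nothing useful; we use $a\parallel b$ and the fact that $c\le$ those paths' elements would give $c\le a$, which is impossible as $c\parallel d$ with $a\le d$ is fine). I would instead argue via \cref{prop:properties-regions}.\ref{prop:properties-regions:c-outside}-type reasoning that $a\in\calR$. Concretely, $a\le d$ with $d\in\Int\calR$, and a witnessing path from $a$ to $d$ cannot cross $W_R(c,d')\cup W_R(c,b'')$ (that would force $c\le a$'s successors only, not a contradiction). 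So the cleaner route mimics the paragraph in the proof of \cref{prop:smallest-region}: take the highest edge $e$ of a path $q_3'[W_R(a,\cdot)]\,\cdot$ that is not in $\calR$. Its upper endpoint $u$ must lie on $\partial\calR$. It cannot lie on $W_R(b'')$ or $W_R(d')$ by extremality of the $W_R$ paths, using \cref{claim:diagram:extreme_paths_exist} as in \cref{prop:smallest-region}. So $u$ lies on $W_R(c,\cdot)$, giving $c\le u\le d$ or $c\le u\le b''$, the first contradicting $c\parallel d$. In the remaining case I use the Darboux argument with the horizontal line through $b$ and the right-pair property of $(c,d)$ to produce a contradiction. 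Handling this case cleanly (the $W_R(c,b'')$ crossing) is the step I expect to be hardest. Once all four pieces lie in $\calR$, \cref{obs:region_containment} concludes.
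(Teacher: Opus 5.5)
Your first two assertions and the two boundary pieces of $\calR'=\calR(a,d,b'')$ lying on $W_R(d)$ and $W_R(b'')$ are handled essentially as in the paper. The detour through \cref{prop:force-inB*} in your first bullet is unnecessary: your second bullet already puts $q_1$ on $W_R(d)$, so $q_1\le\gcpe(W_R(d),W_R(b''))=q_1'$, and \cref{prop:properties-regions}.\ref{prop:properties-regions:b''} gives $q_1[W_R(b'')]b''\subset\calR$.

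The gap is in the two pieces starting at $a$, where your case analysis is inverted.

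For $W_R(a,d)$, you dismiss ``by extremality'' the case where the upper endpoint $u$ of the highest edge $e$ outside $\calR$ lies on $W_R(d')$. That is precisely the substantive case, also in the proof of \cref{prop:smallest-region} that you are imitating. Extremality gives nothing there: splicing $W_R(d')$ with $W_R(a,d)$ at $u$ only yields a witnessing path from $x_0$ to $d$, which is automatically weakly left of $W_R(d)$. The cases $u\in W_R(c,\cdot)$ (giving $c\le d$) and $u\in W_R(b'')$ are the easy ones. What rules out the $W_R(d')$ case is the right-pair property of $(a,b)$, not of $(c,d)$:
take $p\in e$ just below $u$, so $p$ is left of $W_R(d')$, hence left of $W_R(b)$.
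If $p\preccurlyeq_\uparrow b$, this contradicts $(a,b)$ being a right pair.
Otherwise, on the horizontal line through $b$, $W_R(d')$ is left of $b$ while $W_R(a,d)$ is right of it. By Darboux the two paths meet at some $p'$ below $p$, and \cref{claim:extreme_subpaths} forces $p'[W_R(a,d)]u=p'[W_R(d')]u$, contradicting $p\notin W_R(d')$.

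For $W_R(a,b'')$ it is the other way round. The case you single out as hardest, meeting $W_R(c,b'')$, is not contradictory at all. $W_R(a,b'')$ and $W_R(c,b'')$ both end at $b''$ and are top-consistent, so they may share a final segment; your claim that paths from $a$ avoid $W_R(c,b'')$ is false for the same reason, and any attempt to extract a contradiction there will fail. The argument that works anchors at $a\in\calR$ (available once $W_R(a,d)\subset\calR$) and looks at the lowest point $u$ of $W_R(a,b'')$ on $\partial\calR$:
\begin{itemize}
\item If $u$ is on $q_1[W_R(b'')]m_2$ or $q_3[W_R(c,b'')]m_2$, top-consistency and \cref{prop:properties-regions}.\ref{prop:properties-regions:b''} keep the rest of the path in $\calR$.
\item If $u\in q_1[W_R(d')]m_4\setminus\{q_1\}$, then $u$ is left of $W_R(b'')$, contradicting $b''\in B''_*(a,b)$.
\item If $u\in q_3[W_R(c,d')]m_4\setminus\{q_3\}$, then $u$ is right of $W_R(c,b'')$, and $c[W_R(c,d')]u[W_R(a,b'')]b''$ contradicts the extremality of $W_R(c,b'')$.
\end{itemize}
Neither of the last two contradictions appears in your proposal. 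In particular, meeting $W_R(c,d')$ only yields $c\le u\le b''$, which is compatible with $c\parallel d$.
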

\begin{proof} 
    Since $d \prec_R b$, we also have $d \prec_L b$ by \Cref{prop:one-side-implies-the-other}, and so, $d \in B'(a,b)$.
    By \cref{prop:force-inB*}.\ref{prop:force-inB*:bb'}, we also have $b'' \in B_*''(c,d)$.

    By \cref{prop:satisfy-items-r}, $a$, $d$, and $b''$ satisfy \ref{item-regions-first}--\ref{item-regions-last}, as well as $c$, $d'$, and $b''$ satisfy these conditions.
    Thus, we may define the region $\calR' = \calR(a,d,b'')$ with the extreme points $(q_1',m_2',q_3',m_4')$ and the region $\calR = \calR(c,d',b'')$ with the extreme points $(q_1,m_2,q_3,m_4)$.
    We will show that $\partial \calR' \subset \calR$, which by \Cref{obs:region_containment} will complete the proof.

    By \Cref{prop:properties-regions}.\ref{prop:properties-regions:b-inside}, $q_1$ lies in $W_R(d)$ and $q_1[W_R(d)]d\subset \calR$.
    Since $q_1$ lies in $W_R(d)$ and $W_R(b'')$, we have $q_1 \leq q_1'$ in $P$.
    It follows that $q_1'[W_R(d)]d \subset \calR$ and by \cref{prop:properties-regions}.\ref{prop:properties-regions:b''} $q_1'[W_R(d)]b'' \subset \calR$.
    
    Next, we show that $W_R(a,d) \subset \calR$.
    Since $c \parallel d$ in $P$, $W_R(a,d)$ does not intersect neither $W_R(c,b'')$ nor $W_R(c,d')$.
    Let $q = \gcpe(W_R(b),W_R(b'')$.
    By \Cref{prop:properties-regions}.\ref{prop:properties-regions:b-inside}, $q_1 \leq q$ in $P$.
    Suppose that $W_R(a,d)$ intersects $W_R(b'')$, say in an element $u$.
    If $u \leq q$ in $P$, then $a \leq b$ in $P$, which is a contradiction.
    Thus, $q < u$ in $P$.
    However, since $d \prec_R b \prec_R b''$, the path $x_0[W_R(b'')]u[W_R(a,d)]d$ is a witnessing path from $x_0$ to $d$ in $P$ neither left nor equal to $W_R(d)$, which contradicts \cref{claim:diagram:extreme_paths_exist}.
    Thus, if $W_R(a,d)$ intersects $\partial \calR$, then it intersects $q_1[W_R(d')]m_4 \setminus\{q_1,m_4\}$.
    Suppose to the contrary that $W_R(a,d)$ is not contained in $\calR$.
    Note that $q_1 \prec_\uparrow u$.
    Let $e$ be the highest edge of $W_R(a,d)$ not in $\calR$ with the higher endpoint $u$.
    There exists a point $p$ in $e$ such that $p$ is not in $\calR$ and $p$ is left of $W_R(d')$ by \cref{obs:curves-region}.
    Let $\ell$ be the horizontal line containing $p$.
    If $p \preccurlyeq_\uparrow b$, then we have an immediate contradiction with $(a,b)$ being a right pair as $d' \prec_R b$.
    Thus, we may assume that $b \prec_\uparrow p$.
    Let $\ell'$ be the horizontal line containing $b$.
    The point of $W_R(d')$ in $\ell'$ is left of $b$ (as $d' \prec_R b$ and $b \prec_\uparrow m_4 \prec_\uparrow d'$) and $b$ is left of the point of $W_R(a,d)$ in $\ell'$ (as $(a,b)$ is a right pair).
    The paths $W_R(d')$ and $W_R(a,d)$ intersect $\ell$ and $\ell'$ in the opposite order, thus by the Darboux property, they intersect in a point $p'$ with $p' \prec_\uparrow p$.
    However, by \cref{claim:extreme_subpaths}, $p'[W_R(a,d)]u = p'[W_R(d')]u$, which contradicts the definition of $p$.
    The contradiction gives that $W_R(a,d)$ is contained in $\calR$, as desired.

    To complete the proof, it suffices to show that $W_R(a,b'') \subset \calR$.
    We already know that $a \in \calR$.
    Suppose that $W_R(a,b'')$ intersects $\partial \calR$ and $u$ is the lowest element in this intersection.
    If $u$ lies in $q_1[W_R(b'')]m_2$ or $q_3[W_R(c,b'')]m_2$, then by consistency (\Cref{claim:diagram:extremal_paths_are_consistent}) and since $m_2[W_R(b'')]b'' \subset \calR$ (by \cref{prop:properties-regions}.\ref{prop:properties-regions:b''}) , $u[W_R(a,b'')]b'' \subset \calR$.
    Note that $u \preccurlyeq_\uparrow b''$.
    Suppose that $u$ lies in $q_1[W_R(d')]m_4 \setminus\{q_1\}$.
    It follows by \ref{prop:tuple-is-valid:left} that $u$ is left of $W_R(b'')$.
    However, since $b'' \in B_*''(a,b)$, we have $W_R(a,b'')$ right of $W_R(b'')$.
    This is a contradiction.
    Finally, suppose that $u$ lies in $q_3[W_R(d')]m_4 \setminus\{q_3\}$.
    In this case by \ref{prop:tuple-is-valid:left}, $u$ is right of $W_R(c,b'')$.
    It follows that $c[W_R(c,d')]u[W_R(b'')]b''$ is neither left nor equal to $W_R(c,b'')$, which contradicts \cref{claim:diagram:extreme_paths_exist}.
    
    We conclude that indeed $W_R(a,b'') \subset \calR$.
    This completes the proof that $\partial\calR' \subset \calR$ and by \cref{obs:region_containment}, we obtain $\calR' \subset \calR$, as desired.
\end{proof}

\subsection{Auxiliary oriented graphs}\label{ssec:auxiliary}
% We define two auxiliary graphs $H$ and $H'$ with vertex sets $I_R$.
% The key property of $H$ will be that in every strict alternating cycle in $P$ with all the pairs in $I_R$, there will be two pairs connected by an edge in $H$.
% In particular, a proper coloring of $H$ will give a covering of $I_R$ by as many reversible sets in $P$ as many colors the coloring uses.
% The graph $H$ will be a subgraph of $H'$ and we will find a coloring of $H$ with the number of colors depending on $\kelly_P(I_R)$ using $H'$.
% \piotr{I will come back to this paragraph once i seethe rest ofthe proof.} 
An \defin{orientation} of a graph $G$ is a function assigning to each edge $\set{u,v}$ of $G$ one of the pairs: $(u,v)$ or $(v,u)$. 
An \defin{oriented graph} is a graph with a fixed orientation. 
The \defin{vertex set} of an oriented graph $H$ is the vertex set of the underlying graph, while the \defin{edge set} of $H$ is the set of all $(u,v)$ such that $\{u,v\}$ is an edge in the underlying graph mapped to $(u,v)$ by the orientation.
We say that an edge $(u,v)$ of an oriented graph $H$ is an edge \defin{from $u$ to $v$} in $H$.
A \defin{directed path} is an oriented graph such that $\{v_0,\dots,v_k\}$ is its vertex set and $\{(v_{i-1},v_i) : i \in [k]\}$ is its edge set, where $v_0,\dots,v_k$ are pairwise distinct.
This directed path \defin{starts} in $v_0$ and \defin{ends} in $v_k$.
A \defin{directed cycle} is an oriented graph with at least three vertices such that removing each of its edges gives a directed path.
An oriented graph is \defin{acyclic} if it contains no directed cycle.

Let \defin{$H$} be an oriented graph with the vertex set $I_R$ such that $(a,b) \in I_R$ is connected by an edge oriented towards $(c,d) \in I_R$ if there exists $f \in B$ such that
\begin{enumerateNumH}
    \item $d \preccurlyeq_R f \prec_R b$, \label{itemH:ordering}
    \item $(c,f) \in I_R$,\label{itemH:pair}
    \item $((a,b),(c,f))$ is a strict alternating cycle in $P$.\label{itemH:sac}
\end{enumerateNumH}
See \Cref{fig:auxiliaryH}.
\begin{figure}
    \centering
    \includegraphics{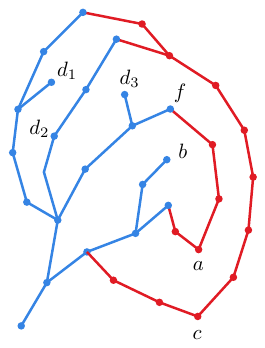}
    \caption{\(((a, b), (c, f))\) is a strict alternating cycle with \(\{(a, b), (c, d)\} \subseteq I_R\) and \(f \prec_R b\). For each \(d \in \{d_1, d_2, d_3, f\}\), we have \(d \preccurlyeq_R f\) and \((c, f) \in I_R\), so \(H\) has an edge oriented from \((a, b)\) towards \((c, f)\).}
    \label{fig:auxiliaryH}
\end{figure}
Note that every strict alternating cycle $((a,b),(c,d))$ in $P$ contained in $I_R$ is an edge in $H$ as witnessed by $d = f$.
The following is implied by \cref{prop:cycle-gives-an-edge}.

\begin{corollary}\label{cor:sacs}
    Let $((a_1,b_1),\dots,(a_k,b_k))$ be a strict alternating cycle in $P$ contained in $I_R$.
    Then, there exist $i,j \in [k]$ with $i \neq j$ such that $((a_i,b_i),(a_j,b_j)) \in E(H)$.
\end{corollary}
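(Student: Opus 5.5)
The plan is to derive \cref{cor:sacs} directly from \cref{prop:cycle-gives-an-edge}. Fix a strict alternating cycle $((a_1,b_1),\dots,(a_k,b_k))$ in $P$ contained in $I_R$. By \cref{prop:cycle-gives-an-edge}, there are distinct $i,j\in[k]$ and an element $d\in B$ with $(a_j,d)\in I_R$, $b_j\prec_R d\prec_R b_i$, and $((a_i,b_i),(a_j,d))$ a strict alternating cycle in $P$. The claim is that this pair $(a_i,b_i)$, $(a_j,b_j)$ is joined by an edge of $H$ directed from $(a_i,b_i)$ to $(a_j,b_j)$.

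To check this, take $f=d$ in the definition of $H$ and verify the three conditions for the vertices $(a,b)=(a_i,b_i)$ and $(c,d)=(a_j,b_j)$. Condition \ref{itemH:ordering} asks for $b_j\preccurlyeq_R f\prec_R b_i$, which holds since $b_j\prec_R f\prec_R b_i$. Condition \ref{itemH:pair} asks for $(a_j,f)\in I_R$, which \cref{prop:cycle-gives-an-edge} provides. Condition \ref{itemH:sac} asks that $((a_i,b_i),(a_j,f))$ be a strict alternating cycle, which is also part of its output. Both endpoints $(a_i,b_i)$ and $(a_j,b_j)$ lie in $I_R$ by assumption, so they are vertices of $H$, and therefore $((a_i,b_i),(a_j,b_j))\in E(H)$.

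I expect no real obstacle. The only subtlety is that the edge of $H$ runs to $(a_j,b_j)$, not to the auxiliary pair $(a_j,f)$, which need not belong to the original cycle. The definition of $H$ allows this because it uses a witness $f$ with $d\preccurlyeq_R f$, so the non-strict inequality $b_j\preccurlyeq_R f$ is all that is needed.
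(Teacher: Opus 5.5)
Your proposal is correct and is the paper's argument: the corollary is presented as an immediate consequence of \cref{prop:cycle-gives-an-edge}, with the witness $f$ in the definition of $H$ taken to be the element $d$ produced there. Your remark that only $b_j \preccurlyeq_R f$ is needed is worth keeping. It also covers the case $k=2$, where the proof of \cref{prop:cycle-gives-an-edge} actually yields $d=b_2=b_j$, so the strict inequality $b_j\prec_R d$ in its statement does not hold there.
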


Let \defin{$H'$} be an oriented graph with the vertex set $I_R$ such that $(a,b) \in I_R$ is connected by an edge oriented towards $(c,d) \in I_R$ if 
\begin{enumerateNumHprim}
    \item $d \prec_R b$,\label{itemH':ordering}
    \item $B_*''(a,b) \subset B_*''(c,d)$,\label{itemH':B*}
    \item for every $b'' \in B_*''(a,b)$, we have $\calR(a,z'(a,b),b'') \subset \calR(c,z'(c,d),b'')$.\label{itemH':regions} 
    %\later{MS: Note: the original definition (with \(d \preccurlyeq_R b\)) tried to capture ``(the region of \((a, b)\)) \(\subseteq\) (the region of \((c, d)\))''. (H3') was formulated this way to allow the edge \((a, b) \to (a, b)\). If we are aiming for \(\subsetneq\) relation on the regions, it seems plausible to replace (H2') and (H3') by a condition where the LCA of all \(B''_*(a, b)\) is in \(B''_*(c, d)\): ``there exists \(d'' \in B''_*(c, d)\) such that for every \(b'' \in B''_*(a, b)\) we have \(d'' \in W_R(b'')\) and $\calR(a,z'(a,b),b'') \subset \calR(c,z'(c,d),d'')$'' }
\end{enumerateNumHprim}
We say that an edge of $H'$ is a \defin{cycle edge} whenever it is a strict alternating cycle in $P$.
Due to \ref{itemH':ordering}, $H'$ is acyclic, and we may apply to it the following algorithmic observation.
\begin{figure}
    \centering
    \includegraphics{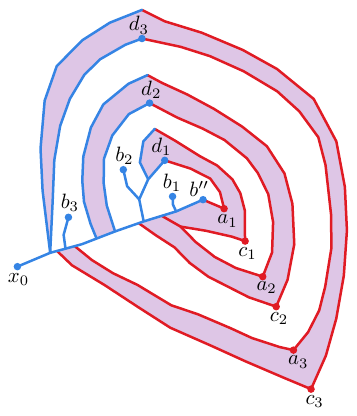}
    \caption{A path \((a_1, b_1)(c_1, d_1)(a_2, b_2)(c_2, d_2)(a_3, b_3)(c_3, d_3)\) in \(H'\) (oriented from left to right), and an element \(b'' \in B''_*(a_1, b_1)\). Each \((a_i, b_i)(c_i, d_i)\) is a strict alternating cycle and an edge of \(H\). For each \(i \in [3]\), the difference \(\calR(c_i, z'(c_i, d_i), b'') \setminus \calR(a_i, z'(a_i, b_i), b'')\) is marked purple.}
    \label{fig:path-in-H'}
\end{figure}

\begin{obs}\label{obs:algo-compute-longest}
    There exists a polynomial-time algorithm that, given an oriented acyclic graph $G'$ and its subset of vertices $E'$, for each vertex $v$ computes the maximum number of edges in $E'$ in a path starting in $v$ in $G'$. 
\end{obs}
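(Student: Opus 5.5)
The plan is to reduce the statement to a dynamic program over a topological order of $G'$. Since $G'$ is acyclic, a topological ordering $v_1,\dots,v_N$ can be computed in linear time, for instance by repeatedly removing sinks or by depth-first search. In this ordering every edge $(v_i,v_j)$ of $G'$ satisfies $i<j$.

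Define $\lambda(v)$ as the maximum number of edges of $E'$ on a directed path in $G'$ starting in $v$. Here $E'$ is understood as a set of edges of $G'$, namely the cycle edges in the intended application. The trivial path consisting of $v$ alone has value $0$. A nontrivial path starting in $v$ begins with some edge $(v,w)$ and continues along a path starting in $w$. Such a continuation never revisits $v$, since the graph is acyclic, so every path from $w$ extends to a valid path from $v$. This gives the recurrence
\[
\lambda(v)=\max\Bigl(\{0\}\cup\bigl\{\lambda(w)+[(v,w)\in E'] : (v,w)\in E(G')\bigr\}\Bigr),
\]
where $[\cdot]$ equals $1$ if the condition holds and $0$ otherwise. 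We evaluate $\lambda$ in reverse topological order, $v_N,\dots,v_1$. When $\lambda(v)$ is computed, all its out-neighbours already have their values fixed.

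Correctness follows by induction along the reverse order. For the upper bound, any path from $v$ either is trivial or decomposes as the first edge followed by a path from $w$, whose count is at most $\lambda(w)$. For the lower bound, the maximizing choice in the recurrence is realized by prepending $(v,w)$ to an optimal path from $w$.

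The running time is $\Oh(|V(G')|+|E(G')|)$ plus the cost of testing membership in $E'$, which is polynomial. The only point needing care is that paths must have pairwise distinct vertices, and this is automatic in an acyclic graph. There is no real obstacle beyond that.
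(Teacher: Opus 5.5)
Your dynamic program over a reverse topological order is correct and is exactly the standard argument the paper relies on; the paper states this observation without proof. You were also right to read $E'$ as a set of edges, namely the cycle edges of $H'$, since the phrase ``subset of vertices'' in the statement is a slip.
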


\begin{proposition}\label{prop:H-subgraph-H'}
    $E(H) \subset E(H')$.
\end{proposition}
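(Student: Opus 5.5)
Fix an edge $((a,b),(c,d))\in E(H)$ and let $f\in B$ witness it, so $d\preccurlyeq_R f\prec_R b$, $(c,f)\in I_R$, and $((a,b),(c,f))$ is a strict alternating cycle. We need to check \ref{itemH':ordering}--\ref{itemH':regions} for the pair $((a,b),(c,d))$. Condition \ref{itemH':ordering} is immediate: $d\preccurlyeq_R f\prec_R b$ gives $d\prec_R b$, since $\preccurlyeq_R$ is a partial order by \cref{claim:diagram:L_and_R_are_posets}.

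For \ref{itemH':B*} we first pass from $d$ to $f$. Apply \cref{prop:force-inB*}.\ref{prop:force-inB*:bb'} to the strict alternating cycle $((a,b),(c,f))$ (here $(c,f)\in I_R\subseteq I_R'$ and $f\prec_R b$). This gives $B''_*(a,b)\subseteq B''_*(c,f)$. Next we compare $B''_*(c,f)$ with $B''_*(c,d)$. Take $b''\in B''_*(c,f)$. Then $c\le b''$, and $W_R(c,b'')$ is right of $W_R(b'')$. Also $f\prec_R b''$ and $f\prec_L b''$, so $d\preccurlyeq_R f\prec_R b''$ gives $d\prec_R b''$. Since $(c,d)\in I_1$, \cref{prop:one-side-implies-the-other} upgrades this to $d\prec_L b''$. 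Hence $b''\in B''_*(c,d)$, and therefore $B''_*(a,b)\subseteq B''_*(c,d)$.

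For \ref{itemH':regions}, fix $b''\in B''_*(a,b)$. The plan is to chain region inclusions through the smallest-region property. By \cref{prop:smallest-region}, applied with $(a,b)$ and $d'=f\in B'(a,b)$ (from \cref{prop:sac-region-containment}), we get $\calR(a,z'(a,b),b'')\subseteq\calR(a,f,b'')$. By \cref{prop:sac-region-containment}, applied to the cycle $((a,b),(c,f))$ with $d'=z'(c,d)$, we get $\calR(a,f,b'')\subseteq\calR(c,z'(c,d),b'')$. This second step requires $z'(c,d)\in B'(c,f)$, and that is the main obstacle. We know $c\le z'(c,d)$ and $z'(c,d)\prec_R d\preccurlyeq_R f$, so $z'(c,d)\prec_R f$. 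Since $(c,f)\in I_1$, \cref{prop:one-side-implies-the-other} then yields $z'(c,d)\prec_L f$, so indeed $z'(c,d)\in B'(c,f)$.

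Combining the two inclusions gives \ref{itemH':regions}. The remaining care is to confirm that every region used is well defined, i.e.\ that \ref{item-regions-first}--\ref{item-regions-last} hold for each triple. This follows from \cref{prop:satisfy-items-r} once we know $b''\in B''_*(c,f)$, which was shown above, and the analogous membership for $(c,d)$.
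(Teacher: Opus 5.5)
Your \ref{itemH':ordering} is fine. Your \ref{itemH':regions} is also fine, and more careful than the paper's one-liner. You route the chain through $\calR(a,f,b'')$, which is correct because \cref{prop:sac-region-containment} concerns the strict alternating cycle $((a,b),(c,f))$, not $((a,b),(c,d))$. You also apply \cref{prop:one-side-implies-the-other} in the correct direction (pair $(c,f)$, element $z'(c,d)\ge c$) to get $z'(c,d)\in B'(c,f)$. Moreover, $\calR(c,z'(c,d),b'')$ is well defined from $b''\in B''_*(c,f)$ and $z'(c,d)\in B'(c,f)$ alone, so that part does not depend on \ref{itemH':B*}.

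The gap is in \ref{itemH':B*}, at the step $d\prec_R b''\Rightarrow d\prec_L b''$. For $e\in B$ with $c\le e$, \cref{prop:one-side-implies-the-other} applied to $(c,d)$ gives only two implications: $e\prec_R d\Rightarrow e\prec_L d$, and $d\prec_L e\Rightarrow d\prec_R e$. You need the converse of the second, and it fails whenever $d\prec_\uparrow e$ and $d$ lies between $W_L(e)$ and $W_R(e)$. The paper's sentence ``since $d\preccurlyeq_R f$, this directly implies $b''\in B''_*(c,d)$'' passes over the same point. It is not just unjustified: it can fail in exactly this context.

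Here is a configuration. Take covers $x_0<s<t<q_1$ and $x_0<\ell<t$, with $x_0\ell$ drawn left of $x_0s$. Above $q_1$ put $q_1<g_1<f$ on the left, and $q_1<u<b$, $u<h<b''$ on the right, with $h$ lower than $b$ and $b$ lower than $f$. Put $a<h$ and $a<a_1<f$ to the right, with the edge $a_1f$ passing above $b''$. Put $c<c_1<u$ with $c$ to the right. Now add three more elements:
\begin{itemize}
\item $d$, covering only $x_0$, drawn inside the face bounded by $x_0\ell t$ and $x_0st$ and higher than $c$;
\item $q'$, covering $x_0$ and $c$, drawn on the right at a height between those of $c$ and $d$;
\item a top element $d'$ covering $\ell'$ and $c_3$, where $x_0<\ell'$ is drawn left of $x_0\ell$ and $c<c_2<c_3$ runs around the right side.
\end{itemize}
Let $I=\{(a,b),(c,f),(c,d)\}$. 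All three pairs are right pairs, and $a,c$ lie above $x_0$, so they are not bottom elements. The witnesses for (I1), (I2), (I7), (I8)/(I12) are: $h,b'',f,b''$ for $(a,b)$; $q',d',d',b''$ for $(c,f)$; and $q',u,d',q'$ for $(c,d)$. So all three pairs lie in $I_R$. Also $((a,b),(c,f))$ is a strict alternating cycle, and $d\prec_R f\prec_R b$ because $W_R(d)=x_0d$ is left of $x_0s$. Hence $((a,b),(c,d))\in E(H)$ and $b''\in B''_*(a,b)$. But $W_L(b'')$ begins with $x_0\ell$, which is left of $W_L(d)=x_0d$. So $b''\prec_L d$, hence $b''\notin B''(c,d)$, and \ref{itemH':B*} fails for this edge.

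Consequently no argument can close this step, and the proposition as stated is false. The same example also breaks \cref{obs:righter-b'-gives-an-edge}, which \cref{prop:ell-plus-1} relies on: $(c,f),(c,d)\in I_R$ and $d\prec_R f$, yet $b''\in B''_*(c,f)\setminus B''_*(c,d)$. The definition of $H'$, or the way edges of $H$ are handled, needs to change.
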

\begin{proof}
    Let $((a,b),(c,d)) \in E(H)$ witnessed by $f \in B$.
    Condition \ref{itemH:ordering} directly implies condition \ref{itemH':ordering}.
    For the proofs of the remaining conditions, let $b'' \in B_*''(a,b)$.
    \cref{prop:force-inB*}.\ref{prop:force-inB*:bb'} applied to a strict alternating cycle $((a,b),(c,f))$ gives $b'' \in B_*''(c,f)$.
    Since $d \preccurlyeq_R f$, this directly implies $b'' \in B_*''(c,d)$, and so, \ref{itemH':B*} holds.
    Finally, $\calR(a,z'(a,b),b'') \subset \calR(a,d,b'')$ by \cref{prop:smallest-region} and $\calR(a,d,b'') \subset \calR(c,z'(c,d),b'')$ by \cref{prop:sac-region-containment} giving \ref{itemH':regions}.
    This completes the proof.
\end{proof}

Straight from the definitions, we obtain the following facts on edges of $H'$.

\begin{obs}\label{obs:righter-b'-gives-an-edge}
    For every $(c,f),(c,d) \in I_R$, if $d \prec_R f$, then $((c,f),(c,d)) \in E(H')$.
\end{obs}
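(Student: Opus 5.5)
We must verify the three conditions \ref{itemH':ordering}--\ref{itemH':regions} for the pair $((c,f),(c,d))$, where $(c,f),(c,d)\in I_R$ and $d\prec_R f$. Condition \ref{itemH':ordering} is exactly the hypothesis $d \prec_R f$, so the work lies in the remaining two conditions. The key observation is that both pairs share the first coordinate $c$; the sets $B'_*$, $B''_*$ and the element $z'$ depend on the second coordinate only through the orders $\prec_L,\prec_R$.

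For \ref{itemH':B*}, take $b''\in B''_*(c,f)$. Then $c\le b''$ in $P$, $f\prec_L b''$, $f\prec_R b''$, and $W_R(c,b'')$ is right of $W_R(b'')$. From $d\prec_R f\prec_R b''$ and transitivity (\cref{claim:diagram:L_and_R_are_posets}) we get $d\prec_R b''$. Since $(c,d)\in I_1$, \cref{prop:one-side-implies-the-other} applied to $(c,d)$ and $b''$ then gives $d\prec_L b''$. The last condition on $W_R(c,b'')$ does not involve $d$ at all. Hence $b''\in B''_*(c,d)$.

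For \ref{itemH':regions}, fix $b''\in B''_*(c,f)$. The plan is to show $z'(c,f)\in B'(c,d)\cup\{\text{elements with } W_R \text{ extending appropriately}\}$, or more directly to compare the two regions through \cref{prop:smallest-region}. Since $z'(c,f)$ is the lowest $\prec_R$-maximal element of $B'(c,f)$, it is natural to argue as follows.

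We first aim to show that $B'(c,d)\subseteq B'(c,f)$. If $b'\in B'(c,d)$, then $b'\prec_R d\prec_R f$, so $b'\prec_R f$, and \cref{prop:one-side-implies-the-other} applied to $(c,f)$ gives $b'\prec_L f$. In particular $z'(c,d)\in B'(c,f)$.

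By \cref{prop:smallest-region} applied to $(c,f)$, $b'=z'(c,d)$ and $b''$, we then obtain
\[
\calR(c,z'(c,f),b'')\subseteq \calR(c,z'(c,d),b''),
\]
which is exactly \ref{itemH':regions}.

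Both regions are well defined by \cref{prop:satisfy-items-r}, using $b''\in B''_*(c,d)$ from the previous step. The only subtle point is that \cref{prop:smallest-region} is stated with the second argument ranging over $B'$ of the same pair, which is precisely what the inclusion $B'(c,d)\subseteq B'(c,f)$ supplies. So the main (mild) obstacle is checking this inclusion carefully through \cref{prop:one-side-implies-the-other}; everything else is definition unwinding.
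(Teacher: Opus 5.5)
\paragraph{Where the proof breaks.} Your checks of (H1') and (H3') are fine. In (H3') you apply \cref{prop:one-side-implies-the-other} to the pair $(c,f)$ and an element $b'\in B'(c,d)$ with $c\le b'$, in the correct direction. So $B'(c,d)\subseteq B'(c,f)$, and \cref{prop:smallest-region} for $(c,f)$ with $z'(c,d)$ gives the inclusion of regions. Both regions are already well defined through $(c,f)$ by \cref{prop:satisfy-items-r}, so this part does not even need (H2').

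The gap is in (H2'). There you need $d\prec_L b''$, and you derive it from $d\prec_R b''$ using \cref{prop:one-side-implies-the-other} for the pair $(c,d)$. For $(c,d)\in I_1$ and $x\in B$ with $c\le x$, that proposition only gives $x\prec_R d\Rightarrow x\prec_L d$ and $d\prec_L x\Rightarrow d\prec_R x$. You are using the converse of the second implication, which the proposition does not provide. Going through $f$ does not help either: getting $d\prec_L f$ from $d\prec_R f$ would require the proposition for $(c,f)$ with the element $d$, but $c\not\le d$.

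\paragraph{The gap cannot be closed.} The needed implication, and (H2') itself, can genuinely fail. Take the cover relations $x_0<l$, $x_0<r$, $x_0<d$, $x_0<s$, $x_0<g$, $l<u$, $r<u$, $u<f$, $u<h$, $c<s$, $c<h$, $c<g$. Draw them as follows:
\begin{itemize}
\item $l$ left and $r$ right of $x_0$, and $d$ inside the face bounded by $x_0\,l\,u\,r$;
\item $f$ up-left and $h$ up-right of $u$, with $h$ higher than $f$;
\item $s$ on the right and lower than $d$, and $g$ highest and far to the left;
\item $c$ below and to the right of $x_0$, with the edges $cs$, $ch$, $cg$ running to the right of everything else ($cg$ passing over $h$).
\end{itemize}
This is a planar diagram with $x_0$ minimal on the exterior face.

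Let $I=\{(c,d),(c,f)\}$. Both pairs lie in $I_R$:
\begin{itemize}
\item $s$ and $h$ give (I1) and (I2), and $g$ gives (I7);
\item $s$ serves for (I8) and (I12) for $(c,d)$, and $h$ serves for $(c,f)$;
\item $c$ is not a bottom element, and both pairs are right pairs.
\end{itemize}
Moreover $d\prec_R f$ and $h\in B''_*(c,f)$. However, $W_L(h)=x_0\,l\,u\,h$ passes left of $d$. Hence $h\prec_L d$, so $h\notin B''(c,d)$, and therefore $((c,f),(c,d))\notin E(H')$.

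\paragraph{Consequence.} The statement as written is false, so no argument along your lines, or any other, can establish it. The paper's ``straight from the definitions'' skips exactly this point, and the identical inference (from $b''\in B''_*(c,f)$ and $d\preccurlyeq_R f$ to $b''\in B''_*(c,d)$) appears in the proof of \cref{prop:H-subgraph-H'}. Repairing it requires changing the definitions (of $B''$ or of (H2')) or the statement, not just a different proof.
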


\begin{obs}\label{obs:transitive}
    $H'$ is transitive, i.e.\ $((a,b),(c,d)) \in E(H')$ and $((c,d),(e,f)) \in E(H')$ imply $((a,b),(e,f)) \in E(H')$.
\end{obs}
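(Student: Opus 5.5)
The plan is to verify the three defining conditions \ref{itemH':ordering}--\ref{itemH':regions} for the pair $((a,b),(e,f))$ directly, chaining the corresponding conditions for the two given edges.

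\emph{Condition \ref{itemH':ordering}.} We have $d \prec_R b$ and $f \prec_R d$. Since $\preccurlyeq_R$ partially orders $B$ (\cref{claim:diagram:L_and_R_are_posets}), transitivity gives $f \prec_R b$.

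\emph{Condition \ref{itemH':B*}.} We have $B_*''(a,b) \subset B_*''(c,d)$ and $B_*''(c,d) \subset B_*''(e,f)$. Composing the inclusions gives $B_*''(a,b) \subset B_*''(e,f)$.

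\emph{Condition \ref{itemH':regions}.} Fix $b'' \in B_*''(a,b)$. The edge $((a,b),(c,d))$ gives
\[\calR(a,z'(a,b),b'') \subset \calR(c,z'(c,d),b'').\]
By the previous step, $b'' \in B_*''(c,d)$, so we may apply \ref{itemH':regions} for the edge $((c,d),(e,f))$ with the same $b''$. This gives
\[\calR(c,z'(c,d),b'') \subset \calR(e,z'(e,f),b'').\]
Composing the two inclusions yields the required containment.

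The only point needing care is that every region involved is well defined. This follows from \cref{prop:satisfy-items-r}, using $z'(\cdot,\cdot) \in B'(\cdot,\cdot)$ and the membership of $b''$ in each $B_*''$ set established above. Beyond that there is no real obstacle: the statement follows directly from transitivity of $\prec_R$, of set inclusion, and of region containment.
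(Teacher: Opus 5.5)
Your proof is correct and matches the paper, which states this observation as following directly from the definitions. Conditions (H1') and (H2') follow by transitivity of $\prec_R$ and of inclusion; for (H3'), you correctly use (H2') to place $b''$ in $B_*''(c,d)$ before composing the two region containments.
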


For every $(a,b) \in I_R$, we define
\[\defmath{\ell(a,b)} = \text{the maximum number of cycle edges in a directed path in $H'$ starting in $(a,b)$.}\]
See an example of a path in $H'$ in \cref{fig:path-in-H'}.

\begin{proposition}\label{prop:ell-plus-1}
  For every edge \(((a, b), (c, d)) \in E(H)\), we have
  \[\ell(a, b) \ge \ell(c, d) + 1.\]
\end{proposition}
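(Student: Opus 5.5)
Fix an edge $((a,b),(c,d))\in E(H)$ and let $f\in B$ witness it, so $d\preccurlyeq_R f\prec_R b$, $(c,f)\in I_R$, and $((a,b),(c,f))$ is a strict alternating cycle in $P$. The plan is to build a directed path in $H'$ that starts at $(a,b)$, has a cycle edge as its first edge, and then continues along a path realizing $\ell(c,d)$. This gives $\ell(a,b)\ge 1+\ell(c,d)$ directly from the definition of $\ell$.

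First, I claim $((a,b),(c,f))\in E(H')$ and that it is a cycle edge. The cycle-edge part is \ref{itemH:sac}. For membership in $E(H')$, note that $((a,b),(c,f))$ is itself an edge of $H$, witnessed by $f$ with $f\preccurlyeq_R f\prec_R b$, by $(c,f)\in I_R$, and by the strict cycle. So \cref{prop:H-subgraph-H'} gives $((a,b),(c,f))\in E(H')$.

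Second, I connect $(c,f)$ to $(c,d)$. If $d=f$, nothing is needed. Otherwise $d\prec_R f$, and \cref{obs:righter-b'-gives-an-edge} gives $((c,f),(c,d))\in E(H')$. Let $Q$ be a directed path in $H'$ starting at $(c,d)$ with $\ell(c,d)$ cycle edges.

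The concatenation of these edges with $Q$ is a walk in $H'$, and since $H'$ is acyclic it is a directed path. There is one issue: the edge from $(c,f)$ to $(c,d)$ might not be a cycle edge, so the count could fail if using it cost something. It costs nothing, because we simply do not count it. A cleaner alternative uses transitivity (\cref{obs:transitive}): replace the two edges $(a,b)\to(c,f)\to(c,d)$ by the single edge $(a,b)\to(c,d)$. But that edge may not be a cycle edge, since we need not have $a\le d$, so it is better to keep the detour through $(c,f)$. That detour preserves the cycle edge $((a,b),(c,f))$ and adds at most one non-cycle edge before $Q$.

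Distinctness of the vertices follows from acyclicity, because the $\prec_R$-values of the second coordinates strictly decrease along every edge of $H'$ by condition \ref{itemH':ordering}. Hence the resulting path starts at $(a,b)$ and contains at least $1+\ell(c,d)$ cycle edges, which gives $\ell(a,b)\ge\ell(c,d)+1$.

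The only point that needs care is the case $d\ne f$, where we must confirm that the edge $((c,f),(c,d))$ really lies in $H'$. That is exactly the content of \cref{obs:righter-b'-gives-an-edge}, which relies on $z'(c,f)$ and $z'(c,d)$ and on the containment $B_*''(c,f)\subset B_*''(c,d)$. I take that observation as given.
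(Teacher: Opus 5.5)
Your proposal is correct and is essentially the paper's proof. You treat $((a,b),(c,f))$ as an edge of $H$ witnessed by $f$ itself, so it is a cycle edge of $H'$. When $d\neq f$ you insert the $H'$-edge $((c,f),(c,d))$, and then you prepend these edges to a path realizing $\ell(c,d)$. Your two additions, the explanation of why the transitivity shortcut would lose the cycle edge and the distinctness check via the strict $\prec_R$-decrease, are correct and only make explicit what the paper leaves implicit.
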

\begin{proof}
    Let \(f \in B\) be an element witnessing that \(((a, b), (c, d))\) is an edge in $H$.
    The pair $((a,b),(c,f))$ is an edge in $H$, thus, by \cref{prop:H-subgraph-H'} also a cycle edge in $H'$.
    If $d = f$, this gives the statement, as we can prepend $(a,b)$ to every path in $H'$ starting in $(c,d)$.
    If $d \prec_R f$, then by \cref{obs:righter-b'-gives-an-edge}, $((c,f),(c,d))$ is an edge in $H'$, which gives the statement as we can prepend $(a,b)$ and $(c,f)$ to every path in $H'$ starting $(c,d)$.
\end{proof}

\begin{lemma}\label{lemma:shortcuts}
    There exists a polynomial-time algorithm that takes a positive integer $n$ and an edge $((a,b),(c,d))$ in $H$ with $\ell(a,b) \geq \ell(c,d) + (2n+7)$ and returns a Kelly subposet of $P$ of order $n$ based on $I_R$.
\end{lemma}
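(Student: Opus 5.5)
We follow the outline given in \cref{sec:outline}. The algorithm first fixes, using \cref{obs:algo-compute-longest}, a directed path in $H'$ starting in $(a,b)$ with $\ell(a,b)$ cycle edges. By \cref{obs:transitive} we shortcut every maximal run of non-cycle edges into one edge, so the path becomes
\[(a,b)(a_1,b_1)(c_1,d_1)(a_2,b_2)(c_2,d_2)\cdots(a_m,b_m)(c_m,d_m),\]
with $m=\ell(a,b)$. Here each $((a_j,b_j),(c_j,d_j))$ is a cycle edge, and $(c_{j-1},d_{j-1})$ either equals $(a_j,b_j)$ or is joined to it by an edge of $H'$ (with $(c_0,d_0)=(a,b)$). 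We fix $b''\in B_*''(a,b)$. By \ref{itemH':B*} and transitivity, $b''\in B_*''(a_j,b_j)\cap B_*''(c_j,d_j)$ for all $j$. By \ref{itemH':regions} together with \cref{prop:sac-region-containment} and \cref{prop:smallest-region}, the regions $\calR_j=\calR(c_j,z'(c_j,d_j),b'')$ form a nested increasing chain. Each cycle edge strictly enlarges the region: $(a_j,b_j)\to(c_j,d_j)$ forces $a_j\le d_j$ while $c_j\parallel d_j$. All these regions are based on the common path $W_R(b'')$.

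\textbf{Left arm.} We take the pairs $(c_j,d_j)$, say for $j$ in a window of $n$ consecutive indices, as the incomparable pairs of the Kelly poset. The relations $c_i<d_j$ for $i>j$, and the chain $u$'s, should come from $W_R(b'')$. Using \cref{prop:two-regions}.\ref{prop:two-regions:q}, \ref{prop:two-regions:m} and \cref{prop:force-inB*}.\ref{prop:force-inB*:cleq}, \ref{prop:force-inB*:gcpe}, the points where $W_R(c_j,b'')$ and $W_R(d_j)$ attach to $W_R(b'')$ (namely $m_2$ of $\calR_j$ and the relevant $\gcpe$'s) move monotonically down along $W_R(b'')$ as $j$ grows. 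Choosing $u_j$ to be these attachment elements on $W_R(b'')$ then gives $c_n\le u_{n-1}\le\dots\le u_2\le d_1$ and $c_i<u_i<d_{i-1}$. Nesting transfers these comparabilities to all $i>j$.

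\textbf{Right arm.} This is where the hypothesis $\ell(a,b)\ge\ell(c,d)+2n+7$ is used, and we expect it to be the main obstacle. The edge $((a,b),(c,d))\in E(H)$ with witness $f$ gives, by \cref{prop:H-subgraph-H'} and \cref{prop:sac-region-containment}, that $\calR(c,z'(c,d),b'')$ contains $\calR(a,z'(a,b),b'')$. If this region also contained every $\calR_j$ (that is, if $(c_j,d_j)$ had an $H'$-edge into $(c,d)$ for some large $j$), then $\ell(c,d)$ would be too large. So all but a bounded number, at most $O(1)$, of the $\calR_j$ must escape $\calR(c,z'(c,d),b'')$. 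We would argue this by checking \ref{itemH':ordering}--\ref{itemH':regions} for $((c_j,d_j),(c,d))$ whenever containment holds, and the constant $7$ absorbs the exceptional indices. Escaping yields an element $x$ inside an inner region, near $c$, and an element $y$ outside an outer region, near $d_j$, with $x<y$ along a path of $A$-elements. Applying \cref{prop:two-regions}.\ref{prop:two-regions:gamma3}, \ref{prop:two-regions:gamma4} to that path, it must cross the $\gamma_4$-sides $W_R(c_i,z'(c_i,d_i))$ of all intermediate regions. This produces elements $v_i$ with $c_1\le v_2\le\dots\le v_{n-1}\le d_n$ and $c_i<v_i<d_{i+1}$, and via nesting $c_i<d_j$ for $i<j$. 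We would use every other index so that the $u$- and $v$-witnesses are distinct and nonadjacent; this accounts for the factor $2n$.

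Finally we verify incomparabilities. We have $(c_j,d_j)\in I_R\subseteq\closure{I}$, and incomparability within the cycle edges holds. All chosen elements are computable by tracing extreme paths (\cref{claim:diagram:extreme_paths_exist}) and testing region membership, so the procedure runs in polynomial time.
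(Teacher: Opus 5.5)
Your setup and left arm agree with the paper. The $u$-chain consists of the attachment points $\pi(c_i)=\lcse(W_R(b''),W_R(c_i,b''))$, ordered by $\pi(d_{i+1})<\pi(c_{i+1})\le\pi(d_i)<\pi(c_i)$ in $P$.

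The genuine gap is the step where you use $\ell(a,b)\ge\ell(c,d)+2n+7$: your implication runs in the wrong direction. An $H'$-edge from $(c_j,d_j)$ to $(c,d)$ gives $\ell(c_j,d_j)\ge\ell(c,d)$. That is an \emph{upper} bound on $\ell(c,d)$ (note $\ell(c_j,d_j)=m-j$ on your maximal path), and it is fully consistent with the hypothesis. So nothing forces the $\calR_j$ to escape $\calR(c,z'(c,d),b'')$. Even if they did, that would not give you a comparability crossing the nested boundaries.

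Since regions grow along $H'$-edges, what the hypothesis actually excludes is that $(c,d)$ and its region sit \emph{inside} an early region of the chain. The paper proves this: if the witness $f$ lies in $\Int\calS_{n+2}$ and $W_R(c,z'(c,d))\subset\Int\calS_{2n+4}$, where $\calS_i=\calR(a_i,d_i,b'')$, then $((c,d),(a_{2n+6},b_{2n+6}))\in E(H')$. Hence $\ell(c,d)\ge\ell(a_{2n+6},b_{2n+6})$ and $\ell(a,b)\le\ell(c,d)+2n+6$. Checking that this edge exists, in particular $B''_*(c,d)\subseteq B''_*(a_{2n+6},b_{2n+6})$, is a substantial argument that your sketch does not contain.

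Consequently one of two specific escapes must occur. These are exactly the comparabilities your sketch leaves unidentified (\q{$x$ near $c$, $y$ near $d_j$}):
\begin{enumerate}
\item $a\le f$ with $a\in\calR_1$ (a separate claim) and $f\notin\Int\calS_{n+2}$; or
\item $c\le z'(c,d)$ along $W_R(c,z'(c,d))$, with $c\in\calR_{n+3}$ and some element of that path outside $\Int\calS_{2n+4}$. Here $c\in\calR_{n+3}$ itself uses $f\in\Int\calS_{n+2}$ to get $W_R(c,b'')\subset\calR_{n+3}$.
\end{enumerate}

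Even then the escaping path is not made of $A$-elements, so you cannot apply \cref{prop:two-regions} to it directly. The paper needs two further facts:
\begin{itemize}
\item An element of $B$ above an element of $B\cap\calR_j$ lies in $\calR_{j+2}$. So the $A$-part of the path must already cross $n$ consecutive boundaries.
\item An $A$-path starting in $\calR_j$ avoids $c_{j+1}[W_R(c_{j+1},b'')]\pi(c_{j+1})$. This forces the crossings onto the sides $W_R(c_i,z'(c_i,d_i))$, and also onto $W_R(a_i,d_i)$, which is what gives $v_i\le d_{i+1}$.
\end{itemize}
This is also where the constant comes from: two windows of $n+1$ regions each, plus the target $(a_{2n+6},b_{2n+6})$. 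It does not come from using every other index.
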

\begin{proof}
    Let $n$ be a positive integer and let $((a,b),(c,d))$ in $H$ with $\ell(a,b) \geq \ell(c,d) + (2n+7)$. 
    We fix a path in $H'$ starting in $(a,b)$ containing $\ell(a,b)$ cycle edges.
    By the transitivity of $H'$ (\Cref{obs:transitive}), we may assume that this path is of the form 
    \[(a,b)(a_1,b_1)(c_1,d_1)(a_2,b_2)(c_2,d_2)\dots(a_m,b_m)(c_m,d_m)\]
    where for every $j \in [m]$, $((a_j,b_j),(c_j,d_j))$ is a cycle edge in $H'$ and for every $j \in [m]$ either $(c_{j-1},d_{j-1}) = (a_j,b_j)$ or $((c_{j-1},d_{j-1}),(a_{j},b_{j}))$ is an edge in $H'$ where $(c_0,d_0) = (a,b)$.
    Note that $\ell(a,b) \geq 2n+7$, thus $m \geq 2n+7$.

    Fix $b'' \in B''_*(a,b)$.
    For every $i \in [m]$, by \ref{itemH':B*}, we have $b'' \in B_*''(a_i,b_i)$ and $b'' \in B_*''(c_i,d_i)$, and by \cref{prop:sac-region-containment}, we have $d_i \in B'(a_i,b_i)$.
    Thus, by \cref{prop:satisfy-items-r}, the following definitions are valid.
    For every $i \in [m]$, let 
    \[\calR_i = \calR(c_i,z'(c_i,d_i),b'') \ \text{ and } \ \calS_i = \calR(a_i,d_i,b'').\]
    By \cref{prop:smallest-region} and \cref{prop:sac-region-containment} for every $i \in [m]$, we have
        \[\calR(a_i,z'(a_i,b_i),b'') \subset \calS_i \subset \calR_i.\]
    Additionally, for every $i \in [m-1]$, by \ref{itemH':regions},
        \[\calR_i \subset \calR(a_{i+1},z'(a_{i+1},b_{i+1}),b'').\]
    It follows that
        \[\calS_1 \subset \calR_1 \subset \calS_2 \subset \dots \subset \calS_m \subset \calR_m.\]
    For each $b \in B$ and for each $a \in A$ with $a \leq b''$ in $P$, let
    \[\pi(b) = \gcpe(W_R(b),W_R(b'')) \ \text{ and } \ \pi(a) = \lcse(W_R(b''),W_R(a,b'')).\]
    Note that by \cref{prop:sandwich}, for all $d,b \in B$ if $d \prec_R b$, then $\pi(d) \leq \pi(b)$ in $P$.
    By the regions containment and \cref{prop:two-regions}.\ref{prop:two-regions:q}--\ref{prop:two-regions:m}, for every $i \in [m]$, we have 
    \begin{equation}\label{eq:z'-d-z'-and-c-a}
        \pi(z'(c_i,d_i)) \leq \pi(d_i) \leq \pi(z'(a_i,b_i)) \ \text{ and } \ \pi(c_i) \leq \pi(a_i) \text{ in } P
    \end{equation}
    and for every $i \in [m-1]$,
    \begin{equation}\label{eq:z'-z'-and-a-c}
        \pi(z'(a_{i+1},b_{i+1})) \leq \pi(z'(c_i,d_i)) \ \text{ and } \ \pi(a_{i+1}) \leq \pi(c_i) \text{ in } P.
    \end{equation}
    By \cref{prop:force-inB*}.\ref{prop:force-inB*:cleq}--\ref{prop:force-inB*:gcpe}, for every $i \in [m]$, we have
    \begin{equation}\label{eq:d-c-b}
        \pi(d_{i}) < \pi(c_{i}) \leq \pi(b_i) \text{ in } P.
    \end{equation}
    For every $i \in [m-1]$, since $z'(a_{i+1},b_{i+1}) \prec_R b_{i+1} \preccurlyeq_R d_{i}$, we have
    \begin{equation}\label{eq:z'-b-d}
        \pi(z'(a_{i+1},b_{i+1})) \leq \pi(b_{i+1}) \leq \pi(d_i) \text{ in } P.
    \end{equation}
    In particular, for every $i \in [m-1]$, we have
    \begin{equation}\label{eq:d-c-d}
        \pi(d_{i+1}) < \pi(c_{i+1}) \leq \pi(d_{i}) < \pi(c_i) \text{ in } P.
    \end{equation}

    \begin{claim}\label{claim:B-to-B}
        Let $j \in [m-2]$, let $x,y \in B$ with $x \leq y$ in $P$, 
        If $x \in \calR_j$ then $y \in \calR_{j+2}$.
    \end{claim}
    \begin{proofclaim}
        Assume that $x \in \calR_j$.
        Let $z_j' = z'(c_j,d_j)$ and $z_{j+2}' = z'(c_{j+2},d_{j+2})$.
        Observe that
        %\[\pi(z_{j+2}') \overset{\eqref{eq:z'-d-z'-and-c-a}}{\leq} \pi(d_{j+2}) \overset{\eqref{eq:d-c-d}}{<}\pi(c_{j+2}) \overset{\eqref{eq:d-c-d}}{\leq} \pi(d_{j+1}) \overset{\eqref{eq:z'-d-z'-and-c-a}}{\leq} \pi(z'(a_{j+1},b_{j+1})) \overset{\eqref{eq:z'-z'-and-a-c}}{\leq} \pi(z'_j) \text { in } P. \]
        \[\pi(c_{j+2}) \overset{\eqref{eq:d-c-d}}{\leq} \pi(d_{j+1}) \overset{\eqref{eq:z'-d-z'-and-c-a}}{\leq} \pi(z'(a_{j+1},b_{j+1})) \overset{\eqref{eq:z'-z'-and-a-c}}{\leq} \pi(z'_j) \text { in } P. \]
        By \cref{prop:properties-regions}.\ref{prop:properties-regions:b-inside}, $x \in \calR_j$ implies $\pi(z_j') \leq x$ in $P$.
        In particular,
        \[\pi(c_{j+2}) \leq \pi(z_j') \leq x \leq y \text{ in } P.\]
        Therefore, by \cref{prop:properties-regions}.\ref{prop:properties-regions:d-over-m2}, every witnessing path from $x$ to $y$ in $P$ is contained in $\calR_{j+2}$, thus, $y \in \calR_{j+2}$, as desired.
       %
        % Let $W$ be a witnessing path from $x$ to $y$ in $P$.
        % If $W$ is disjoint from $\partial\calR_j$, then $y \in \calR_j \subset \calR_{j+2}$.
        % Thus, we may assume that $W$ intersects $\partial\calR_j$ and  that $x \in \partial \calR_{j}$. 
        % It suffices to prove that $W$ is disjoint from $\partial \calR_{j+2}$.
        % Suppose towards a contradiciton that $u$ is an element in $W$ and $\partial \calR_{j+2}$. 
        % Note that $x \leq u \leq y$ in $P$. 
        % Since all elements of $W$ are in $B$, by \ref{prop:tuple-is-valid:inA}, $u$ lies in $\pi(z_{j+2}')[W_R(z'_{j+2})]z'_{j+2}$ or $\pi(z'_{j+2})[W_R(b'')]\pi(c_{j+2})$.
        %
        % If $u$ lies in $\pi(z'_{j+2})[W_R(b'')]\pi(c_{j+2})$, then 
        % $u \leq \pi(z_j')$ in $P$, and so, $u \leq \pi(c_{j+2}) \leq \pi(z_j') < x$ in $P$, which is a contradiction. 
        %
        % The only remaining case is that $u$ lies in $\pi(z_{j+2}')[W_R(z'_{j+2})]z'_{j+2} \setminus\{\pi(z'_{j+2})\}$.
        % Since $\pi(z_{j+2}')< \pi(z_j')$ in $P$, 
        % we have that $W_R(z_{j+2}')$ is disjoint from $\pi(z'_j)[W_R(z'_j)]z'_j$ and moreover, $x$ is right of $W_R(z_{j+2}')$.
        % However, it implies that the path $x_0[W_R(z_j')]x[W]u[W_R(z_{j+2}')]z_{j+2}'$ is a witnessing path from $x_0$ to $z_{j+2}'$ containing a point right or $W_R(z_{j+2}')$, which contradicts \cref{claim:diagram:extreme_paths_exist}. This completes the proof of the claim.
    \end{proofclaim}

    \begin{claim}\label{claim:hit-gamma3}
        Let $j\in [m-1]$, let $x,y \in A$, let $W$ be a witnessing path from $x$ to $y$ in $P$.
        If $x \in \calR_j$, then $W$ is disjoint from $c_{j+1}[W_R(c_{j+1},b'')]\pi(c_{j+1})$.
    \end{claim}
    \begin{proofclaim}
        Suppose to the contrary that $W$ intersects $c_{j+1}[W_R(c_{j+1},b'')]\pi(c_{j+1})$ in an element $u$.
        Since $u \in \partial \calR_{j+1}$, $u \notin \Int \calR_j$.
        Since $x \in \calR_j$ and $u \notin \Int\calR_{j}$, $W$ intersects $\partial \calR_j$, say in an element $v$.
        Since all the elements of $W$ are in $A$, by \ref{prop:tuple-is-valid:inA}, $W$ intersects $W_R(c_j,b'')$ or $W_R(c_j,z'(c_j,d_j))$.
        In both cases, $c_j \leq v$ in $P$.
        We obtain 
        \[c_j \leq v \leq u \leq \pi(c_{j+1}) \overset{\eqref{eq:d-c-d}}{\leq} \pi(d_j) \leq d_j \text{ in } P,\]
        which is a contradiction that completes the proof of the claim.
    \end{proofclaim}

    \begin{claim}\label{claim:going-out}
        There exists a polynomial-time algorithm that takes $j\in [m-(n+1)]$, $x \in A$ and $y \in B$ with $x \leq y$ in $P$ such that $x \in \calR_j$ and $y \notin \Int \calS_{j+n+1}$, and outputs a Kelly subposet of $P$ of order $n$ based on $I_R$.
        % Let $j\in [m-(n+1)]$, let $x \in A$ and $y \in B$ with $x \leq y$ in $P$.
        % If $x \in \calR_j$, then either $y \in \calR_{j + n+1}$ or the algorithm may return a Kelly subposet of $P$ of order $n$ based on $I_R$.
    \end{claim}
    \begin{proofclaim}
        Let $W$ be a witnessing path from $x$ to $y$ in $P$.
        Since $x \in A$ and $y \in B$, there exist adjacent elements $u$ and $v$ in $W$ such that all elements of $x[W]u$ are in $A$ and all elements of $v[W]y$ are in $B$.
        By \cref{claim:B-to-B}, if $v \in \calR_{j+n-1}$, then $y \in \calR_{j+n+1}$.
        Thus, $v \notin \calR_{j+n-1}$.
        It follows that $x[W]u$ intersects $\partial \calR_k$ for every integer $k$ with $j \leq k\leq j+n-1$.
        If $x[W]u$ is disjoint from $W_R(c_j,z'(c_j,d_j))$, then by \cref{prop:two-regions}.\ref{prop:two-regions:gamma4} applied to $\calR' = \calR_j$ and $\calR = \calR_{j+1}$, $x[W]u$ intersects $c_{j+1}[W_R(c_{j+1},b'')]\pi(c_{j+1})$, which contradicts \cref{claim:hit-gamma3}.
        Thus, we may assume that $x[W]u$ intersects $W_R(c_j,z'(c_j,d_j))$.
        
        For each integer $0 \leq i \leq n-2$, we apply \cref{prop:two-regions}.\ref{prop:two-regions:gamma3} to $\calR' = \calR_j$ and $\calR = \calR_{j+i}$ obtaining that $x[W]u$ intersects $W_R(c_{j+i},z'(c_{j+i},d_{j+i}))$, say in an element $r_{j+i}$.
        For each integer $1 \leq i \leq n-1$, we apply \cref{prop:two-regions}.\ref{prop:two-regions:gamma3} to $\calR' = \calR_j$ and $\calR = \calS_{j+i}$ obtaining that $x[W]u$ intersects $W_R(a_{j+i},d_{j+i})$, say in an element $s_{j+i}$.
        By the regions containment,
        \[
        \calR_j \subseteq \calS_{j+1} \subseteq \calR_{j+1} \subseteq \cdots \subseteq \calS_{j+n-2} \subseteq \calR_{j+n-2} \subseteq \calS_{j+n-1} 
        \]
        we may assume that 
        \[r_j \leq s_{j+1} \leq r_{j+1} \leq \dots \leq s_{j+n-2} \leq r_{j+n-2} \leq s_{j+n-1} \text{ in } P.\]
        By \eqref{eq:d-c-d}, we have
        \[ c_{j+n-1} \leq \pi(c_{j+n-1}) < \pi(c_{j+n-2}) < \dots < \pi(c_{j+1}) \leq \pi(d_j) \leq d_j \text{ in } P.\]
        For every integer $i \in [n-2]$, we have
        \begin{align*}
        c_{j+i} &\leq r_{j+i} \leq s_{j+i+1} \leq d_{j+i+1} \textrm{ in $P$,}\\
        c_{j+i} &\leq \pi(c_{j+1}) \leq \pi(d_{j+i-1}) < d_{j+i-1} \textrm{ in $P$.}
        \end{align*}
        In the second line above, we used \eqref{eq:d-c-d}.
        % For every $i \in [n-2]$, we have
        % \[c_{j+i} \leq v_{i+1} \ \text{ and } \ v_{i}' \leq d_{j+i} \text{ in } P\]
        %For every integer $2 \leq i \leq n-1$, let $u_i = \pi(c_{j+i-1})$.
        It follows that the subposet of $P$ induced by the elements
        \[\{c_{j},\dots,c_{j+n-1},d_j,\dots,d_{j+n-1},\pi(c_{j+1}),\dots,\pi(c_{j+n-2}),r_{j+1},\dots,r_{j+n-2}\}\]
        is a Kelly subposet of $P$ of order $n$ based on $I_R$.
        Note that all the elements in the Kelly subposet above are easy to find by a polynomial-time algorithm.
    \end{proofclaim}

    \begin{claim}\label{claim-a}
        $a \in \calR_1$.
    \end{claim}
    \begin{proofclaim}
        By \eqref{eq:d-c-b}, since $b_1 \preccurlyeq_R b$, and since $a\parallel b$ in $P$ we have
        \[\pi(c_1) \leq \pi(b_1) \leq \pi(b) < \pi(a) \text{ in } P. \]
        Let $(q_1,m_2,q_3,m_4)$ be the extreme points of $\calR_1$.
        Note that $\pi(c_1) = m_2$.
        By \cref{prop:properties-regions}.\ref{prop:properties-regions:b''}, it follows that $\pi(a) \in \Int \calR_1$.
        Suppose to the contrary that $a \notin \calR_1$.
        Then, $a[W_R(a,b'')]\pi(a)$ intersects $\partial \calR_1$.
        By \ref{prop:tuple-is-valid:inA}, they intersect in $q_3[W_R(c_1,b'')]\pi(c_1)$ or $q_3[W_R(c_1,z'(c_1,d_1))]m_4$, say in an element $u$.
        If $u$ lies in $q_3[W_R(c_1,b'')]\pi(c_1)$, then $a \leq u \leq \pi(c_1) \leq b$ in $P$, which is a contradiction.
        Therefore, $u$ lies in $q_3[W_R(c_1,z'(c_1,d_1))]m_4 \setminus\{q_3\}$.
        In this case, $u$ is right of $W_R(c_1,b'')$ by \ref{prop:tuple-is-valid:left}.
        It follows that $c_1[W_R(c_1,z'(c_1,d_1))]u[W_R(a,b'')]b''$ is a witnessing path from $c_1$ to $b''$ in $P$ containing an element right of $W_R(c_1,b'')$, which contradicts \cref{claim:diagram:extreme_paths_exist}.
        Therefore, $a[W_R(a,b'')]\pi(a)$ does not intersect $\partial \calR_1$, and $a \in \calR_1$, as claimed.
    \end{proofclaim}

    Recall that $((a,b),(c,d))$ on the input is an edge in $H$.
    There exists $f \in B$ witnessing this fact.
    If $f \notin \Int \calS_{n+2}$, then using the algorithm from \Cref{claim:going-out}, we output a Kelly subposet of $P$ of order $n$ based on $I_R$.
    Thus, we assume that $f \in \Int \calS_{n+2}$.

    \begin{claim}\label{claim:c}
        If $f \in \Int \calS_{n+2}$, then $W_R(c,b'') \subset \calR_{n+3}$.
    \end{claim}
    \begin{proofclaim}
        By \eqref{eq:d-c-b}, \cref{prop:two-regions}.\ref{prop:two-regions:q}, and since $c \parallel f$ in $P$, we have
        \[\pi(c_{n+3}) < \pi(d_{n+2}) \leq \pi(f) < \pi(c) \text{ in } P.\]
        It follows that $\pi(c)[W_R(c,b'')]b'' \subset \Int\calR_{n+3}$ by \cref{prop:properties-regions}.\ref{prop:properties-regions:b''}.
        %\later{The stuff below is repeated with the previous claim.}
        Suppose to the contrary that $c[W_R(c,b'')]\pi(c) \setminus\{\pi(c)\}$ intersects $\partial\calR_{n+3}$.
        Let $(q_1,m_2,q_3,m_4)$ be the extreme points of $\calR_{n+3}$.
        By \ref{prop:tuple-is-valid:inA}, they intersect in $q_3[W_R(c_{n+3},b'')]\pi(c_{n+3})$ or $q_3[W_R(c_{n+3},z'(c_{n+3},d_{n+3}))]m_4$, say in an element $u$.
        If $u$ lies in $q_3[W_R(c_{n+3},b'')]\pi(c_{n+3})$, then $c \leq u \leq \pi(c_{n+3}) \leq \pi(f) \leq f$ in $P$, which is a contradiction.
        Therefore, $u$ lies in $q_3[W_R(c_{n+3},z'(c_{n+3},d_{n+3}))]m_4 \setminus\{q_3\}$.
        In this case, $u$ is right of $W_R(c_{n+3},b'')$ by \ref{prop:tuple-is-valid:left}.
        It follows that $c_{n+3}[W_R(c_{n+3},z'(c_{n+3},d_{n+3})]u[W_R(c,b'')]b''$ is a witnessing path from $c_{n+3}$ to $b''$ in $P$ containing an element right of $W_R(c_{n+3},b'')$, which contradicts \cref{claim:diagram:extreme_paths_exist}.
        Therefore, $c[W_R(c,b'')]\pi(c)$ does not intersect $\partial \calR_{n+3}$, as claimed.
    \end{proofclaim}

    In particular, \Cref{claim:c} implies that $c \in \calR_{n+3}$.
    If $W_R(c,z'(c,d))$ contains any element outside $\Int \calS_{2n+4}$, then using the algorithm from \Cref{claim:going-out}, we output a Kelly subposet of $P$ of order $n$ based on $I_R$.
    Thus, we assume that $W_R(c,z'(c,d)) \subset \Int \calS_{2n+4}$.

    \begin{claim}\label{claim:edge}
        If $f \in \Int \calS_{n+2}$ and $W_R(c,z'(c,d)) \subset \Int \calS_{2n+4}$, then $((c,d),(a_{2n+6},b_{2n+6}))$ is an edge in $H'$.
    \end{claim}
    \begin{proofclaim}
        Since $z'(c,d) \in \calS_{2n+4}$, $z'(c,d)$ does not lie in $W_R(d_{2n+4})$ and $d_{2n+4}$ does not lie in $W_R(z'(c,d))$ by \cref{prop:properties-regions}.\ref{prop:properties-regions:b-lower-m4}.
        By \cref{prop:properties-regions}.\ref{prop:properties-regions:d-outside}, we also do not have $z'(c,d) \prec_R d_{2n+4}$.
        It follows that $d_{2n+4} \prec_R z'(c,d)$.
        We also have $b_{2n+6} \prec_R d_{2n+4}$.
        Thus, $b_{2n+6} \prec_R d_{2n+4} \prec_R z'(c,d) \prec_R d$.
        This gives \ref{itemH':ordering} for $((c,d),(a_{2n+6},b_{2n+6}))$.

        Let $\calR' = \calR(c,z'(c,d),b'')$ and let $\calR = \calR(a_{2n+6},z'(a_{2n+6},b_{2n+6}),b'')$.
        Condition \ref{itemH':regions} states that $\calR' \subset \calR$.
        Let $(q_1,m_2,q_3,m_4)$ be the extreme points of $\calR$.
        Note that $q_1 = \pi(z'(a_{2n+6},b_{2n+6}))$.
        We prove that $\partial \calR' \subset \calR$ which by \cref{obs:region_containment} suffices.
        We assumed that $W_R(c,z'(c,d)) \subset \Int \calS_{2n+6} \subset \calR$ and by \cref{claim:c}, $W_R(c,b'') \subset \calR_{n+3} \subset \calR$.
        %\later{The argument below has already appeared several times. Maybe we can have a statement saying that for region containment it suffices to check red parts?}
        By \Cref{prop:properties-regions}.\ref{prop:properties-regions:b-inside}, $q_1$ lies in $W_R(z'(c,d))$ and $q_1[W_R(z'(c,d))]z'(c,d)\subset \calR$.
        Since $q_1$ lies in $W_R(z'(c,d))$ and $W_R(b'')$, we have $q_1 \leq \pi(z'(c,d))$ in $P$.
        It follows that $\pi(z'(c,d))[W_R(z'(c,d))]z'(c,d) \subset \calR$ and by \cref{prop:properties-regions}.\ref{prop:properties-regions:b''} $\pi(z'(c,d))[W_R(d)]b'' \subset \calR$.
        We conclude that indeed $\calR' \subset \calR$ and \ref{itemH':regions} holds.

        Condition \ref{itemH':B*} states that $B''_*(c,d) \subset B_*''(a_{2n+6},b_{2n+6})$.
        Since $b_{2n+6} \prec_R d$, we have $B''(c,d) \subset B''(a_{2n+6},b_{2n+6})$.
        Let $d'' \in B''_*(c,d)$.
        We must prove that $a_{2n+6} \leq d''$ in $P$ and $W_R(a_{2n+6},d'')$ is right of $W_R(d'')$.
        % Note that
        % \[\pi(a_{2n+6}) \leq \pi(d_{2n+4}) \leq \pi(z'(c,d)) \leq \pi(d) \text{ in } P.\]
        
        First, we show that it suffices to have $\pi(a_{2n+6})$ in $W_R(d'')$.
        Clearly, this implies $a_{2n+6} \leq d''$ in $P$.
        If $a_{2n+6} \preccurlyeq_\uparrow x_0$, then since $(a_{2n+6},b_{2n+6})$ is a right pair, $W_R(a_{2n+6},d'')$ must be right of $x_0$ and so of $W_R(d'')$ by \cref{prop:diagram:for_consistent_one_distinction_is_enough}.
        If $x_0 \prec_\uparrow a_{2n+6}$, then $a_{2n+6}$ is right of $W_R(b'')$, and so, right of $W_R(\pi(a_{2n+6}))$, and so, right of $W_R(d'')$.

        Therefore, the remainder of the proof is devoted to showing 
        \[\text{$\pi(a_{2n+6})$ in $W_R(d'')$}.\]
        By \cref{prop:properties-regions}.\ref{prop:properties-regions:d-inside} and \cref{prop:sandwich}, we have
        \[\pi(a_{2n+6}) \overset{\eqref{eq:z'-z'-and-a-c}}{\leq} \pi(c_{2n+5}) \overset{\eqref{eq:d-c-d}}{\leq} \pi(d_{2n+4}) \leq \pi(z'(c,d)) \leq \pi(d) \text{ in } P.\]
        If $b''$ lies in $W_R(d'')$, then this assertion is clear.
        Note that $z'(c,d)$ does not lie in $W_R(d'')$.
        If $d''$ lies in $W_R(b'')$, then $\pi(a_{2n+6}) < d''$ as otherwise, $c \leq d'' \leq \pi(a_{2n+6}) \leq \pi(d) \leq d$ in $P$, which is a contradiction.
        If $d'' \prec_R b''$, then as $z'(c,d) \prec_R d''$, by \cref{prop:m4=z'} and \cref{prop:properties-regions}.\ref{prop:properties-regions:b-inside}, $d \in \calR'$ and $\pi(a_{2n+6}) \leq \pi(z'(c,d)) \leq \pi(d'') \leq d''$ in $P$, as desired.
        Therefore, we may assume that $b'' \prec_R d''$.
        Let $q = \gcpe(W_R(b''),W_R(d''))$.
        If $\pi(a_{2n+6}) \leq q$ in $P$, then we again obtain the desired assertion. 
        To complete the proof, we show that the remaining case, i.e.\ $q < \pi(a_{2n+6})$ in $P$, leads to a contradiction.

        Since $b'' \in B''*(c,d)$, either $x_0 \prec_\uparrow c$ or $c$ is right of $W_R(b'')$.
        Since $c \in \calR$, by \ref{prop:tuple-is-valid:left} and \Cref{obs:curves-region}, this implies that $c$ is right of $W_R(a_{2n+6},b'')$ and left of $W_R(a_{2n+6},z'(a_{2n+6},b_{2n+6}))$.
        In particular, $a_{2n+6} \prec_\uparrow c \prec_\uparrow d''$.

        We define a horizontal line $\ell$.
        If $q \prec_\uparrow a_{2n+6}$, let $\ell$ be the horizontal line containing $a_{2n+6}$ and if $a_{2n+6} \prec_\uparrow q$, let $\ell$ be a horizontal line slightly above $q$.
        Let $p$ be the point of $W_R(d'')$ in $\ell$.
        Note that when $a_{2n+6} \prec_\uparrow q$, we may choose $\ell$ so that $p \prec_\uparrow c$, and $p$ is right of $W_R(b'')$ and left of $W_R(a_{2n+6},b'')$.
        We consider two cases.
        
        First, suppose that $p$ is right of $W_R(a_{2n+6})$.
        In this case, $q \prec_\uparrow a_{2n+6}$, thus, $p$ is right of $a_{2n+6}$.
        By \ref{prop:tuple-is-valid:inA}, $a_{2n+6}[W_R(a_{2n+6})]m_4$ is disjoint from $W_R(d'')$, thus by \cref{prop:diagram:for_consistent_one_distinction_is_enough}, $p[W_R(d'')]d''$ is right of $a_{2n+6}[W_R(a_{2n+6})]m_4$.
        However, $c$ is left of $a_{2n+6}[W_R(a_{2n+6})]m_4$, so $c$ is left of $p[W_R(d'')]d''$, which contradicts $d'' \in B''_*(c,d)$.

        Second, suppose that $p$ is left of $W_R(a_{2n+6})$.
        Recall that $p$ is right of $W_R(b'')$.
        By \cref{claim:diagram:extremal_paths_are_consistent} and \ref{prop:tuple-is-valid:inA}, $p[W_R(d'')]d''$ is disjoint from $q[W_R(b'')]b''$ and $a_{2n+6}[W_R(a_{2n+6})]m_2\setminus\{m_2\}$, respectively.
        It follows that $d'' \prec_\uparrow m_2$ as otherwise, the element of $W_R(d'')$ in the horizontal line of $m_2$ has to be both left and right of $m_2$, which is a contradiction.
        It also follows by \cref{obs:where-curves-end}, that $d''$ is left of $a_{2n+6}[W_R(a_{2n+6})]m_2\setminus\{m_2\}$.
        Since $c$ is right of $a_{2n+6}[W_R(a_{2n+6})]m_2\setminus\{m_2\}$, by the Darboux property, a witnessing path from $c$ to $d''$ intersects $a_{2n+6}[W_R(a_{2n+6})]m_2\setminus\{m_2\}$.
        However, this implies $c \leq m_2 = \pi(a_{2n+6}) \leq \pi(d) \leq d$ in $P$, which is a contradiction that completes the proof of the claim.
    \end{proofclaim}
    \cref{claim:edge} implies that under the assumptions that  $f \in \Int \calS_{n+2}$ and $W_R(c,z'(c,d)) \subset \Int \calS_{2n+4}$, 
        \[\ell(a,b) \leq \ell(c,d) + 2n+6. \]
    This contradicts the assumptions on the given input, hence, the algorithm has already terminated, outputting a Kelly subposet of $P$ of order $n$ based on $I_R$.
    This completes the proof of the lemma.
\end{proof}

\begin{proposition}\label{prop:IRalpha-reversible}
    Let $n$ be a positive integer.
    For every $\alpha \in [2n+7]$, let 
    \[I_{R,\alpha} = \{(a,b) \in I_R : \ell(a,b) \equiv \alpha \bmod (2n+7)\}.\]
    If $n > \kelly_P(I_R)$, then $I_{R,\alpha}$ is reversible for every $\alpha \in [2n+7]$.
\end{proposition}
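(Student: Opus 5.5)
We argue by contradiction. Fix $\alpha \in [2n+7]$ and suppose that $I_{R,\alpha}$ is not reversible. By \cref{prop:dim-alternating-cycles}, $I_{R,\alpha}$ then contains a strict alternating cycle $((a_1,b_1),\dots,(a_k,b_k))$ in $P$. This cycle is also contained in $I_R$, so \cref{cor:sacs} gives distinct $i,j\in[k]$ with $((a_i,b_i),(a_j,b_j)) \in E(H)$. By \cref{prop:ell-plus-1}, $\ell(a_i,b_i) \ge \ell(a_j,b_j)+1$. Both pairs lie in $I_{R,\alpha}$, so $\ell(a_i,b_i) \equiv \ell(a_j,b_j) \pmod{2n+7}$. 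A positive difference divisible by $2n+7$ is at least $2n+7$, hence
\[\ell(a_i,b_i) \ge \ell(a_j,b_j) + (2n+7).\]

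We now apply \cref{lemma:shortcuts} to the edge $((a_i,b_i),(a_j,b_j))$ of $H$. It yields a Kelly subposet of $P$ of order $n$ based on $I_R$, so $\kelly_P(I_R) \ge n$. This contradicts the assumption $n > \kelly_P(I_R)$. Therefore no $I_{R,\alpha}$ contains a strict alternating cycle, and each is reversible by \cref{prop:dim-alternating-cycles}.

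The argument is essentially bookkeeping; all the substantive work is already done in \cref{cor:sacs}, \cref{prop:ell-plus-1} and \cref{lemma:shortcuts}. The only point to check is that \cref{lemma:shortcuts} applies for every positive integer $n$ as stated, including small $n$ where a Kelly poset of order $n$ would be degenerate. If $n\le 2$, the hypothesis $n>\kelly_P(I_R)\ge 1$ forces $n=2$. In that case the conclusion of the lemma should be read as producing a standard example of order $2$, or one can simply restrict attention to $n\ge 3$, which is the only case used by \cref{algo}.
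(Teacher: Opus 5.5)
Your argument is correct and is the paper's proof: a strict alternating cycle in $I_{R,\alpha}$ yields an edge of $H$ by \cref{cor:sacs}; \cref{prop:ell-plus-1} and the congruence force a gap of at least $2n+7$ in $\ell$; and \cref{lemma:shortcuts} then gives a Kelly subposet of order $n$ based on $I_R$, contradicting $n>\kelly_P(I_R)$. Your closing remark about degenerate small $n$ is a harmless caveat that the paper does not address, since the statement is only used for $n\ge 3$.
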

\begin{proof}
    Let $\alpha \in [2n+7]$.
    Suppose to the contrary that there exists a strict alternating cycle in $P$ contained in $I_{R,\alpha}$.
    By \cref{cor:sacs}, this cycle has two pairs connected by an edge in $H$, say $(a,b)$ and $(c,d)$.
    By \cref{prop:ell-plus-1}, $\ell(a,b) > \ell(c,d)$, thus, by the definition of $I_{R,\alpha}$, $\ell(a,b) \geq \ell(c,d) + (2n+7)$.
    However, in this case, \Cref{lemma:shortcuts} would give a Kelly subposet in $P$ of order $n$ based on $I_R$, which contradicts $n > \kelly_P(I_R)$.
\end{proof}

\cref{lemma:shortcuts,prop:IRalpha-reversible} (and their symmetric versions) complete the proof of \cref{lem:singly-constrained-proof} as discussed in \cref{sec:outline}.
Thus, we also obtain \cref{thm:main}, and hence \cref{thm:approx-algo,thm:dim-boundedness,thm:dim-boundedness-Kelly,thm:algo-kelly}.

\bibliographystyle{plain}
\bibliography{bibliography}

\end{document}